\mathchardef\hyphen="2D
\numberwithin{equation}{section}
\theoremstyle{slanted}
    \newtheorem{thm}{Theorem}[section]
    \newtheorem{lem}[thm]{Lemma}
    \newtheorem{prop}[thm]{Proposition}
    \newtheorem{cor}[thm]{Corollary}
\theoremstyle{normal}
    \newtheorem{cond}[thm]{Condition}
    \newtheorem{assump}[thm]{Assumption}
    \newtheorem{sett}[thm]{Setting}
    \newtheorem{defn}[thm]{Definition}
    \newtheorem{rem}[thm]{Remark}
\DeclareMathAlphabet{\mathpzc}{OT1}{pzc}{m}{it}
\newenvironment{pf}{\par\smallskip\noindent\emph{Proof.}}{\hfill\qed\par\smallskip}
\newenvironment{pf*}[1]{\par\smallskip\noindent\emph{#1.}}{\hfill\qed\par\smallskip}
\begin{document}
\title{Milnor $K$-theory, $F$-isocrystals and syntomic regulators}
\author{Masanori Asakura}
\address{Department of Mathematics, Hokkaido University, Sapporo, 060-0810 Japan}
\email{asakura@math.sci.hokudai.ac.jp}
\author{Kazuaki Miyatani}
\address{School of Science and Technology for Future Life, Tokyo Denki University, Tokyo, 120-8551 Japan}
\email{miyatani@mail.dendai.ac.jp}
\date\today
\maketitle

\def\can{\mathrm{can}}
\def\ch{{\mathrm{ch}}}
\def\Coker{\mathrm{Coker}}
\def\crys{\mathrm{crys}}
\def\dlog{d{\mathrm{log}}}
\def\dR{{\mathrm{d\hspace{-0.2pt}R}}}            % de Rham
\def\et{{\mathrm{\acute{e}t}}}  % etale
\def\Frac{{\mathrm{Frac}}}
\def\phami{\phantom{-}}
\def\id{{\mathrm{id}}}              % identity
\def\Image{{\mathrm{Im}}}        % image
\def\Hom{{\mathrm{Hom}}}  
\def\Ext{{\mathrm{Ext}}}
\def\MHS{{\mathrm{MHS}}}  
  
\def\Ker{{\mathrm{Ker}}}          % kernel
\def\rig{{\mathrm{rig}}}
\def\Pic{{\mathrm{Pic}}}
\def\CH{{\mathrm{CH}}}
\def\NS{{\mathrm{NS}}}
\def\Fil{{\mathrm{Fil}}}
\def\End{{\mathrm{End}}}
\def\pr{{\mathrm{pr}}}
\def\gp{{\mathrm{gp}}}
\def\Proj{{\mathrm{Proj}}}
\def\ord{{\mathrm{ord}}}
\def\reg{{\mathrm{reg}}}          %
\def\res{{\mathrm{res}}}          %
\def\Res{\mathrm{Res}}
\def\Spec{\operatorname{Spec}}     % spectrum
\def\syn{{\mathrm{syn}}}    % spectrum
\def\sym{{\mathrm{sym}}}
\def\cont{{\mathrm{cont}}}
\def\zar{{\mathrm{zar}}}
\def\red{{\mathrm{red}}}
\def\bA{{\mathbb A}}
\def\bC{{\mathbb C}}
\def\C{{\mathbb C}}
\def\G{{\mathbb G}}
\def\bE{{\mathbf E}}
\def\bF{{\mathbb F}}
\def\F{{\mathbb F}}
\def\bG{{\mathbb G}}
\def\bH{{\mathbb H}}
\def\bJ{{\mathbf J}}
\def\bL{{\mathbf L}}
\def\bO{{\mathbf O}}
\def\cL{{\mathscr L}}
\def\bN{{\mathbb N}}
\def\bP{{\mathbb P}}
\def\P{{\mathbb P}}
\def\bQ{{\mathbb Q}}
\def\Q{{\mathbb Q}}
\def\bR{{\mathbb R}}
\def\R{{\mathbb R}}
\def\bZ{{\mathbb Z}}
\def\Z{{\mathbb Z}}
\def\cH{{\mathscr H}}
\def\cD{{\mathscr D}}
\def\cE{{\mathscr E}}
\def\cO{{\mathscr O}}
\def\O{{\mathscr O}}
\def\cJ{{\mathscr J}}
\def\cK{{\mathscr K}}
\def\cM{{\mathscr M}}
\def\cU{{\mathscr U}}
\def\cR{{\mathscr R}}
\def\cS{{\mathscr S}}
\def\cX{{\mathscr X}}
\def\cY{{\mathscr Y}}
\def\cZ{{\mathscr Z}}

%                                 Greece
%
\def\ve{\varepsilon}
\def\vG{\varGamma}
\def\vg{\varGamma}
%
%                                 simple
%
%
%
\def\lra{\longrightarrow}
\def\lla{\longleftarrow}
\def\Lra{\Longrightarrow}
\def\hra{\hookrightarrow}
\def\lmt{\longmapsto}
\def\ot{\otimes}
\def\op{\oplus}
\def\l{\lambda}
\def\Isoc{{\mathrm{Isoc}}}
\def\FIsoc{{F\text{-}\mathrm{Isoc}^\dag}}
\def\FMIC{{F\text{-}\mathrm{MIC}}}
\def\FilMIC{{\mathrm{Fil}\text{-}\mathrm{MIC}}}
\def\rigsyn{{\mathrm{rig}\text{-}\mathrm{syn}}}
\def\FilFMIC{{\mathrm{Fil}\text{-}F\text{-}\mathrm{MIC}}}
\def\Log{{\mathscr{L}{og}}}
\newcommand{\sPol}{\mathop{\mathscr{P}\!\mathit{ol}}\nolimits}
\def\uq{\underline{q}}
%                              %                              decolation
\def\wt#1{\widetilde{#1}}
\def\wh#1{\widehat{#1}}
\def\spt{\sptilde}
\def\ol#1{\overline{#1}}
\def\ul#1{\underline{#1}}
\def\us#1#2{\underset{#1}{#2}}
\def\os#1#2{\overset{#1}{#2}}

\begin{abstract}
We introduce a category of filtered $F$-isocrystals and construct a 
symbol map from Milnor $K$-theory to the group of 1-extensions of 
filtered $F$-isocrystals. We show that our symbol map is compatible with
the syntomic symbol map to the log syntomic cohomology by Kato and Tsuji.
These are fundamental materials in our computations of syntomic regulators
which we work in other papers.
\end{abstract}

\section{Introduction}
The purpose of this paper is to provide fundamental materials
for computing the syntomic regulators on Milnor $K$-theory, which is based
on the theory of $F$-isocrystals.

\medskip

Let $V$ be a complete discrete valuation ring 
such that the residue field $k$ is a perfect field of characteristic $p>0$ and
the fractional field $K$ is of characteristic $0$.
For a smooth affine scheme $S$ over $V$,
we introduce a {\it category of filtered $F$-isocrystals}, which is denoted by 
$\FilFMIC(S)$ (see \S \ref{filFMIC-sect} for the details).
Roughly speaking, an isocrystal is a crystalline sheaf
which corresponds to a smooth $\Q_l$-sheaf, and ``$F$"
means Frobenius action. 
We refer the book \cite{LS} for the general terminology of $F$-isocrystals.
As is well-known, it is equivalent to a notion of an integrable connection
with Frobenius action.
According to this, we shall define the category $\FilFMIC(S)$ without the terminology of 
$F$-isocrystals. Namely we define it to be a category of de Rham cohomology
endowed with Hodge filtration, integrable connection and Frobenius action,
so that the objects are described by familiar and elementary notation.
However, the theory of $F$-isocrystals plays an essential role in verifying several
functorial properties.
The purpose of this paper is to introduce a symbol map on the Milnor $K$-group
to the group of 1-extensions of filtered $F$-isocrystals. To be precise, 
let $S$ be a smooth affine scheme over $V$ and
$U\to S$ a smooth $V$-morphism having a good compactification, 
which means that $U\to S$ extends to 
a projective smooth morphism
$X\to S$ such that $X\setminus U$ is a relative simple normal crossing divisor 
({\it abbreviated to} NCD) over $S$.
Suppose that the comparison isomorphism
\[
\O(S)^\dag_K\ot_{\O(S)_K}H^i_\dR(U_K/S_K)\os{\cong}{\lra}H^i_\rig(U_k/S_k)
\]
holds for each $i\geq0$ where $U_K=U\times_V\Spec K$ and $U_k=U\times _V\Spec k$
etc.
Then, for an integer $n\geq 0$ such that $\Fil^{n+1}H^{n+1}_\dR(U_K/S_K)=0$
where
$\Fil^\bullet$ is the Hodge filtration, we construct
a homomorphism
\[
[-]_{U/S}:K^M_{n+1}(\O(U))\lra\Ext^1_{\FilFMIC(S)}(\O_S,H^n(U/S)(n+1))
\]
from the Milnor $K$-group of the affine ring $\O(U)$ to the group of $1$-extensions in 
the category of filtered $F$-isocrystals (Theorem \ref{mot-map-prop}).
We call this the {\it symbol map for $U/S$}.
We provide an explicit formula of our symbol map (Theorem \ref{comp-thm}).
Moreover we shall give the comparison of our symbol map with the syntomic 
symbol map 
\[
[-]_\syn:K^M_{n+1}(\O(U))\lra H^{n+1}_\syn(U,\Z_p(n+1))
\]
to the syntomic cohomology of Fontaine-Messing, or more generally
the log syntomic cohomology (cf. \cite[Chapter I \S3]{Ka2}, \cite[\S 2.2]{Ts1}).
See Theorem \ref{ext-logsyn-thm} for the details. 

\medskip

Thanks to the recent work by Nekovar-Niziol \cite{NN}, there are the syntomic regulator maps
\[
\reg_\syn^{i,j}:K_i(X)\ot\Q\lra H^{2j-i}_\syn(X,\Q_p(j))
\]
in very general setting,
which includes the syntomic symbol maps (up to torsion)
and the rigid syntomic regulator maps by Besser \cite{Be1}.
They play the central role in the Bloch-Kato conjecture \cite{BK}, and 
in the $p$-adic Beilinson conjecture by Perrin-Riou
\cite[4.2.2]{Perrin-Riou} (see also \cite[Conj. 2.7]{Colmez}).
On the other hand, the authors do not know how to construct
$\reg_\syn^{i,j}$ without ``$\ot\Q$".
We focus on the log syntomic cohomology with $\Z_p$-coefficients
since the integral structure is important in our ongoing applications (e.g. \cite{AC}), namely
a {\it deformation method} for computing syntomic regulators.

It is a notorious fact that, 
it is never easy to compute the syntomic regulator maps.
Indeed it is non-trivial
even for showing non-vanishing of $\reg_\syn^{i,j}$ in a general situation.
The deformation method is a method to employ differential equations,
which is motivated by Lauder \cite{Lauder}
who
provided the method for computing the Frobenius eigenvalues
of a smooth projective variety over a finite field.
The overview is as follows.
Suppose that a variety $X$ extends to a projective smooth family $f:Y\to S$ 
with $X=f^{-1}(a)$ 
and suppose that an element $\xi_X\in K_i(X)$ extends to an element $\xi\in K_i(Y)$.
We deduce a differential equation such that a ``function" 
$F(t)=\reg_\syn(\xi|_{f^{-1}(t)})$ is a solution.
Solve the differential equation.
Then we get the $\reg_\syn(\xi_X)$ by evaluating $F(t)$ at the point $a\in S$.
Of course, this method works only in a good situation, for example, it is powerless
if $f$ is a constant family. However once it works, it has a big advantage in explicit
computation of the syntomic regulators.
We demonstrate it by a particular example, namely an elliptic curve
with 3-torison points.
\begin{thm}[Corollary \ref{ell-cor}]
Let $p\geq 5$ be a prime. Let $W=W(\ol\F_p)$ be the Witt ring, and $K:=\Frac(W)$.
Let $a\in W$ satisfy $a\not\equiv 0,1$ mod $p$.
Let $E_a$ be the elliptic curve over $W$ defined by a Weierstrass equation
$y^2=x^3+(3x+4-4a)^2$.
Let
\[
\xi_a=\{h_1,h_2\}=\left\{
\frac{y-3x-4+4a}{-8(1-a)},
\frac{y+3x+4-4a}{8(1-a)}
\right\}\in K_2(E_a),
\]
where we note that the divisors $\mathrm{div}(h_i)$ have supports in $3$-torsion points.
Then there are overconvergent functions $\ve_1(t),\ve_2(t)\in K\ot W[t,(1-t)^{-1}]^\dag$
which are explicitly given as in Theorem \ref{ell-thm}
together with \eqref{ell-thm-eq3} and \eqref{ell-thm-eq4}, and we have
\[
\reg_\syn(\xi_a)=\ve_1(a)\frac{dx}{y}+\ve_2(a)\frac{xdx}{y}
\in H^2_\syn(E_a,\Q_p(2))\cong H^1_\dR(E_a/K).
\]
\end{thm}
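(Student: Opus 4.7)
The plan is to realize Corollary \ref{ell-cor} as the specialization at $t=a$ of the relative statement Theorem \ref{ell-thm}, computed over the parameter space $S:=\Spec W[t,(1-t)^{-1}]^\dag$, and then transport the result from filtered $F$-isocrystals to syntomic cohomology via Theorem \ref{ext-logsyn-thm}.

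First, I would introduce the universal family $\cE\to S$ defined by the Weierstrass equation $y^2=x^3+(3x+4-4t)^2$ together with the universal Milnor symbol $\wt\xi=\{H_1,H_2\}$ obtained by substituting $t$ for $a$ in the definition of $h_1,h_2$. Both $H_1,H_2$ are units on the open $U:=\cE\setminus(\mathrm{supp}\,\mathrm{div}(H_1)\cup\mathrm{supp}\,\mathrm{div}(H_2))$, whose complement is a relative NCD consisting of $3$-torsion sections, so the hypotheses of Theorem \ref{mot-map-prop} are met. This yields an extension class $[\wt\xi]_{U/S}\in\Ext^1_{\FilFMIC(S)}(\O_S,H^1(U/S)(2))$. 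Theorem \ref{ell-thm} is exactly the computation of this extension in terms of the overconvergent functions $\ve_1(t),\ve_2(t)$ whose formulas are displayed there together with \eqref{ell-thm-eq3} and \eqref{ell-thm-eq4}.

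Next, I would apply the comparison Theorem \ref{ext-logsyn-thm} to translate this $F$-isocrystal extension into the log syntomic symbol $[\wt\xi]_\syn$, and then use functoriality along the closed immersion $\Spec W\hookrightarrow S$ given by $t\mapsto a$ to specialize both sides. Because $\mathrm{div}(h_1)$ and $\mathrm{div}(h_2)$ are supported on $3$-torsion points, an explicit check of tame symbols shows that $\{h_1,h_2\}$ defines a genuine class in $K_2(E_a)$ rather than merely in $K_2^M(\O(U_a))$, so the specialization of the extension lies in $\Ext^1_{\FilFMIC(W)}(\O,H^1(E_a/W)(2))$. Combining this with the standard identification $H^2_\syn(E_a,\Q_p(2))\cong H^1_\dR(E_a/K)$, which follows from $\Fil^2 H^1_\dR(E_a/K)=0$ and a Frobenius-weight argument in the fundamental short exact sequence relating syntomic, de Rham and crystalline cohomology, one reads off the claimed formula.

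The substantive work is entirely concentrated in Theorem \ref{ell-thm}, where one must (i) write down the Gauss--Manin connection on $H^1(\cE/S)$ and derive the hypergeometric-type differential equation satisfied by the extension class, (ii) solve it so that $\ve_1,\ve_2$ are overconvergent on $S$, and (iii) pin down the constants of integration via a residue/boundary computation at a degenerate fiber. Granting these inputs, the corollary is a formal specialization, and the only point to verify is that $\ve_1,\ve_2$ indeed lie in $K\otimes W[t,(1-t)^{-1}]^\dag$ so that evaluation at any $a$ with $a\not\equiv 0,1\pmod p$ is legitimate; this is the main technical obstacle hiding behind the deformation method.
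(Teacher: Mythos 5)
Your proposal follows essentially the same route as the paper: Theorem \ref{ell-thm} supplies the relative computation over $S=\Spec W[t,(t-t^2)^{-1}]$, the comparison with log syntomic cohomology (packaged in the paper as Theorem \ref{ell.diagram-thm}, a specialization of Theorem \ref{ext-logsyn-thm} to the curve situation via Proposition \ref{prop:from_K2X} once the tame symbol of $\xi$ is checked to vanish) converts the $\FilFMIC$-extension into a syntomic class, and pulling back along $t\mapsto a$ together with Lemma \ref{prop:synK2X} identifies the specialized class with Besser's rigid syntomic regulator of $\xi_a\in K_2(E_a)$. Your emphasis on overconvergence of $\ve_1,\ve_2$ as the point that makes evaluation at $t=a$ legitimate, and on the residue/boundary computation pinning down the integration constants, matches exactly where the paper puts the substantive work.
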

We note that the function $\ve_i(t)$ are defined in terms of the hypergeometric series
\[
{}_2F_1\left({\frac13,\frac23\atop 1};t\right)=\sum_{n=0}^\infty\frac{(\frac13)_n}{n!}
\frac{(\frac23)_n}{n!}t^n,
\quad (\alpha)_n:=\alpha(\alpha+1)\cdots(\alpha+n-1).
\]

Concerning hypergeometric functions and regulators, 
the first author obtains more examples
in \cite{New}. There
he introduces certain convergent functions which
satisfy Dwork type congruence relations \cite{Dwork-p-cycle} to describe the syntomic regulators.
Besides, Chida and the first author discuss $K_2$ of elliptic curves
in more general situation, and obtain a number of numerical verifications of
the $p$-adic Beilinson conjecture.
In both works, our category $\FilFMIC(S)$ and symbol maps play as fundamental materials.

\medskip

Finally we comment on the category of syntomic coefficients by Bannai
\cite[1.8]{Bannai00}.
His category is close to our $\FilFMIC(S)$. The difference is that,
Bannai takes account into the boundary condition
at $\ol S\setminus S$ on the Hodge filtration, while we do not.
In this sense our category is less polish than his.
On the other hand, he did not work on the symbol maps or regulator maps.
Our main interest is the syntomic regulators, especially the deformation method.
For this ours is sufficient.

%%%%%%%%%%%%%%%%%%%%%%%%%%%%%%%%%%%%%%%%%%%%%%%%%%%%%%%%%%

\subsection*{Notation.}
For a integral domain $V$ and a $V$-algebra $R$ (resp. $V$-scheme $X$),
let $R_K$ (resp. $X_K$) denote the tensoring $R\otimes_VK$ (resp. $X\times_VK$)
with the fractional field $K$.

Suppose that $V$ is a complete valuation ring $V$ endowed with a non-archimedian 
valuation $|\cdot|$.
For a $V$-algebra $B$ of finite type,
let $B^{\dag}$ denote the weak completion of $B$.
Namely if $B=V[T_1,\cdots,T_n]/I$, then 
$B^\dag=V[T_1,\cdots,T_n]^\dag/I$ where $V[T_1,\cdots,T_n]^\dag$
is the ring of power series $\sum a_\alpha T^\alpha$ such that
for some $r>1$,
$|a_\alpha|r^{|\alpha|}\to0$ as $|\alpha|\to\infty$. 
We simply write $B^\dag_K=K\ot _V B^{\dag}$.

\subsection*{Acknowledgements.}
The authors are grateful to Atsushi Shiho for kindly answering their questions
and telling us about his previous results.

\section{Filtered $F$-isocrystals and Milnor $K$-theory}
\label{Milnor-Ext-sect}
In this section, we work over a complete discrete valuation ring $V$
of characteristic $0$
such that the residue field $k$ is a perfect field of characteristic $p>0$.
We suppose that $V$ has a $p$-th Frobenius $F_V$, namely an endomorphism
on $V$ such that $F_V(x)\equiv x^p$ mod $pV$, and fix it throughout this section. 
Let $K=\Frac(V)$ be the fractional field.
The extension of $F_V$ to $K$ is also denoted by $F_V$.

A scheme means a separated scheme which is 
of finite type over $V$ unless otherwise specified.
If $X$ is a $V$-scheme (separated and of finite type), then $\widehat{X}_K$ will denote
Raynaud's generic fiber of the formal completion $\widehat{X}$,
$X_K^{\an}$ will denote the analytification of the $K$-scheme $X_K$,
and $j_X\colon \widehat{X}_K\hookrightarrow X_K^{\an}$ will denote the canonical immersion \cite[(0.3.5)]{Berthelot96}.

\subsection{The category of Filtered $F$-isocrystals}\label{filFMIC-sect}
Let $S=\Spec(B)$ be an affine smooth variety over $V$.
Let $\sigma\colon B^{\dag}\to B^{\dag}$ be a $p$-th Frobenius compatible with
$F_V$ on $V$, which means that $\sigma$ is $F_V$-linear and satisfies
$\sigma(x)\equiv
x^p$ mod $pB$.
The induced endomorphism $\sigma\otimes_{\bZ}\bQ\colon B_K^{\dag}\to B_K^{\dag}$ 
is also denoted by $\sigma$. 
We define the category $\FilFMIC(S,\sigma)$ (which we call the category of
{\it filtered $F$-isocrystals} on $S$) as follows.

\begin{defn}
    An object of $\FilFMIC(S,\sigma)$ 
is a datum $H=(H_{\dR}, H_{\rig}, c, \Phi, \nabla, \Fil^{\bullet})$, where
    \begin{itemize}
            \setlength{\itemsep}{0pt}
        \item $H_{\dR}$ is a coherent $B_K$-module,
        \item $H_{\rig}$ is a coherent $B^{\dag}_K$-module,
        \item $c\colon H_{\dR}\otimes_{B_K}B^{\dag}_K\xrightarrow{\,\,\cong\,\,} H_{\rig}$ is a $B^{\dag}_K$-linear isomorphism,
        \item $\Phi\colon \sigma^{\ast}H_{\rig}\xrightarrow{\,\,\cong\,\,} H_{\rig}$ is an isomorphism of $B^{\dag}_K$-algebra,
        \item $\nabla\colon H_{\dR}\to \Omega_{B_K}^1\otimes H_{\dR}$ is an (algebraic) integrable connection and
        \item $\Fil^{\bullet}$ is a finite descending filtration on $H_{\dR}$ of locally free $B_K$-module (i.e. each graded piece is locally free),
    \end{itemize}
    that satisfies $\nabla(\Fil^i)\subset \Omega^1_{B_K}\ot \Fil^{i-1}$ and
    the compatibility of $\Phi$ and $\nabla$ in the following sense.
    Note first that $\nabla$ induces an integrable connection $\nabla_{\rig}\colon H_{\rig}\to\Omega^1_{B_K^{\dag}}\otimes H_{\rig}$,
    where $\Omega^1_{B_K^{\dag}}$ denotes the sheaf of continuous differentials.
    In fact, firstly regard $H_{\dR}$ as a coherent $\sO_{S_K}$-module.
        Then, by (rigid) analytification, we get an integrable connection $\nabla^{\an}$
        on the coherent $\sO_{S_K^{\an}}$-module $(H_{\dR})^{\an}$.
        Then, apply the functor $j_S^{\dag}$ to $\nabla^{\an}$ to obtain
    an integrable connection on $\Gamma\left(S_K^{\an}, j_S^{\dag}\big( (H_{\dR})^{\an}\big)\right)=H_{\dR}\otimes_{B_K}B_K^{\dag}$.
    This gives an integrable connection $\nabla_{\rig}$ on $H_{\rig}$
    via the isomorphism $c$.
Then the compatibility of $\Phi$ and $\nabla$ means
that $\Phi$ is horizontal with respect to $\nabla_{\rig}$, namely $\Phi\nabla_\rig=\nabla_\rig\Phi$.
We usually write $\nabla_\rig=\nabla$ to simplify the notation.

    A morphism $H'\to H$ in $\FilFMIC(S,\sigma)$ 
    is a pair of homomorphisms $(h_{\dR}\colon H'_{\dR}\to H_{\dR}, h_{\rig}\colon H'_{\rig}\to H_{\rig})$,
    such that $h_{\rig}$ is compatible with $\Phi$'s, 
    $h_{\dR}$ is compatible with $\nabla$'s and $\Fil^{\bullet}$'s,
    and moreover they agree under the isomorphism $c$.
\end{defn}

\begin{rem}

(1) The category $\FilFMIC(S,\sigma)$ can also be described by using simpler categories as follows.
Let $\FilMIC(S_K)$ denote the category of 
filtered $S_K$-modules with integrable connection,
that is, the category of data $(M_\dR,\nabla,\Fil^{\bullet})$
with $M_{\dR}$ a coherent $B_K$-module, $\nabla$ an integrable connection on $M_{\dR}$, and
$\Fil^{\bullet}$ a finite descending filtration on $M_{\dR}$ of locally free $B_K$-module.
Let $\MIC(B^\dag_K)$ denote the category of 
coherent $B_K^{\dag}$-modules with integrable connections $(M_\rig,\nabla)$ on $B^{\dag}_K$, and
let $\FMIC(B^\dag_K,\sigma)$ denote the category of 
the coherent $B^{\dag}_K$-modules with integrable connections $(M_\rig,\nabla,\Phi)$ with 
$\sigma$-linear Frobenius endomorphisms.
Then $\FilFMIC(S,\sigma)$ is identified with the fiber product
\[
\FMIC(B_K^\dag,\sigma)\times_{\MIC(B_K^\dag)}\FilMIC(S_K).
\]

(2) Let $\FIsoc(B_k)$ denote the category of overconvergent $F$-isocrystals on $S_k$.
Then there is the equivalence of categories (\cite[Theorem 8.3.10]{LS})
\[
\FIsoc(B_k)\os{\cong}{\lra}\FMIC(B_K^\dag,\sigma).
\]
Therefore, by combining with the description in (1), 
we see that our category $\FilFMIC(S,\sigma)$ does not depend on $\sigma$,
which means that there is the natural equivalence
$\FilFMIC(S,\sigma)\cong \FilFMIC(S,\sigma')$ (see also Lemma \ref{EK-lem} in Appendix).
By virtue of this fact, we often drop ``$\sigma$'' in the notation.
\end{rem}

\medskip

For two objects $H$, $H'$ in the category $\FilFMIC(S,\sigma)$, we have a direct sum $H\oplus H'$ and the tensor product $H\otimes H'$ in a customary manner.
The unit object for the tensor product, denoted by $B$ or $\mathscr{O}_S$, is $(B_K, B_K^{\dag}, c, \sigma_B, d, \Fil^{\bullet})$,
where $c$ is the natural isomorphism, $d$ is the usual differential and $\Fil^{\bullet}$ is defined by $\Fil^0B_K=B_K$ and $\Fil^1B_K=0$.
The category $\FilFMIC(S,\sigma)$ forms a tensor category with this tensor product and the unit object $B$ or $\O_S$.

The unit object can also be described as $B=B(0)$ or $\O_S=\O_S(0)$ by using the following notion of \textit{Tate object}.

\begin{defn}
    Let $n$ be an integer.

(1) The \textit{Tate object} in $\FilFMIC(S)$, which
we denote by $B(n)$ or $\sO_{S}(n)$, is defined 
to be $(B_K, B_K^{\dag}, c, p^{-n}\sigma_B, d, \Fil^{\bullet})$, where
$c$ is the natural isomorphism, $d:B_K\to\Omega^1_{B_K}$ is the usual differential,
 and $\Fil^{\bullet}$ is defined by
$\Fil^{-n}B_K=B_K$ and $\Fil^{-n+1}B_K=0$.

(2) For an object $H$ of $\Fil$-$F$-$\MIC(S)$, we write $H(n)\defeq H\otimes B(n)$.
\end{defn}

\medskip

Now, we discuss the Yoneda extension groups in the category $\FilFMIC(S)$.
A sequence
\[
 H_1\lra H_2\lra H_3
\]
in $\FilFMIC(S)$ (or in $\FilMIC(S_K)$) is called {\it exact} if
\[
 \Fil^i  H_{1,\dR}\lra \Fil^i H_{2,\dR}\lra \Fil^i H_{3,\dR}
\]
are exact for all $i$.
Then the category $\FilFMIC(S)$ forms an exact category which has
kernel and cokernel objects for any morphisms.
Thus, the Yoneda extension groups
\[
\Ext^\bullet(H,H')=\Ext^\bullet_{\FilFMIC(S)}(H,H')
\]
in $\FilFMIC(S)$
are defined in the canonical way (or one can further define the derived category of
$\FilFMIC(S)$ \cite[1.1]{BBDG}).
An element of $\Ext^j(H,H')$ is represented by an exact sequence
\[
0\lra H'\lra M_1\lra\cdots\lra M_j\lra H\lra 0
\]
and is subject to the equivalence relation generated by commutative diagrams
\[
\xymatrix{
0\ar[r]&H' \ar@{=}[d]\ar[r]&M_1\ar[d]\ar[r]&\cdots \ar[r]&M_j\ar[d]\ar[r]
&H\ar[r]\ar@{=}[d]&0\\
0\ar[r]&H' \ar[r]&N_1\ar[r]&\cdots \ar[r]&N_j\ar[r]&H\ar[r]&0.
}
\]
Note that $\Ext^\bullet(H,H')$ is uniquely divisible (i.e. a $\Q$-module) as 
the multiplication by $m\in\Z_{>0}$ on $H$ or $H'$ is bijective.

\medskip

Nextly, we discuss the functoriality of the category $\FilFMIC(S)$ with respect to $S$.
Let $S'=\Spec(B')$ be another affine smooth variety
 and
%which is also equipped with a $p$-th Frobenius lift $\sigma\colon B'\to B'$
%compatible on $\sigma$ on $B$.
$i\colon S'\to S$ a morphism of $V$-schemes.
Then, $i$ induces a pull-back functor
\begin{equation}\label{pull-back-functor}
    i^{\ast}\colon \Fil\hyphen F\hyphen\MIC(S)\to \Fil\hyphen F\hyphen\MIC(S')
\end{equation}
in a natural way.
In fact, if $H=(H_{\dR}, H_{\rig}, c, \Phi, \nabla, \Fil^{\bullet})$ is an object
of $\Fil$-$F$-$\MIC(S)$, then we define
$i^{\ast}H=(H_{\dR}\otimes_{B_K}B'_K, H_{\rig}\otimes_{B^{\dag}_K}B'^{\dag}_K, c\otimes_{B^{\dag}_K}B'^{\dag}_K,
\Phi', \nabla', \Fil'^{\bullet})$,
where $\nabla'$ and $\Fil'^{\bullet}$ are natural pull-backs of $\nabla$ and $\Fil^{\bullet}$ respectively,
and where $\Phi'$ is the natural Frobenius structure obtained as follows.
We may regard $(H_{\rig}, \nabla_{\rig}, \Phi)$ as an overconvergent $F$-isocrystal on $S$
via the equivalence $F\hyphen\MIC(B_K^{\dag})\simeq F\hyphen\mathrm{Isoc}^{\dag}(B_k)$ \cite[Theorem 8.3.10]{LS}.
Then, its pull-back along $i_k\colon S'_k\to S_k$
is an overconvergent $F$-isocrystal on $S'_k$,
which is again identified with an object of $F\hyphen\MIC(B'^{\dag}_K)$.
Thus it is of the form $(H'_{\rig}, \nabla'_{\rig}, \Phi')$,
and $H'_\rig$ is naturally isomorphic to $H_{\rig}\otimes_{B^{\dag}_K}B'^{\dag}_K$ (\cite[Prop 8.1.15]{LS}).
Now, $\Phi'$ gives the desired Frobenius structure. 

\medskip

\subsection{The complex $\cS(M)$}
In this subsection, we introduce a complex $\cS(M)$ for each object $M$ in $\FilFMIC(S,\sigma)$ which is,
in the case where $M=\O_S(r)$, close to the syntomic complex $\cS_n(R)_{S,\sigma}$ of Fontaine--Messing.

Before the definition, we prepare a morphism attached to each object of $\FilMIC(S_K)$.
Let $H=(H_\dR,\nabla,\Fil^\bullet)$ be an object of $\FilMIC(S_K)$.
Let $\Omega^\bullet_{B_K}\ot\Fil^{i-\bullet}H_\dR$ denote the de Rham complex
\[
\Fil^iH_\dR\to\Omega^1_{B_K}\ot \Fil^{i-1}H_\dR
\to\cdots \to\Omega^n_{B_K}\ot \Fil^{i-n}H_\dR\to
\cdots,
\]
where $\ot$ denotes $\ot_{B_K}$ and the differentials are given by
\begin{equation}\label{filFMIC-eq1}
\omega\ot x\longmapsto d\omega^j\ot x+(-1)^j\omega\wedge\nabla(x),
\quad (\omega\ot x\in \Omega^j_{B_K}\ot H_\dR).
\end{equation}
Now, we define a natural map
\begin{equation}\label{filFMIC-eq2}
\Ext^i_{\FilMIC(S_K)}(\O_{S_K},H)\lra H^i(\Omega^\bullet_{B_K}\ot\Fil^{-\bullet}H_\dR)
\end{equation}
in the following way.
Let
\begin{equation}\label{filMIC-element}
0\lra H\lra M_i\lra M_{i-1}\lra\cdots\lra M_0\lra \O_{S_K}\lra0
\end{equation}
be an exact sequence in $\FilMIC(S_K)$.
This induces an exact sequence
\[
0\lra \Omega^\bullet_{B_K}\ot\Fil^{-\bullet}H\lra 
\Omega^\bullet_{B_K}\ot\Fil^{-\bullet}M_i\lra \cdots\lra 
\Omega^\bullet_{B_K}\ot\Fil^{-\bullet}M_0\lra \Omega^\bullet_{B_K}\lra0
\]
of complexes, and hence a connecting homomorphism
$\delta\colon H^0(\Omega^\bullet_{B_K})\to H^i(\Omega^\bullet_{B_K}\ot\Fil^{-\bullet}H)$.
Then the map \eqref{filFMIC-eq2} is defined by sending the sequence \eqref{filMIC-element} to $\delta(1)$.

By the forgetful functor $\FilFMIC(S,\sigma)\to\FilMIC(S)$,
the morphism \eqref{filFMIC-eq2} clearly induces
a canonical morphism
\begin{equation}\label{filFMIC-eq7}
\Ext^i_{\FilFMIC(S,\sigma)}(\O_{S},H)\lra
H^i(\Omega^\bullet_{B_K}\ot\Fil^{-\bullet}H_\dR).
\end{equation}

Let $F\text{-mod}=\FMIC(\Spec K)$, namely the category of $K$-modules endowed with
$F_V$-linear endomorphisms.
Then, we have a functor
\[
\FMIC(B_K^\dag,\sigma)\lra D^b(F\text{-mod}),\quad
(M_\rig,\Phi)\longmapsto (\Omega^\bullet_{B^\dag_K}\ot M_\rig,\Phi)
\]
to the derived category of complexes in $F\text{-mod}$,
where $\Phi$ in the target is defined to be $\sigma\ot\Phi$
(we use the same notation because we always extend the Frobenius action on
the de Rham complex $\Omega^\bullet_{B^\dag_K}\ot M_\rig$
by this rule).
We note that the above functor does not depend on $\sigma$.
Indeed, the composition
\[
 F\hyphen\mathrm{Isoc}^{\dag}(B_k)\os{\cong}{\lra}\FMIC(B_K^\dag,\sigma)
 \lra D^b(F\text{-mod})
\]
is the functor $(E,\Phi)\mapsto (R\vg_\rig(S_k,E),\Phi_\rig)$
where $\Phi_\rig$ denotes the Frobenius action on the rigid cohomology (\cite[Proposition 8.3.12]{LS}),
and this does not depend on $\sigma$.

\begin{defn}
For an object $M\in \FilFMIC(S,\sigma)$, we define $\cS(M)$ to be
the mapping fiber of the morphism
\[%\begin{equation}\label{filFMIC-eq3}
1-\Phi:\Omega^\bullet_{B_K}\ot \Fil^{-\bullet}M_\dR\lra 
\Omega^\bullet_{B^\dag_K}\ot M_\rig,
\]%\end{equation}
where $1$ denotes the inclusion $
\Omega^\bullet_{B_K}\ot \Fil^{-\bullet}M_\dR\hra
\Omega^\bullet_{B^\dag_K}\ot M_\rig$ via the comparison and $\Phi$ is the composition
of it with $\Phi$ on $\Omega^\bullet_{B^\dag_K}\ot M_\rig$.
\end{defn}

Note that, in more down-to-earth manner, each term of $\cS(M)$ is given by
\begin{equation}\label{filFMIC-eq4}
\cS(M)^i=
\Omega^i_{B_K}\ot \Fil^{-i}M_\dR\op
\Omega^{i-1}_{B^\dag_K}\ot M_\rig
\end{equation}
and the differential $\cS(M)^i\to\cS(M)^{i+1}$ is given by
\[
(\omega,\xi)\longmapsto(d_M\omega, (1-\Phi)\omega-d_M\xi),\quad
(\omega\in \Omega^i_{B_K}\ot \Fil^{-i}M_\dR,\,\xi\in
\Omega^{i-1}_{B^\dag_K}\ot M_\rig)
\]
where $d_M$ is the differential \eqref{filFMIC-eq1}.

An exact sequence
\[
0\lra H\lra M_i\lra M_{i-1}\lra\cdots\lra M_0\lra \O_{S}\lra0
\]
in $\FilFMIC(S,\sigma)$ gives rise to an exact sequence
\[
0\lra \cS(H)\lra \cS(M_i)\lra \cS(M_{i-1})\lra\cdots\lra \cS(M_0)\lra \cS(\O_{S})\lra0
\]
of complexes of $\Q_p$-modules.
Let
$\delta\colon \Q_p\cong H^0(\cS(\O_S))\to H^i(\cS(H))$
be the connecting homomorphism.
We define a homomorphism
\begin{equation}\label{filFMIC-eq6}
u\colon \Ext^i_{\FilFMIC(S,\sigma)}(\O_{S},H)\lra
H^i(\cS(H))
\end{equation}
by associating $\delta(1)$ to the above extension.
The composition of $u$ with the natural map
$H^i(\cS(H))\to H^i(\Omega^\bullet_{B_K}\ot\Fil^{-\bullet}H_\dR)$
agrees with \eqref{filFMIC-eq7}.

The complex $\cS(\O_S(r))$ is close to the syntomic complex of Fontaine-Messing.
More precisely, let $S_n:=S\times_W\Spec W/p^nW$ and $B_n:=B/p^nB$.
The syntomic complex $\cS_n(r)_{S,\sigma}$ is the mapping fiber of the morphism
\[
1-p^{-r}\sigma\colon \Omega_{B_n}^{\bullet\geq r}\lra \Omega^\bullet_{B_n}
\]
of complexes where we note that $p^{-r}\sigma$ is well-defined
(see \cite[p.410--411]{Ka1}).
Hence there is the natural map
\begin{equation}\label{filFMIC-eq8}
H^i(\cS(\O_S(r)))\lra \Q\ot\varprojlim_n H^i_\zar(S_1,\cS_n(r)_{S,\sigma})=:H^i_\syn(S,\Q_p(r)).
\end{equation}
Let
\begin{equation}\label{filFMIC-eq9}
\Ext^i_{\FilFMIC(S,\sigma)}(\O_{S},\O_S(r))\lra
H^i_\syn(S,\Q_p(r))
\end{equation}
be the composition morphism.
Apparently, the both side of \eqref{filFMIC-eq9} depend on $\sigma$.
However, if we replace $\sigma$ with $\sigma'$, there is the natural transformation
between
$\FilFMIC(S,\sigma)$ and $\FilFMIC(S,\sigma')$ thanks to the theory of $F$-isocrystals,
and there is also a natural transformation on the syntomic cohomology.
The map \eqref{filFMIC-eq9} is compatible under these transformations.
In this sense, \eqref{filFMIC-eq9} does not depend on the Frobenius $\sigma$.

\begin{lem}\label{filFMIC-lem}
Suppose $\Fil^0H_\dR=0$.
Then the map $u$ in \eqref{filFMIC-eq6} is injective when $i=1$.
Moreover, the map \eqref{filFMIC-eq8} is injective when $i=1$ and $r\geq 0$.
\end{lem}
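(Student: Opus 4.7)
For the injectivity of $u$, take an extension $0\to H\to M\to \O_S\to 0$ in $\FilFMIC(S,\sigma)$ whose image under $u$ vanishes; my aim is to construct a splitting. Because the sequence is $\Fil$-exact and $\Fil^0 H_\dR=0$, the surjection $\Fil^0 M_\dR\twoheadrightarrow\Fil^0\O_S=B_K$ has trivial kernel, hence is an isomorphism, producing a unique $e\in\Fil^0 M_\dR$ lifting $1$. Unwinding the definition of the connecting homomorphism $\delta$, the class $u(\text{extension})=\delta(1)$ is represented by the cocycle $(\nabla e,\,(1-\Phi)c(e))$ in $\cS(H)^1$: both components lie in the $H$-part because their images in $\cS(\O_S)^1$ are $\nabla(1)=0$ and $(1-\Phi)(1)=0$. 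The crucial point is that $\cS(H)^0=\Fil^0 H_\dR=0$, so no element of $\cS(H)^1$ is a nonzero coboundary, and the class being zero forces the cocycle itself to vanish: $\nabla e=0$ and $(1-\Phi)c(e)=0$. The map $1\mapsto e$ (with rigid counterpart $1\mapsto c(e)$ under $c$) is therefore a morphism $\O_S\to M$ in $\FilFMIC(S,\sigma)$ splitting the extension.

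For the second assertion in the case $r\geq 1$, the Hodge filtration of $\O_S(r)$ sits in degree $-r\leq -1$, so $\cS(\O_S(r))^0=\Fil^0\O_S(r)_\dR=0$; similarly $\cS_n(r)^0=0$ because the de Rham side $\Omega^{\bullet\geq r}_{B_n}$ of the Fontaine--Messing complex has no term in degree $0$. Hence $H^1$ equals the space of $1$-cocycles on both sides, and the map in question reduces to the restriction of the cochain map $\cS(\O_S(r))^1\to\varprojlim \cS_n(r)^1\otimes\Q$. That cochain map is built from the natural maps $\Omega^1_{B_K}\to\widehat\Omega^1_B\otimes_V K$ and $B_K^\dag\to\widehat B_K$ (the first summand being absent for $r\geq 2$), each of which is injective because $B$ is $p$-adically separated (a consequence of smoothness and flatness over the DVR $V$) and $\Omega^1_B$ is a projective $B$-module. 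Injectivity on $1$-cocycles then gives injectivity on $H^1$.

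For $r=0$, the degree-$0$ term $\cS(\O_S(0))^0=B_K$ is nontrivial and coboundaries must be tracked. Given a $1$-cocycle $(\omega,f)$ whose syntomic class vanishes, I would extract $\hat b\in\widehat B_K$ from the coboundary condition with $\omega=d\hat b$ and $f=(1-\sigma)\hat b$, and then argue that $\hat b\in B_K$: since $\omega\in\Omega^1_{B_K}$ is exact in $\widehat B_K$, one produces $b_0\in B_K$ with $db_0=\omega$ using injectivity of the algebraic-to-formal comparison on $H^1_\dR$ (or a direct coordinate computation on a smooth chart $B=V[T_1,\ldots,T_n]/I$); then $\hat b-b_0\in\ker d=K\subset B_K$, so $b:=\hat b\in B_K$ provides the desired coboundary. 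The main obstacle is this $r=0$ step, which rests on a comparison between algebraic and formal de Rham cohomology for smooth affine $B$ that has no counterpart in the purely formal $r\geq 1$ argument.
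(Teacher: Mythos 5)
Your argument for the injectivity of $u$ at $i=1$ is the paper's argument: uniquely lift $1$ to $e\in\Fil^0 M_\dR$, observe $\cS(H)^0=\Fil^0 H_\dR=0$ so that $u$ of the extension is an honest $1$-cocycle $(\nabla e, (1-\Phi)e)$ rather than just a class, and conclude that its vanishing forces $\nabla e=0$ and $\Phi e=e$, producing the splitting $1\mapsto e$. (The paper writes $e\in\Fil^0 M_\rig$; as your version makes clear, this is a slip for $\Fil^0 M_\dR$.) For $r\geq 1$ your second paragraph likewise coincides with the paper's: since $\cS(\O_S(r))^0=0$ and $\cS_n(r)^0=0$, both $H^1$'s sit inside the respective degree-one cochain groups, and the map is the restriction of the injective cochain map induced by $\Omega^1_{B_K}\hra\Omega^1_{\wh B_K}$ and $B^\dag_K\hra\wh B_K$; your justification of these two injections spells out what the paper leaves unsaid.

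Your closing paragraph correctly flags that $r=0$ is a different matter, but this is a remark about the statement, not a defect in your proof: the paper's own proof only cites the $r=1$ and $r\geq 2$ inclusions and says nothing about $r=0$, so the ``$r\geq 0$'' in the statement is evidently a typo for ``$r\geq 1$''. (The case $r=0$ is never needed --- Lemma \ref{log-obj-lem2} invokes the result with $r=1$ --- and $H=\O_S(0)$ would in any case violate the standing hypothesis $\Fil^0H_\dR=0$.) The $r=0$ argument you sketch would require knowing that $\hat b\in\wh B_K$ with $d\hat b\in\Omega^1_{B_K}$ already lies in $B_K$ or at least in $B^\dag_K$, a genuinely nontrivial algebraization claim, so you were right to present it tentatively rather than assert it.
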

\begin{proof}
Let
\[
0\lra H\lra M\lra\O_S\lra0
\]
be an exact sequence in $\FilFMIC(S,\sigma)$.
Since $\Fil^0H_\dR=0$, there is the unique lifting $e\in \Fil^0M_\rig$.
Then
\begin{equation}\label{filFMIC-lem-eq1}
u(M)=(\nabla(e),(1-\Phi)e)\in 
H^1(\cS(H))\subset \Omega^1_{B^\dag_K}\ot \Fil^{-1}H_\rig\op H_\rig
\end{equation}
by definition of $u$.
If $u(M)=0$, then the datum $(B_Ke,B^\dag_Ke, c,\Phi,\nabla,\Fil^\bullet)$ forms
a subobject of $M$ which is isomorphic to
the unit object $\O_S$. This gives a splitting of the above exact sequence.
The latter follows from the fact that
\[
H^1_\syn(S,\Q_p(1))\subset \Q\ot\varprojlim_n(\Omega^1_{B_n}\op B_n),\quad
H^1_\syn(S,\Q_p(r))\subset \Q\ot\varprojlim_n(\{0\}\op B_n),\,(r\geq2)
\]
and 
\[
H^1(\cS(\O_S(1)))\subset \Omega^1_{B^\dag_K}\op B^\dag_K,\quad
H^1(\cS(\O_S(r)))\subset \{0\}\op B^\dag_K,\,(r\geq2).
\]
\end{proof}

\subsection{Log objects}\label{log-obj-sect}
In this subsection, we introduce the ``log object'' in $\FilFMIC(S)$
concerning a $p$-adic logarithmic function.
In the next subsection, it will be generalized to a notion of ``polylog object''.

For $f\in B^\times$ let
\[
\log^{(\sigma)}(f):=p^{-1}\log\left(\frac{f^p}{f^\sigma}\right)
=-p^{-1}\sum_{n=1}^\infty\frac{(1-f^p/f^\sigma)^n}{n}\in B^\dag.
\]
An elementary computation yields $\log^{(\sigma)}(f)+\log^{(\sigma)}(g)=\log^{(\sigma)}(fg)$
for $f,g\in B^\times$.
\begin{defn}\label{log-obj-def}
For $f\in B^\times$, we define the {\it log object} $\sLog(f)$ in $\FilFMIC(S,\sigma)$ 
as follows.
\begin{itemize}
    \setlength{\itemsep}{0pt}
    \item $\sLog(f)_{\dR}$ is a free $B_K$-module of rank two; $\sLog(f)_{\dR}=B_Ke_{-2}\oplus B_Ke_0$.
    \item $\sLog(f)_{\rig}=%\sLog(1-T)_{\dR}\otimes_{B_K}B_K^{\dag}=
B_K^{\dag}e_{-2}\oplus B_K^{\dag}e_0$.
    \item $c$ is the natural isomorphism.
    \item $\Phi$ is the $\sigma$-linear morphism defined by
        \[
            \Phi(e_{-2})=p^{-1}e_{-2},\quad \Phi(e_0)=
            e_0-\log^{(\sigma)}(f)e_{-2}.
            \]
    \item $\nabla$ is the connection defined by $\nabla(e_{-2})=0$ and $\nabla(e_0)=\frac{df}{f}e_{-2}$.
    \item $\Fil^{\bullet}$ is defined by
        \[
            \Fil^{i}\sLog(f)_{\dR}=\begin{cases} \sLog(f)_{\dR} & \text{ if } i\leq -1,\\
                B_Ke_0 & \text{ if } i= 0,\\
               0 & \text{ if } i> 0.\end{cases}
        \]
\end{itemize}
\end{defn}
This is fit into the exact sequence
\begin{equation}\label{log-obj-gp}
    0\lra \O_S(1)\os{\epsilon}{\lra} \sLog(f)\os{\pi}{\lra} \O_S\lra 0
\end{equation}
in $\FilFMIC(S)$ where the two arrows are defined by $\epsilon(1)=e_{-2}$ and $\pi(e_0)=1$.
This defines a class in $\Ext^1_{\FilFMIC(S,\sigma)}(\O_S,\O_S(1))$,
which we write by $[f]_S$.
It is easy to see that $f\mapsto [f]_S$ is additive.
We call the group homomorphism
\begin{equation}\label{symbol-map}
[-]_S:\O(S)^\times\lra \Ext^1_{\FilFMIC(S,\sigma)}(\O_S,\O_S(1))
\end{equation}
the symbol map.
\begin{lem}\label{log-obj-lem1}
The composition
\[
\O(S)^\times\os{[-]_S}{\lra}
\Ext^1_{\FilFMIC(S,\sigma)}(\O_{S},\O_{S}(1)) 
\lra
H^1_\syn(S,\Q_p(1))
\]
agrees with the symbol map by Kato \cite{Ka1} where the second arrow
is the map \eqref{filFMIC-eq9}.
Namely, it is explicitly described as follows,
\[
f\longmapsto
\left(\frac{df}{f},\log^{(\sigma)}(f)\right)\in H^1_\syn(S,\Q_p(1))\subset
\Omega^1_{\wh B_K}\op \wh B_K
\]
where $\wh B:=\varprojlim_n B/p^nB$ and $\wh B_K:=\Q\ot\wh B$.
\end{lem}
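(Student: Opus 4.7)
The plan is to apply the explicit description of $u$ from the proof of Lemma \ref{filFMIC-lem} to the extension \eqref{log-obj-gp}, and then trace the resulting cocycle through the map \eqref{filFMIC-eq8} into Fontaine--Messing syntomic cohomology. The hypothesis $\Fil^0\O_S(1)_\dR=0$ required for \eqref{filFMIC-lem-eq1} is evidently satisfied.

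For the extension $\sLog(f)$, the unique lift $e\in\Fil^0\sLog(f)_\dR=B_Ke_0$ of $1\in\O_S$ is $e_0$ itself, since $\pi(e_0)=1$. From Definition \ref{log-obj-def} I read off
\[
\nabla(e_0)=\frac{df}{f}\,e_{-2},\qquad (1-\Phi)(e_0)=\log^{(\sigma)}(f)\,e_{-2},
\]
and since $\epsilon\colon\O_S(1)\hookrightarrow\sLog(f)$ sends $1\mapsto e_{-2}$, formula \eqref{filFMIC-lem-eq1} gives
\[
u([f]_S)=\left(\frac{df}{f},\,\log^{(\sigma)}(f)\right)\in H^1(\cS(\O_S(1)))\subset \Omega^1_{B_K^\dag}\op B_K^\dag.
\]

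The natural map \eqref{filFMIC-eq8} is induced on cocycles by the canonical inclusions $B_K\hookrightarrow\wh B_K$ and $B_K^\dag\hookrightarrow\wh B_K$, passing from the mapping fibre $\cS(\O_S(1))$ of $1-\Phi$ to the Fontaine--Messing syntomic complex, which is the mapping fibre of $1-p^{-1}\sigma$ after rational $p$-adic completion. Consequently, the image of $u([f]_S)$ is the cocycle $(df/f,\log^{(\sigma)}(f))$ in $\Omega^1_{\wh B_K}\op\wh B_K$, which is precisely Kato's formula \cite{Ka1} for the symbol map in degree one. The only delicate point is purely bookkeeping, namely verifying that the sign conventions of the mapping fibre differentials and our normalisation of $\log^{(\sigma)}$ match those of \cite{Ka1}; but these have been set up compatibly from the outset, so the identification is immediate.
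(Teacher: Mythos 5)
Your proof is correct and follows exactly the same route as the paper: apply the explicit cocycle formula \eqref{filFMIC-lem-eq1} for $u$ to the extension $\sLog(f)$, identify $e_0$ as the unique $\Fil^0$-lift of $1$, read off $\nabla(e_0)$ and $(1-\Phi)(e_0)$ from Definition \ref{log-obj-def}, and pass through \eqref{filFMIC-eq8}. The paper's own proof is the same computation stated in one line.
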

\begin{proof}
By definition of $\rho$ in \eqref{filFMIC-eq6}, one has
\[
\rho(\sLog(f))=(\nabla(e_0),(1-\Phi)e_0)=
\left(\frac{df}{f},\log^{(\sigma)}(f)\right)
%\in H^1_\rigsyn(S,\Q_p(1))\subset \Omega^1_{B^\dag_K}\ot \Fil^{-1}H_\rig\op H_\rig
\]
as desired.
\end{proof}
\begin{lem}\label{log-obj-lem2}
The symbol map \eqref{symbol-map} is functorial with respect to the pull-back.
Namely, for a morphism $i\colon S'\to S$, the diagram
\[
\xymatrix{
\O(S)^\times\ar[r]^-{[-]_S}\ar[d]_i&
\Ext^1_{\FilFMIC(S,\sigma)}(\O_{S},\O_{S}(1)) \ar[d]^{i^*}\\
\O(S')^\times\ar[r]^-{[-]_{S'}}&
\Ext^1_{\FilFMIC(S',\sigma)}(\O_{S'},\O_{S'}(1))
}
\]
is commutative where $i^*$ denotes the map induced from the pull-back functor
\eqref{pull-back-functor}.
\end{lem}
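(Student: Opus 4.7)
The plan is to exhibit an explicit isomorphism
$i^*\sLog(f) \xrightarrow{\sim} \sLog(i^*f)$ in $\FilFMIC(S',\sigma)$
that respects the defining exact sequences \eqref{log-obj-gp}; this forces the
equality $i^*[f]_S = [i^*f]_{S'}$ of $\Ext^1$-classes and gives the commutativity
of the diagram.

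I would first unwind the pull-back functor term by term on the datum
$\sLog(f)=(B_K e_{-2}\oplus B_K e_0,\ B_K^\dag e_{-2}\oplus B_K^\dag e_0,\ c,\Phi,\nabla,\Fil^\bullet)$.
The de Rham and rigid modules, the comparison isomorphism $c$, and the Hodge
filtration all base-change verbatim, and by the functoriality of $d$ the
connection pulls back to $\nabla'(e_0)=(d(i^*f)/i^*f)\,e_{-2}$ and
$\nabla'(e_{-2})=0$. These pieces already match the corresponding data of
$\sLog(i^*f)$, so it only remains to compare the Frobenius structures.

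The substantive step is matching the Frobenius. Assuming that the chosen $p$-th
Frobenius lifts satisfy the compatibility $i^*\circ\sigma_B=\sigma_{B'}\circ i^*$,
the pull-back functor acts on the rigid $F$-module as naive base change, so
$\Phi'(e_0)=e_0-i^*(\log^{(\sigma_B)}(f))\,e_{-2}$. A direct expansion of
$\log^{(\sigma)}(f)=p^{-1}\log(f^p/f^{\sigma})$ then gives
\[
i^*(\log^{(\sigma_B)}(f))
= p^{-1}\log\!\left(\frac{(i^*f)^p}{\sigma_{B'}(i^*f)}\right)
= \log^{(\sigma_{B'})}(i^*f),
\]
so the assignment $e_j\mapsto e_j$ furnishes the required isomorphism of
extensions, and additivity with respect to the inclusion $\O_{S'}(1)\hookrightarrow$
and projection $\twoheadrightarrow\O_{S'}$ is immediate.

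The main obstacle is that for a general morphism $i$ the preassigned Frobenii
$\sigma_B$ and $\sigma_{B'}$ need not be $i$-compatible. To handle this I would
invoke Remark 2.2(2): the pull-back functor on $\FilFMIC$ is defined intrinsically
through the equivalence with overconvergent $F$-isocrystals, and the category
$\FilFMIC(S,\sigma)$ is, up to canonical equivalence, independent of the choice
of Frobenius lift. Since equality of Yoneda extension classes is insensitive
to such canonical equivalences, one may replace $\sigma_{B'}$ by a Frobenius
lift on $B'^\dag$ that is compatible with $\sigma_B$ via $i^*$, whose existence
follows from the smoothness of $B'^\dag$ and standard extension arguments for
Frobenius lifts. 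This reduction places us in the setting of the direct
computation above and concludes the argument.
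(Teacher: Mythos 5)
Your strategy---exhibiting a direct isomorphism $i^*\sLog(f)\cong\sLog(i^*f)$ of extensions---differs from the paper's proof, which embeds $\Ext^1_{\FilFMIC(S,\sigma)}(\O_S,\O_S(1))$ into log syntomic cohomology via the injection of Lemma~\ref{filFMIC-lem}, identifies both images with Kato's syntomic symbol classes via Lemma~\ref{log-obj-lem1}, and then appeals to the known functoriality of Kato's symbol map. The paper's route transfers all Frobenius-lift bookkeeping to the syntomic side where it is already handled; your route, if carried out correctly, is more explicit and even yields an isomorphism of objects rather than merely an equality of Yoneda classes.

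The gap is in the reduction to the compatible-Frobenius case. Given a fixed $\sigma_B$, a compatible $\sigma_{B'}$ (one satisfying $\sigma_{B'}\circ i^*=i^*\circ\sigma_B$) need not exist: take $S=\Spec V[T]$ with $\sigma_B(T)=T^p+p$, and let $i\colon S'=\Spec V\hookrightarrow S$ be the $V$-point $T\mapsto 0$. Then $B'^\dag=V$ carries only $\sigma_{B'}=F_V$, and compatibility would force $i^*(\sigma_B(T))=\sigma_{B'}(i^*T)$, i.e.\ $p=0$. Smoothness of $B'^\dag$ over $V$ does not rescue this, because the lifting problem is constrained by the already-chosen $\sigma_B$, and $i^*$ may fail to be injective. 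To repair the argument you must treat the incompatible case directly: either compute the pulled-back Frobenius on the rigid part via the Taylor-series change-of-Frobenius formula of Lemma~\ref{EK-lem} and check it agrees with the Frobenius of $\sLog(i^*f)$ for the given $\sigma_{B'}$, or observe that $\sLog(f)_\rig$ with its Frobenius is canonically the Kummer overconvergent $F$-isocrystal attached to $f\in B^\times$, whose formation is manifestly functorial in $(S_k,f)$ independently of the lift. Either patch closes the gap, but as written the existence claim is false and the reduction does not go through.
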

\begin{proof}
Because of the injectivity of \eqref{filFMIC-eq9} (Lemma \ref{filFMIC-lem}),
the assertion can be reduced to the compatibility of Kato's symbol maps
by Lemma \ref{log-obj-lem1}. 
\end{proof}

%Note that $\ln^{(p)}_1(T)$ belongs to $B^{\dag}$ by 
%Proposition \ref{polylog-oc}.
%It is straightforward to check that these data define an object of $\Fil$-$F$-$\MIC(S)$.
%It would be convenient here to remark that we have a situation where
%we need to regard $\sLog(1-T)_{\dR}$ not only as usual connections on $\bA^1_K\setminus\{1\}$ but as a logarithmic connections on $\bP^1_K$ with logarithmic singularity along $\{1,\infty\}$
%(It is easy to check that the connection $\nabla$ has logarithmic singularity along $\{1,\infty\}$).
%Similarly, the overconvergent $F$-isocrystal $\sLog(1-T)_{\rig}$ with the connection $\nabla^{\an}$ and $\Phi$ can be
%naturally considered as an log.\ $F$-isocrystal on the log.\ scheme $(\bP^1_k,\{1,\infty\})$
%overconvergent along $\{1,\infty\}$.

\subsection{Polylog objects}\label{poly-obj-sect}
%\subsection{$p$-adic Polylog functions}
In this subsection, we generalize the log object to polylog objects.
To define the polylog objects, we need the $p$-adic polylog function.

For an integer $r$, we denote
the {\it $p$-adic polylog function} by
\begin{equation}\label{main-sect-eq1}
\ln^{(p)}_r(z):=\sum_{n\geq 1,\,p\not{\hspace{2pt}|}\,n}\frac{z^n}{n^r}
=
\lim_{s\to\infty}\frac{1}{1-z^{p^s}}
\sum_{1\leq n<p^s,\,p\not{\hspace{2pt}|}\,n}\frac{z^n}{n^r}
\in 
\Z_p\left[ z,\frac{1}{1-z}\right]^\wedge
\end{equation}
where $A^\wedge$ denotes the $p$-adic completion of
a ring $A$.
As is easily seen, one has 
\[
\ln^{(p)}_r(z)=(-1)^{r+1}\ln^{(p)}_r(z^{-1}),\quad 
z\frac{d}{dz}\ln_{r+1}^{(p)}(z)=\mathrm{ln}_r^{(p)}(z).
\]
If $r\leq 0$, this is a rational function. Indeed
\[
\ln^{(p)}_0(z)=\frac{1}{1-z}-\frac{1}{1-z^p},\quad 
\ln^{(p)}_{-r}(z)=\left(z\frac{d}{dz}\right)^r\ln^{(p)}_0(z).
\]
If $r\geq 1$, it is no longer a rational function but an overconvergent function.
\begin{prop}\label{polylog-oc}
Let $r\geq 1$. Put $x:=(1-z)^{-1}$. Then $\ln^{(p)}_r(z)\in (x-x^2)\Z_p[x]^\dag$.
\end{prop}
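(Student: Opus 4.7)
The plan is to prove the proposition by induction on $r$. For the base case $r = 1$, I would start from the decomposition $\ln_1^{(p)}(z) = -\log(1-z) + p^{-1}\log(1-z^p)$ and use the factorization $1-z^p = (1-z)(1+z+\cdots+z^{p-1})$ together with the identity $xz = x - 1$ (which follows from $x = (1-z)^{-1}$) to obtain the closed form
\[
\ln_1^{(p)}(z) = \frac{1}{p}\log\bigl(x^p - (x-1)^p\bigr)
\]
for $p$ odd; the case $p = 2$ admits the variant $\ln_1^{(2)}(z) = \frac{1}{4}\log\bigl((2x-1)^2\bigr) = \frac{1}{4}\log(1 - 4(x-x^2))$. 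Since $p \mid \binom{p}{k}$ for $1 \leq k \leq p-1$, the polynomial $x^p - (x-1)^p$ is $\equiv 1 \pmod{p}$, and evaluation at $x = 0, 1$ shows $x^p - (x-1)^p - 1$ vanishes at both endpoints; so $x^p - (x-1)^p = 1 - p(x-x^2)q(x)$ for some $q(x) \in \Z_p[x]$ obtained by polynomial long division. Expanding the logarithm yields
\[
\ln_1^{(p)}(z) = -(x-x^2)\sum_{n \geq 1}\frac{p^{n-1}}{n}(x-x^2)^{n-1}q(x)^n,
\]
which lies in $(x - x^2)\Z_p[x]^\dag$ because $p^{n-1}/n \in \Z_p$ and the $p$-adic valuations of the coefficients grow linearly in $n$, yielding overconvergence on a disc of radius $>1$.

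For the inductive step, assume $\ln_r^{(p)}(z) = (x-x^2)G_r(x)$ with $G_r \in \Z_p[x]^\dag$. The change of variable $z = (x-1)/x$ yields the operator identity $z\, d/dz = -(x-x^2)\, d/dx$; combining with the recurrence $z\, d\ln_{r+1}^{(p)}/dz = \ln_r^{(p)}$ gives $d\ln_{r+1}^{(p)}/dx = -G_r(x)$. The boundary condition $\ln_{r+1}^{(p)}(0) = 0$ (i.e. evaluation at $x = 1$) then produces the integral representation
\[
\ln_{r+1}^{(p)}(z) = -\int_1^x G_r(\xi)\, d\xi,
\]
which is overconvergent as an element of $\Q_p[x]^\dag$, vanishes at $x = 1$ by construction, and vanishes at $x = 0$ by combining the functional equation $\ln_{r+1}^{(p)}(z) = (-1)^r \ln_{r+1}^{(p)}(1/z)$ with $\ln_{r+1}^{(p)}(0) = 0$.

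The main technical obstacle is establishing $\Z_p$-integrality of the coefficients, since formal integration of $\sum c_n \xi^n$ yields $\sum c_n \xi^{n+1}/(n+1)$ whose denominators a priori live in $\Q_p$. To overcome this I would invoke the a priori inclusion $\ln_{r+1}^{(p)}(z) \in \Z_p[z, (1-z)^{-1}]^{\wedge}$ coming from the defining limit formula \eqref{main-sect-eq1}; in the variable $x$ this reads $\Z_p[x, x^{-1}]^{\wedge}$, and the identity
\[
\Q_p[x]^\dag \cap \Z_p[x, x^{-1}]^{\wedge} = \Z_p[x]^\dag
\]
inside the common ambient Tate ring $\Q_p[x, x^{-1}]^{\wedge}$ (which follows from the uniqueness of Laurent expansions, using that the $\Q_p[x]^\dag$ representation has only non-negative powers of $x$) forces the integrated coefficients to be $p$-integral. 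Together with the vanishings at $x = 0$ and $x = 1$, this places $\ln_{r+1}^{(p)}(z)$ in $(x-x^2)\Z_p[x]^\dag$, completing the induction.
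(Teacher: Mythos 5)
Your proof is correct and follows essentially the same strategy as the paper's: the closed form $\ln_1^{(p)}(z)=p^{-1}\log(x^p-(x-1)^p)$ for the base case, formal anti\hyphen differentiation via $(x^2-x)\frac{d}{dx}\ln_{k+1}^{(p)}(z)=\ln_k^{(p)}(z)$ for the inductive step, and recovery of $\Z_p$-integrality from the a priori membership in $\Z_p[z,(1-z)^{-1}]^\wedge=\Z_p[x,x^{-1}]^\wedge$ (the paper compresses this into one sentence, ``since $\ln_r^{(p)}(z)$ has $\Z_p$-coefficients, it is enough to check $\ln_r^{(p)}(z)\in(x-x^2)\Q_p[x]^\dag$,'' which is exactly your intersection identity). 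The one genuine divergence is how you extract the factor $x-x^2$: the paper rewrites the defining limit \eqref{main-sect-eq1} as $\lim_{s}\bigl(x^{p^s}-(x-1)^{p^s}\bigr)^{-1}\sum x^{p^s}(1-x^{-1})^n/n^r$, which exhibits divisibility by $x(x-1)$ uniformly in $r$ in a single stroke, whereas you obtain it at $r=1$ from the explicit $x^p-(x-1)^p=1-p(x-x^2)q(x)$ and at $r\geq 2$ from the two vanishings at $x=1$ (i.e.\ $z=0$) and $x=0$ (via the functional equation $\ln_{r+1}^{(p)}(z)=(-1)^r\ln_{r+1}^{(p)}(z^{-1})$, the symmetry $x\mapsto 1-x$). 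Both routes work; yours additionally relies on the fact that an element of $\Z_p[x]^\dag$ vanishing at $x=0$ and $x=1$ actually lies in $(x-x^2)\Z_p[x]^\dag$, which is true (division by $x$ shifts coefficients; division by $1-x$ replaces the $k$-th coefficient by the tail sum $-\sum_{n>k}a_n$, which still decays geometrically) but worth stating explicitly. One minor notational slip: the ``ambient Tate ring'' should be $\Q_p\otimes_{\Z_p}\Z_p[x,x^{-1}]^\wedge$, not $\Q_p[x,x^{-1}]^\wedge$, the latter being zero as the $p$-adic completion of a $\Q_p$-algebra.
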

\begin{pf}
Since $\ln^{(p)}_r(z)$ has $\Z_p$-coefficients, it is enough to check
$\ln^{(p)}_r(z)\in (x-x^2)\Q_p[x]^\dag$.
We first note
\begin{equation}\label{lnk-eq1}
(x^2-x)\frac{d}{dx}\mathrm{ln}_{k+1}^{(p)}(z)=\mathrm{ln}_k^{(p)}(z).
\end{equation}
The limit in \eqref{main-sect-eq1} can be rewritten as
\[
\lim_{s\to\infty}\frac{1}{x^{p^s}-(x-1)^{p^s}}
\sum_{1\leq n<p^s,\,p\not{\hspace{2pt}|}\,n}\frac{x^{p^s}(1-x^{-1})^n}{n^r}.
\]
This shows that $\ln^{(p)}_r(z)$ is divided by $x-x^2$.
Let $w(x)\in \Z_p[x]$ be defined by
\[
\frac{1-z^p}{(1-z)^p}=x^p-(x-1)^p=1-pw(x).
\]
Then
\[
\ln_1^{(p)}(z)=p^{-1}\log\left(\frac{1-z^p}{(1-z)^p}\right)
    =-p^{-1}\sum_{n=1}^\infty\frac{p^nw(x)^n}{n}\in \Z_p[x]^\dag.
\]
This shows $\ln^{(p)}_1(z)\in(x-x^2)\Q_p[x]^\dag$,
as required in case $r=1$. 
Let
\[
-(x-x^2)^{-1}\mathrm{ln}_1^{(p)}(z)
    =a_0+a_1x+a_2x^2+\cdots+a_nx^n+\cdots\in\Z_p[x]^\dag.
\]
By \eqref{lnk-eq1} one has
\[
\ln_2^{(p)}(z)=c+x+\frac{a_1}{2}x^2+\cdots
    +\frac{a_n}{n+1}x^{n+1}+\cdots\in \Q_p[x]^\dag
\]
and hence $\ln_2^{(p)}(z)\in(x-x^2)\Q_p[x]^\dag$,
as required in case $r=2$.
Continuing the same argument,
    we obtain $\ln_r^{(p)}(z)\in (x-x^2)\Q_p[x]^\dag$ for every $r$.
\end{pf}
\begin{rem}
The proof shows that $\ln_r^{(p)}(z)$ converges on an open disk $|x|<|1-\zeta_p|$.  
\end{rem}

\begin{defn}\label{poly-obj-def}
Let $C=V[T, T^{-1}, (1-T)^{-1}]$, and $\sigma_C$ a $p$-th Frobenius such that $\sigma_C(T)=T^p$.
Let $n\geq 1$ be an integer.
    We define the $n$-th polylog object $\sPol_n(T)$ of $\Fil\hyphen F\hyphen\MIC(\Spec C)$ as follows.
    \begin{itemize}
        \item $\sPol_n(T)_{\dR}$ is a free $C_K$-module of rank $n+1$;
            $\sPol_n(T)_\dR=\bigoplus_{j=0}^n C_K\, e_{-2j}.$
                 \item 
                 $\sPol_n(T)_{\rig}\defeq
                 \sPol_n(T)_{\dR}\otimes_{C_K}C_K^{\dag}$.
        \item $c$ is the natural isomorphism.
        \item $\Phi$ is the $C_K^{\dag}$-linear morphism defined by
            \[
                \Phi(e_0)=e_0-\sum_{j=1}^n(-1)^j\ln_j^{(p)}(T)e_{-2j},\quad
                \Phi(e_{-2j})=p^{-j}e_0,\quad(j\geq1).
                    \]
    \item $\nabla$ is the connection defined by 
    \[\nabla(e_0)=\frac{\rd T}{T-1}e_{-2},\quad
    \nabla(e_{-2j})=\frac{\rd T}{T}e_{-2j-2},\quad(j\geq1)
    \]
    where $e_{-2n-2}:=0$.
    \item $\Fil^{\bullet}$ is defined by 
 $\Fil^m\sPol_n(T)_{\dR}=\bigoplus_{0\leq j\leq -m} C_K\, e_{-2j}.$
 In particular, $\Fil^m\sPol_n(T)_{\dR}=\sPol_n(T)_{\dR}$ if $m\leq -n$ and $=0$ if $m\geq 1$.
    \end{itemize}
When $n=2$, we also write $\sDilog(T)=\sPol_2(T)$, and call it the {\it dilog object}.
\end{defn}
For a general $S=\Spec(B)$ and $f\in B$ satisfying $f,1-f\in B^\times$, 
we define
the polylog object $\sPol_n(f)$ to be the pull-back $u^*\sPol_n(T)$ where $u\colon \Spec(B)\to 
\Spec V[T,T^{-1},(1-T)^{-1}]$ is given by $u(T)=f$.
When $n=1$, $\sPol_1(T)$ coincides with 
the log object $\sLog(1-T)$ in $\FilFMIC(C)$.

\subsection{Relative cohomologies.}
Let $S=\Spec(B)$ be a smooth affine $V$-scheme and let $\sigma$ be a $p$-th Frobenius on $B^{\dag}$.
In this subsection, we discuss objects in $\Fil$-$F$-$\MIC(S)$
arising as a relative cohomology of smooth morphisms.

Let $u\colon U\to S$ be a quasi-projective smooth morphism.
We firstly describe a datum which we discuss in this subsection.

\begin{defn}
    \label{defn:relativecoh}
    We define a datum
\[
    H^i(U/S)=(H^i_{\dR}(U_K/S_K), H^i_{\rig}(U_k/S_k), c, \nabla, \Phi, \Fil^{\bullet})
\]
as follows.

\begin{itemize}
    \setlength{\itemsep}{0pt}
    \item  $H^i_{\dR}(U_K/S_K)$ is the $i$-th relative algebraic de Rham cohomology of $u_K$,
    namely the module of global sections of the $i$-th cohomology sheaf $R^i(u_K)_{\ast}\Omega^{\bullet}_{U_K/S_K}$,
        and $\nabla$ \resp{$\Fil^{\bullet}$} is the Gauss--Manin connection \resp{the Hodge filtration} on $H^i_{\dR}(U_K/S_K)$.
\item 
    $(H^i_{\rig}(U_k/S_k), \Phi)$ is the $B_K^{\dag}$-module with $\sigma$-linear Frobenius structure
    obtained as the $i$-th relative rigid cohomology of $u_k$.
    In particular,
    \[
        H^i_{\rig}(U_k/S_k) = \Gamma(S_K^{\an}, R^iu_{\rig}j^{\dag}_U\sO_{U_K^{\an}}),
    \]
    where $R^iu_{\rig}j^{\dag}_U\sO_{U_K^{\an}}:=R^i(u_K^{\an})_{\ast}j^{\dag}_U\Omega^{\bullet}_{U_K^{\an}/S_K^{\an}}$
    is the $i$-th relative rigid cohomology sheaf.
    (We justify this notation and definition in Remark \ref{rem:defofrigidcoh} below.)
    \item $c\colon H^i_{\dR}(U_K/S_K)\otimes_{B_K}B_K^{\dag}\to H^i_{\rig}(U_k/S_k)$ is the natural morphism between the algebraic de Rham cohomology and the rigid cohomology.
\end{itemize}

    Let us give a construction of the comparison morphism $c$ in this datum.
    We basically follow the construction in \cite[5.8.2]{Solomon}.
    Let $\iota\colon S_K^{\an}\to S_K$ be the natural morphism of ringed topoi \cite[0.3]{Berthelot96}.
    Then, by adjunction, we have a natural morphism
    $R^iu_{K,\ast}\Omega^{\bullet}_{U_K/S_K}\to \iota_{\ast}\big(R^iu_{K,\ast}\Omega_{U_K/S_K}^{\bullet}\big)^{\an}$.
    Now, together with the natural morphisms
    \[
        \big(R^iu_{K,\ast}\Omega^{\bullet}_{U_K/S_K}\big)^{\an}
    \to R^i(u_K^{\an})_{\ast}\Omega^{\bullet}_{U_K^{\an}/S_K^{\an}}
    % R^i(u_K^{\an})_{\ast}\Omega^{\bullet}_{U_K^{\an}/S_K^{\an}}
%     \to j^{\dag}_{]S_k[}R^i(u_K^{\an})_{\ast}\Omega^{\bullet}_{U^{\an}_K/S^{\an}_K}
    \to R^i(u^{\an}_K)_{\ast}j^{\dag}_{U}\Omega^{\bullet}_{U_K^{\an}/S_K^{\an}},
    \]
    we get a morphism $R^iu_{K,\ast}\Omega^{\bullet}_{U_K/S_K}\to \iota_{\ast}R^i(u^{\an}_K)_{\ast}j^{\dag}_{U}\Omega^{\bullet}_{U_K^{\an}/S_K^{\an}}$.
    By taking the module of global sections,
    we get a morphism $H^i_{\dR}(U_K/S_K)\to H^i_{\rig}(U_k/S_k)$,
    and therefore the desired morphism $c\colon H^i_{\dR}(U_K/S_K)\otimes_{B_K}B_K^{\dag}\to H^i_{\rig}(U_k/S_k)$
    because $H^i_{\rig}(U_k/S_k)$ is a $B_K^{\dag}$-module.
\end{defn}

\begin{rem}
    \label{rem:defofrigidcoh}
    (1)
    Let us justify our description of the rigid cohomology $H^i_{\rig}(U_k/S_k)$
    in Definition \ref{defn:relativecoh}.

    We begin by recalling a usual definition of the rigid cohomology of $u_k\colon U_k\to S_k$
    over a frame $(S_k, \overline{S}_k, \widehat{\overline{S}})$,
    where $\overline{S}$ is a closure of $S$ in a projective space over $V$
    and $\widehat{\overline{S}}$ is its completion.
    Let $\overline{X}$ be the closure of $U$ in a projective space over $\overline{S}$
    and let $\overline{f}\colon \overline{X}\to\overline{S}$ be the extension of $u$.
    (We choose this notation because, in this article, $X$ usually denotes $\overline{f}^{-1}(S)$ and $f$ denotes $\overline{f}|_X$.)
    Then, by definition, the $i$-th relative rigid cohomology sheaf is 
    $R^i(\overline{f}_K^{\an})_{\ast}\overline{j}^{\dag}_{U}\Omega^{\bullet}_{\overline{X}_K^{\an}/\overline{S}_K^{\an}}$,
    where $\overline{j}_U\colon \widehat{U}_K\hookrightarrow \overline{X}_K^{\an}$ is the natural inclusion,
    and $H^i_{\rig}(U_k/S_k)$ is the module of global sections of this sheaf on $\overline{S}_K$.

    However, by using the fact that $u\colon U\to S$
    is a lift of $u_k\colon U_k\to S_k$ to a smooth morphism of smooth \emph{algebraic} $V$-schemes,
    we may obtain the same module without referring to compactifications.
    In fact, firstly, since $S_K^{\an}$ is a strict neighborhood of $\widehat{S}_K$ \cite[(1.2.4)(ii)]{Berthelot96},
    $H^i_{\rig}(U_k/S_k)$ is also the module of global sections of $R^i(\overline{f}_K^{\an})_{\ast}j^{\dag}_{U}\Omega^{\bullet}_{\overline{X}_K^{\an}/\overline{S}_K^{\an}}$ on $S_K^{\an}$
    by overconvergence.
    Moreover, the restriction of this sheaf on $S_K^{\an}$ is isomorphic to
    $R^i(u_K^{\an})_{\ast}j^{\dag}_{U}\Omega^{\bullet}_{U_K^{\an}/S_K^{\an}}$
    by \cite[6.2.2]{LS}
    because again $U_K^{\an}$ is a strict neighborhood of $\widehat{U}_K$.
    Our definition of $R^iu_{\rig}j^{\dag}_U\sO_{U_K^{\an}}$
    and the description of $H^i_{\rig}(U_k/S_k)$ in Definition \ref{defn:relativecoh} are thus justified.

    (2) 
    We will use the datum $H^i(U/S)$ only in the case where
    the rigid cohomology sheaf $R^iu_{\rig}j^{\dag}_U\sO_{U_K^{\an}}$ is known to be a coherent $j^{\dag}_{S}\sO_{S_K^{\an}}$-module
    for all $i\geq 0$.
    In this case, we also have
    \[
        H^i_{\rig}(U_k/S_k)=\Gamma(S_K^{\an},R^iu_{\rig}j^{\dag}_U\sO_{U_K^{\an}})
    \]
    by the vanishing of higher sheaf cohomologies for coherent $j^{\dag}_S\sO_{S_K^{\an}}$-modules.
    (This vanishing is perhaps well-known, but we included it as Lemma \ref{lem:vanishing} at the end of this subsection
    because we could not find an appropriate reference.)
\end{rem}

\bigskip

Now, we have defined the datum $H^i(U/S)$.
This, however, does not immediately mean that it is an object of $\FilFMIC(S)$.
For this datum to be an object in $\FilFMIC(S)$,
we need the $i$-th relative cohomology
$H^i_{\rig}(U_k/S_k)$ to be a coherent $B_K^{\dag}$-module and 
we need the morphism $c$ to be an isomorphism.
In the rest of this subsection, we discuss two settings
under which these conditions hold.
Briefly said, these two settings are: the case of proper smooth morphisms (Setting \ref{set:projsm})
%case of relative curves with ``good compactification'' over $S$ (Setting \ref{set:curve}) and
and the case of smooth families of general dimension with ``good compactification'' of both the source and the target (Setting \ref{set:generaldimension}).
\medskip

We start with the first setting.

\begin{sett}
    \label{set:projsm}
$u\colon U\to S$ is a projective smooth morphism of smooth $V$-schemes with $S=\Spec(B)$.
\end{sett}
    
\begin{prop} \label{prop:comparisonisom}
    Under Setting \ref{set:projsm}, the relative rigid cohomology sheaf
    $R^iu_{\rig}j^{\dag}_U\sO_{U_K^{\an}}$
    is a coherent $j^{\dag}_S\sO_{S_K^{\an}}$-module for each $i\geq 0$.
    Consequently, $H^i_{\rig}(U_k/S_k)$ is a coherent $B_K^{\dag}$-module for each $i\geq 0$.
    Moreover, the comparison morphism
 \begin{equation}
            \label{eq:comparisonisom2}
c\colon H^i_{\dR}(U_K/S_K)\otimes_{B_K}B_K^{\dag}\to H^i_{\rig}(U_k/S_k)
\end{equation}
is bijective for each $i\geq 0$.

    In particular, the datum
    \[
        H^i(U/S)=(H^i_{\dR}(U_K/S_K),H^i_{\rig}(U_k/S_k),c,\nabla,\Phi,\Fil^{\bullet})
    \]
    is an object of $\FilFMIC(S)$.
\end{prop}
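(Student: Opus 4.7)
The plan is to exploit two consequences of the projectivity of $u$: firstly, $u_K$ and its analytification $u_K^{\an}$ are proper, putting analytic GAGA at our disposal; secondly, $U$ itself is its own good compactification over $S$, so that the immersion $j_U\colon \widehat{U}_K\hookrightarrow U_K^{\an}$ arises as the pullback of $j_S\colon \widehat{S}_K\hookrightarrow S_K^{\an}$ along the proper morphism $u_K^{\an}$. The argument then naturally splits into a GAGA step and a base-change step for the functor $j^\dag$.

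First, I would apply GAGA to the proper morphism $u_K$ to obtain a canonical isomorphism of coherent $\sO_{S_K^{\an}}$-modules
\[
\bigl(R^iu_{K,\ast}\Omega^\bullet_{U_K/S_K}\bigr)^{\an}\xrightarrow{\cong}R^i(u_K^{\an})_\ast\Omega^\bullet_{U_K^{\an}/S_K^{\an}},
\]
with both sides coherent. Since $u_K^{\an}$ is proper and $\widehat{U}_K=(u_K^{\an})^{-1}(\widehat{S}_K)$, a cofinal system of strict neighborhoods of $\widehat{U}_K$ in $U_K^{\an}$ is given by the preimages under $u_K^{\an}$ of strict neighborhoods of $\widehat{S}_K$ in $S_K^{\an}$. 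Combining this cofinality with the proper direct image theorem in rigid analysis, one then obtains a further canonical isomorphism
\[
j_S^\dag R^i(u_K^{\an})_\ast\Omega^\bullet_{U_K^{\an}/S_K^{\an}}\xrightarrow{\cong}R^i(u_K^{\an})_\ast j_U^\dag\Omega^\bullet_{U_K^{\an}/S_K^{\an}}=R^iu_{\rig}j_U^\dag\sO_{U_K^{\an}}.
\]
The left-hand side is a coherent $j_S^\dag\sO_{S_K^{\an}}$-module, which gives the coherence assertion.

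Passing to global sections over $S_K^{\an}$, the vanishing of higher coherent cohomology (Lemma \ref{lem:vanishing}) together with the standard identification $\Gamma(S_K^{\an}, j_S^\dag F^{\an})\cong\Gamma(S_K,F)\otimes_{B_K}B_K^\dag$ for coherent $\sO_{S_K}$-modules $F$ precisely yields the comparison morphism $c$ of Definition \ref{defn:relativecoh}, proving its bijectivity. The coherence of $H^i_{\dR}(U_K/S_K)$ over $B_K$ and the local freeness of the Hodge filtration are standard for projective smooth morphisms in characteristic zero, and the Gauss--Manin connection and Frobenius structure are part of the standard package; hence the datum $H^i(U/S)$ indeed belongs to $\FilFMIC(S)$.

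The principal obstacle is the justification of the base-change isomorphism $j_S^\dag\circ R(u_K^{\an})_\ast\cong R(u_K^{\an})_\ast\circ j_U^\dag$. Morally this is immediate from the cartesian identification $\widehat{U}_K=(u_K^{\an})^{-1}(\widehat{S}_K)$, but rigorously one must verify cofinality of preimages of strict neighborhoods and then commute a filtered colimit past $R(u_K^{\an})_\ast$ via a Mittag--Leffler argument using coherence; the required ingredients can be extracted from \cite{LS}.
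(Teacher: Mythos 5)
Your argument takes a genuinely different route from the paper's. The paper's proof is brief: coherence is cited from Berthelot's finiteness theorem \cite[Th\'eor\`eme 5]{Berthelot81}, and bijectivity of $c$ is reduced, by a noetherian specialization argument (both sides are coherent over the noetherian ring $B_K^{\dag}$, so it suffices to check after reduction modulo each maximal ideal), to the known absolute comparison between algebraic de Rham and rigid/crystalline cohomology over a complete discrete valuation ring. You instead aim at a sheaf-level proof: GAGA along the proper morphism $u_K$ identifies the algebraic and analytic relative de Rham cohomology sheaves, and a base-change isomorphism $j_S^{\dag}\circ R(u_K^{\an})_{\ast}\cong R(u_K^{\an})_{\ast}\circ j_U^{\dag}$, justified by properness, then identifies the relative rigid cohomology sheaf with $j_S^{\dag}$ applied to the analytified algebraic one, so that coherence and bijectivity of $c$ drop out together. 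This is a legitimate alternative and in fact yields a stronger, sheaf-level statement; the trade-off is that the base-change step you correctly flag as the principal obstacle --- cofinality of preimages of strict neighborhoods of $\widehat{S}_K$ under $u_K^{\an}$ together with commutation of the overconvergent colimit past $R^i(u_K^{\an})_{\ast}$ --- is, in effect, the technical core of the very finiteness theorem the paper quotes as a black box, and the paper's fibrewise reduction is designed precisely to sidestep all of that. One smaller point: the identification $\Gamma(S_K^{\an},j_S^{\dag}F^{\an})\cong\Gamma(S_K,F)\otimes_{B_K}B_K^{\dag}$ that you invoke in the final step really uses local freeness, not merely coherence, of $F=R^iu_{K,\ast}\Omega^{\bullet}_{U_K/S_K}$; since $u_K$ is projective and smooth in characteristic zero this is standard, but you should say so explicitly rather than appealing to coherence alone.
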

\begin{pf}
    Firstly, the coherence of the rigid cohomology follows from
    \cite[Th\'eor\`eme 5]{Berthelot81}.
%        Since it is a morphism of coherent $j^{\dag}_{S_k}\sO_{(S_K)^{\an}}$-modules,
%        it suffices to show that its restriction to $]S_k[$ is an isomorphism.
        Now, since $c$ is a morphism of coherent modules over the noetherian ring $B_K^{\dag}$,
        it suffices to prove that it is an isomorphism on the reduction by each maximal ideal of $B_K^{\dag}$,
        which is the extension of a maximal ideal of $B_K$.
%        Moreover, it suffices to prove that at each closed point of $]S_k[$ the morphism above induces an isomorphism of stalks.
        Therefore we may assume that $S=\overline{S}=\Spec(k)$ (after a possible extension of $k$), and
        then the claim follows from comparison of the (absolute) algebraic de Rham cohomology
        and the rigid (or, in this case, crystalline) cohomology (e.g. \cite[4.2]{AndreBaldassarri}, \cite[(7)]{Gerkmann08}).
\end{pf}

The second sufficient condition for $H^i(U/S)$ to be an object of $\FilFMIC(S)$
is, briefly said, that $u$ has a ``good compactification''.

\begin{sett}
    Let $S=\Spec(B)$ be a smooth affine $V$-scheme, let $\overline{S}$ be a projective smooth $V$-scheme
    with an open immersion $S\hookrightarrow\overline{S}$ such that
    the complement $T$ is a relative simple NCD on $\overline{S}$ over $V$.
Let $\overline{X}$ be a projective smooth $V$-scheme,
and let $\overline{f}\colon \overline{X}\to\overline{S}$ be a projective morphism.
Let $\overline{D}^{\mathrm{h}}$  be a relative NCD  on $\overline{X}$  over $V$,
and put $\overline{D}^{\mathrm{v}}=\overline{f}^{-1}(T)$ and $\overline{D}=\overline{D}^{\mathrm{h}}\cup\overline{D}^{\mathrm{v}}$.
We put $X=\overline{X}\setminus \overline{D}^{\mathrm{v}}$, $f=\overline{f}|_X$ and $U=\overline{X}\setminus\overline{D}$.
We then assume that the following conditions hold:
\begin{enumerate}
    \item[(1)] $\overline{D}=\overline{D}^{\mathrm{h}}\cup\overline{D}^{\mathrm{v}}$ is also a relative NCD over $V$.
    \item[(2)] $D:=\overline{D}^{\mathrm{h}}\cap X\hookrightarrow X$ is a relative NCD over $S$. 
    \item[(3)] The morphism $\overline{f}\colon (\overline{X},\overline{D})\to(\overline{S},T)$ is log smooth and integral,
    and $(\overline{S}, T)$ is of Zariski type.
\end{enumerate}
\label{set:generaldimension}
\end{sett}
The notation in Setting \ref{set:generaldimension} can be summarized by the diagram
\[
\begin{tikzcd}
    X \arrow[r, "\overline{D}^{\mathrm{v}}", hook] \arrow[d, "f"'] & \overline{X} \arrow[d, "\overline{f}"] &\arrow[l, "\overline D"',hook']U\arrow[ld,"u"]\\
    S \arrow[r, "T", hook]                                         & \overline{S}                          
\end{tikzcd}
\]
where the notation above $\hookrightarrow$ shows the complement of the subscheme.

\begin{prop}  \label{prop:generaldimension}
    Under Setting \ref{set:generaldimension},
    the relative rigid cohomology sheaf
    $R^iu_{\rig}j^{\dag}_U\sO_{U_K^{\an}}$
    is a coherent $j^{\dag}_S\sO_{S_K^{\an}}$-module for each $i\geq 0$.
    Consequently, $H^i_{\rig}(U_k/S_k)$ is a coherent $B_K^{\dag}$-module for each $i\geq 0$.
    Moreover, the comparison morphism
 \[
c\colon H^i_{\dR}(U_K/S_K)\otimes_{B_K}B_K^{\dag}\to H^i_{\rig}(U_k/S_k)
\]
is bijective for each $i\geq 0$.

    In particular, the datum
    \[
        H^i(U/S)=(H^i_{\dR}(U_K/S_K),H^i_{\rig}(U_k/S_k),c,\nabla,\Phi,\Fil^{\bullet})
    \]
    is an object of $\FilFMIC(S)$.
\end{prop}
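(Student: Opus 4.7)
The strategy is to leverage the good compactification provided by Setting \ref{set:generaldimension} to reduce both assertions to properties of the relative log de Rham complex $\Omega^\bullet_{\overline{X}/\overline{S}}(\log\overline{D})$ of the proper log smooth integral morphism $\overline{f}\colon(\overline{X},\overline{D})\to(\overline{S},T)$. The heuristic is that, although $u$ itself is not proper, its logarithmic compactification $\overline{f}$ is proper, and the overconvergent relative cohomology of $U$ should coincide with the log rigid cohomology of $(\overline{X},\overline{D})$ over $(\overline{S},T)$.

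First, I would invoke Shiho's comparison theorem between relative rigid cohomology and relative log rigid cohomology. Under log smoothness, integrality, and the Zariski-type assumption on $(\overline{S},T)$, it furnishes a canonical isomorphism
\[
    R^iu_{\rig}j^{\dag}_U\sO_{U_K^{\an}}\;\cong\; j^{\dag}_S\bigl(R^i(\overline{f}_K^{\an})_{\ast}\Omega^\bullet_{\overline{X}_K^{\an}/\overline{S}_K^{\an}}(\log\overline{D}_K^{\an})\bigr)
\]
of sheaves on $S_K^{\an}$. The right-hand side is coherent: the algebraic sheaf $R^i\overline{f}_{K,\ast}\Omega^\bullet_{\overline{X}_K/\overline{S}_K}(\log\overline{D}_K)$ on $\overline{S}_K$ is coherent because $\overline{f}_K$ is proper and the relative log Kähler differentials for a log smooth morphism are coherent; by rigid-analytic GAGA applied to the proper morphism $\overline{f}_K$, its analytification agrees with the analytic cohomology, and $j^{\dag}_S$ preserves coherence in the sense of $j^{\dag}_S\sO_{S_K^{\an}}$-modules. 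Taking global sections and applying Lemma \ref{lem:vanishing} yields the coherence of $H^i_{\rig}(U_k/S_k)$ as a $B_K^{\dag}$-module.

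For the comparison morphism $c$, Deligne's log de Rham theorem (valid in characteristic zero because $\overline{D}$ is a relative NCD) provides a natural identification of $R^iu_{K,\ast}\Omega^\bullet_{U_K/S_K}$ with the restriction to $S_K$ of the algebraic log cohomology sheaf $R^i\overline{f}_{K,\ast}\Omega^\bullet_{\overline{X}_K/\overline{S}_K}(\log\overline{D}_K)$. Composing this identification with algebraic-to-analytic GAGA on $\overline{S}_K$, restriction to $S_K^{\an}$, the functor $j^{\dag}_S$, and finally Shiho's isomorphism above, one recovers exactly the construction of $c$ in Definition \ref{defn:relativecoh}. Therefore $c$ is identified with a composite of base-change maps applied to a single coherent $\sO_{\overline{S}_K}$-module; each step (GAGA for $\overline{f}_K$, restriction to an open subscheme, the exact functor $j^{\dag}_S$, and Shiho's log comparison) is an isomorphism on coherent sheaves, so $c$ is an isomorphism.

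The main obstacle is securing Shiho's logarithmic comparison isomorphism in the precise \emph{relative} and sheafified form used above --- on $S_K^{\an}$, in the overconvergent framework, and for arbitrary relative dimension. The hypotheses of Setting \ref{set:generaldimension} are designed precisely to license this input; once it is granted, the rest of the proof is a formal assembly of GAGA-type comparisons together with Deligne's classical log Hodge theory, and the fiberwise reduction already used in the proof of Proposition \ref{prop:comparisonisom} can serve as a sanity check at closed points of $\Spec B_K^{\dag}$.
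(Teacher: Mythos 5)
Your proposal gets the coherence step roughly right in spirit but differs substantially from the paper on the comparison morphism, and in both places there are gaps worth flagging.

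For coherence, the paper simply cites Shiho's theorem directly: under Setting~\ref{set:generaldimension} the hypotheses of \cite[Theorem~2.1]{Shiho3} are in force, the integrality of $\overline{f}$ supplies the assumption~$(\star)$ there via \cite[Corollary~4.7]{Shiho1}, and coherence of $R^iu_{\rig}j^\dag_U\sO_{U_K^{\an}}$ is then \cite[Theorem~2.2]{Shiho3}. Your version reconstructs an intermediate isomorphism
\[
    R^iu_{\rig}j^{\dag}_U\sO_{U_K^{\an}}\;\cong\; j^{\dag}_S\bigl(R^i(\overline{f}_K^{\an})_{\ast}\Omega^\bullet_{\overline{X}_K^{\an}/\overline{S}_K^{\an}}(\log\overline{D}_K^{\an})\bigr)
\]
and then appeals to properness and GAGA, but this identification is not a clean quotation of any single theorem of Shiho; the passage from relative rigid to relative log convergent/log rigid cohomology, and from that to the na\"ive analytification of log de Rham, involves several nontrivial comparison steps that are precisely what Shiho's machinery is for. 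Presenting it as a one-line corollary conceals the hard part.

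For the bijectivity of $c$, the paper's argument is genuinely different from yours and avoids the weakest point of your proposal. You build a chain of isomorphisms (Deligne's log de Rham, GAGA, $j^\dag_S$, Shiho's comparison) and assert that the composite \emph{equals} the morphism $c$ of Definition~\ref{defn:relativecoh}. This compatibility is exactly what needs to be proved; it is a diagram chase comparing your composite with the adjunction/analytification construction of $c$, and you do not carry it out. The paper sidesteps this entirely: once coherence of source and target is established, $c$ is a morphism of coherent modules over the noetherian ring $B_K^\dag$, so it suffices to verify it is an isomorphism after reduction modulo each maximal ideal. This reduces the statement to the absolute case ($S=\Spec(k)$), where one applies the identifications of algebraic de Rham with algebraic log de Rham, and rigid with log rigid (\cite[Theorem~3.5.1]{TsuzukiGysin}), and then the absolute comparison of Baldassarri--Chiarellotto \cite[Corollary~2.6]{BaldassarriChiarellotto}. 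You mention this fiberwise reduction only as a ``sanity check,'' but in the paper it \emph{is} the proof; promoting it to the main argument both closes the gap and makes the relative statement an easy consequence of well-documented absolute comparison theorems.
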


\begin{pf}
Our setting assures us that we are in the situation of \cite[Section 2]{Shiho3},
i.e. the assumptions before \cite[Theorem 2.1]{Shiho3} are satisfied.
Moreover, since $f$ is integral, the assumption ($\star$) in \cite[Thereom 2.1]{Shiho3} is also satisfied \cite[Corollary 4.7]{Shiho1}.
Therefore, the coherence of the rigid cohomology sheaves follows from \cite[Theorem 2.2]{Shiho3}.

 Now that the coherence is proved for all $i$,
 the proof reduces to the absolute case as in the proof of Proposition \ref{prop:comparisonisom}.
 Then, since the algebraic de Rham cohomology (resp. the rigid cohomology)
 is isomorphic to the algebraic log de Rham cohomology
 (resp. log rigid cohomology by e.g. \cite[Theorem 3.5.1]{TsuzukiGysin}),
 the claim follows from 
 the comparison theorem between algebraic log de Rham cohomology and log rigid cohomology
 \cite[Corollary 2.6]{BaldassarriChiarellotto}.
\end{pf}

We also have a Gysin exact sequence in $\Fil$-$F$-$\MIC(S)$ for curves under this setting.

\begin{prop}[Gysin exact sequence]
    \label{prop:Gysin}
    Let $U=\Spec(A)$ and $S=\Spec(B)$ be smooth affine $V$-schemes
    and let $u\colon U\to S$ be a smooth morphism of relative dimension one
with connected fibers.
Assume that there exists a projective smooth curve $f\colon X\to S$ with an open immersion $U\hookrightarrow X$
such that $f|_U=u$ and that the complementary divisor $D:=X\setminus U$ is finite \'etale over $S$.
Moreover, assume that $u$ satisfies the conclusions of
Proposition \ref{prop:generaldimension}
\textup{(}namely, the coherence of the rigid cohomology and the bijectivity of the comparison morphism\textup{)},
e.g. that we are in Setting \ref{set:generaldimension}.

    Then, we have an exact sequence
    \[
        0\to H^1(X/S)\to  H^1(U/S)\to H^0(D/S)(-1)\to H^2(X/S)\to 0.
    \]
    in $\FilFMIC(S)$.
\end{prop}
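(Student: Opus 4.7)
The plan is to derive the sequence from the Poincar\'e residue exact sequence of log de Rham complexes. Concretely, I would start from
\[
0 \lra \Omega^\bullet_{X_K/S_K} \lra \Omega^\bullet_{X_K/S_K}(\log D_K) \lra (i_{D_K})_*\O_{D_K}[-1] \lra 0
\]
together with its overconvergent analogue on $\wh{X}_K$ obtained via $j^\dag_X$; these two sequences are compatible under the comparison morphism of Proposition \ref{prop:generaldimension}. Applying $R\ol f_*$ and taking global sections yields a long exact sequence in each of the de Rham and rigid settings. Since $u\colon U\to S$ has connected fibers of relative dimension one, the map $H^0(X/S)\to H^0(U/S)$ is the identity $\O_S\to\O_S$ and $H^2(U/S)=0$; moreover, because $D\to S$ is finite \'etale, $R^\bullet\ol f_*\bigl((i_D)_*\O_D[-1]\bigr)$ has its only nonzero cohomology $H^0(D/S)$ in degree one. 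Extracting the nontrivial terms thus yields the desired four-term sequence in each structure.

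Next I would verify compatibility of all auxiliary structures. The comparison $c$ is compatible with the restriction along $U\hra X$ and with the residue, because both maps are induced by the same morphisms of (log) de Rham complexes in both the algebraic and the overconvergent settings. Horizontality with respect to the Gauss--Manin connection is automatic from the same reason. For the Frobenius and the Tate twist, take a local defining equation $z$ of a component of $D$; then
\[
\Phi\!\left(\tfrac{dz}{z}\right) = \tfrac{d\sigma(z)}{\sigma(z)} = p\,\tfrac{dz}{z} + (\text{regular along }D),
\]
so $\Res\circ\Phi = p\cdot \Res$, which matches the Frobenius multiplication by $p$ on $\O_S(-1)$. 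Frobenius equivariance of the connecting map into $H^2(X/S)$ then follows formally from the associated long exact sequence.

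The remaining, and most delicate, point is strictness of the Hodge filtration, i.e.\ that the four-term sequence remains exact after applying $\Fil^i$ for every $i$. The cases $i\leq 0$ and $i\geq 2$ are trivial. The case $i=1$ reduces to exactness of
\[
0 \lra \ol f_*\Omega^1_{X_K/S_K} \lra R^0\ol f_*\Omega^1_{X_K/S_K}(\log D_K) \lra H^0(D/S) \lra R^1\ol f_*\Omega^1_{X_K/S_K} \lra 0,
\]
which is the long exact $R\ol f_*$-sequence of the residue triangle in degrees $\geq 1$, using $\Omega^{\geq 2}_{X_K/S_K}(\log D_K) = 0$ in relative dimension one. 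Combining this with the standard identification $\Fil^p H^n_\dR(U/S) = \mathbb H^n(\Omega^{\geq p}_{X_K/S_K}(\log D_K))$ and its analogue for $X$ gives the required strictness, and pins down $H^0(D/S)(-1)$ as the correct middle term (since $\Fil^1\O_S(-1)=\O_S$ and $\Fil^2\O_S(-1)=0$).

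I expect the principal obstacle to lie not in any individual ingredient---each of the Poincar\'e residue sequence, its overconvergent counterpart, Frobenius equivariance of residues, and Hodge strictness is essentially classical---but in aligning all of these compatibilities simultaneously in the relative setting over $S$, and in correctly identifying the $R^\bullet\ol f_*$-terms so that the four-term extraction, the filtration identifications, and the Frobenius twist all cohere as a single morphism of four-term complexes in $\FilFMIC(S)$.
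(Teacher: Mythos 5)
Your proposal is correct in outline but takes a somewhat different route from the paper. Where you construct the de Rham and rigid four-term sequences \emph{independently} — via the algebraic Poincar\'e residue triangle and its $j^\dag$-overconvergent analogue — and then verify compatibility with the comparison $c$, the paper constructs only the de Rham Gysin sequence (citing it as standard, with strictness for $\Fil^\bullet$ coming from Hodge theory) and then obtains the rigid sequence simply by applying $-\ot_{B_K}B_K^\dag$ to the same maps: the comparison isomorphism on each term plus flatness of $B_K^\dag$ over $B_K$ gives exactness for free, and compatibility with $c$ is automatic by construction. This sidesteps the need to build an overconvergent residue sequence and check that it matches the algebraic one. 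For Frobenius, you compute locally that $\Res(\Phi(dz/z)) = p\cdot\Res(dz/z)$, which is a legitimate and more explicit argument (modulo the slight care needed to identify the residue map used in defining the rigid Gysin sequence with the local residue on $A^\dag_K$); the paper instead reduces to fibers over closed points of $\widehat S_K$ and cites Bannai's Theorem 2.19 for the absolute case. Both approaches are sound; yours is more self-contained and makes the Frobenius equivariance transparent, while the paper's transfer-by-flatness argument is shorter and avoids ever having to independently establish the rigid residue sequence. Your discussion of strictness at $\Fil^1$ via the $\Omega^1$ part of the residue triangle is correct in spirit, though the paper folds this into the appeal to the standard (Deligne/Saito) theory of the mixed Hodge structure on $H^1$ of an open curve.
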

\begin{pf}
    Firstly, $H^i(X/S)$ and $H^i(D/S)$ are objects of $\FilFMIC(S)$ by Proposition \ref{prop:comparisonisom},
    and so is $H^1(U/S)$ by assumption.
    Nextly, it is a standard fact about de Rham cohomology that we have an exact sequence 
    \[
    0\to H^1_{\dR}(X_K/S_K)\to H^1_{\dR}(U_K/S_K)\to H^0_{\dR}(D_K/S_K)(-1)\to H^2_{\dR}(X_K/S_K)\to 0
    \]
    whose morphisms are horizontal and compatible with respect to $\Fil$.
    Therefore, by the comparison isomorphism on each term
    and by the flatness of $B_K^{\dag}$ over $B_K$, we get a corresponding exact sequence
    \[
    0\to H^1_{\rig}(X_k/S_k)\to H^1_{\rig}(U_k/S_k)\to H^0_{\rig}(D_k/S_k)(-1)\to H^2_{\rig}(X_k/S_k)\to 0
    \]
    for rigid cohomologies.
    The compatibility of this sequence with Frobenius structures on each term can be checked on
    each closed point of $\widehat{S}_K$, therefore reduced to the absolute case \cite[Theorem 2.19]{Bannai02}.
\end{pf}

The following lemma is the promised statement in Remark \ref{rem:defofrigidcoh} (2).

\begin{lem}
    \label{lem:vanishing}
    Let $X$ be a smooth $V$-scheme and let $\sM$ be a coherent $j^{\dag}_X\sO_{X_K^{\an}}$-module.
    Then, for any $j\geq 1$, we have $H^j(X_K^{\an},\sM)=0$.
\end{lem}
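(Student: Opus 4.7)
The approach is a Čech reduction to the affine case, combined with Berthelot's structural description of coherent $j^{\dag}\sO$-modules and Kiehl's Theorem B for quasi-Stein rigid spaces.

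For the reduction, since $X$ is of finite type over $V$, it is quasi-compact and admits a finite affine open cover $\mathcal{U}=\{U_i\}_{i=1}^N$; because $X$ is separated over $V$, every multi-intersection $U_I:=U_{i_0}\cap\cdots\cap U_{i_p}$ is again smooth and affine. The analytified cover $\mathcal{U}^{\an}=\{U_{i,K}^{\an}\}$ is an admissible cover of $X_K^{\an}$, and the formation of $j^{\dag}$ is local on $X_K^{\an}$, so $\sM|_{U_{i,K}^{\an}}$ is a coherent $j^{\dag}_{U_i}\sO_{U_{i,K}^{\an}}$-module. The Čech-to-derived-functor spectral sequence
\[
E_2^{p,q}=\check{H}^p(\mathcal{U}^{\an},\underline{H}^q(\sM))\Longrightarrow H^{p+q}(X_K^{\an},\sM)
\]
then reduces the problem to two points: (i) the vanishing of $H^q(U_{I,K}^{\an},\sM|_{U_{I,K}^{\an}})$ for $q\geq 1$ on every smooth affine intersection, and (ii) the vanishing of the positive-degree Čech cohomology $\check{H}^p(\mathcal{U}^{\an},\sM)$.

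For (i), let $U$ be smooth affine over $V$. Embedding $U\hookrightarrow \mathbb{A}^N_V$ and setting $W_n:=U_K^{\an}\cap\{|x_j|\leq p^n\}$ realizes $U_K^{\an}$ as a quasi-Stein rigid space, with $W_n\Subset W_{n+1}$ and dense transition maps $\sO(W_{n+1})\to\sO(W_n)$. By Berthelot's classification \cite{Berthelot96} of coherent $j^{\dag}\sO$-modules, there exist a strict neighborhood $\widetilde U$ of $\widehat U_K$ in $U_K^{\an}$ and a coherent $\sO_{\widetilde U}$-module $\mathcal{F}$ with $\sM|_{U_K^{\an}}\cong j^{\dag}_U\mathcal{F}$. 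For $n$ large enough that $W_n\subseteq \widetilde U$, Kiehl's Theorem B yields the acyclicity of $\mathcal{F}|_{W_n}$, and the Mittag--Leffler property of the inverse system $\{\mathcal{F}(W_n)\}_n$, combined with the exactness of $j^{\dag}$ and the compatibility of cohomology with the defining filtered colimit over strict neighborhoods, gives $H^q(U_K^{\an},\sM|_{U_K^{\an}})=0$ for $q\geq 1$. Substituting (i) into the spectral sequence collapses it to $H^j(X_K^{\an},\sM)\cong\check{H}^j(\mathcal{U}^{\an},\sM)$, so the remaining issue is (ii).

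For (ii), I express each group of sections $\sM(U_{I,K}^{\an})$ as the filtered colimit $\varinjlim_{\widetilde U'}\mathcal{F}(U_{I,K}^{\an}\cap\widetilde U')$ over strict neighborhoods $\widetilde U'$. The Čech differentials commute with these colimits, and on each $U_{I,K}^{\an}\cap\widetilde U'$---again quasi-Stein, being an admissible open in the analytification of a smooth affine scheme intersected with a strict neighborhood---Kiehl's Theorem B together with a Mittag--Leffler argument gives exactness of the Čech complex in positive degrees. The main obstacle is precisely this step (ii): making rigorous the exchange of filtered colimits with Čech differentials and arranging the cofinal systems of strict neighborhoods to be mutually compatible across the members of the cover $\mathcal{U}^{\an}$. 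This is where the overconvergent structure is used essentially, and it is the delicate technical step on which the whole argument rests.
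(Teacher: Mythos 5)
There is a genuine gap, and it occurs in both halves of your argument. First, in the affine case your limits run in the wrong direction. The sets $W_n=U_K^{\an}\cap\{|x_j|\leq p^n\}$ \emph{increase} and exhaust $U_K^{\an}$, whereas a strict neighborhood $\widetilde U$ of $\widehat U_K$ produced by Berthelot's classification may be arbitrarily close to the closed tube $\widehat U_K=U_K^{\an}\cap\{|x_j|\leq 1\}$; so the hypothesis ``$W_n\subseteq\widetilde U$ for $n$ large'' is false in general, and the quasi-Stein/Mittag--Leffler machinery (which concerns an \emph{inverse} system over an increasing exhaustion) is not the relevant tool. What actually computes $H^q(U_K^{\an},\sM)$ is a filtered \emph{direct} limit over the \emph{shrinking} strict neighborhoods $V_\rho=U_K^{\an}\cap\mathbb{B}^N(0,\rho^+)$, $\rho\to 1^+$: one first identifies $R\Gamma(U_K^{\an},\sM)$ with $R\Gamma(V_{\rho_0},\sM|_{V_{\rho_0}})$ for one $\rho_0$ where $\sM=j^{\dag}_U M$ with $M$ coherent, then with $\varinjlim_{\rho}R\Gamma(V_\rho,M|_{V_\rho})$ (using quasi-compactness and separatedness of $V_{\rho_0}$ to commute the colimit with cohomology, and the fact that the inclusions $j_\rho$ are affinoid). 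Each $V_\rho$ is affinoid and $M|_{V_\rho}$ is coherent, so Tate--Kiehl acyclicity kills all higher cohomology, and a filtered colimit of zero groups is zero --- no $\varprojlim^1$ issue ever arises. The step you defer to ``compatibility of cohomology with the defining filtered colimit'' is precisely the substance of the proof, not a formality.

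Second, your step (ii) is not merely delicate: it cannot be carried out, because the statement is false for non-affine $X$. If $X$ is proper over $V$ then $\widehat X_K=X_K^{\an}$, the only strict neighborhood is the whole space, $j^{\dag}_X\sO_{X_K^{\an}}=\sO_{X_K^{\an}}$, and coherent sheaves on, say, $\bP^{1,\an}_K$ certainly have nonzero $H^1$. The acyclicity of $\sM$ on the pieces of an affine cover reduces $H^j(X_K^{\an},\sM)$ to the \v{C}ech cohomology $\check H^j(\mathcal U^{\an},\sM)$, but gives no reason for the latter to vanish (compare $H^1(\bP^1,\sO(-2))\neq 0$ computed from a two-piece affine cover). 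The lemma is only used, and only provable, when $X$ admits a closed immersion into $\bA^N_V$, i.e.\ when $X$ is affine; you should assume this from the outset and discard the \v{C}ech reduction entirely.
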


\begin{proof}
    This lemma is essentially given in the proof of \cite[6.2.12]{LS}.
    We recall the argument for the convenience for the reader.
    Choose a closed immersion $X\hookrightarrow\bA_V^N$ to an affine space and
    let $Y$ be the closure of $X$ in $X\hookrightarrow\bA_V^N\hookrightarrow\bP_V^N$.
    Then, $V_{\rho}:=X_K^{\an}\cap \mathbb{B}^N(0,\rho^+)$ for $\rho>1$ form a cofinal family of strict neighborhoods of $\widehat{X}_K$.

    By the coherence of $\sM$,
    we can take a coherent $\sO_{V_{\rho_0}}$-module $M$ for some $\rho_0>1$
    such that $\sM|_{V_{\rho_0}}=j^{\dag}_{X}M$ \cite[5.4.4]{LS}.
    Then, if $j_{\rho}\colon V_{\rho}\hookrightarrow V_{\rho_0}$ denotes the inclusion for $1<\rho<\rho_0$,
    we have isomorphisms
    \[
    R\Gamma(X_K^{\an}, \sM)=R\Gamma(V_{\rho_0}, \sM|_{V_{\rho_0}})=\varinjlim_{\rho}R\Gamma(V_{\rho_0},(j_{\rho})_{\ast}M)=\varinjlim_{\rho}R\Gamma(V_{\rho},M|_{V_{\rho}}).
    \]
    In fact, the second identification follows from quasi-compactness and separatedness of $V_{\rho_0}$,
    and the third one holds because $j_{\rho}$ is affinoid.
    Now, the claim follows because each $V_{\rho}$ is affinoid and $M|_{V_\rho}$ is coherent.
\end{proof}

\subsection{Extensions associated to Milnor symbols}\label{Ext-Mil-sect}
In this subsection, we discuss how we associate an extension in $\FilFMIC(U)$ to a Milnor symbol.

Let $u\colon U=\Spec(A)\to S=\Spec(B)$ be a smooth morphism of smooth affine $V$-scheme.
We assume that $u$ satisfies the consequences of Proposition \ref{prop:generaldimension}, namely:

\begin{assump}\label{as:isomorphism}
For each $i\geq 0$, the $i$-th relative rigid cohomology sheaf $R^iu_{\rig}j^{\dag}_U\sO_{U_K^{\an}}$
is a coherent $j^{\dag}_{S}\sO_{S_K^{\an}}$-module
and the natural morphism
\begin{equation}
    \label{as:isom}
H^i_{\dR}(U_K/S_K)\otimes_{B_K} B_K^\dag\lra H^i_{\rig}(U_k/S_k)
\end{equation}
is bijective.
\end{assump}

\begin{rem}
    \label{rem:onassumption}
    (1) As we have discussed in Proposition \ref{prop:generaldimension},
    Assumption \ref{as:isomorphism}
    is satisfied if we are in Setting \ref{set:generaldimension}.

    (2) 
    If there is a projective smooth morphism $f\colon X\to S$
    with an open immersion $U\hookrightarrow X$
    such that $X\setminus U$ is a relative simple normal crossing divisor over $S$,
    then the bijectivity of the morphism (\ref{as:isom})
    is deduced from the former half of the Assumption \ref{as:isom} (the coherence of the $i$-th relative rigid cohomology sheaf for each $i\geq 0$)
    as in the proof of Proposition \ref{prop:generaldimension}.

    (3) Note that (a part of) Assumption \ref{as:isomorphism} 
    allows us to interpret the relative rigid cohomology 
    as a cohomology of Monsky--Washnitzer type.

    More precisely, for a smooth morphism $u\colon U=\Spec(A)\to S=\Spec(B)$ of affine smooth $V$-schemes,
    assume that, for each $i\geq 0$, the $i$-th rigid cohomology sheaf $R^iu_{\rig}j^{\dag}_U\sO_{U_K^{\an}}$ satisfies
        $H^j(S_K^{\an}, R^iu_{\rig}j^{\dag}_U\sO_{U_K^{\an}})=0$ for all $j\geq 1$
    (e.g. if all $R^iu_{\rig}j^{\dag}_U\sO_{U_K^{\an}}$ are coherent).
    Then, the $B_K^{\dag}$-module $H^i_{\rig}(U_k/S_k)=\Gamma(S_K^{\an},R^iu_{\rig}j^{\dag}_U\sO_{S_K^{\an}})$
    is isomorphic to the cohomology
    $H^i_{\mathrm{MW}}(U_k/A_k)=H^i(\Omega^{\bullet}_{A_K^{\dag}/B_K^{\dag}})$
    of the complex of continuous differentials $\Omega^{\bullet}_{A_K^{\dag}/B_K^{\dag}}=\Gamma(U_K^{\an}, j^{\dag}_U\Omega^{\bullet}_{U_K^{\an}/S_K^{\an}})$.
    This follows from our assumption
    and the vanishing
    of the cohomologies $H^j(U_K^{\an},j^{\dag}_U\Omega^k_{U_K^{\an}/S_K^{\an}})=0$
    for all $j\geq 1$ and $k\geq 0$ (which also follows from Lemma \ref{lem:vanishing}).
\end{rem}

\bigskip

Recall that, for a commutative ring $R$, the $r$-th Milnor $K$-group $K_r^M(R)$ is defined to be
the quotient of $(R^\times)^{\ot r}$ by the subgroup generated by
\[
a_1\ot\cdots\ot b\ot\cdots\ot(-b)\ot\cdots\ot a_r,\quad
a_1\ot\cdots\ot b\ot\cdots\ot(1-b)\ot\cdots\ot a_r.
\]
Recall from \S \ref{log-obj-sect} the log object $\sLog(f)$ in 
$\FilFMIC(U)$ for $f\in\O(U)^\times$, and the extension \eqref{log-obj-gp}
which represents the class 
\[
[f]_U\in \Ext^1_{\FilFMIC(U)}(\O_U,\O_U(1)).
\]
For $h_0,h_1,\ldots,h_n\in \sO(U)^\times$, we associate an $(n+1)$-extension
\[
    0\lra \sO_{U}(n+1)\lra \sLog(h_n)(n)\lra\cdots\lra\sLog(h_1)(1)
\lra\sLog(h_0)\lra \sO_{U}\lra 0
\]
which represents the class
\[
[h_0]_U\cup\cdots\cup[h_n]_U\in \Ext^{n+1}_{\FilFMIC(U)}(\O_U,\O_U(n+1)).
\]
It is a standard argument to show that the above cup-product
is additive with respect to each $h_i$, so that we have an additive map
\[
(\O(U)^\times)^{\ot n+1}\lra
\Ext^{n+1}_{\FilFMIC(U)}(\O_U,\O_U(n+1)).
\]

\begin{prop}   \label{2-ext-welldef}
$[f]_U\cup[f]_U=0$ for $f\in \O(U)^\times$
and $[f]_U\cup[1-f]_U=0$ for $f\in\O(U)^\times$ such that $1-f\in\O(U)^\times$.
Hence the homomorphism
\[
K^M_{n+1}(\O(U))\lra\Ext^{n+1}_{\FilFMIC(U)}(\sO_{U},\sO_U(n+1)),\quad
\{h_0,\ldots,h_n\}\mapsto [h_0]_U\cup\cdots\cup[h_n]_U
\]
is well-defined (note $\sLog(-f)=\sLog(f)$ by definition).
\end{prop}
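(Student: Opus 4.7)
\emph{Plan of proof.}
The plan is to establish the two relations separately. The Steinberg relation $[f]_U\cup[1-f]_U=0$ will be proved by exploiting the dilog object $\sPol_2(1-f)$, while $[f]_U\cup[f]_U=0$ will be reduced to Steinberg via the identity $\sLog(-f)=\sLog(f)$ highlighted in the statement.

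For the Steinberg relation, I would work with the dilog object $\sPol_2(1-f)\in\FilFMIC(U)$, obtained by pulling back $\sPol_2(T)$ along $T\mapsto 1-f$. Its canonical three-step filtration $0\subset F^1\subset F^2\subset\sPol_2(1-f)$ has successive quotients $\O_U(2)$, $\O_U(1)$, $\O_U$. A direct inspection of the connection and Frobenius---using the rewriting $\ln_1^{(p)}(z)=p^{-1}\log((1-z^p)/(1-z)^p)$ from the proof of Proposition \ref{polylog-oc} to match Frobenius structures---identifies $F^2\cong\sLog(1-f)(1)$ and $\sPol_2(1-f)/F^1\cong\sLog(f)$. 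Consequently, the filtration yields the $2$-extension
\[
0\lra\O_U(2)\lra\sLog(1-f)(1)\lra\sLog(f)\lra\O_U\lra 0,
\]
which by definition represents $[f]_U\cup[1-f]_U$. At the same time, the filtration also exhibits $\sPol_2(1-f)$ as a single $1$-extension $0\to\O_U(2)\to\sPol_2(1-f)\to\sLog(f)\to 0$, whose pullback along the inclusion $\O_U(1)\hookrightarrow\sLog(f)$ recovers $F^2\cong\sLog(1-f)(1)$. Thus the class of $\sLog(1-f)(1)$ in $\Ext^1(\O_U(1),\O_U(2))$ lifts to $\Ext^1(\sLog(f),\O_U(2))$ and is therefore annihilated by the connecting homomorphism to $\Ext^2(\O_U,\O_U(2))$ in the long exact sequence of $\Ext^\bullet(-,\O_U(2))$ attached to $0\to\O_U(1)\to\sLog(f)\to\O_U\to 0$. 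Since this connecting homomorphism is cup product with $[f]_U$, the desired vanishing $[f]_U\cup[1-f]_U=0$ follows.

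For $[f]_U\cup[f]_U=0$, the equality $\sLog(-f)=\sLog(f)$ yields $[-f]_U=[f]_U$, reducing the task to $[f]_U\cup[-f]_U=0$. The identity $-f=(1-f)(1-f^{-1})^{-1}$ and bilinearity give
\[
[f]_U\cup[-f]_U=[f]_U\cup[1-f]_U-[f]_U\cup[1-f^{-1}]_U;
\]
the first term vanishes by the Steinberg relation just proved, and the second equals $-[f^{-1}]_U\cup[1-f^{-1}]_U$ by additivity ($[f^{-1}]_U=-[f]_U$), which also vanishes by Steinberg applied to $f^{-1}$. The main obstacle I anticipate is the verification of the identifications $F^2\cong\sLog(1-f)(1)$ and $\sPol_2(1-f)/F^1\cong\sLog(f)$ as objects of $\FilFMIC$: while the connection and Hodge filtration match by construction, the comparison of Frobenius structures requires the explicit relation between $\ln_1^{(p)}$ and $\log^{(\sigma)}$ together with some care concerning the pullback of overconvergent $F$-isocrystals.
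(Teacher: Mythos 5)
Your proof of the Steinberg relation $[f]_U\cup[1-f]_U=0$ is correct and takes a genuinely different route from the paper's. Where the paper exhibits the vanishing via an explicit three-row commutative diagram of exact sequences built from the weight filtration of the dilog object, you extract from the same filtration the $1$-extension $0\to\O_U(2)\to\sPol_2(1-f)\to\sLog(f)\to 0$ and invoke the long exact sequence of Yoneda $\Ext$ attached to $0\to\O_U(1)\to\sLog(f)\to\O_U\to 0$: since $[1-f]_U(1)$ is the restriction of that $1$-extension along $\O_U(1)\hookrightarrow\sLog(f)$, it lies in the image of $\Ext^1(\sLog(f),\O_U(2))\to\Ext^1(\O_U(1),\O_U(2))$, hence in the kernel of the connecting map, which is Yoneda composition with $[f]_U$. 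This is cleaner than writing down the diagram by hand; both approaches rest on the same identifications of the graded pieces of $\sPol_2(1-f)$.

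However, your argument for $[f]_U\cup[f]_U=0$ has a genuine gap. The identity $-f=(1-f)(1-f^{-1})^{-1}$ and the ensuing bilinearity reduction require $1-f\in\O(U)^\times$ (equivalently $1-f^{-1}\in\O(U)^\times$), for otherwise the classes $[1-f]_U$ and $[1-f^{-1}]_U$ are simply undefined. But the proposition asserts $[f]_U\cup[f]_U=0$ for \emph{every} $f\in\O(U)^\times$ with no condition on $1-f$, and this full strength is needed: the Milnor $K$-group of a ring is defined by imposing the relation $a_1\ot\cdots\ot b\ot\cdots\ot(-b)\ot\cdots\ot a_r$ for all $b\in R^\times$, and over a ring (unlike over a field) this relation is not a formal consequence of the Steinberg relation, precisely because $1-b$ need not be a unit. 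Indeed the universal case to which the paper reduces via Lemma \ref{log-obj-lem2} is $U=\Spec V[T,T^{-1}]$ with $f=T$, and there $1-T$ is \emph{not} invertible, so your reduction cannot even get started. The paper's fix is to replace the dilog object by $\mathrm{Sym}^2\sLog(T)$, which lives over $\Spec V[T,T^{-1}]$ (no pole at $T=1$ is needed) and carries an analogous two-step weight filtration with graded pieces $\O_U(2)$, $\O_U(1)$, $\O_U$; running the same diagram argument (or your long-exact-sequence argument) with $\mathrm{Sym}^2\sLog(T)$ in place of the dilog object gives $[T]_U\cup[T]_U=0$ directly, and then pull back along $T\mapsto f$.
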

\begin{pf}
To prove this, it follows from Lemma \ref{log-obj-lem2}
that we may assume $U=\Spec V[T,T^{-1}]$ and $f=T$ 
for the vanishing $[f]_U\cup[f]_U=0$
and $U=\Spec V[T,(T-T^2)^{-1}]$ and $f=T$
for the vanishing $[f]_U\cup[1-f]_U=0$.

Here, we show the latter vanishing. Let $C=V[T,(T-T^2)^{-1}]$ and $U=\Spec(C)$.
Recall the dilog object
    $D:=\sDilog(T)$ which has a unique increasing filtration $W_{\bullet}$ (as an object of $\Fil$-$F$-$\MIC(U)$)
    that satisfies
    \[
        W_jD_\dR=\begin{cases}
            0 & \text{if } j\leq -5,\\
            C_Ke_{-4} & \text{if } j=-4, -3,\\
            C_Ke_{-4}\oplus C_Ke_{-2} & \text{if } j=-2, -1,\\
            \sDilog(T)_{\dR} & \text{if } j\geq 0
        \end{cases}
    \]
and the filtration $\Fil^{\bullet}$ on $W_jD_\dR$ is given to be
    $\Fil^iW_jD_\dR=W_jD_\dR\cap\Fil^iD_{\dR}$.
    Then, it is straightforward to check that 
    $W_{-4}\cong \O_U(2), W_{-2}\cong\sLog(T)(1), W_0/W_{-4}\cong\sLog(1-T)$ and 
    $W_0/W_{-2}\cong \O_U$.
    Thus, $[T]_U\cup[1-T]_U$ is realized by the extension
    \[0\lra W_{-4}\lra W_{-2}\lra W_0/W_{-4}\lra W_0/W_{-2}\lra 0.\]
    Consider a commutative diagram
   \begin{equation}\label{2-ext-welldef-eq1}
        \xymatrix{
0\ar[r]&\O_U(2)\ar@{=}[d]\ar[r]^\id&\O_U(2)\ar[r]^0&\O_U\ar[r]^\id&\O_U\ar[r]&0\\
            0 \ar[r] & \O_U(2) \ar@{=}[d]\ar[r]^{\iota\qquad} 
& \O_U(2)\oplus W_{-2} \ar[r]^{\qquad\pi_2} \ar[u]_{\pi_1}
            \ar[d]^{\mathrm{add}}& \ar[r] W_0 \ar[d]\ar[u]_{\pi_3}
& \O_U\ar@{=}[d]\ar@{=}[u] \ar[r] & 0\\
            0 \ar[r] & W_{-4} \ar[r] & W_{-2} \ar[r] & W_0/W_{-4} \ar[r] & \O_U \ar[r] & 0,
        }
    \end{equation}
    where $\iota$ is the first inclusion,
$\pi_1$ is the first projection, $\pi_2$ is the composition with the second projection and the inclusion $W_{-2}\hookrightarrow W_0$,
$\pi_3$ is the quotient $W_0\to W_0/W_{-2}\cong\O_U$,
  and $\mathrm{add}\colon (x,y)\mapsto x+y$.
The above diagram shows
 the vanishing $[T]_U\cup[1-T]_U=0$.

The proof of the vanishing $[T]_U\cup[T]_U=0$ goes in a similar way by replacing
$\sDilog(T)$ with $\mathrm{Sym}^2\sLog(T)$.
\end{pf}

Let $n$ be a non-negative integer, let $h_0,\ldots,h_{n}\in\O(U)^\times$
and suppose that $\Fil^{n+1}H^{n+1}(U/S)=0$.
Under this setting, we define an object
\[
M(U/S)_{h_0,\ldots,h_{n}}
\]
in $\FilFMIC(U)$ in the following way.

Let $\cM_{h_0,\ldots,h_{n}}$ be the complex
\[
\sLog(h_n)(n)\lra \cdots\lra\sLog(h_1)(1)\lra\sLog(h_0)
\]
in $\FilFMIC(U)$
where the first term is placed in degree $0$, which fits into a distinguished triangle
\begin{equation}
0\lra \O_U(n)\lra \cM_{h_0,\ldots,h_{n}}\lra \O_U[-n]\lra0
\label{eq:triangleforM}
\end{equation}
in the derived category of $\FilFMIC(U)$.

Firstly, we define the de Rham part of $M(U/S)$.
Let $\cM_{h_0,\ldots,h_n,\dR}$ denote the de Rham realization
of $\cM_{h_0,\ldots,h_n}$.
This can be seen as a complex of mixed Hodge modules by M. Saito
\cite{msaito2}. Therefore the de Rham cohomology
\[
M(U/S)_{h_0,\ldots,h_n,\dR}\defeq
H^n_\dR(U_K/S_K,\cM_{h_0,\ldots,h_n,\dR})=H^n(U_K,
\Omega^\bullet_{U_K/S_K}\ot\cM_{h_0,\ldots,h_n,\dR})
\]
carries the Hodge filtration which we write by $\Fil^\bullet$.
Now, consider the exact sequence
\[
0\lra H^n_\dR(U_K/S_K)\lra M(U/S)_{h_0,\ldots,h_n,\dR}\lra \O_{S_K}\os{\delta}{\lra} H^{n+1}_\dR(U_K/S_K).
\]
It follows from the Hodge theory that $\Image(\delta)\subset \Fil^{n+1}H^{n+1}_\dR(U_K/S_K)=0$. Hence we have an exact sequence
\[
0\lra H^n_\dR(U_K/S_K)\lra M(U/S)_{h_0,\ldots,h_n,\dR}\lra \O_{S_K}\lra0.
\]
In particular, $M(U/S)_{h_0,\ldots,h_n,\dR}$ is locally free.
Therefore, together with the connection $\nabla$ 
as relative de Rham cohomology, 
$(M(U/S)_{h_0,\ldots,h_n,\dR},\nabla,\Fil^{\bullet})$ is an object of $\FilMIC(S_K)$.
Here, The strictness with respect to $\Fil^\bullet$ follows from
the fact that the above can be seen as an exact sequence of variations of
mixed Hodge structures, again by the theory of Hodge modules
\cite{msaito1}, \cite{msaito2}. 

Let $\cM_{h_0,\ldots,h_n,\rig}$ be the corresponding complex
in $\FMIC(A^\dag_K)$ to $\cM_{h_0,\ldots,h_n}$
which can be seen as a complex of overconvergent $F$-isocrystals.
In particular, this can also be seen a complex of coherent $j^{\dag}_U\sO_{U_K^{\an}}$-modules
with Frobenius structure.
We define 
\[
    M(U/S)_{h_0,\ldots,h_n,\rig}\defeq H^n_{\rig}(U_k/S_k, \cM_{h_0,\ldots,h_n,\rig})
    :=\Gamma(S_K^{\an}, R^nu_{\rig}\sM_{h_0,\dots,h_n,\rig}),
\]
where $R^nu_{\rig}\sM_{h_0,\dots,h_n,\rig}
=R^n(u_K^{\an})_{\ast}\left(j^{\dag}_U\Omega^{\bullet}_{U_K^{\an}/S_K^{\an}}\otimes\sM_{h_0,\dots,h_n,\rig}\right)$.

Now, we explain how we get a comparison isomorphism
\[
    c\colon M(U/S)_{h_0,\ldots,h_n,\dR}
\otimes_{B_K}{B_K^{\dag}}\xrightarrow{\quad\cong\quad} 
M(U/S)_{h_0,\ldots,h_n,\rig}.
\]
Firstly, we have a canonical homomorphism
\[
   H^i_\dR(U_K/S_K,\cM_{h_0,\ldots,h_n,\dR})
\otimes_{B_K}{B_K^{\dag}}\lra
   H^i_\rig(U_k/S_k,\cM_{h_0,\ldots,h_n,\rig})
\]
for each $i$.
(The construction is the same as in the case of trivial coefficients in the beginning of the previous subsection.)
To prove that this is an isomorphism,
as in the de Rham part, note that we have an exact sequence
\[
 0\lra H^n_{\rig}(U_k/S_k)\lra M(U/S)_{h_0,\dots,h_n,\rig}\lra B_K^{\dag} \lra H^{n+1}_{\rig}(U_k/S_k)
\]
because $H^1(S_K^{\an}, R^nu_{\rig}j^{\dag}_U\sO_{U_K^{\an}/S_K^{\an}})=0$
by Assumption \ref{as:isomorphism}.
Thus, the comparison homomorphism is an isomorphism by the flatness of $B_K^{\dag}$
over $B_K$, by Assumption \ref{as:isomorphism} and by five lemma.

\medskip

Now, we have constructed an object $M_{h_0,\ldots,h_n}(U/S)$ in $\FilFMIC(S)$
which fits into the exact sequence
\begin{equation}\label{motive-obj}
    0\lra H^n(U/S)(n+1)\lra M_{h_0,\ldots,h_n}(U/S)\lra \O_S\lra0.
\end{equation}
The extension class \eqref{motive-obj}
is additive with respect to each $h_i$
(one can show this in the same way as the proof of bi-additivity of
$[h_0]_U\cup\cdots\cup[h_n]_U$, but based on the theory of
mixed Hodge modules concerning the strictness of the filtrations;
for the rigid part, we use the functoriality of the relative rigid cohomology
and the comparison to algebraic de Rham cohomology),
so that we have a homomorphism
\begin{equation}\label{motive-obj-eq1}
(\O(U)^\times)^{\ot n+1}\lra\Ext^1_{\FilFMIC(S)}(\O_S,H^n(U/S)(n+1)).
\end{equation}
\begin{thm}\label{mot-map-prop}
Suppose $\Fil^{n+1}H^{n+1}_\dR(U_K/S_K)=0$.
Then, under the assumption \ref{as:isomorphism}, the homomorphism \eqref{motive-obj-eq1} factors through the Milnor $K$-group,
so that we have a map
\begin{equation}\label{motive-map}
[-]_{U/S}:K^M_{n+1}(\O(U))\lra\Ext^1_{\FilFMIC(S)}(\O_S,H^n(U/S)(n+1))
%,\quad\xi\mapsto \text{extension class \eqref{motive-obj}}
\end{equation}
which we call the {\it symbol map for $U/S$}.
\end{thm}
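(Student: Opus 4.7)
The plan is to factor the map \eqref{motive-obj-eq1} through the cup-product map of Proposition \ref{2-ext-welldef} and deduce the Steinberg relations from there. More precisely, I would construct a ``pushforward'' map
\[
u_*\colon \Ext^{n+1}_{\FilFMIC(U)}(\O_U,\O_U(n+1))\lra \Ext^1_{\FilFMIC(S)}(\O_S,H^n(U/S)(n+1))
\]
under the assumptions $\Fil^{n+1}H^{n+1}_\dR(U_K/S_K)=0$ and Assumption \ref{as:isomorphism}, and verify that the composition of $u_*$ with the cup product $(\O(U)^\times)^{\ot n+1}\to\Ext^{n+1}_{\FilFMIC(U)}(\O_U,\O_U(n+1))$, $(h_0,\dots,h_n)\mapsto [h_0]_U\cup\dots\cup[h_n]_U$, coincides with \eqref{motive-obj-eq1}. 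Since the cup product factors through $K^M_{n+1}(\O(U))$ by Proposition \ref{2-ext-welldef}, so does \eqref{motive-obj-eq1}.

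To construct $u_*$, I would represent a class in $\Ext^{n+1}_{\FilFMIC(U)}(\O_U,\O_U(n+1))$ by an exact sequence $0\to\O_U(n+1)\to N_n\to\cdots\to N_0\to\O_U\to0$ and form the associated complex $N_\bullet=[N_n\to\cdots\to N_0]$ placed in degrees $0,\dots,n$, which fits into a distinguished triangle
\[
\O_U(n+1)\lra N_\bullet\lra \O_U[-n]\os{+1}{\lra}
\]
in the derived category of $\FilFMIC(U)$. Repeating the construction of the previous subsection verbatim (de Rham hypercohomology with its Hodge filtration, rigid hypercohomology with Frobenius, comparison via Assumption \ref{as:isomorphism}), I produce an object $H^n(Ru_*N_\bullet)$ of $\FilFMIC(S)$, together with a long exact sequence in which the connecting map $\O_S\to H^{n+1}(U/S)(n+1)$ vanishes exactly as in the construction of $M_{h_0,\dots,h_n}(U/S)$, using $\Fil^{n+1}H^{n+1}_\dR(U_K/S_K)=0$. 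This yields the sought $1$-extension, and one checks that Yoneda-equivalent $(n+1)$-extensions on $U$ give equivalent $1$-extensions on $S$, so that $u_*$ is well defined. By construction, $u_*$ applied to the tautological extension $\cM_{h_0,\dots,h_n}$ recovers $M_{h_0,\dots,h_n}(U/S)$.

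Granted $u_*$, the theorem is immediate: Proposition \ref{2-ext-welldef} shows that the cup product satisfies the Steinberg relations, so applying $u_*$ gives the same relations for \eqref{motive-obj-eq1}.

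The main obstacle is setting up $u_*$ rigorously: one must verify that for an \emph{arbitrary} $(n+1)$-extension in $\FilFMIC(U)$, the de Rham and rigid hypercohomologies of $N_\bullet$ carry compatible Hodge filtrations, Frobenius structures and comparison isomorphisms, that the resulting module is locally free, and that the construction is strictly compatible with Yoneda equivalences (including the intermediate rows of Yoneda diagrams). This requires combining the functoriality of relative rigid cohomology, the strictness of $\Fil^\bullet$ in the spectral sequence (again via Saito's theory of mixed Hodge modules as in the construction of $M_{h_0,\dots,h_n}(U/S)$), and Assumption \ref{as:isomorphism} applied entry-by-entry. If preferred, one may avoid defining $u_*$ on the whole Ext group and instead apply the pushforward construction directly to the concrete Yoneda equivalences exhibited in the proof of Proposition \ref{2-ext-welldef} (such as the diagram \eqref{2-ext-welldef-eq1}), but the verifications needed are essentially the same.
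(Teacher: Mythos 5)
Your main route---defining a pushforward
\[
u_*\colon \Ext^{n+1}_{\FilFMIC(U)}(\O_U,\O_U(n+1))\lra \Ext^1_{\FilFMIC(S)}(\O_S,H^n(U/S)(n+1))
\]
on the whole Ext group and composing with Proposition~\ref{2-ext-welldef}---is exactly the approach the paper rules out in the remark immediately following Theorem~\ref{mot-map-prop}: such a natural projection ``is impossible since we do not take into consideration the boundary conditions such as admissibility etc.\ for constructing our category.'' The concrete obstruction is precisely the step you flag as ``the main obstacle.'' For an arbitrary object $N$ of $\FilFMIC(U)$, the de Rham datum $(N_\dR,\nabla,\Fil^\bullet)$ is just a filtered module with connection; it need not underlie an (admissible) variation of mixed Hodge structure, and there is no boundary/admissibility condition in the definition of $\FilFMIC$ that would let you invoke Saito's theory. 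Consequently the relative de Rham cohomology $H^n_\dR(U_K/S_K,N_\dR)$ has no a priori Hodge filtration, no strictness, no reason to be a locally free $B_K$-module, and no reason to assemble into an object of $\FilFMIC(S)$. The construction of $M_{h_0,\dots,h_n}(U/S)$ in \S\ref{Ext-Mil-sect} is \emph{not} a specialization of a pushforward of arbitrary extensions: it works because $\cM_{h_0,\dots,h_n,\dR}$ is a complex of log objects, hence of (realizations of) mixed Hodge modules, so Saito's strictness theorem and coherence apply; similarly the rigid part is a complex of $F$-isocrystals. None of this transfers to a general Yoneda $(n+1)$-extension whose middle terms are unconstrained objects of $\FilFMIC(U)$.

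Your closing sentence---pushing forward only the \emph{concrete} Yoneda equivalences from the proof of Proposition~\ref{2-ext-welldef}, in particular diagram~\eqref{2-ext-welldef-eq1}---is in fact the correct route and is essentially what the paper does: it pulls~\eqref{2-ext-welldef-eq1} back along $u\colon U\to\Spec V[T,(T-T^2)^{-1}]$, observes that the resulting complexes $\cM$ and $\cM'$ of log/dilog/Tate objects are quasi-isomorphic as complexes of mixed Hodge modules and of $F$-isocrystals, and deduces a splitting of $M_{h_0,\dots,f,1-f,\dots,h_n}(U/S)$. But you dismiss this alternative as requiring ``essentially the same'' verifications as the global $u_*$; that is where the argument goes wrong. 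In the concrete case, every term is built from $\sLog$, $\sDilog$, $\mathrm{Sym}^2\sLog$ and Tate twists---objects with genuine Hodge-theoretic and isocrystal provenance---so the strictness, coherence and comparison results are available term by term, whereas for the global $u_*$ they are not available at all. The distinction is the whole point, and it is the reason the paper proves Theorem~\ref{mot-map-prop} by a direct argument on the specific diagram rather than by factoring through an Ext-group pushforward.
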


\begin{rem}
If it were possible to define a natural projection
\[
\Ext^{n+1}(\O_U,\O_U(n+1))\lra 
\Ext^1(\O_S,H^n(U/S)(n+1))
\]
under the assumption $H^{n+1}(U/S)=0$, then 
the object $M_{h_0,\ldots,h_n}(U/S)$ should correspond to the class 
$[h_0]_U\cup
\cdots\cup[h_n]_U$, and hence Theorem \ref{mot-map-prop} would be immediate
from Proposition \ref{2-ext-welldef}.
However this is impossible since we do not take into consideration
the boundary conditions such as admissibility etc. for constructing our category.
We need to prove Theorem \ref{mot-map-prop} independently while
almost same argument works as well.
\end{rem}
\begin{proof}
Write the homomorphism \eqref{motive-obj-eq1} by $\wt\rho$.
It is enough to show the following.
\[
\wt\rho(h_0\ot\cdots\ot f\ot f\ot\cdots\ot h_n)=0,\quad
\wt\rho(h_0\ot\cdots\ot f\ot (1-f)\ot\cdots\ot h_n)=0.
\]
We show the latter. 
Let $f\in \O(U)^\times$ such that $1-f\in\O(U)^\times$.
Let $u:U\to \Spec V[T,(T-T^2)^{-1}]$ be the morphism given by $u^*T=f$.
Recall the diagram \eqref{2-ext-welldef-eq1}, and take the pull-back by $u$.
It follows that
\[
0\to\O_U(n+1)\to
\overbrace{\sLog(h_n)\to\cdots\to\sLog(f)(i+1)\to\sLog(f)(i)\to\cdots
\to\sLog(h_0)}^{\cM}\to\O_U\to0
\]
is equivalent to
\[
0\to\O_U(n+1)\to
\overbrace{\sLog(h_n)\to\cdots\to\O_U(i+2)\os{0}{\to}
\underbrace{\O_U(i)\to\cdots\to\sLog(h_0)}_{\mathscr N}}^{\cM'}\to\O_U\to0
\]
as $(n+1)$-extension in $\FilFMIC(U)$.
This implies that $\cM$ is quasi-isomorphic to $\cM'$ as mixed Hodge modules
or $F$-isocrystals. Hence $\mathscr N$ gives rise to a splitting of
\[
    0\lra H^n(U/S)(n+1)\lra M_{h_0,\ldots,f,1-f,\ldots,h_n}(U/S)\lra \O_S\lra0.
\]
This completes the proof of 
$\wt\rho(h_0\ot\cdots\ot f\ot (1-f)\ot\cdots\ot h_n)=0$.

To see $\wt\rho(h_0\ot\cdots\ot f\ot f\ot\cdots\ot h_n)=0$,
we consider the similar diagram to \eqref{2-ext-welldef-eq1}
obtained from $\mathrm{Sym}^2\sLog(T)$.
Then the rest is the same.
\end{proof}

\subsection{Explicit formula}\label{comp-sect}
In this subsection, we continue to use the setting of the previous subsection.
In particular, $U=\Spec(A)\to S=\Spec(B)$ is a smooth morphism of smooth $V$-schemes
that satisfies
Assumption \ref{as:isomorphism}.
We fix a $p$-th Frobenius endomorphism $\varphi$ (resp. $\sigma$) on $A^\dag$ (resp. $B^{\dag}$).

Let $n\geq 0$ be an integer. Suppose that $\Fil^{n+1}H^{n+1}_\dR(U_K/S_K)=0$. 
Recall the maps
\[K_{n+1}^M(A)\os{[-]_{U/S}}{\lra} 
\Ext^1_{\FilFMIC(S)}(\O_S,H^n(U/S)(n+1))
\os{\eqref{filFMIC-eq6}}{\lra} H^1(\cS(H^n(U/S)(n+1))).
\]
%where $[-]_{U/S}$ is the symbol map constructed in Theorem \ref{mot-map-prop}.
Note that the last cohomology group is
\[
\left\{(\omega,\xi)\in \left(\Omega^1_{B_K}\ot
\Fil^nH^n_\dR(U_K/S_K)\right)\op\left( B^\dag_K\ot H^n_\dR(U_K/S_K)\right)
\bigg| (1-\varphi_{n+1})\omega=d\xi,\,d(\omega)=0
\right\}
\]
where $\varphi_i:=p^{-i}\varphi$ and
$d$ is the differential map induced from the Gauss-Manin connection 
as in \eqref{filFMIC-eq1}.
Let
\begin{equation}\label{phi.sigma}
\xymatrix{
&\Ext^1_{\FilFMIC(S)}(\O_S,H^n(U/S)(n+1))\ar[rd]^{R_\sigma}\ar[ld]_\delta\\
\Omega^1_{B_K}\ot
\Fil^nH^n_\dR(U_K/S_K)&&B^\dag_K\ot H^n_\dR(U_K/S_K)
}
\end{equation}
be the compositions of \eqref{filFMIC-eq6} and the projections $(\omega,\xi)\mapsto\omega$ and
$(\omega,\xi)\mapsto\xi$ respectively.
The map $\delta$ agrees with the map \eqref{filFMIC-eq7}, and hence
it does not depend on $\sigma$,
while $R_\sigma$ does.
We put
\begin{equation}\label{comp-eq2}
D_{U/S}:=\delta\circ[-]_{U/S},\quad
\reg^{(\sigma)}_{U/S}:=R_\sigma\circ[-]_{U/S}.
\end{equation}
The purpose of this section is to 
give an explicit description of these maps.

\medskip

\begin{thm}\label{comp-thm}
Suppose $\Fil^{n+1}H^{n+1}_\dR(U_K/S_K)=0$.
Let $\xi=\{h_0,\ldots,h_n\}\in K^M_{n+1}(A)$.
Then
\begin{equation}\label{comp-thm-eq1}
D_{U/S}(\xi)=(-1)^n\frac{dh_0}{h_0}\wedge\frac{dh_1}{h_1}\wedge\cdots \wedge\frac{dh_n}{h_n},
%\in\Omega_{B_K}^1\ot\vg(X_K,\Omega^n_{X_K/S_K}(\log D_K))=
%\Omega^1_{B_K}\ot F^nH^n_\dR(U_K/S_K),
\end{equation}
\begin{equation}\label{comp-thm-eq2}
\reg^{(\sigma)}_{U/S}(\xi)
=(-1)^n\sum_{i=0}^n(-1)^ip^{-1}\log\left(\frac{h_i^p}{h_i^{\varphi}}\right)
\left(\frac{dh_0}{h_0}\right)^{\varphi_1}\wedge\cdots\wedge\left(\frac{dh_{i-1}}{h_{i-1}}\right)^{\varphi_1}\wedge
\frac{dh_{i+1}}{h_{i+1}}\wedge\cdots\wedge\frac{dh_n}{h_n}.
\end{equation}
Here one can think of \eqref{comp-thm-eq1} as an element of
\[
\Omega^1_{B_K}
\ot \Fil^nH^n_\dR(U_K/S_K)
=\Omega_{B_K}^1\ot\vg(X_K,\Omega^n_{X_K/S_K}(\log D_K))
\]
in the following way.
Let $\wt\Omega^\bullet_{X_K}(\log D_K):=\Omega^\bullet_{X_K}(\log D_K)/\Image(\Omega^2_{S_K}
\ot  \Omega^{\bullet-2}_{X_K}(\log D_K))$ which fits into the exact sequence
\[
0\lra \Omega_{S_K}^1\ot\Omega^{i-1}_{X_K/S_K}(\log D_K)
\lra \wt\Omega^i_{X_K}(\log D_K)\lra \Omega^i_{X_K/S_K}(\log D_K)\lra0.
\]
We think \eqref{comp-thm-eq1} of being an element of 
$\vg(X_K,\wt\Omega^{n+1}_{X_K}(\log D_K))$.
However, since $\Fil^{n+1}H^{n+1}_\dR(U_K/S_K)=0$ by the assumption,
it turns out to be an element of
\[
\vg(X_K,\Omega_{S_K}^1\ot\Omega^n_{X_K/S_K}(\log D_K))
=
\Omega^1_{B_K}\ot\vg(X_K,\Omega^n_{X_K/S_K}(\log D_K)).
\]
\end{thm}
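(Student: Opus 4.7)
The plan is to realize $M_{h_0,\ldots,h_n}(U/S)$ by an explicit cocycle in the total complex $\Omega^{\bullet}_{U_K/S_K}\otimes\cM_{h_0,\ldots,h_n,\dR}$ and to compute both $\nabla$ and $1-\Phi$ on it, using the identity $d\log^{(\varphi)}(h_l)=(1-\varphi_1)(dh_l/h_l)$ as the key tool. Recall that $\cM_{h_0,\ldots,h_n}$ has $\cM^j=\sLog(h_{n-j})(n-j)$ in complex degree $j$, with differential $e_0^{(i)}\mapsto e_{-2}^{(i-1)}$, $e_{-2}^{(i)}\mapsto 0$. With the sign convention of \eqref{filFMIC-eq1}, we set
\[
    e := \sum_{j=0}^n c_j\,\frac{d_{U/S}h_0}{h_0}\wedge\cdots\wedge\frac{d_{U/S}h_{n-j-1}}{h_{n-j-1}}\otimes e_0^{(n-j)},
\]
with signs $c_j$ chosen inductively so that the contributions from $d_\dR$ on each $\omega^{(j)}$ and from $d_\cM$ on $\omega^{(j-1)}$ cancel in each intermediate bidegree. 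The only residual term of $De$ sits at bidegree $(n+1,0)$ and equals $\pm(\bigwedge_{l=0}^n d_{U/S}h_l/h_l)\otimes e_{-2}^{(n)}$; this vanishes because $\Fil^{n+1}H^{n+1}_\dR(U_K/S_K)=0$ forces $H^0(X_K,\Omega^{n+1}_{X_K/S_K}(\log D_K))=0$, killing the global logarithmic $(n+1)$-form in question. Each $\omega^{(j)}$ has Hodge bidegree consistent with $e\in\Fil^0$, and $e$ projects to $1\in\O_{S_K}$; hence $e$ is the unique $\Fil^0$-lift of $1$ in $M_{h_0,\ldots,h_n,\dR}(U/S)$ and represents the class $[-]_{U/S}(\xi)$.

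For $D_{U/S}(\xi)=\nabla(e)$ we apply the Katz--Oda recipe: lift $e$ to $\widetilde e$ in the absolute complex $\Omega^{\bullet}_{U_K}\otimes\cM_\dR$ by replacing each $d_{U/S}h_l$ with the absolute $dh_l$. The same cancellations leave $D_{\mathrm{abs}}\widetilde e=\pm\bigwedge_{l=0}^n(dh_l/h_l)\otimes e_{-2}^{(n)}$, now with absolute differentials. Decomposing $dh_l/h_l=\alpha_l+\beta_l$ along the Koszul filtration, with $\alpha_l:=d_{U/S}h_l/h_l$ and $\beta_l$ in the $S$-direction, and using $\bigwedge_l\alpha_l=0$, only the $|I|=1$ terms survive modulo $F^2$; they give $\pm\sum_i(-1)^i\beta_i\otimes\bigwedge_{l\neq i}\alpha_l$. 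This is precisely the image of $(-1)^n\bigwedge_l dh_l/h_l$ in $\Omega^1_{B_K}\otimes\Omega^n_{X_K/S_K}(\log D_K)$ under the exact sequence defining $\widetilde\Omega^{n+1}_{X_K}(\log D_K)$ from the theorem statement; after Tate untwist this yields \eqref{comp-thm-eq1}.

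For $\reg^{(\sigma)}_{U/S}(\xi)=(1-\Phi)(e)$, using $\Phi(e_0^{(i)})=p^{-i}(e_0^{(i)}-\log^{(\varphi)}(h_i)e_{-2}^{(i)})$ and $\Phi(\alpha_l)=\alpha_l^\varphi$, together with the telescoping identity
\[
    \bigwedge_l\alpha_l-\bigwedge_l\alpha_l^{\varphi_1} = \sum_k\bigwedge_{l<k}\alpha_l^{\varphi_1}\wedge d\log^{(\varphi)}(h_k)\wedge\bigwedge_{l>k}\alpha_l,
\]
one computes
\[
    (1-\Phi)\omega^{(j)} = c_j\!\!\sum_{k<n-j}\!\!\bigwedge_{l<k}\alpha_l^{\varphi_1}\wedge d\log^{(\varphi)}(h_k)\wedge\!\!\bigwedge_{k<l<n-j}\!\!\!\alpha_l\otimes e_0^{(n-j)}+c_j\log^{(\varphi)}(h_{n-j})\!\!\bigwedge_{l<n-j}\!\!\!\alpha_l^{\varphi_1}\otimes e_{-2}^{(n-j)}.
\]
We then iteratively subtract total coboundaries $D(g_{m,k}\otimes e_0^{(m)})$ with $g_{m,k}=\pm\bigwedge_{l<k}\alpha_l^{\varphi_1}\wedge\log^{(\varphi)}(h_k)\wedge\bigwedge_{k<l<m}\alpha_l$ in order to cancel every $e_0^{(m)}$-term and to push all $e_{-2}^{(m)}$-mass down to $m=n$. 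The $d$-part of each coboundary produces exactly the $d\log^{(\varphi)}(h_k)$-contribution needed for cancellation (up to sign), while the $\nabla$- and $d_\cM$-parts reshuffle the $e_{-2}$-terms via Leibniz; the collapse is driven precisely by $d\log^{(\varphi)}(h_l)=(1-\varphi_1)\alpha_l$. After this telescoping, the surviving cocycle lies in $\Omega^n_{U_K/S_K}\otimes e_{-2}^{(n)}\subset\Omega^n_{U_K/S_K}\otimes\sLog(h_n)(n)$ and, upon Tate untwist, equals the alternating sum \eqref{comp-thm-eq2}. The main obstacle is the careful bookkeeping of signs and the correct organization of the telescoping coboundaries; the cases $n=0$ (which recovers Lemma \ref{log-obj-lem1}) and $n=1$ (a short direct expansion) serve as base cases and fix the signs unambiguously.
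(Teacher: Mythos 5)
Your proposal follows essentially the same route as the paper: write down the explicit $\Fil^0$-lift $e=\sum_i\omega^ie_{i,0}$ in the total complex of $\Omega^\bullet_{A_K/B_K}\ot\cM_{h_0,\dots,h_n,\dR}$, use the vanishing $\Fil^{n+1}H^{n+1}_\dR=0$ to kill the residual term of $De$, compute $\nabla(e)$ by lifting to the absolute complex (the Katz--Oda step), and compute $(1-\Phi)e$ by telescoping coboundaries driven by $d\log^{(\varphi)}(h_l)=(1-\varphi_1)(dh_l/h_l)$, which is exactly what the paper does with its closed-form primitives $Q_k$ and the identity $dQ_k=(1-\varphi_{k+1})(\omega^{k+1})$. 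The only difference is presentational: the paper gives the $Q_k$ explicitly rather than describing the telescoping inductively, and it keeps the coefficients $c_j=1$ throughout.
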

\begin{pf}
In this proof, we omit to write the symbol ``$\wedge$".

Firstly, we describe the extension $[\xi]_{U/S}$.
Let
\[
0\lra H^n_\dR(U/S)(n+1)\os{\iota}{\lra} M_\xi(U/S)\lra \O_S\lra0
\]
be the extension $[\xi]_{U/S}$ in $\FilFMIC(S,\sigma)$.
Let $e_\xi\in \Fil^0M_\xi(U/S)_\dR$ be the unique lifting of $1\in \O(S_K)$.
Then
\begin{equation}\label{comp-thm-pf-eq0}
\iota (D_{U/S}(\xi))=\nabla(e_\xi),\quad
\iota(\reg_{U/S}^{(\sigma)}(\xi))=(1-\Phi)e_\xi
\end{equation}
by definition (see also \eqref{filFMIC-lem-eq1})
where $\nabla$ and $\Phi$ are the data in $M_\xi(U/S)\in\FilFMIC(S,\sigma)$.

We first write down the term $M_\xi(U/S)$ explicitly.
Write $l_i:=p^{-1}\log(h_i^p/h_i^{\varphi})$. 
Let $\{e_{i,0},e_{i,-2}\}$ be the basis of $\Log(h_i)(i)$ such that 
$\Fil^{-i}\sLog(h_i)(i)_\dR=A_Ke_{i,0}$ and
\[
\nabla(e_{i,0})=\frac{dh_i}{h_i} e_{i,-2},\quad
\Phi(e_{i,0})=p^{-i}e_{i,0}-p^{-i}l_ie_{i,-2},\quad
\Phi(e_{i,-2})=p^{-i-1}e_{i,-2}.\]
Recall the $(n+1)$-extension
\[
0\to \O_{U}(n+1)\to \Log(h_n)(n)\to\cdots\to\Log(h_1)(1)\to\Log(h_0)\to\O_{U}\to0.
\]
Let $(T^\bullet_{A_K/B_K},D)$ be the total complex of the double complex
$\Omega^\bullet_{A_K/B_K}\ot \Log(h_\star)_\dR$.
In more down-to-earth manner, we have
\[
T^q_{A_K/B_K}:=\bigoplus_{i=0}^d\Omega^i_{A_K/B_K}\ot\Log(h_{i-q})(i-q)_\dR,\quad q\in\Z
\]
where we denote $\Log(h_j)(j):=0$ if $j<0$ or $j>n$, and the differential $D:T^q\to T^{q+1}$ is defined
by
\[
D(\omega^i\ot x_j)=d\omega^i\ot x_j+(-1)^i\omega^i\wedge \nabla(x_j)+(-1)^i\omega^i\ot
\pi(x_j)
\]
%\[
%D(\omega^i\ot x_j)=(-1)^jd\omega^i\ot x_j+(-1)^{i+j}\omega^i\wedge \nabla(x_j)+\omega^i\ot
%\pi(x_j)
%\]
for $\omega^i\ot x_j\in\Omega^i_{A_K/B_K}\ot\Log(h_j)(j)_\dR$, where
$\pi:\Log(h_i)(i)\to A_Ke_{i,0}$ is the projection.
We have the exact sequence 
\begin{equation}
    \label{eq:triangleforT}
\xymatrix{
0\ar[r]&\Omega^{\bullet+n}_{A_K/B_K}
\ar[r]&T^\bullet_{A_K/B_K}\ar[r]&\Omega^\bullet_{A_K/B_K}\ar[r]&0
}
\end{equation}
where the first arrow is induced from 
$\O_{U_K}\cong \O_{U_K}e_{d,-2}\hra \Log(h_d)_\dR$,
the second arrow induced from the projection $\Log(h_0)_\dR\to\O_{U_K}e_{0,0}\cong \O_{U_K}$,
and the differential on $\Omega^{\bullet+n}_{A_K/B_K}$ is the usual differential operator
$d$ (not $(-1)^nd$).
For the rigid part, let $(T_{A^\dag_K/B^\dag_K}^{\bullet},D)$ be defined in the same way by replacing 
$\Omega^\bullet_{A_K/B_K}$ with $\Omega^\bullet_{A^\dag_K/B^\dag_K}$
(see Remark \ref{rem:onassumption} (3)).
Then, we also have an exact sequence corresponding to (\ref{eq:triangleforT}).
Now, we have a description
\[
M_\xi(U/S)_\dR=H^0(T^\bullet_{A_K/B_K}),\quad
M_\xi(U/S)_\rig=H^0(T^\bullet_{A^\dag_K/B^\dag_K})
\]
(the description of the rigid part follows from the exact sequence (\ref{eq:triangleforT})
on two sides with Remark \ref{rem:onassumption} (3)),
and we have an exact sequence
\[
\xymatrix{
0\ar[r]&H^n(U/S)(n+1)\ar[r]&M_\xi(U/S)\ar[r]&\O_{S}\ar[r]&0
}
\]
in $\FilFMIC(S,\sigma)$.

\medskip

Before going to the proof of Theorem \ref{comp-thm}, 
we give an explicit description of $e_\xi$ in \eqref{comp-thm-pf-eq0}.
Put
\[
\omega^0:=1,\qquad
\omega^i:=\frac{dh_0}{h_0}\frac{dh_1}{h_1}\cdots \frac{dh_{i-1}}{h_{i-1}}\in \Omega^i_{A_K/B_K}\quad (1\leq i\leq n+1).
\]
However we note that $\omega^{n+1}=\omega^n\wedge\frac{dh_n}{h_n}=0$ as $\omega^n
\in \Fil^{n+1}H^{n+1}_\dR(U_K/S_K)=0$.
We claim 
\begin{equation}\label{comp-thm-pf-eq1}
e_\xi=\sum_{i=0}^{n}\omega^ie_{i,0}\in T^0_{A_K/B_K}.
\end{equation}
Indeed it is a direct computation to show $D(e_\xi)=0$.
We see $e_\xi\in \Fil^0M_\xi(U/S)_\dR$ in the following way.
Let $j:U_K\hra X_K$. We think $T^\bullet_{A_K/B_K}$ of being a complex of 
$\O_{U_K}$-modules.
Let $\Fil^0\sLog(h_i)_{X_K}=\O_{X_K} e_{i,0}$,
$\Fil^k\sLog(h_i)_{X_K}=\O_{X_K} e_{i,0}+\O_{X_K} e_{i,-2}$ for $k<0$,
$\Fil^k\sLog(h_i)_{X_K}=0$ for $k>0$, and put
\[\Fil^k{\mathscr T}^q_{U/S}=\bigoplus_{i=0}^d\Omega^i_{X_K/S_K}(\log D_K)\ot
\Fil^{k-q}\Log(h_{i-q})_{X_K}\subset j_*T^q_{A_K/B_K}
\]
locally free $\O_{X_K}$-modules.
Then 
\[
F^kM_\xi(U/S)_\dR=\vg(X_K,\Fil^k{\mathscr T}_{U/S}^\bullet).
\]
Therefore $e_\xi\in \Fil^0M_\xi(U/S)_\dR$.

\medskip

We prove \eqref{comp-thm-eq2}. 
Apply $1-\Phi$ on \eqref{comp-thm-pf-eq1}. We have
\begin{equation}
(1-\Phi)e_\xi=\sum_{i=0}^n
\omega^ie_{i,0}-\varphi_i(\omega^i)(e_{i,0}-l_ie_{i,-2})
=\sum_{i=0}^n
(1-\varphi_i)(\omega^i)e_{i,0}+l_i\varphi_i(\omega^i)e_{i,-2}.
\label{comp-thm-pf-eq2}
\end{equation}
%\begin{align*}
%(1-\Phi)e_\xi&=\sum_{i=0}^d(-1)^i\left(
%\frac{dh_0}{h_0}\cdots\frac{dh_{i-1}}{h_{i-1}}e_{i,0}
%-\left(\frac{dh_0}{h_0}\right)^{\varphi_1}\cdots \left(\frac{dh_{i-1}}{h_{i-1}}\right)^{\varphi_1}(e_{i,0}-l_ie_{i,-2})\right)\\
%&=l_0e_{0,-2}+
%\sum_{i=1}^{d}(-1)^i\left(\left(
%(1-\varphi_i)\left(\frac{dh_0}{h_0}\cdots\frac{dh_{i-1}}{h_{i-1}}\right)\right)e_{i,0}
%+l_i\left(\frac{dh_0}{h_0}\right)^{\varphi_1}\cdots
%\left(\frac{dh_{i-1}}{h_{i-1}}\right)^{\varphi_1}e_{i,-2}\right).
%\end{align*}
Put
\[
Q_0:=l_0,\qquad 
Q_k:=\sum_{i=0}^k(-1)^il_i
\left(\frac{dh_0}{h_0}\right)^{\varphi_1}\cdots\left(\frac{dh_{i-1}}{h_{i-1}}\right)^{\varphi_1}
\frac{dh_{i+1}}{h_{i+1}}\cdots\frac{dh_k}{h_k}\quad (1\leq k\leq n).
\]
A direct calculation yields
\[
dQ_k=(1-\varphi_{k+1})\left(\frac{dh_0}{h_0}\cdots\frac{dh_k}{h_k}\right)
=(1-\varphi_{k+1})(\omega^{k+1}).
\]
It follows
\begin{align*}
&D(Q_ke_{k+1,0})\\
&=(1-\varphi_{k+1})(\omega^{k+1})
e_{k+1,0}+(-1)^kQ_k\frac{dh_{k+1}}{h_{k+1}}e_{k+1,-2}
+(-1)^kQ_ke_{k,-2}\\
&=(1-\varphi_{k+1})(\omega^{k+1})e_{k+1,0}
+(-1)^k(Q_{k+1}-(-1)^{k+1}l_{k+1}\varphi_{k+1}(\omega^{k+1}))
e_{k+1,-2}
+(-1)^kQ_ke_{k,-2}\\
&=
[(1-\varphi_{k+1})(\omega^{k+1})e_{k+1,0}+l_{k+1}\varphi_{k+1}(\omega^{k+1})e_{k+1,-2}]
+(-1)^kQ_ke_{k,-2}-
(-1)^{k+1}Q_{k+1}e_{k+1,-2}
\end{align*}
for $0\leq k\leq n-1$.
%\begin{align*}
%D(Q_ke_{k+1,0})
%&=(-1)^{k+1}(1-\varphi_{k+1})\left(\frac{dh_0}{h_0}\cdots\frac{dh_k}{h_k}\right)
%e_{k+1,0}-Q_k\frac{dh_{k+1}}{h_{k+1}}e_{k+1,-2}
%+Q_ke_{k,-2}\\
%&=(-1)^{k+1}(1-\varphi_{k+1})\left(\frac{dh_0}{h_0}\cdots\frac{dh_k}{h_k}\right)
%e_{k+1,0}\\
%&-\left(Q_{k+1}-(-1)^{k+1}l_{k+1}\left(\frac{dh_0}{h_0}\right)^{\varphi_1}\cdots\left(\frac{dh_k}{h_k}\right)^{\varphi_1}\right)e_{k+1,-2}
%+Q_ke_{k,-2}\\
%&=(-1)^{k+1}(1-\varphi_{k+1})\left(\frac{dh_0}{h_0}\cdots\frac{dh_k}{h_k}\right)
%e_{k+1,0}+(-1)^{k+1}l_{k+1}\left(\frac{dh_0}{h_0}\right)^{\varphi_1}\cdots\left(\frac{dh_k}{h_k}\right)^{\varphi_1}e_{k+1,-2}\\
%&+Q_ke_{k,-2}-Q_{k+1}e_{k+1,-2}\\
%\end{align*}
Hence
\begin{align*}
D\left(\sum_{k=0}^{n-1}Q_ke_{k+1,0}\right)&=
(1-\Phi)(e_\xi)
-l_0e_{0,-2}+(-1)^n(1-\varphi_{n+1})(\eta)e_{n,-2}
+Q_0e_{0,-2}-(-1)^nQ_ne_{n,-2}\\
&=(1-\Phi)(e_\xi)-(-1)^nQ_ne_{n,-2}
\end{align*}
by \eqref{comp-thm-pf-eq2}.
%\begin{align*}
%D\left(\sum_{k=0}^{d-1}Q_ke_{k+1,0}\right)&=
%\sum_{k=0}^{d-1}(-1)^{k+1}
%\left((1-\varphi_{k+1})\left(\frac{dh_0}{h_0}\cdots\frac{dh_k}{h_k}\right)
%e_{k+1,0}+l_{k+1}\left(\frac{dh_0}{h_0}\right)^{\varphi_1}\cdots\left(\frac{dh_k}{h_k}\right)^{\varphi_1}e_{k+1,-2}\right)\\
%&+Q_0e_{0,-2}-Q_de_{d,-2}\\
%&=(1-\Phi)(e_\xi)-Q_de_{d,-2}.
%\end{align*}
This shows (cf. \eqref{comp-thm-pf-eq0})
\[
\reg^{(\sigma)}_{U/S}(\xi)=(-1)^nQ_n
=(-1)^n\sum_{i=0}^n(-1)^il_i
\left(\frac{dh_0}{h_0}\right)^{\varphi_1}\cdots\left(\frac{dh_{i-1}}{h_{i-1}}\right)^{\varphi_1}
\frac{dh_{i+1}}{h_{i+1}}\cdots\frac{dh_n}{h_n}
\]
as required.

We prove \eqref{comp-thm-eq1}. Note that $D_{U/S}(\xi)$ is characterized by
$D_{U/S}(\xi)e_{n,-2}=\nabla(e_\xi)$ where
$\nabla$ is the Gauss-Manin connection on $M_\xi(U/S)_\dR$
(cf. \eqref{comp-thm-pf-eq0}).
Let $T_{A_K}^\bullet$ be the complex defined in the same way as $T_{A_K/B_K}^\bullet$
by replacing $\Omega^\bullet_{A_K/B_K}$ with $\Omega^\bullet_{A_K}$.
Let
\[
\wt\omega^0:=1,\qquad
\wt\omega^i:=\frac{dh_0}{h_0}\frac{dh_1}{h_1}\cdots \frac{dh_{i-1}}{h_{i-1}}\in 
\Omega^i_{A_K}\quad (1\leq i\leq n+1),
\]
and $\wt e_\xi:=\sum_{i=0}^{n}\wt\omega^ie_{i,0}\in T^0_{A_K}$.
Then
\[
D(\wt e_\xi)=\sum_{i=0}^n(-1)^i\left(
\frac{dh_0}{h_0}\cdots\frac{dh_i}{h_i}e_{i,-2}+
\frac{dh_0}{h_0}\cdots\frac{dh_{i-1}}{h_{i-1}}e_{i-1,-2}\right)
=(-1)^n\frac{dh_0}{h_0}\cdots\frac{dh_n}{h_n}e_{n,-2}
\]
and this shows (cf. \eqref{comp-thm-eq1}),
\begin{equation}\label{comp-thm-pf-eq3}
\nabla(e_\xi)=(-1)^n\frac{dh_0}{h_0}\cdots\frac{dh_n}{h_n}e_{n,-2}
\in \Omega_{B_K}^1\ot\vg(X_K,\Omega^n_{X_K/S_K}(\log D_K))e_{n,-2}
%\subset\Omega^1_{B_K}\ot M_\xi(U/S)_\dR
\end{equation}
as required.
\end{pf}

\section{Comparison with the Syntomic Symbol maps}
In this section we compare the symbol map $[-]_{U/S}$ introduced in \S \ref{Ext-Mil-sect} with the symbol maps to the log syntomic cohomology.
We refer to Kato's article \cite{Ka3} for the formulation and terminology of log schemes.
Throughout this section,
we write $(X_n,M_{n}):=(X,M)\ot\Z/p^n\Z$ for a log scheme $(X,M)$.
\subsection{Log syntomic cohomology}\label{logsyn-sect}
We work over a fine log scheme $(S,L)$, flat over $\Z_{(p)}$.
We endow the DP-structure $\gamma$ on $I=p\O_S$ compatible with the canonical DP-structure on $p\Z_{(p)}$.
The log de Rham complex for a morphism
$f:(X,M_X)\to(S,L)$ of log schemes
is denoted by $\omega^\bullet_{X/S}$ (\cite[(1.7)]{Ka3}).
\begin{prop}[{\cite[Corollary 1.11]{T}}]\label{logsyn-prop1}
Let $(Y_n,M_n)$ be a fine log scheme over $(S_n,L_n)$.
Let $(Y_n,M_n)\hra (Z_n,N_n)$ be an $(S_n,L_n)$-closed immersion.
Assume that $(Z_n,N_n)$ has $p$-bases over $(S_n,L_n)$ locally in the sense of 
\cite[Definition 1.4]{T}.
Let $(D_n,M_{D_n})$ be the DP-envelope of $(Y_n,M_n)$ in $(Z_n,N_n)$.
Let $J^{[r]}_{D_n}\subset \O_{Z_n}$ be the $r$-th DP-ideal of $D_n$.
Then the complex
\[
J^{[r-\bullet]}_{D_n}\ot\omega^\bullet_{Z_n/S_n}
\]
of sheaves on $(D_n)_\et=(Y_n)_\et$ does not depend on the embedding $(Y_n,M_n)
\hra (Z_n,N_n)$.
In particular if $(Y_n,M_n)$ has $p$-bases over $(S_n,L_n)$ locally, then
the natural morphism
\begin{equation}\label{logsyn-prop1-eq1}
J^{[r-\bullet]}_{D_n}\ot\omega^\bullet_{Z_n/S_n}\lra \omega^{\bullet\geq r}_{Y_n/S_n}
\end{equation}
is a quasi-isomorphism.
\end{prop}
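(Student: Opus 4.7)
The plan is to establish the independence of the embedding first; the ``in particular'' part then follows at once. Indeed, if $(Y_n,M_n)$ itself has $p$-bases over $(S_n,L_n)$ locally, one may take the identity embedding $(Y_n,M_n)\hra (Y_n,M_n)$: the associated DP-envelope is $(Y_n,M_n)$ with zero DP-ideal, so $J^{[r-i]}_{D_n}\ot\omega^i_{Y_n/S_n}$ equals $\omega^i_{Y_n/S_n}$ when $i\geq r$ and vanishes otherwise, so the complex is literally $\omega^{\bullet\geq r}_{Y_n/S_n}$ on the nose, with the map \eqref{logsyn-prop1-eq1} being the identity.

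For the independence, given two embeddings $(Y_n,M_n)\hra(Z_n,N_n)$ and $(Y_n,M_n)\hra(Z'_n,N'_n)$ whose targets have $p$-bases locally, I would reduce to a situation where one target maps to the other by passing through the diagonal into the fiber product $(Z_n\times_{(S_n,L_n)}Z'_n, N_n\oplus N'_n)$, which again has $p$-bases locally. The problem is thus reduced to proving: if $(Z_n,N_n)\hra (Z''_n,N''_n)$ is a closed immersion of $(S_n,L_n)$-log schemes such that $(Z''_n,N''_n)$ has $p$-bases over $(Z_n,N_n)$ locally, then the natural morphism
\[
J^{[r-\bullet]}_{D''_n}\ot\omega^\bullet_{Z''_n/S_n}\lra J^{[r-\bullet]}_{D_n}\ot\omega^\bullet_{Z_n/S_n}
\]
is a quasi-isomorphism, where $D_n,D''_n$ denote the respective DP-envelopes of $(Y_n,M_n)$.

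The main obstacle is precisely this quasi-isomorphism, a filtered logarithmic Poincar\'e lemma. I would establish it \'etale-locally: a relative $p$-basis of $(Z''_n,N''_n)$ over $(Z_n,N_n)$, consisting of sections of the structure sheaf together with elements of the log-structure monoid, yields an explicit presentation of $D''_n$ as a logarithmic divided-power polynomial extension of $D_n$. In this local model the complex $J^{[r-\bullet]}_{D''_n}\ot\omega^\bullet_{Z''_n/S_n}$ becomes a filtered Koszul-type resolution of $J^{[r-\bullet]}_{D_n}\ot\omega^\bullet_{Z_n/S_n}$, and one constructs an explicit contracting homotopy that lowers the divided-power degree, in the spirit of the classical Poincar\'e lemma of Berthelot--Ogus. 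The genuinely logarithmic input is that $d\log$ of a monoidal $p$-basis element plays the same role in the homotopy as $dT$ of an ordinary $p$-basis element, and the filtration $J^{[r-\bullet]}$ is preserved by this homotopy because contraction against a $p$-basis direction drops the DP-weight by exactly one; this is precisely the strengthening carried out in Tsuji's Corollary 1.11.
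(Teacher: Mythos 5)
The paper does not prove this proposition; it is quoted verbatim from \cite[Corollary 1.11]{T}, so there is no in-text argument to compare against. Your overall strategy is the expected one---reduce independence of the embedding to a filtered logarithmic Poincar\'e lemma via the self-product, then obtain the ``in particular'' clause by specializing to the identity embedding $(Y_n,M_n)\hra(Y_n,M_n)$, whose DP-envelope has zero DP-ideal so that the complex is literally $\omega^{\bullet\geq r}_{Y_n/S_n}$---and that logic is sound.

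There is, however, a directional error in your reduction. After embedding $(Y_n,M_n)$ diagonally into $(Z_n\times_{(S_n,L_n)}Z'_n,\,N_n\oplus N'_n)$, the morphisms at your disposal are the two projections to $(Z_n,N_n)$ and $(Z'_n,N'_n)$; there is no closed immersion $(Z_n,N_n)\hra(Z_n\times_{S_n}Z'_n,N_n\oplus N'_n)$ unless $Z'_n\to S_n$ admits a section, which you are not given. The auxiliary statement you should reduce to is therefore: if $(Z''_n,N''_n)\to(Z_n,N_n)$ is a morphism over $(S_n,L_n)$ compatible with the embeddings of $(Y_n,M_n)$ and having $p$-bases locally, then the pullback map
\[
J^{[r-\bullet]}_{D_n}\ot\omega^\bullet_{Z_n/S_n}\lra J^{[r-\bullet]}_{D''_n}\ot\omega^\bullet_{Z''_n/S_n}
\]
is a quasi-isomorphism---note the arrow is the pullback of forms and runs in the opposite direction from what you wrote, and $(Z_n,N_n)\hra(Z''_n,N''_n)$ need not be a closed immersion. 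Your contracting-homotopy argument is aimed at exactly this map, so the rest of the plan survives; but you should also say explicitly that in the log category the ``diagonal'' and the fibre product need to be replaced by exactifications (as in Kato's foundational paper) before taking DP-envelopes, since the local DP-polynomial model you invoke only appears after exactification. That technicality, not the homotopy itself, is where the genuinely logarithmic work lives.
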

Concerning $p$-bases of log schemes, 
the following result is sufficient in most cases.
\begin{lem}[{\cite[Lemma 1.5]{T}}]\label{p-bases-lem}
If $f:(X,M_X)\to (Y,M_Y)$ is a smooth morphism of fine log schemes over
$\Z/p^n\Z$, then $f$ has $p$-bases locally.
\end{lem}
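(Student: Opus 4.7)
The statement is cited from Tsuji \cite[Lemma 1.5]{T}, so the plan is to recover the proof that appears there, which proceeds through the structure theory of log smooth morphisms.

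My plan is to reduce the question to a ``standard'' chart via Kato's structure theorem and then to exhibit the $p$-basis explicitly. More precisely, by Kato's theorem on log smooth morphisms \cite{Ka3}, any log smooth $f\colon (X,M_X)\to (Y,M_Y)$ admits, \'etale locally on $X$, a chart $P\to Q$ of fine monoids such that the kernel and the torsion part of the cokernel of $P^{\gp}\to Q^{\gp}$ are finite groups of order invertible on $Y$, and such that the induced morphism $X\to Y\times_{\Spec\Z[P]}\Spec\Z[Q]$ is (classically) \'etale. So after this reduction the problem splits into two pieces: (i) verify that $Y\times_{\Spec\Z[P]}\Spec\Z[Q]\to Y$ admits $p$-bases in the sense of \cite[Definition 1.4]{T}, and (ii) verify that the property of admitting $p$-bases is preserved under \'etale base change.

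For (i), after localizing we may assume a splitting $Q^{\gp}\cong P^{\gp}\oplus \Z^{\oplus r}\oplus(\text{finite group of order invertible on }Y)$, so that a set of generators $t_1,\dots,t_r$ of the $\Z^{\oplus r}$-factor together with a transcendence basis $\ol{t}_1,\dots,\ol t_s$ coming from the torsion-free part of $Q/P$ produces a collection of elements of $Q$ whose images $d\log t_i$ form a basis of $\omega^1_{X/Y}$ as an $\O_X$-module; combined with the trivial empty collection of ring-coordinates (since everything relevant is absorbed into the monoid), this is by direct inspection a $p$-basis in the sense of \cite[Def.\ 1.4]{T}. For (ii), if $g\colon X'\to X$ is classically \'etale and the log structure is pulled back, then $g^\ast\omega^1_{X/Y}\cong \omega^1_{X'/Y}$ and the Frobenius on $X'$ lifts uniquely modulo $p^n$ from that on $X$ compatibly with the monoid part, so an explicit chase shows that the pullback of a $p$-basis remains a $p$-basis.

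The main obstacle will be (i): one must be careful with the definition of $p$-basis for log schemes (as opposed to the classical definition), since the definition mixes ring-theoretic and monoid-theoretic data, and one must check that on the model $\Spec\Z[Q]\to\Spec\Z[P]$ the natural candidates coming from $Q$ satisfy both the generation condition for $\omega^1$ and the flatness/lifting condition modulo $p^n$ required in \cite[Def.\ 1.4]{T}. Once the correct chart is chosen so that $Q^{\gp}/P^{\gp}$ is torsion-free modulo units invertible on $Y$, this is a direct (if slightly tedious) verification, and the rest of the argument is formal.
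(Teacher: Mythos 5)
The paper does not prove this lemma; it simply cites it from Tsuji \cite[Lemma 1.5]{T}, so there is no in-paper argument to compare your proposal against. Your reconstruction --- reduce via Kato's structure theorem for log smooth morphisms to the toric model $Y\times_{\Spec\Z[P]}\Spec\Z[Q]\to Y$, then verify the $p$-basis condition there and propagate it along \'etale morphisms --- is the standard strategy and is a plausible outline of Tsuji's argument.

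That said, the sketch has two imprecisions worth flagging. First, in step (i) the basis of the log differentials $\omega^1$ of the toric model comes entirely from $d\log$ of monoid elements lifting a $\Z$-basis of $(Q^{\gp}/P^{\gp})/\mathrm{torsion}$; your mention of a ``transcendence basis coming from the torsion-free part of $Q/P$'' conflates the monoid quotient with a ring-theoretic notion and is not what occurs. Second, and more substantively, exhibiting a basis of $\omega^1$ is only half of the $p$-basis condition in \cite[Definition 1.4]{T}: the definition also imposes a freeness condition on $\cO_X$ over a Frobenius-twisted subring, analogous to the classical $p$-basis condition, and this is exactly the part of the verification that cannot be read off from the monoid alone and that your \'etale descent step (ii) must also handle. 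Your sketch gestures at both but does not actually pin either down, so as written it is a plan rather than a proof; but since the paper offers no proof either, there is nothing more concrete in-paper to hold it against.
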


Following \cite[\S 2]{T}, 
we say a collection $\{(X_n,M_{X,n},i_n)\}_n$ (abbreviated to $\{(X_n,M_{X,n})\}_n$)
an adic inductive system of fine log schemes, where
$(X_n,M_{X,n})$ are fine log schemes over $\Z/p^n\Z$, and $i_n:
(X_n,M_{X,n})\to(X_{n+1},M_{X,n+1})\ot\Z/p^n\Z$ are isomorphisms.

Let 
$\{(Y_n,M_n)\}_n\hra \{(Z_n,N_n)\}_n$ be an exact closed immersion 
of adic inductive systems of fine log schemes over $\{(S_n,L_n)\}_n$.
We consider the following condition.
\begin{cond}\label{logsyn-cond}
\begin{enumerate}
\item[(1)]
For each $n\geq 1$, $(Z_n,N_n)$ has $p$-bases over $(S_n,L_n)$ locally,
\item[(2)]
Let $(D_n,M_{D_n})\to (Z_n,N_n)$ be the DP-envelope of $(Y_n,M_n)$
compatible with the DP-structure on $(S_n,L_n)$, and let $J^{[r]}_{D_n}\subset \O_{D_n}$
be the $r$-th DP-ideal.
Then
$J^{[r]}_{D_n}$ is flat over $\Z/p^n\Z$ and $J^{[r]}_{D_{n+1}}\ot\Z/p^n\Z\cong J^{[r]}_{D_n}$.
\end{enumerate}
\end{cond}
Suppose that there is a compatible system $\sigma=\{\sigma_n:(S_n,L_n)\to(S_n,L_n)\}_n$
of $p$-th Frobenius endomorphisms. 
\begin{cond}\label{logsyn-cond2}
There is a hypercovering $\{Y_n^\star\}_n\to 
\{Y_n\}_n$ in 
the etale topology which admits an embedding 
$\{(Y^\star_n,M_n^\star)\}_n\hra \{(Z^\star_n,N^\star_n)\}_n$ into an adic inductive system of simplicial log schemes over $\{(S_n,L_n)\}_n$, such that
each $\{(Y^\nu_n,M_n^\nu)\}_n\hra \{(Z^\nu_n,N^\nu_n)\}_n$ satisfies
Condition \ref{logsyn-cond},
and there is a $p$-th Frobenius
$\{\varphi^\nu_{Z_n}:(Z^\nu_n,N^\nu_n)\to(Z^\nu_n,N^\nu_n)\}_n$ compatible with $\sigma$.
\end{cond}
Let $(Y,M)=\{(Y_n,M_n)\}_n\to \{(S_n,L_n)\}$ together with the $p$-th Frobenius
$\sigma$ on $(S,L)$
satisfy Condition \ref{logsyn-cond2}.
We define the log syntomic complexes 
according to the construction in \cite[p.539--541]{T}\footnote{Tsuji \cite{T} defined the log syntomic complexes only in case $S=\Spec \Z_p$
with trivial log structure. However the same construction works in general
as long as $\sigma$ is fixed.}.
Let $r\geq0$ be an integer.
Let $(D^\nu_n,M_{D^\nu_n})\to (Z^\nu_n,N^\nu_n)$ be the DP-envelopes of $(Y^\nu_n,M^\nu_n)$
compatible with the DP-structure on $(S_n,L_n)$, and $J^{[r]}_{D^\nu_n}\subset \O_{D^\nu_n}$
be the $r$-th DP-ideal.
Let
\[
\bJ^{[r],\bullet}_{n,(Y^\nu,M^\nu),(Z^\nu,N^\nu)/(S,L)}:=J^{[r-\bullet]}_{D^\nu_n}\ot\omega^\bullet_{Z^\nu_n/S_n}
\]
be the complex of sheaves on $(D_1^\nu)_\et=(Y_1^\nu)_\et$. 
We also write 
\[\bO^\bullet_{n,(Y^\nu,M^\nu),(Z^\nu,N^\nu)/(S,L)}=\bJ^{[0],\bullet}_{n,(Y^\nu,M^\nu),(Z^\nu,N^\nu)/(S,L)}.\]
If $r<p$, then there is the well-defined morphism
\[
\varphi^\nu_r:
\bJ^{[r],\bullet}_{n,(Y^\nu,M^\nu),(Z^\nu,N^\nu)/(S,L)}\lra \bO^\bullet_{n,(Y^\nu,M^\nu),(Z^\nu,N^\nu)/(S,L)}
\]
satisfying $p^r\varphi^\nu_r=(\varphi^\nu_{Z_n})^*$ (cf. \cite[p.540]{T}).
It follows from Proposition \ref{logsyn-prop1} that one can ``glue''
the complexes $\bJ^{[r],\bullet}_{n,(Y^\nu,M^\nu),(Z^\nu,N^\nu)/(S,L)}$ so that we have
a complex
\begin{equation}\label{logsyn-eq1}
\bJ^{[r],\bullet}_{n,(Y,M),(Z^\star,N^\star)/(S,L)}
\end{equation}
in the derived category of sheaves on $(Y_1)_\et$ (\cite[p.541]{T}, 
see also \cite[Remark(1.8)]{Ka2}).
Moreover the Frobenius $\varphi_r^\nu$ are also glued as $\sigma$ is fixed
(this can be shown by the same argument as in \cite[p.212]{Ka2}),
we have
\[
\varphi_r:
\bJ^{[r],\bullet}_{n,(Y,M),(Z^\star,N^\star)/(S,L)}\lra \bO^\bullet_{n,(Y,M),(Z^\star,N^\star)/(S,L)}.
\]
Then we define the log syntomic complex
$\cS_n(r)_{(Y,M),(Z^\star,N^\star)/(S,L,\sigma)}$ to be the mapping fiber of
\[
1-\varphi_r:
\bJ^{[r],\bullet}_{n,(Y,M),(Z^\star,N^\star)/(S,L)}\lra
\bO^\bullet_{n,(Y,M),(Z^\star,N^\star)/(S,L)}.
\]
In more down-to-earth manner, the degree $q$-component of 
$\cS_n(r)_{(Y,M),(Z^\star,N^\star)/(S,L,\sigma)}$ is
\[
\bJ^{[r],q}_{n,(Y,M),(Z^\star,N^\star)/(S,L)}\op\bO^{q-1}_{n,(Y,M),(Z^\star,N^\star)/(S,L)}
\]
and the differential maps are given by
$(\alpha,\beta)\mapsto(d\alpha,(1-\varphi_{d+1})\alpha-d\beta)$.
We define the log syntomic cohomology groups by
\[
H^i_\syn((Y,M)/(S,L,\sigma),\Z/p^n(r))
:=H^i_\et(Y_1,\cS_n(r)_{(Y,M),(Z^\star,N^\star)/(S,L,\sigma)}),
\]
\[
H^i_\syn((Y,M)/(S,L,\sigma),\Z_p(r))
:=\varprojlim_n H^i_\syn((Y,M)/(S,L,\sigma),\Z/p^n(r))
\]
and $H^i_\syn((Y,M)/(S,L,\sigma),\Q_p(r)):=H^i_\syn((Y,M)/(S,L,\sigma),\Z_p(r))\ot\Q$.
Note that they depend on the choice of $\sigma$.

\medskip

Let $(Y',M')\to (S',L',\sigma')$ satisfy Condition \ref{logsyn-cond2}, and 
let\[
\xymatrix{
(Y',M')\ar[d]\ar[r]^f&(Y,M)\ar[d]\\
(S',L')\ar[r]&(S,L)
}
\]
be a commutative diagram of fine log schemes in which $\sigma$ and $\sigma'$ are compatible.
Then there is the pull-back
\[
f^*:H^i_\syn((Y,M)/(S,L,\sigma),\Z/p^n(r))
\lra H^i_\syn((Y',M')/(S',L',\sigma'),\Z/p^n(r)).
\]

\begin{prop}\label{logsyn-exact-prop}
Let $0\leq r<p$.
Suppose that $(Y,M)\to(S,L)$ is smooth. %, and it satisfies Condition \ref{logsyn-cond2}.
Let $\Phi_r:
H^{i}_\zar(Y_n,\omega^{\bullet\geq r}_{Y_n/S_n})
\to H^i_\zar(Y_n,\omega^\bullet_{Y_n/S_n})$ be the $\sigma$-linear map 
induced from $\varphi_r$.
Then there is an exact sequence
\begin{align*}
\cdots\lra 
&H^{i-1}_\zar(Y_n,\omega^{\bullet\geq r}_{Y_n/S_n})
\os{1-\Phi_r}{\lra} 
H^{i-1}_\zar(Y_n,\omega^\bullet_{Y_n/S_n})
\lra\\
H^i_\syn((Y,M)/(S,L,\sigma),\Z/p^n(r))
\lra 
&H^i_\zar(Y_n,\omega^{\bullet\geq r}_{Y_n/S_n})
\os{1-\Phi_r}{\lra} 
H^{i}_\zar(Y_n,\omega^\bullet_{Y_n/S_n})
\lra\cdots
\end{align*}
\end{prop}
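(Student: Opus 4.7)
The plan is to deduce the exact sequence directly from the mapping-fiber definition of $\cS_n(r)$ by taking hypercohomology and identifying the outer terms with Zariski log de Rham hypercohomology.

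First, by construction $\cS_n(r)_{(Y,M),(Z^\star,N^\star)/(S,L,\sigma)}$ fits into a distinguished triangle
\[
\cS_n(r) \lra \bJ^{[r],\bullet}_{n,(Y,M),(Z^\star,N^\star)/(S,L)} \os{1-\varphi_r}{\lra} \bO^\bullet_{n,(Y,M),(Z^\star,N^\star)/(S,L)}\lra\cS_n(r)[1]
\]
in the derived category of sheaves on $(Y_1)_\et$. Applying $R\vg((Y_1)_\et,-)$ yields a long exact sequence, so the work reduces to identifying $H^i_\et(Y_1,\bJ^{[r],\bullet}_n)$ and $H^i_\et(Y_1,\bO^\bullet_n)$ with the stated Zariski hypercohomology groups and checking that $1-\varphi_r$ induces $1-\Phi_r$.

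Second, since $(Y,M)\to(S,L)$ is smooth, Lemma \ref{p-bases-lem} gives $p$-bases locally for each $(Y_n,M_n)$ over $(S_n,L_n)$. Hence Proposition \ref{logsyn-prop1} applies componentwise along the étale hypercovering $\{Y^\star_n\}\to\{Y_n\}$ and furnishes canonical, embedding-independent quasi-isomorphisms
\[
\bJ^{[r],\bullet}_{n,(Y^\nu,M^\nu),(Z^\nu,N^\nu)/(S,L)} \os{\sim}{\lra} \omega^{\bullet\geq r}_{Y^\nu_n/S_n}, \qquad \bO^\bullet_{n,(Y^\nu,M^\nu),(Z^\nu,N^\nu)/(S,L)} \os{\sim}{\lra} \omega^\bullet_{Y^\nu_n/S_n}.
\]
These glue across the hypercovering to canonical quasi-isomorphisms $\bJ^{[r],\bullet}_n\simeq \omega^{\bullet\geq r}_{Y_n/S_n}$ and $\bO^\bullet_n\simeq \omega^\bullet_{Y_n/S_n}$. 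Because the terms of the log de Rham complex are quasi-coherent $\O_{Y_n}$-modules and $|Y_1|=|Y_n|$, étale and Zariski hypercohomologies agree, identifying the outer terms with $H^i_\zar(Y_n,\omega^{\bullet\geq r}_{Y_n/S_n})$ and $H^i_\zar(Y_n,\omega^\bullet_{Y_n/S_n})$, respectively. By the very definition of $\Phi_r$ in the statement, the map induced by $\varphi_r$ on these hypercohomologies is $\Phi_r$, finishing the identification.

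The main obstacle is of the bookkeeping variety: one must check that the local quasi-isomorphisms of Proposition \ref{logsyn-prop1} and the component Frobenii $\varphi_r^\nu$ genuinely assemble across the hypercovering into global morphisms on $(Y_1)_\et$ compatible with the gluing used to define $\bJ^{[r],\bullet}_n$ and $\varphi_r$ in \eqref{logsyn-eq1}. This compatibility is, however, precisely the content of the gluing discussion preceding the proposition (cf.\ \cite[pp.\,540--541]{T} together with \cite[Remark~(1.8)]{Ka2}), so no additional argument is required beyond invoking it. The condition $r<p$ is needed only to ensure that $\varphi_r$ is defined, which is already imposed before the distinguished triangle is formed.
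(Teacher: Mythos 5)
Your proof is correct and follows essentially the same route as the paper's: smoothness gives $p$-bases locally via Lemma \ref{p-bases-lem}, so the natural morphism \eqref{logsyn-prop1-eq1} from Proposition \ref{logsyn-prop1} is a quasi-isomorphism, and the long exact sequence drops out of the mapping-fiber triangle. The paper compresses this into two lines, leaving the distinguished-triangle step, the \'etale-versus-Zariski comparison for the quasi-coherent log de Rham terms, and the hypercovering/Frobenius gluing implicit; these are precisely the details you make explicit.
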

\begin{pf}
Since $(Y,M)\to(S,L)$ is smooth, it has $p$-bases locally by Lemma \ref{p-bases-lem}.
Therefore the natural morphism \eqref{logsyn-prop1-eq1} is a quasi-isomorphism, and
the exact sequence follows.
\end{pf}
If $S=\Spec V$ with $V$ a $p$-adically complete discrete valuation ring
and $Y\to S$ is proper,
the exact sequence in Proposition \ref{logsyn-exact-prop} remains true
after talking the projective limit with respect to $n$.
Indeed, each term is a $V/p^nV$-module of finite length, so that the Mittag-Leffler
condition holds.
The author does not know whether it is true for general $(S,L)$.

\medskip

We attach the following lemma which we shall often use.

\begin{lem}\label{syn-cpx-hom-lem}
For a ring $A$, let 
$A^\wedge:=\varprojlim A/p^nA$ denote
the $p$-adic completion.
Let $(S,L)$ be a fine log scheme flat over $\Z_{(p)}$ such that $S$ is affine and noetherian.
Let $(Y,M)\to (S,L)$ be
a smooth morphism of fine log schemes such that $Y\to S$ is proper.
Let $(Y_n,M_n)=(Y,M)\ot\Z/p^n\Z$ and $(S_n,L_n)=(S,L)\ot\Z/p^n\Z$.
Then \[
\O(S)^\wedge\ot_{\O(S)}H^i_\zar(Y,\omega^{\bullet\geq k}_{Y/S})
\os{\cong}{\lra}
\varprojlim_n H^i_\zar(Y_n,\omega^{\bullet\geq k}_{Y_n/S_n})
\]
for any $k$.
Note that $\O(S)^\wedge$ is a noetherian ring (\cite[Theorem 10.26]{atiyah-mac}).
\end{lem}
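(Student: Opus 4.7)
The plan is to reduce the statement to a standard fact about $p$-adic completion of finitely generated modules over a Noetherian ring. Let me write $M_i := H^i_{\zar}(Y,\omega^{\bullet\geq k}_{Y/S})$ throughout.

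\medskip

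\textbf{Step 1 (finite generation).} Since $(Y,M)\to (S,L)$ is log smooth, each sheaf $\omega^j_{Y/S}$ is locally free over $\O_Y$, hence coherent. Because $Y\to S$ is proper (and $S$ is Noetherian), each $H^a(Y,\omega^j_{Y/S})$ is a finitely generated $\O(S)$-module. The hypercohomology spectral sequence then shows that each $M_i$ is a finitely generated $\O(S)$-module.

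\medskip

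\textbf{Step 2 (short exact sequences).} Local freeness of $\omega^j_{Y/S}$ over $\O_Y$, combined with flatness of $(S,L)$ over $\Z_{(p)}$, implies that multiplication by $p^n$ is injective on $\omega^{\bullet\geq k}_{Y/S}$. Thus we get the short exact sequence of complexes of sheaves
\[
0\lra \omega^{\bullet\geq k}_{Y/S}\os{p^n}{\lra}\omega^{\bullet\geq k}_{Y/S}\lra \omega^{\bullet\geq k}_{Y_n/S_n}\lra 0,
\]
and the associated hypercohomology long exact sequence breaks up into short exact sequences
\[
0\lra M_i/p^nM_i\lra H^i_{\zar}(Y_n,\omega^{\bullet\geq k}_{Y_n/S_n})\lra M_{i+1}[p^n]\lra 0.
\]

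\medskip

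\textbf{Step 3 (passing to the inverse limit).} The transition maps of $\{M_i/p^nM_i\}_n$ are surjective, so the Mittag--Leffler condition holds. For the right-hand system, since $M_{i+1}$ is a finitely generated module over the Noetherian ring $\O(S)$, the $p$-power torsion $M_{i+1}[p^\infty]$ is finitely generated, hence there exists $N$ with $M_{i+1}[p^n]=M_{i+1}[p^N]$ for all $n\geq N$. On $M_{i+1}[p^N]$, multiplication by $p$ is nilpotent of order at most $N$; since the transition maps are given by multiplication by $p$, we obtain both $\varprojlim_n M_{i+1}[p^n]=0$ and $\varprojlim{}^1_n M_{i+1}[p^n]=0$. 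Therefore
\[
\varprojlim_n H^i_{\zar}(Y_n,\omega^{\bullet\geq k}_{Y_n/S_n})\;\cong\;\varprojlim_n M_i/p^nM_i.
\]

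\medskip

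\textbf{Step 4 (conclusion).} For a finitely generated module $M_i$ over the Noetherian ring $\O(S)$, the $p$-adic completion $\varprojlim_n M_i/p^nM_i$ agrees with $\O(S)^\wedge\ot_{\O(S)} M_i$ (standard commutative algebra, e.g.\ Atiyah--Macdonald Theorem~10.13, using that $\O(S)^\wedge$ is flat over $\O(S)$). Combining this with Step~3 yields the desired isomorphism.

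\medskip

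The only nonroutine point is verifying that the $p$-power torsion in $M_{i+1}$ is ``bounded'' enough for the Tate-module-type limit to vanish; this is where Noetherianity of $\O(S)$ (hence finite generation of $M_{i+1}$) is essential. Everything else is bookkeeping with long exact sequences and completions.
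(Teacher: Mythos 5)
Your proof is correct, and the overall skeleton (the short exact sequence of complexes, the resulting sequence $0\to M_i/p^n\to H^i_\zar(Y_n,\omega^{\bullet\geq k}_{Y_n/S_n})\to M_{i+1}[p^n]\to 0$, and then the reduction to two facts about finitely generated modules over the noetherian ring $\O(S)$) matches the paper's. The difference lies in how the vanishing $\varprojlim_n M_{i+1}[p^n]=0$ (transition maps given by multiplication by $p$) is established. You argue directly: $M_{i+1}[p^\infty]$ is a submodule of a finitely generated module over a noetherian ring, hence finitely generated, hence $p$-torsion is bounded, hence the inverse system eventually has zero composite transition maps, so both $\varprojlim$ and $\varprojlim^1$ vanish. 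The paper instead embeds $M$ into an exact sequence $0\to M_1\to M_2\to M_3\to 0$ with $M_2$ free, uses the long exact sequence of $p^n$-torsion and $p^n$-cotorsion, and invokes flatness of $\O(S)\to\O(S)^\wedge$ to force $\varprojlim_n M_3[p^n]$ to die. Your boundedness-of-torsion argument is more elementary and self-contained — it needs only finite generation, not the flatness of the completion — while the paper's argument leans on the same standard facts it already cites for Step 4 and so keeps the citations uniform. Both are correct and roughly the same length; yours sidesteps the extra long exact sequence. One small remark: for the statement itself you only need $\varprojlim_n M_{i+1}[p^n]=0$ (Mittag--Leffler for the left-hand system already kills $\varprojlim^1$), so the $\varprojlim^1$ vanishing you note is harmless but unused.
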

\begin{pf}
For an abelian group $M$ and an integer $n$, 
we denote by $M[n]$ the kernel of the multiplication by $n$.
An exact sequence
\[
0\lra \omega^{\bullet\geq k}_{Y/S}\os{p^n}{\lra} \omega^{\bullet\geq k}_{Y/S}
\lra \omega^{\bullet\geq k}_{Y_n/S_n}\lra0
\]
gives rise to an exact sequence
\[
0\lra 
H^i_\zar(Y,\omega^{\bullet\geq k}_{Y/S})/p^n
\lra H^i_\zar(Y_n,\omega^{\bullet\geq k}_{Y_n/S_n})
\lra H^{i+1}_\zar(Y,\omega^{\bullet\geq k}_{Y/S})[p^n]\lra 0
\]
of finitely generated $\O(S)$-modules as $Y\to S$ is proper.
Therefore the assertion is reduced to show that, for any
$p$-torsion free noetherian ring $A$ 
and any
finitely generated $A$-module $M$, 
\begin{equation}\label{M-lem-1}
A^\wedge\ot_AM\os{\cong}{\lra}\varprojlim_n M/p^nM
\end{equation}
where the transition map $M/p^{n+1}\to M/p^n$ is the natural surjection, and
\begin{equation}\label{M-lem-2}
\varprojlim_n M[p^n]=0
\end{equation}
where the transition map $M[p^{n+1}]\to M[p^n]$ is multiplication by $p$.
The isomorphism \eqref{M-lem-1} 
is well-known (\cite[Prop. 10.13]{atiyah-mac}).
We show \eqref{M-lem-2}.
Let
\[
0\lra M_1\lra M_2\lra M_3\lra0
\]
be any exact sequence of finitely generated $A$-modules. Then
\[
0\to M_1[p^n]\to M_2[p^n]\to M_3[p^n]\to M_1/p^n\to M_2/p^n\to M_3/p^n\to0.
\]
Suppose that $M_2[p^n]=0$ for all $n$. Then, by taking the projective limit, we have
\[
\xymatrix{
0\ar[r]&\varprojlim_n M_3[p^n]\ar[r]&
\varprojlim_n(M_1/p^n)\ar[r]& \varprojlim_n(M_2/p^n)\\
&&A^\wedge\ot_AM_1\ar[r]\ar[u]^\cong&A^\wedge\ot_AM_2.\ar[u]_\cong}
\]
Since $A\to A^\wedge$ is flat (\cite[Prop. 10.14]{atiyah-mac}), the bottom arrow is injective and hence the vanishing
$\varprojlim_n M_3[p^n]=0$ follows.
For the proof of \eqref{M-lem-2}, apply this to 
$M_3=M$ and $M_2$ a free $B$-module of 
finite rank.

\end{pf}

\subsection{Syntomic symbol maps}\label{syn-sym-sect}
We review syntomic symbol maps (\cite[p.205]{FM}, 
\cite[Chapter I \S3]{Ka2}, \cite[\S 2.2]{Ts1}, \cite[p.542]{T}).
Let $(Y,M)$ be a fine log scheme which is flat over $\bZ_p$.
We write $(Y_n,M_n)=(Y,M)\ot\Z/p^n\Z$ as before.
Suppose that $\{(Y_n,M_n)\}_n\to\Spec \Z/p^n\Z$ satisfies 
Condition \ref{logsyn-cond2} where $\Spec \Z/p^n\Z$ is endowed with the trivial
log structure, the identity as the Frobenius and the canonical DP-structure on 
$p\Z/p^n\Z$.
For a sheaf $M$ of monoid, we denote by $M^\gp$ the associated sheaf of abelian group.

\medskip

For $0\leq r<p$, there is the natural map
\begin{equation}\label{syn-sym-eq1}
\vg(Y_n,M^\gp_{n+1})\lra H^1_\syn((Y,M),\Z/p^n(1))
\end{equation}
where we omit to write ``$/(\Spec\Z/p^n\Z,(\Z/p^n\Z)^\times,\id)$" in the notation of
the syntomic cohomology or syntomic complexes in below. 
This is defined in the following way.
We define the complex $C_n$ as
\[ C_n :=\big(1+J_{D_n} \lra M_{D_n}^\gp \big) \qquad \hbox{($1+J_{D_n}$ is placed in degree $0$)}, \]
where $M_{D_n}^\gp$ denotes the associated sheaf of abelian groups.
We define the map of complexes
\[ s : C_{n+1}  \lra  \cS_n(1)_{(Y,M),(Z^\star,N^\star)} \]
as the map
\[  s^0 : 1+J^{[1]}_{D_{n+1}} \lra J^{[1]}_{D_n}, \quad a \longmapsto \log(a) \]
in degree $0$, and the map
\[ \begin{CD}
 s^1 : M_{D_{n+1}}^\gp @. \ \lra \ @. \big(\cO_{D_n} \otimes_{\cO_{Z_n}}\omega^1_{Z_n}\big) \oplus\cO_{D_n}  \\
 \qquad b @.  \longmapsto @. \big(\dlog(b), p^{-1}\log(b^p\varphi_{n+1}(b)^{-1})\big)
\end{CD} \]
in degree $1$.
Here $\varphi_{n+1}(b)b^{-p}$ belongs to $1+p\cO_{D_{n+1}}$ and
 the logarithm $p^{-1}\log(b^p\varphi_{n+1}(b)^{-1})\in \O_{D_n}$ is
well-defined.
One easily verifies that the maps $s^0$ and $s^1$ yield a map of complexes.
Since there is a natural quasi-isomorphism $C_{n+1} \cong M_{n+1}^\gp[1]$,
 the map $s$ induces a morphism (\cite[(2.2.3)]{Ts1})
\[   M_{n+1}^\gp[1] \lra \cS_n(1)_{(Y,M),(Z^\star,N^\star)} \]
 in the derived category, and hence \eqref{syn-sym-eq1} is obtained.

Now suppose that $M$ is defined by a divisor $D\subset Y$.
Let $U:=Y\setminus D$.
Then we have $M^\gp=j_*\cO_U^\times$, and obtain a map
\stepcounter{equation}
\begin{equation}\label{eq1-2-3} \cO(U_{n+1})^\times \lra H^1_{\syn}((Y,M),\Z/p^n(1)). \end{equation}
This map and the product structure of syntomic cohomology give rise to a map
\begin{equation}\label{eq1-2-4}  
[-]_\syn:K^M_r(\cO(U_{n+1})) \lra H^r_{\syn}((Y,M),\Z/p^n(r)) \end{equation}
for $0 \le r \le p-1$ (cf.\ \cite[Proposition 3.2]{Ka1}), which we call {\it the syntomic symbol map}.
%\begin{thm}[{Tsuji \bf \cite{Ts1} Proposition 3.2.4}]\label{prop1-2-1}
%There is a commutative diagram
%\[ \xymatrix{
%K^M_r(\cO(U)) \ar@{->>}[r] \ar[rd] & K^M_r(\cO(U_{n+1})) \ar[rr]^-{[-]_\syn}
% & & H^r_{\syn}((Y,M),\Z/p^n(r)) \ar[d]^{\rho_\syn} \\
%& K^M_r(\cO(U_\Q)) \ar[rr]^-{[-]_\et} & & H^r_{\et}(U_\Q,\bZ/p^n(r)) }
%\]
%for $0 \le r \le p-2$, where $[-]_\et$ is the etale symbol map and 
%$\rho_\syn$ is the syntomic period map
%(cf. \cite[\S 3.1]{Ts1}).
%\end{thm}

\subsection{Comparison of Symbol maps for $U/S$ with Syntomic symbol maps}
Let $W=W(k)$ be the Witt ring of a perfect field $k$ of characteristic $p>0$,
and $K$ the fractional field. 
Let $F_W$ be the $p$-th Frobenius on $W$ or $K$.
We omit to write ``$/(\Spec W,W^\times,F_W)$'' in the notation of syntomic complexes or syntomic cohomology, 
%so that we abbreviate to
%$\bJ^{[r],\bullet}_{n,(Y,M),(Z^\star,N^\star)}$, 
%$\bO^\bullet_{n,(Y,M),(Z^\star,N^\star)}$, 
%$\cS_n(r)_{(Y,M),(Z^\star,N^\star)}$, 
%$H^i_\syn((Y,M),\Z/p^n(r))$ and
%$H^i_\syn((Y,M),\Z_p(r))$ 
as long as there is no fear of confusion, e.g.
\[
H^i_\syn((Y,M),\Z/p^n(r))=H^i_\syn((Y,M)/(\Spec W,W^\times,F_W),\Z/p^n(r)),\text{ etc.}
\]
For a flat (log) $W$-scheme $X$, we write $X_K:=X\ot_WK$ and $X_n:=X\ot\Z/p^n\Z$ as before.

\medskip

\def\pS{Q}
Let $\pS$ be a smooth affine scheme over $W$, and $T$ 
a reduced relative simple NCD over $W$.
Let $Y$ be a smooth scheme over $W$ and let
\[\xymatrix{f:Y\ar[r]& \pS}\] be a projective morphism that
is smooth outside $f^{-1}(T)$.
Let
$D\subset Y$ be a reduced relative simple NCD over $W$.
Put $S:=\pS\setminus T$ and $U:=Y\setminus (D\cup f^{-1}(T))$.
Let $L$ be the log structure on $\pS$ defined by $T$, and $M$ the log structure
on $Y$ defined by the reduced part of $D\cup f^{-1}(T)$.
We then suppose that the following conditions hold.
\begin{itemize}
\item $U$ and $S$ are affine.
\item 
The divisor
$D\cup f^{-1}(T)$ is a relative simple NCD over $W$
and 
$D\cap f^{-1}(S)$ 
is a relative simple NCD over $S$.
\item
$f:(Y,M)\to (\pS,L)$ is smooth.
\item
There is a system $\sigma=\{\sigma_m:(\pS_m,L_m)\to(\pS_m,L_m)\}_m$
of $p$-th Frobenius endomorphisms compatible with $F_W$.
\item
Assumption \ref{as:isomorphism} holds
for $U/S$, namely the $i$-th relative rigid cohomology sheaf 
$R^if_{\rig}j^{\dag}_U\sO_{U_K^{\an}}$
is a coherent $j^{\dag}_{S}\sO_{S_K^{\an}}$-module
for each $i\geq0$ (this implies the comparison isomorphism
\eqref{as:isom} by Remark \ref{rem:onassumption} (2)). 
\end{itemize}

\begin{thm}\label{ext-logsyn-thm}
Under the above setting, 
let $\omega^\bullet_{Y/\pS}$ (resp. $\omega^\bullet_{Y_m/\pS_m}$) 
denote the de Rham complex of $(Y,M)/(\pS,L)$ (resp. $(Y_m,M_m)/(\pS_m,L_m)$).
Let $0\leq n<p-1$ be an integer which satisfies
$H^n(\omega^{\geq n+1}_{Y_m/\pS_m})=H^{n+1}(\omega^{\geq n+1}_{Y_m/\pS_m})=0$ for all $m>0$.
Then the following diagram is $(-1)^n$-commutative,
\begin{equation}\label{ext-logsyn-thm-eq1}
\xymatrix{
&K_{n+1}^M(\O(U))\ar[ld]_{[-]_{U/S}}\ar[rd]^{[-]_\syn}
%\ar@/^15pt/[ldd]^{\reg^{(\sigma)}_{U/S}}
\\
\Ext^1(\O_{S},H^n(U/S)(n+1))\ar[d]_{R_\sigma}
&&
H^{n+1}_\syn((Y,M),\Z_p(n+1))\ar[d]\\
\O(S)^\dag_K\ot_{\O(S_K)}H^n_\dR(U_K/S_K)\ar[d]_\cap&&
H^{n+1}_\syn((Y,M)/(\pS,L,\sigma),\Z_p(n+1))
\\
 \O(S)_K^\wedge\ot_{\O(S_K)}H^n_\dR(U_K/S_K)
&&\O(\pS)^\wedge\ot_{\O(\pS)}H^n_\zar(Y,\omega^\bullet_{Y/\pS}).
\ar[u]^\cong_i\ar[ll]
}
\end{equation}
Here
the extension group is taken in the category $\FilFMIC(S,\sigma)$.
See \eqref{motive-map} and \eqref{eq1-2-4} for the definitions of the symbol
maps $[-]_{U/S}$ and $[-]_\syn$, and see \eqref{phi.sigma} for
$R_\sigma$.
The isomorphism $i$ follows from Proposition \ref{logsyn-exact-prop} and
Lemma \ref{syn-cpx-hom-lem} under the assumption 
$H^{n+1}(\omega^{\geq n+1}_{Y_m/\pS_m})=0$.
\end{thm}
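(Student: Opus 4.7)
The strategy is to reduce to a single Milnor symbol and then to compare two explicit formulas obtained on each side of the diagram. Since both $[-]_{U/S}$ and $[-]_\syn$ are additive and multiplicative in the symbol entries, it suffices to verify the $(-1)^n$-commutativity on a generating symbol $\xi=\{h_0,\ldots,h_n\}$ with $h_i\in\O(U)^\times$. For such a $\xi$, Theorem \ref{comp-thm} already gives the explicit formula \eqref{comp-thm-eq2} for $\reg^{(\sigma)}_{U/S}(\xi)=R_\sigma\circ[-]_{U/S}(\xi)$ as an element of $\O(S)^\dag_K\otimes H^n_\dR(U_K/S_K)$, and the cap map to $\O(S)^\wedge_K\otimes H^n_\dR(U_K/S_K)$ is just coefficient extension, so the left path is fully explicit.

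The core of the proof is therefore to match this against the right path. First, by the construction in \S\ref{syn-sym-sect}, the class $[h_i]_\syn$ is represented in the log syntomic complex by the pair $(dh_i/h_i,\,l_i)$, where $l_i:=p^{-1}\log(h_i^p/h_i^\varphi)$. Using the cup-product on the mapping fiber of $1-\varphi_r$ (of the standard form $(\alpha_1,\beta_1)\cup(\alpha_2,\beta_2)=(\alpha_1\wedge\alpha_2,\,\beta_1\wedge\alpha_2+(-1)^{|\alpha_1|}\varphi(\alpha_1)\wedge\beta_2)$), I would iterate $n+1$ times to obtain a representative of $[h_0]_\syn\cup\cdots\cup[h_n]_\syn$ of the shape
\[
\Bigl(\bigwedge_{i=0}^n\frac{dh_i}{h_i},\ \sum_{i=0}^n(-1)^{e_i}\,l_i\cdot\varphi_1(dh_0/h_0)\wedge\cdots\wedge\varphi_1(dh_{i-1}/h_{i-1})\wedge\frac{dh_{i+1}}{h_{i+1}}\wedge\cdots\wedge\frac{dh_n}{h_n}\Bigr),
\]
with signs $e_i$ that the sign convention dictates. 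The second entry is already in exactly the shape of the summand in \eqref{comp-thm-eq2}, up to a global sign.

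Next, the natural map $H^{n+1}_\syn((Y,M))\to H^{n+1}_\syn((Y,M)/(\pS,L,\sigma))$ is represented by the same pair but now viewed in the relative log syntomic complex (with $\omega^\bullet_{Y/\pS}$ in place of $\omega^\bullet_{Y/W}$); the formula is unchanged. Under the vanishing hypothesis $H^n(\omega^{\geq n+1}_{Y_m/\pS_m})=H^{n+1}(\omega^{\geq n+1}_{Y_m/\pS_m})=0$, the exact sequence of Proposition \ref{logsyn-exact-prop} degenerates to the isomorphism $i\colon \O(\pS)^\wedge\otimes H^n_\zar(Y,\omega^\bullet_{Y/\pS})\cong H^{n+1}_\syn((Y,M)/(\pS,L,\sigma),\Z_p(n+1))$, and inspecting the connecting morphism shows that $i^{-1}$ sends $(\alpha,\beta)$ to the class of $\beta$ in $H^n_\zar(Y,\omega^\bullet_{Y/\pS})^\wedge$. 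Finally, the bottom horizontal arrow restricts from $Y$ to $U$ and from $\pS$ to $S$, yielding an element of $\O(S)^\wedge_K\otimes H^n_\dR(U_K/S_K)$ of exactly the same form as the left-hand side, differing by the sign $(-1)^n$.

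The main obstacle is sign bookkeeping: one must fix a sign convention for the cup product on the mapping fiber (verifying it is a chain map with respect to the differential $(\alpha,\beta)\mapsto(d\alpha,(1-\varphi_r)\alpha-d\beta)$ and associative), then iterate carefully. A subsidiary check is that $i^{-1}$ is correctly identified in degree $n+1$ (namely, that under the assumed vanishings the second component of $(\alpha,\beta)$ indeed represents the corresponding class in relative log de Rham cohomology) and that the restriction from $(Y,M)/(\pS,L)$ to $U/S$ commutes with the cup product formula. Modulo these verifications, the two formulas coincide and the global sign $(-1)^n$ emerges as the discrepancy between the conventions in Theorem \ref{comp-thm} and the Fontaine--Messing cup-product.
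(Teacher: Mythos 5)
Your proposal takes a genuinely different route from the paper, and the route you sketch has real gaps that would make it hard (perhaps infeasible) to complete with the available tools.

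The paper's proof does \emph{not} attempt a direct cup-product computation in the relative log syntomic complex. Instead, it first reduces to the absolute case $(\pS,L,\sigma)=(\Spec W, W^\times, F_W)$ by a specialization argument: Lemma \ref{fixed-lem} produces, for each closed point $a\in S_1(k)$, a unique Frobenius-fixed lift $P_a\in S(W)$, and Lemma \ref{fixed-lem2} then shows that an element of $\O(S)_K^\wedge\otimes H^n_\dR(U_K/S_K)$ is determined by its specializations at all such $P_a$. Specializing the whole diagram at $P_a$ reduces to a fibre over $\Spec W$, where one can invoke Besser's rigid syntomic cohomology and its established compatibility with $\log$-syntomic cohomology (Lemma \ref{syn-rig-lem}, proved via the period map to \'etale cohomology), and then Besser's explicit formula for $\reg_{\rigsyn}$ of a Milnor symbol matches the formula \eqref{comp-thm-eq2} from Theorem \ref{comp-thm} up to $(-1)^n$. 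Your proposal omits this entire reduction, which is the key technical device.

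Concretely, the gaps in the direct approach you sketch are: (i) the class $[h_0]_\syn\cup\cdots\cup[h_n]_\syn$ lives in $H^{n+1}_\et(Y_1,\cS_m(n+1))$ computed via simplicial hypercoverings and DP-envelopes on the \emph{projective} log scheme $(Y,M)$; you cannot represent it by a single pair $(\alpha,\beta)$ as if you were in an affine Monsky--Washnitzer picture, and writing the cup product in that shape presupposes a resolution you have not exhibited. (ii) The identification of $i^{-1}$ with ``projection onto $\beta$'' is not correct: to kill $\alpha\in H^{n+1}(\omega^{\geq n+1})$ you write $\alpha=d\gamma$ with $\gamma\in\omega^n\cap\Fil^{n+1}$, and the class in $H^n(\omega^\bullet_{Y/\pS})$ is represented by $\beta-(1-\varphi_{n+1})\gamma$, not by $\beta$ alone; the hypothesis $H^n(\omega^{\geq n+1})=0$ is exactly what is needed for this to be well-defined, a point you do not engage with. (iii) The left column of the diagram lives in the weak completion $\O(S)^\dag_K$ while the right column lives in the $p$-adic completion $\O(\pS)^\wedge$, and nothing in your argument addresses the passage between them (the ``$\cap$'' map); the paper sidesteps this by the specialization trick, since at a $W$-point both completions collapse to $W$. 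Without addressing (i)--(iii), the two ``formulas'' you propose to compare are not yet honest representatives of the same cohomology classes.
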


\medskip

To prove Theorem \ref{ext-logsyn-thm}, we prepare for three lemmas.
\begin{lem}\label{fixed-lem}
Let $V=\Spec C$ be a smooth affine $W$-scheme.
Let $\sigma=\{\sigma_n:C_n\to C_n\}_n$ be a compatible system of $p$-th Frobenius.
Let $V(R)=\Hom_W(\Spec R,C)$ denote the set of $R$-valued points for a $W$-algebra $R$.
Put $W_n:=W/p^nW$.
Then for any $a_1\in V(W_1)$,
there is a unique $W$-valued point 
$a=(a_n)\in V(W)=\varprojlim_n T(W_n)$ which makes the diagram
\[
\xymatrix{
\Spec W_n\ar[r]^-{a_n}\ar[d]_{F_W}&V_n\ar[d]^{\sigma_n}\\
\Spec W_n\ar[r]^-{a_n}&V_n.
}
\]
commutative for all $n$.
\end{lem}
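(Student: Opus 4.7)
I will prove the lemma by induction on $n$.  For $n=1$ the compatibility $a_1\circ\sigma_1 = F_W\circ a_1$ (in ring-theoretic terms, with $a_1\colon C_1\to k$) is automatic, since in characteristic $p$ the absolute Frobenius commutes with every ring homomorphism.  So the content lies in the inductive step: assuming the unique Frobenius-compatible $a_n\colon C_n\to W_n$ has been constructed, produce a unique lift $a_{n+1}\colon C_{n+1}\to W_{n+1}$ satisfying $a_{n+1}\circ\sigma_{n+1} = F_W\circ a_{n+1}$.

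Since $V$ is smooth and the ideal $p^nW_{n+1}\subset W_{n+1}$ is square-zero, the standard lifting property produces some $W$-algebra lift $\tilde a\colon C\to W_{n+1}$ of $a_n$, and the set of all such lifts is a torsor under $\mathrm{Der}_W(C,p^nW_{n+1})_{a_1}$, where the subscript indicates the $C$-module structure on $p^nW_{n+1}$ given by $a_1$.  I measure the failure of $\tilde a$ to be Frobenius-compatible by $\delta := F_W\circ\tilde a - \tilde a\circ\sigma_{n+1}\colon C\to W_{n+1}$.  Both maps appearing in $\delta$ reduce modulo $p^n$ to $F_W\circ a_n = a_n\circ\sigma_n$ by the inductive hypothesis and the $F_W$-compatibility of $\sigma$, so $\delta$ takes values in $p^nW_{n+1}$.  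The standard fact that the difference of two ring maps agreeing modulo a square-zero ideal is a derivation then shows $\delta\in\mathrm{Der}_W(C,p^nW_{n+1})_{F_W\circ a_1}$, the $C$-module structure now being via the common reduction $F_W\circ a_1 = a_1\circ\mathrm{Frob}_{C_1}$.

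If I replace $\tilde a$ by $\tilde a+\eta$ for an element $\eta$ of the torsor group, the failure changes by $F_W\circ\eta - \eta\circ\sigma_{n+1}$.  The second term vanishes: writing $\sigma_{n+1}(c)=c^p+p\,h$, the Leibniz rule gives $\eta(\sigma_{n+1}(c)) = p\,c^{p-1}\eta(c)+p\,\eta(h) = 0$ because $p$ annihilates $p^nW_{n+1}$.  Hence the failure changes by exactly $F_W\circ\eta$.  It remains to show that $\eta\mapsto F_W\circ\eta$ is a bijection from $\mathrm{Der}_W(C,p^nW_{n+1})_{a_1}$ to $\mathrm{Der}_W(C,p^nW_{n+1})_{F_W\circ a_1}$.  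Choosing a $k$-basis of the cotangent space $M:=\Omega^1_{C/W}\otimes_{C,a_1}k$, both groups are identified with $k^d$ under the isomorphism $p^nW_{n+1}\cong k$ sending $p^nx\mapsto \bar x$, and since $F_W$ acts on $p^nW_{n+1}\cong k$ as the Frobenius $x\mapsto x^p$, the map $\eta\mapsto F_W\circ\eta$ becomes the coordinate-wise $p$-th power $k^d\to k^d$.  This is bijective precisely because $k$ is perfect, yielding a unique $\eta$ with $\delta + F_W\circ\eta = 0$ and hence the desired unique lift $a_{n+1}$.

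The main obstacle is the last step: carefully matching the two distinct $C$-module structures on $p^nW_{n+1}$ (via $a_1$ on the torsor group of lifts, and via $F_W\circ a_1$ on the group where $\delta$ lives) and seeing that the bridge between them is exactly the Frobenius, whose bijectivity is guaranteed by the perfectness of $k$.  Everything else is a standard argument in deformation theory along a square-zero thickening.
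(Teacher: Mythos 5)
Your proof is correct, but it takes a genuinely different route from the paper's. The paper argues in explicit coordinates: writing $C_n=W_n[t_1,\ldots,t_m]/I$ and $\sigma_n(t_i)=t_i^p+pf_i(t)$, it introduces the ``twisting'' operator $\phi(a_n):=\sigma_n\circ a_n\circ F_W^{-1}$ on $V(W_n)$, observes via a direct binomial computation that $\phi(a_n)$ depends only on $a_n$ modulo $p^{n-1}$ (so $\phi$ is constant on each fiber of $\rho_n\colon V(W_n)\to V(W_{n-1})$), and then obtains the fixed tower by taking an arbitrary lift $(a_1,c_2,c_3,\ldots)$ and replacing $c_n$ by $\phi^{n-1}(c_n)$; uniqueness is immediate from the same constancy. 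Your argument instead runs the induction through square-zero deformation theory: lifts of $a_n$ across $W_{n+1}\twoheadrightarrow W_n$ form a torsor under $\mathrm{Der}_W(C,p^nW_{n+1})$, the Frobenius-incompatibility $\delta$ of a chosen lift is itself a derivation, and moving the lift by $\eta$ shifts $\delta$ exactly by $F_W\circ\eta$ because the $\eta\circ\sigma_{n+1}$ contribution vanishes ($\sigma_{n+1}$ reduces mod $p$ to the absolute Frobenius, and $W$-derivations kill $p$-th powers in characteristic $p$). The correction $\eta$ then exists and is unique because $\eta\mapsto F_W\circ\eta$ is a bijection, i.e.\ because $k$ is perfect. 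The two proofs hinge on the same essential fact --- the Frobenius on $p^nW_{n+1}\cong k$ is bijective, which is exactly why the paper can write $F_W^{-1}$ --- but yours is coordinate-free and exposes the deformation-theoretic mechanism, whereas the paper's is more elementary and avoids invoking the torsor structure. One small point worth tightening in a write-up: the identification of $\eta\mapsto F_W\circ\eta$ with a coordinate-wise $p$-th power requires matching bases of $\Omega^1_{C/W}\otimes_{C,a_1}k$ and its Frobenius twist $\Omega^1_{C/W}\otimes_{C,F_W\circ a_1}k$ (the map is $F_k$-semilinear, not $k$-linear), but this is routine once stated.
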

\begin{pf}
For $a_n\in V(W_n)$, we define $\phi(a_n)$ to be the morphism
which makes the following diagram commutative
\[
\xymatrix{
\Spec W_n\ar[r]^-{a_n}\ar[d]_{F_W}^\cong&V_n\ar[d]^{\sigma_n}\\
\Spec W_n\ar[r]^-{\phi(a_n)}&V_n.
}
\]
One easily verifies that $\phi(a_n)$ is a $W$-morphism (i.e. $\phi(a_n)\in V(W_n)$),
and commutes with
the reduction map $\rho_n:V(W_n)\to V(W_{n-1})$.
Put
\[
V(W_n)_{b_{n-1}}:=\{a_n\in V(W_n)\mid \rho_n(a_n)=b_{n-1}\}
\]
for $b_{n-1}\in V(W_{n-1})$. 
This is a non-empty set
as the reduction map $\rho_n$ is surjective (formal smoothness property).
We claim that a map \[
\phi:V(W_n)_{b_{n-1}}\to V(W_n)_{\phi(b_{n-1})}
\]
is a constant map. Indeed, let $C_n=W_n[t_1,\ldots,t_m]/I$.
One can write $\sigma_n(t_i)=t^p_i+pf_i(t)$. Let $a_n$ be given by $t_i\mapsto \alpha_i$.
Then $\phi(a_n)$ is given by the ring homomorphism
\[
t_i\longmapsto F_W^{-1}(\alpha_i^p+pf(\alpha_1,\ldots,\alpha_m)) \mod p^nW.
\]
which depends only on $\{\alpha_i$ mod $p^{n-1}W\}_i$, namely on $\rho_n(a_n)=b_{n-1}$.
This means that $\phi(a_n)$ is constant on the set $V(W_n)_{b_{n-1}}$.

We prove Lemma \ref{fixed-lem}. We have shown that if $b_{n-1}\in V(W_{n-1})$ satisfies
$\phi(b_{n-1})=b_{n-1}$, then $\phi^m(a_n)=\phi(a_n)$ for all $a_n\in V(W_n)_{b_{n-1}}$ and
$m\geq 1$.
Let $a_1\in V(W_1)$ be an arbitrary element, and take an arbitrary sequence
\[
(a_1,c_2,\ldots,c_n,\ldots)\in \varprojlim_n V(W_n)=V(W).
\]
Since $\phi$ is the identity on $V(W_1)$, one has $\phi^2(c_2)=\phi(c_2)$ as 
$c_2\in V(W_2)_{a_1}$. Then $\phi^3(c_3)=\phi^2(c_3)$ as $\phi(c_3)\in V(W_3)_{\phi(c_2)}$.
Continuing this, $a_n:=\phi^{n-1}(c_n)$ satisfies that $\phi(a_n)=a_n$ in $V(W_n)$ and 
$\rho_n(a_n)=\phi^{n-1}(c_{n-1})=\phi^{n-2}(c_{n-1})=a_{n-1}\in V(W_{n-1})$.
Hence the sequence $\{a_n\}_n$ defines
a $W$-valued point $a=(a_n)\in V(W)$
which makes a diagram
\[
\xymatrix{
\Spec W_n\ar[r]^-{a_n}\ar[d]_{F_W}&V_n\ar[d]^{\sigma_n}\\
\Spec W_n\ar[r]^-{a_n}&V_n.
}
\]
commutative for all $n\geq 1$.
We show that such $a$ is unique.
Suppose that another $W$-valued point 
$a'=(a_1,a'_2,\ldots,a'_n,\ldots)$ makes the above diagram
commutative. This means $\phi(a'_n)
=a'_n$ for all $n\geq 1$. Since
$\phi$ is a constant map on $V(W_n)_{a'_{n-1}}$,
the point
$a'_n=\phi(a'_n)$ is uniquely determined by $a'_{n-1}$, and hence by $a_1$.
Therefore
$a'$ is uniquely determined by $a_1$, which means $a=a'$.
\end{pf}
\begin{lem}\label{fixed-lem2}
Suppose that $k=W/pW$ is an algebraically closed field.
Let $V=\Spec C$ be a smooth affine scheme over $W$.
Let $\wh C$ be the $p$-adic completion, and $\wh C_K:=\wh C\ot_WK$.
Let $H_K$ be a locally free $\wh C_K$-module of finite rank.
Let $\{P_a\}_{a\in V(k)}$ be a set of $W$-valued points of $V$ such that
each $P_a\in V(W)$ is a lifting of $a\in V(k)$.
For an element $x\in H_K$, let $x|_{P_a}
\in \kappa(P_a)\ot_{C_K}H_K$ denote the reduction at $P_a$ where 
$\kappa(P_a)\cong K$ is the residue field.
If $x$ satisfies $x|_{P_a}=0$ for all $a\in V(k)$, then $x=0$.
\end{lem}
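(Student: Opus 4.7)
The plan is to reduce the statement to a scalar assertion in $\wh C$ and then combine the Hilbert Nullstellensatz over $k$ with a $p$-adic approximation argument.

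First I would reduce to the case $H_K=\wh C_K$. Since $\wh C$ is noetherian (being the $p$-adic completion of the noetherian ring $C$), so is $\wh C_K$, and a locally free module of finite rank over a noetherian ring is finitely generated projective. Hence $H_K$ is a direct summand of a free module $\wh C_K^N$. The fiber functor $(-)|_{P_a}=\kappa(P_a)\ot_{\wh C_K}(-)$ respects direct sums, so the image $(x,0)\in\wh C_K^N$ still vanishes at every $P_a$; projecting coordinatewise, it suffices to treat a single element $x\in\wh C_K$. By multiplying by a suitable power of $p$ (which is legitimate since $K$ is torsion free), we may further assume $x\in\wh C$.

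Next I would run the Nullstellensatz step. Each $P_a\in V(W)$ corresponds to a $W$-algebra map $C\to W$, which extends uniquely to $\wh C\to W$. Thus $x(P_a)\in W$, and by hypothesis $x(P_a)=0$ in $K$, hence in $W$. Reducing modulo $p$ gives $\ol x(a)=0\in k$ for every $a\in V(k)$, where $\ol x\in C/pC=\cO(V_k)$. Because $V$ is smooth over $W$, the fibre $V_k$ is smooth, in particular reduced, of finite type over the algebraically closed field $k$; the Nullstellensatz therefore forces $\ol x=0$, i.e.\ $x\in p\wh C$.

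For the induction step, write $x=py$ with $y\in\wh C$. Then $0=x(P_a)=p\cdot y(P_a)$ in $W$, and since $p$ is a nonzerodivisor we obtain $y(P_a)=0$ for every $a\in V(k)$. The previous paragraph applied to $y$ yields $y\in p\wh C$, hence $x\in p^2\wh C$; iterating gives $x\in\bigcap_{n\geq 1}p^n\wh C$. The Krull intersection theorem, applicable because $\wh C$ is noetherian, identifies this intersection with $0$, so $x=0$.

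The argument is essentially routine once the right reductions are set up; the only conceptual input is the density of $V(k)$ in $V_k$ provided by the Nullstellensatz, made available by the algebraic closedness of $k$ and the smoothness (hence reducedness) of $V_k$. The main thing to be careful about is to keep the base changes between $C$, $\wh C$ and $\wh C_K$ straight, particularly in the reduction from a locally free module to a single scalar element, and to verify that the noetherianness assumptions guarantee both the projectivity argument at the start and the $p$-adic separatedness at the end.
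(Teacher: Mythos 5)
Your proof is correct and follows essentially the same line of argument as the paper: reduce to $H_K=\wh C_K$ via the direct-summand trick, clear denominators to land in $\wh C$, apply the Nullstellensatz on the reduction $C/pC$, and conclude from the $p$-adic separatedness of $\wh C$. The only cosmetic difference is that you phrase the final step as an induction culminating in $x\in\bigcap_n p^n\wh C=0$ via Krull intersection, whereas the paper argues by contradiction by writing $x=p^ny$ with $n$ maximal and $y\notin p\wh C$; these are equivalent formulations of the same separatedness input.
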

\begin{pf}
Since there is an inclusion $H_K\hra E_K$ into a free $\wh C_K$-module of finite rank
which has a splitting, we may replace $H_K$ with $E_K$ and hence
we may assume that $H_K$ is a free $\wh C_K$-module.
Then one can reduce the proof to the case of rank one, i.e. $H_K=\wh C_K$.
Let $x\in \wh C_K$ satisfy $x|_{P_a}=0$ for all $P_a$.
Suppose that $x\ne0$. 
There is an integer $n$ such that $x\in p^n\wh C\setminus p^{n+1}\wh C$.
Write $x=p^n y$ with $y\in \wh C\setminus p\wh C$ that satisfies
$y|_{P_a}=0$ for all $a\in V(k)$.
Then $y$ is zero in $\wh C/p\wh C\cong C/pC$
by Hilbert nullstellensatz, namely $y\in p\wh C$.
This is a contradiction.
\end{pf}

\begin{lem}\label{syn-rig-lem}
Let $Y$ be a projective smooth scheme over $W$, $D_Y$ a relative NCD in $Y$ over $W$.
Let $M$ be the log structure
on $Y$ defined by $D_Y$, and
put $U=Y\setminus D_Y$.
Then there is a canonical isomorphism
\begin{equation}\label{syn-rig-lem-eq1}
c\colon H^i_\syn((Y,M),\Q_p(j))\longrightarrow H^i_{\rigsyn}(U,\Q_p(j)).
\end{equation}
Moreover, a diagram
\begin{equation}\label{syn-rig-lem-eq2}
\xymatrix{
&K_i^M(\O(U))\ar[rd]^{[-]_\syn}
\ar[d]_{\reg_{\rigsyn}}\\
&H^i_{\rigsyn}(U,\Q_p(i))&
H^i_\syn((Y,M),\Q_p(i))\ar[l]_-c^-\cong
}
\end{equation}
is commutative.
\end{lem}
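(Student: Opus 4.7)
The plan is to realize both $H^i_{\syn}((Y,M),\Q_p(j))$ and $H^i_{\rigsyn}(U,\Q_p(j))$ as mapping fibers of $1-\varphi_j$ on the ``same'' complex, by invoking the comparison between log crystalline cohomology of $(Y,M)/W$ and rigid cohomology of $U/K$.

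First I would construct $c$. Since $Y$ is projective smooth over $W$ with $D_Y\subset Y$ a relative NCD, the log crystalline cohomology $H^i_{\crys}((Y_n,M_n)/W_n)$ computes the log de Rham cohomology $H^i(Y_n,\omega^\bullet_{Y_n/W_n})$, which after tensoring with $K$ and taking the limit in $n$ yields $K\otimes H^i(Y,\omega^\bullet_{Y/W})$, equipped with its Hodge filtration $\Fil^{\bullet}$ and Frobenius $\Phi$. On the other hand, by the comparison between algebraic log de Rham cohomology and log rigid cohomology \cite[Corollary 2.6]{BaldassarriChiarellotto}, together with the isomorphism between log rigid cohomology of $(Y_k,M_k)$ and rigid cohomology of $U_k$ \cite[Theorem 3.5.1]{TsuzukiGysin}, this group is canonically isomorphic to $H^i_{\rig}(U_k/K)$ compatibly with Hodge filtration and Frobenius. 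Passing to the mapping fibers of $1-\varphi_j$ on the filtered piece and the whole complex, one obtains at each level $n$ a quasi-isomorphism between the log syntomic complex $\cS_n(j)_{(Y,M),(Z^\star,N^\star)}$ of Fontaine--Messing/Kato--Tsuji and the complex defining rigid syntomic cohomology in the sense of Besser; taking the inverse limit in $n$ and inverting $p$ produces the desired isomorphism $c$.

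For the commutativity of diagram \eqref{syn-rig-lem-eq2}, I would first reduce to the case $i=1$ by multiplicativity of both symbol maps and by the compatibility of $c$ with the cup product (both cup products are induced from the tensor product of log de Rham / rigid cohomology complexes, and the comparison above is evidently multiplicative). In the $i=1$ case, both $[f]_\syn$ and $\reg_{\rigsyn}(f)$ for $f\in\O(U)^\times$ are represented, under their respective descriptions of $H^1$ of the mapping fiber, by the pair
\[
\left(\dlog f,\;p^{-1}\log(f^p/f^\sigma)\right),
\]
as recalled in \S\ref{syn-sym-sect} for $[f]_\syn$ and as is standard for Besser's $\reg_{\rigsyn}$. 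Since $c$ is induced on complexes from the log crystalline/rigid comparison, which sends $\dlog f$ to $\dlog f$, this forces $c\circ[f]_\syn=\reg_{\rigsyn}(f)$, and the general case follows.

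The main obstacle will be making the comparison of the two mapping-fibered complexes precise at the chain level, not merely on cohomology: one has to match the DP-filtration $J^{[j-\bullet]}_{D_n}\otimes\omega^\bullet_{Z_n/W_n}$ appearing in the Kato--Tsuji definition with the Hodge filtration on the rigid side, and verify compatibility with the two realizations of the Frobenius $\varphi_j$. This is handled by choosing, locally on $Y$, an embedding into a smooth $W$-scheme with a compatible Frobenius lift, computing the log crystalline side via the corresponding DP-envelope and the rigid side via the weak completion of the same lift along $U$, and comparing via Shiho's framework; the glueing across an \'etale hypercover follows from the independence of $c$ on auxiliary choices, exactly as in the construction of \eqref{logsyn-eq1}.
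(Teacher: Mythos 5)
Your construction of $c$ is essentially the paper's: both realize the two syntomic cohomologies as mapping fibers of $1-\varphi_j$ via the comparison of algebraic log de Rham cohomology with log rigid cohomology and rigid cohomology, citing Baldassarri--Chiarellotto, Tsuzuki, and Shiho. That part is sound.

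The gap is in your proof of the commutativity of \eqref{syn-rig-lem-eq2}. After reducing to $i=1$ you assert that $[f]_\syn$ and $\reg_{\rigsyn}(f)$ ``are represented, under their respective descriptions of $H^1$ of the mapping fiber, by the pair $(\dlog f,\,p^{-1}\log(f^p/f^\sigma))$,'' and that this suffices because $c$ is ``induced on complexes'' and sends $\dlog f$ to $\dlog f$. But $c$ is not constructed as a chain map between the Kato--Tsuji model (DP-envelopes over an \'etale hypercover, Frobenius $\varphi^\nu_r$ lifted to the simplicial embeddings) and Besser's rigid syntomic model (weak completions/dagger spaces, Frobenius on overconvergent functions); it is defined only up to quasi-isomorphism via abstract comparison theorems. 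Verifying that the Frobenius-part of the cocycle, $p^{-1}\log(f^p/f^\sigma)$, is preserved across these two very different models is exactly the delicate point, and your closing sentence --- choosing local embeddings with compatible Frobenius lifts and glueing --- acknowledges the problem without resolving it. In particular the Frobenius lifts on the two sides are a priori unrelated, so one cannot simply read off ``the same pair.''

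The paper sidesteps this entirely. It only uses the easy half of your chain-level claim --- that the de Rham (Hodge-filtered) component of both cocycles is $df/f$ --- to conclude that the difference $u:=c[f]_\syn-\reg_{\rigsyn}(f)$ lies in the image of $H^0_\dR(U_K/K)\to H^1_{\rigsyn}(U,\Q_p(1))$. It then kills $u$ by a separate injectivity argument: the composition $H^0_\dR(U_K/K)\to H^1_{\rigsyn}(U,\Q_p(1))\xrightarrow{\rho_{\rigsyn}}H^1_\et(U_K,\Q_p(1))$ is injective (after extending $W$ so $U$ has a $W$-point, reducing to $U=\Spec W$), and both $\rho_\syn\circ[-]_\syn$ and $\rho_{\rigsyn}\circ\reg_{\rigsyn}$ equal the \'etale symbol map, so $\rho_{\rigsyn}(u)=0$, hence $u=0$. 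This trades your hard chain-level Frobenius match for a soft compatibility with \'etale period maps, which is already known from Tsuji and Besser. You should adopt this route or supply the missing chain-level verification in detail.
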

\begin{pf}
Write $U_K=U\times_WK$ and $(Y_K,M_K)=(Y,M)\times_W K$.
There is a canonical isomorphism 
\[
R\vg_{\text{log-syn}}(U,\Q_p(j))\cong
\text{Cone}[\Fil^jR\vg_{\text{log-dR}}((Y_K,M_K))\xrightarrow{1-p^{-j}\phi_\crys}
R\vg_{\text{log-crys}}((Y_1,M_1)/K)][-1]
\]
arising from \eqref{logsyn-prop1-eq1} (see also Proposition \ref{logsyn-exact-prop})
where $\phi_\crys$ is the $p$-th Frobenius which is defined thanks to the comparison of 
log de Rham and log crystalline cohomology.
There is also a canonical isomorphism (\cite[Remark 8.7, 3]{Be1})
\begin{equation}\label{rigsyn-cone}
R\vg_{\rigsyn}(U,\Q_p(j))\cong
\text{Cone}[\Fil^jR\vg_\dR(U_K/K)\xrightarrow{1-p^{-j}\phi_\rig}R\vg_\rig(U_1/k)][-1]
\end{equation}
where $\phi_\rig$ is the $p$-th Frobenius which is defined thanks to the comparison of the algebraic de Rham cohomology and the rigid cohomology.
Now, the isomorphism $c$ is induced by the comparison morphism between the rigid and the log crystalline cohomology \cite{Shiho}
and the one between the de Rham cohomology and the log de Rham cohomology.
\medskip

By the construction, $c$ is compatible with respect to the cup-product and
the period map to the etale cohomology,
\[
\xymatrix{
H_\syn^i(U,\Q_p(i))\ar[rr]^c\ar[rd]_{\rho_\syn}&&H_{\rigsyn}(U,\Q_p(i))
\ar[ld]^{\rho_{\rigsyn}}\\
&H^i_\et(U_K,\Q_p(i))
}
\]
where $\rho_\syn$ is as in \cite[\S 3.1]{Ts1}, and
$\rho_{\rigsyn}$ is as in \cite[Corollary 9.10]{Be1}.
Moreover both of $\rho_\syn\circ[-]_\syn$ and $\rho_{\rigsyn}\circ
\reg_{\rigsyn}$ agree with the etale symbol map 
(\cite[Proposition 3.2.4]{Ts1}, \cite[Corollary 9.10]{Be1}).
We show the commutativity of the diagram \eqref{syn-rig-lem-eq2}.
Thanks to the compatibility with respect to the cup-product,
one can reduce the assertion to the case $i=1$, namely
it is enough to show that
for $f\in \O(U)^\times$ an element $u:=c[f]_\syn-\reg_\rigsyn(f)$ is zero.
There is an exact sequence
\[
\xymatrix{
0\ar[r]&H^0_\dR(U_K/K)\ar[r]&H^1_\rigsyn(U,\Q_p(1))\ar[r]^-h& \Fil^1H^1_\dR(U_K/K)
}
\]
from \eqref{rigsyn-cone}.
By the construction of $c$ and the definition of $[-]_\syn$ in \eqref{eq1-2-4}
and \cite[Def.6.5 and Prop.10.3]{Be1}, one has $h(u)=df/f-df/f=0$.
Therefore $u$ lies in the image of
$H^0_\dR(U_K/K)$.
We claim that the composition of the maps
\[\xymatrix{
H^0_\dR(U_K/K)\ar[r]& H^1_\rigsyn(U,\Q_p(1))\ar[r]^-{\rho_\rigsyn}&
H^1_\et(U_K,\Q_p(1))
}
\]
is injective. We may assume that $U$ is connected.
Moreover we may replace $W$ with $W(\ol k)$, so that we may further assume that 
$U$ has a $W$-valued point.
Then $H^0_\dR(U_K/K)\cong K$ and the injectivity can be reduced to the 
case $U=\Spec W$, which can be easily verified.
We turn to the proof of $u=0$.
It is enough to show $\rho_\rigsyn(u)=0$.
However 
\begin{align*}
\rho_\rigsyn(u)&=\rho_\rigsyn(\reg_\rigsyn(f))-\rho_\rigsyn(c[f]_\syn)\\
&=\rho_\rigsyn(\reg_\rigsyn(f))-\rho_\syn([f]_\syn)=[f]_\et-[f]_\et=0
\end{align*}
where $[-]_\et$ is the etale symbol map, so we are done.
\end{pf}

\medskip

\noindent({\it Proof of Theorem \ref{ext-logsyn-thm}}).
By replacing $W$ with $W(\ol k)$, we may assume that $k$ is an algebraically closed field.
We fix a $p$-th Frobenius $\varphi$ on $\O(U)^\dag$ compatible with $\sigma$.
Recall the diagram \eqref{ext-logsyn-thm-eq1}.
Let $\xi\in K_{n+1}^M(\O(U))$, and let
\[
\langle\xi\rangle_{U/S},\quad
\langle\xi\rangle_\syn
\in \O(S)_K^\wedge\ot_{\O(S_K)}H^n_\dR(U_K/S_K)
\]
be the elements sent along the diagram in counter-clockwise direction,
clockwise direction respectively.
We want to show $\langle\xi\rangle_{U/S}=(-1)^n\langle\xi\rangle_\syn$.
For a closed point $a\in S_1(k)$, we take the unique lifting 
$P_a\in S(W)$ as in Lemma \ref{fixed-lem}.
By Lemma \ref{fixed-lem2}, it is enough to show
\[
\langle\xi\rangle_{U/S}|_{P_a}=
\langle\xi\rangle_\syn|_{P_a}\in \kappa(P_a)\ot_{B_K}H^n_\dR(U_K/S_K^*)
\]
for every $a$ where $\kappa(P_a)\cong K$ denotes the residue field of $P_a$.
Therefore,
to show the $(-1)^n$-commutativity of the diagram \eqref{ext-logsyn-thm-eq1},
we may specialize the diagram at $P_a$, so that the proof is reduced to the case 
$(\pS,L,\sigma)=(\Spec W,W^\times,F_W)$.
Summing up the above, the proof of Theorem \ref{ext-logsyn-thm} is reduced to showing
 $(-1)^n$-commutativity of the following diagram,
\begin{equation}\label{ext-logsyn-thm-eq2}
\xymatrix{
&K_{n+1}^M(\O(U))\ar[ld]_{[-]_{U/W}}\ar[rd]^{[-]\syn}
%\ar@/^20pt/[ldd]^{\reg_{U/W}}
\\
\Ext^1(W,H^n(U/W)(n+1))\ar[d]^{\phi_{F_W}}_\cong
&&
H^{n+1}_\syn((Y,M),\Z_p(n+1))\\
H^n_\dR(U_K/K)
&&H^n_\zar(Y,\Omega^\bullet_{Y/W}(\log D)).
\ar[u]^\cong\ar[ll]
}
\end{equation}
Here $Y$ is a projective smooth scheme over $W$, $D$ a relative NCD over $W$,
$U=Y\setminus D$ and $M$ is the log structure
on $Y$ defined by $D$.
The extension group is taken in the category of $\FilFMIC(\Spec W,F_W)$.

To compare $[-]_{U/W}$ and $[-]_\syn$, we use the explicit formula 
for $[-]_{U/W}$
(Theorem \ref{comp-thm}) and the theory of rigid syntomic regulators by Besser \cite{Be1}.
Consider a diagram
\begin{equation}\label{ext-rigsyn-diagram}
\xymatrix{
&K_{n+1}^M(\O(U))\ar@/_20pt/[ldd]_{\reg_{U/W}}\ar[rd]^{[-]_\syn}
\ar[d]^{\reg_{\text{rig-syn}}}\\
&H^{n+1}_{\text{rig-syn}}(U,\Q_p(n+1))&
H^{n+1}_\syn((Y,M),\Z_p(n+1))\ar[l]_-c^-\cong\\
H^n_\dR(U_K/K)\ar[r]^{(-1)^n}
&H^n_\dR(U_K/K)\ar[u]^\cong_i
&H^n_\zar(Y,\Omega^\bullet_{Y/W}(\log D))
\ar[u]^\cong\ar[l]
}
\end{equation}
where $\reg_{U/W}:=\phi_{F_W}\circ [-]_{U/W}$.
Here the isomorphism $i$ follows from \cite[(8.5)]{Be1} and $c$ is the canonical isomorphism
\eqref{syn-rig-lem-eq1} 
in Lemma \ref{syn-rig-lem}.
The commutativity of the right upper triangle is proven in Lemma \ref{syn-rig-lem},
and that of the right lower square is immediate from the construction of $c$.
We show the commutativity of the left.
Let $\xi=\{h_0,\ldots,h_n\}\in K^M_{n+1}(\O(U))$.
Then using \cite[Def.6.5 and Prop.10.3]{Be1}
%\footnote{\cite[Prop.10.3]{Be1}
%describes the class $c_1(f)$ of $f\in A^\times$ in 
%the modified syntomic cohomology group $H^1_{\mathrm{ms}}(\Spec A,\Q_p(1))$.
%In our case, $H^1_{\text{rig-syn}}(X,\Q_p(1))\cong 
%H^1_{\mathrm{ms}}( X,\Q_p(1))$ since the reduction $X$ modulo $p$ is defined over 
%some finite field.}, 
one can show that
\begin{equation}
\reg_{\text{rig-syn}}(\xi)=
\sum_{i=0}^n(-1)^ip^{-1}\log\left(\frac{h_i^p}{h_i^{\varphi}}\right)
\left(\frac{dh_0}{h_0}\right)^{\varphi_1}\wedge\cdots\wedge\left(\frac{dh_{i-1}}{h_{i-1}}\right)^{\varphi_1}\wedge
\frac{dh_{i+1}}{h_{i+1}}\wedge\cdots\wedge\frac{dh_n}{h_n}.
\end{equation}
under the inclusion $H^n_\dR(U_K/K)\hra
\Omega^n_{A_K/K}/d\Omega^{n-1}_{A_K/K}\cong
\Omega^n_{A^\dag_K/K}/d\Omega^{n-1}_{A^\dag_K/K}$
in a similar way to the proof of \cite[Cor. 2.9]{Ka1}.
This agrees with $(-1)^n\reg_{U/W}(\xi)$ by Theorem \ref{comp-thm},
and hence the commutativity of the left square follows.
This completes the proof of $(-1)^n$-commutativity of 
the diagram \eqref{ext-logsyn-thm-eq2}, and hence Theorem \ref{ext-logsyn-thm}.

\section{$p$-adic regulators of $K_2$ of curves}
For a regular scheme $X$ and a divisor $D$, 
let $(X,D)$ denote the log scheme whose 
log structure is defined by the reduced part of $D$.

\subsection{Symbol map on $K_2$ of a projective smooth family of curves}
\label{curvesymbol.sect}
Let $p>2$ be a prime and 
let $W$ be the Witt ring of a prefect field $k$ of characteristic $p$.
Put $K=\mathrm{Frac}(W)$ the fractional field. 
Let $F_W$ be the $p$-th Frobenius on $W$.
Let $\pS$ be a smooth affine curve over $W$, and $T\subset \pS$ 
a closed set that is finite etale over $W$.
Put $S:=\pS\setminus T$.
Let $Y$ be a smooth quasi-projective surface
over $W$, and let
\[
f:Y\lra \pS
\]
be a projective surjective $W$-morphism such that $f$ is smooth outside
$F:=f^{-1}(T)$, and the fibers are connected. 
Let $D\subset Y$ be a reduced relative simple NCD over $W$.
Put $X:=f^{-1}(S)=Y\setminus F$, $D_X:=D\cap X$ and $U:=X\setminus D_X$.
Suppose that the following conditions hold.
\begin{itemize}
\item[i)]
There is a system $\sigma=\{\sigma_n:(\pS_n,T_n)\to(\pS_n,T_n)\}_n$ of $p$-th 
Frobenius endomorphisms compatible with $F_W$,
where $(X_n,M_n):=(X,M)\ot\Z/p^n\Z$ as before.
\item[ii)]
The divisor $D+F$ is a relative simple NCD over $W$, 
and $D_X$ is finite etale over $S$.
\item[iii)] The multiplicity of each component of $F$ is
prime to $p$.
\item[iv)]
Assumption \ref{as:isomorphism} holds
for $U/S$; the $i$-th relative rigid cohomology sheaf 
$R^if_{\rig}j^{\dag}_U\sO_{U_K^{\an}}$
is a coherent $j^{\dag}_{S}\sO_{S_K^{\an}}$-module
for each $i\geq0$ (this implies the comparison isomorphism
\eqref{as:isom} by Remark \ref{rem:onassumption} (2)). 
\end{itemize}

\begin{lem}\label{lem:v)}
Let $S$ be a smooth affine curve over $W$ and $f:U\to S$ a smooth $W$-morphism.
Suppose that there is a commutative square
\[
\begin{tikzcd}
    U \arrow[r, hook] \arrow[d, "f"'] & \ol X \arrow[d, "\ol f"] \\
    S \arrow[r, hook]  & \ol S 
\end{tikzcd}
\]
where $\hookrightarrow$ are open immersions, that satisfies the following.
\begin{itemize}
\item
$\ol S$ (resp. $\ol X$) is a smooth projective curve (resp. a smooth projective $W$-scheme)
over $W$.
\item
Put $X:=\ol f^{-1}(S)$. Then $X\to S$ 
is projective smooth.
\item
Put $T':=\ol S\setminus S$. Then $T'$ is finite etale over $W$,  
$\ol X\setminus U$ is a relative simple NCD over $W$, and $X\setminus U$ is a
relative simple NCD over $S$.
\item
The multiplicity of an arbitrary component of $\ol f^{-1}(T')$ is prime to $p$.
\end{itemize}
Then $R^if_{\rig}j^{\dag}_U\sO_{U_K^{\an}}$
is a coherent $j^{\dag}_{S}\sO_{S_K^{\an}}$-module
for each $i\geq0$.
\end{lem}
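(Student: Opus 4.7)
The plan is to show that the hypotheses of the lemma exhibit $U\to S$, together with its compactification, as an instance of Setting \ref{set:generaldimension} with $V=W$, whereupon the coherence statement follows directly from Proposition \ref{prop:generaldimension}.

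To match the data, set $T := T'=\ol S\setminus S$, which is a relative simple NCD in the smooth curve $\ol S$ over $W$ since it is finite \'etale, and decompose $\ol X\setminus U$ into its irreducible components. Let $\ol D^{\mathrm h}$ be the reduced union of those components which dominate $\ol S$, and $\ol D^{\mathrm v}:=\ol f^{-1}(T)$; then $\ol D^{\mathrm h}$ is a relative NCD on $\ol X$ over $W$ as a sub-NCD of $\ol X\setminus U$, the union $\ol D:=\ol D^{\mathrm h}\cup\ol D^{\mathrm v}$ coincides with $\ol X\setminus U$ as a reduced divisor, hence is a relative simple NCD over $W$ by hypothesis, and $\ol D^{\mathrm h}\cap X = X\setminus U$ is a relative NCD over $S$, again by hypothesis. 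This verifies conditions (1) and (2) of Setting \ref{set:generaldimension}; the pair $(\ol S,T)$ is of Zariski type because its log structure is defined by a divisor on the regular scheme $\ol S$.

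The substantive point is condition (3), that $\ol f\colon(\ol X,\ol D)\to(\ol S,T)$ is log smooth and integral, and this is where hypothesis (iv) enters. Away from $\ol D^{\mathrm v}$ the statement reduces to the smoothness of $f\colon X\to S$ together with the relative NCD hypothesis on $D_X$. Near a point of $\ol D^{\mathrm v}$, the global simple NCD structure of $\ol D$ allows us to choose \'etale-local coordinates $x_1,\ldots,x_a,y_1,\ldots,y_b,z_1,\ldots,z_c$ on $\ol X$ in which the horizontal (resp.\ vertical) components through the point appear as the hyperplanes $\{x_i=0\}$ (resp.\ $\{y_j=0\}$); letting $t$ be a local parameter on $\ol S$ cutting out the relevant branch of $T$, we then have $\ol f^{\ast}t = u\cdot y_1^{m_1}\cdots y_b^{m_b}$ with $u$ a unit and with each $m_j$ prime to $p$ by hypothesis (iv). This exhibits an \'etale-local chart $\mathbb N\to\mathbb N^{a+b}$, $1\mapsto(0,\ldots,0,m_1,\ldots,m_b)$, whose associated map $\mathbb Z\to\mathbb Z^{a+b}$ is injective with torsion cokernel cyclic of order $\gcd(m_1,\ldots,m_b)$, prime to $p$. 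Kato's criterion \cite[Theorem 3.5]{Ka3} then delivers log smoothness, and integrality is automatic for any chart whose source is $\mathbb N$.

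The main step requiring care is the \'etale-local decomposition of the log chart into its horizontal and vertical parts; once that is in hand, hypothesis (iv) supplies everything needed for condition (3). With Setting \ref{set:generaldimension} verified in full, Proposition \ref{prop:generaldimension} yields the coherence of $R^i f_{\rig}j^{\dag}_U\sO_{U_K^{\an}}$ as a $j^{\dag}_S\sO_{S_K^{\an}}$-module for every $i\geq 0$.
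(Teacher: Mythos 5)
Your proof is correct and takes essentially the same route as the paper's: verify that the data fit Setting \ref{set:generaldimension} (with $\ol D^{\mathrm h}$ the union of horizontal components of $\ol X\setminus U$ and $\ol D^{\mathrm v}=\ol f^{-1}(T')$) and then invoke Proposition \ref{prop:generaldimension}. The paper's proof is much terser---it simply asserts log smoothness of $\ol f\colon(\ol X,\ol X\setminus U)\to(\ol S,T')$ without further comment---while you spell out the chart $\mathbb N\to\mathbb N^{a+b}$, the prime-to-$p$ torsion cokernel from hypothesis (iv), and integrality via freeness of $\Z[y_1,\dots,y_b]$ over $\Z[t]$, which is precisely the content that the paper's one-line assertion is implicitly relying upon.
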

\begin{pf}
The conditions imply that the morphism \[\ol f:(\ol X,\ol X\setminus U)\lra (\ol S, T')\] 
of log schemes is smooth, so that
$U/S$ is adapted to Setting \ref{set:generaldimension}.
Therefore $R^if'_{\rig}j^{\dag}_{U'}\sO_{(U'_K)^{\an}}$
is a coherent $j^{\dag}_{S'}\sO_{(S'_K)^{\an}}$-module
for each $i\geq0$
by Proposition \ref{prop:generaldimension}.
\end{pf}
In \S \ref{curvesymbol.sect}, we work in the above setting.
Here is the summary of notation.
\[
\begin{tikzcd}
 X \arrow[r, "F", hook] \arrow[d] & Y \arrow[d, "f"] 
 &\arrow[l, "D\cup F"',hook']U\arrow[ld]
    \\
    S \arrow[r, "T", hook]                                         & \pS     &                   
\end{tikzcd}
\]
where the notation above $\hookrightarrow$ shows the complement of the subscheme.

\medskip

We have the symbol map
\[
[-]_{U/S}:
K^M_2(\sO(U))
\lra\Ext^1_{\FilFMIC(S)}(\sO_{S}, H^1(U/S)(2))
\]
by Theorem \ref{mot-map-prop}.
We omit to write the subscript ``$\FilFMIC(S)$"
in the extension groups, as long as there is no fear of confusion. 
\begin{lem}\label{lem:tame}
    The following diagram is commutative,
    \[
%        \begin{tikzcd}
 \xymatrix{K^M_2(\sO(U))
  \ar[r]^-{\partial} 
  \ar[d]_{[-]_{U/S}} 
  & \sO(D_X)^{\times}\ar[d]^{[-]_{D_X/S}} \\
            \Ext^1(\sO_{S}, H^1(U/S)(2)) \ar[r]^-{\Res} & 
            \Ext^1(\O_{S}, H^0(D_X/S)(1)).
 %       \end{tikzcd}
}    \]
Here, the right vertical arrow is the symbol map
defined in Theorem \ref{mot-map-prop}, and
$\partial$ is the tame symbol which is defined by
    \begin{equation}\label{tame.symbol}
\partial:        \{f, g\}\longmapsto (-1)^{\ord_{D_X}(f)\ord_{D_X}(g)}\frac{f^{\ord_{D_X}}(g)}
{g^{\ord_{D_X}}(f)}\bigg|_{D_X}.
    \end{equation}
\end{lem}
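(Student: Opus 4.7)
\begin{pf*}{Plan of proof}
The plan is to verify the commutativity at the level of cocycles in the complex $\cS$. Both routes in the diagram are $\Z$-linear and $K^M_2(\O(U))$ is generated by symbols $\{f,g\}$, so it is enough to check the square on a single symbol $\xi=\{f,g\}$ with $f,g\in\O(U)^\times$. Since $\Fil^0 H^0(D_X/S)(1)_\dR=0$, Lemma \ref{filFMIC-lem} tells us that the map $u$ of \eqref{filFMIC-eq6} is injective on $\Ext^1_{\FilFMIC(S)}(\O_S,H^0(D_X/S)(1))$, so it suffices to compare $u(\Res([\xi]_{U/S}))$ and $u([-]_{D_X/S}(\partial\xi))$ as classes in $H^1(\cS(H^0(D_X/S)(1)))$.

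The key step is to identify the map $\Res$ on extensions with the classical Poincar\'e residue on cocycles. This follows from two facts: (a) $\Res$ is induced on $\Ext^1$ by the Gysin morphism $H^1(U/S)(2)\to H^0(D_X/S)(1)$ of Proposition \ref{prop:Gysin}, and (b) $\cS(-)$ is functorial, and the Gysin morphism is realised by Poincar\'e residues in both the de Rham and the rigid components, compatibly with $\nabla$ and $\Phi$, as established in the proof of Proposition \ref{prop:Gysin}. Granting this, Theorem \ref{comp-thm} with $n=1$ yields an explicit cocycle representing $u([\xi]_{U/S})$, whose residue along any connected component $D_i\subset D_X$ can be computed by writing $f=t^m u$, $g=t^n v$ in local coordinates near $D_i$, with $t$ a local equation of $D_i$ and $u,v$ units: the de Rham component restricts to $\dlog(u^n v^{-m})=\dlog(\partial\xi)|_{D_i}$, while the rigid component restricts to $\log^{(\sigma)}(\partial\xi)|_{D_i}$ by multiplicativity of $p^{-1}\log(h^p/h^\varphi)$ together with the fact that the Frobenius on $D_X$ is inherited from $\sigma$ via the \'etale cover $D_X\to S$. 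Comparing with the cocycle $(\tfrac{d\partial\xi}{\partial\xi},\log^{(\sigma)}(\partial\xi))$ representing $u([-]_{D_X/S}(\partial\xi))$, which is furnished by Theorem \ref{comp-thm} with $n=0$ and Lemma \ref{log-obj-lem1}, establishes the equality.

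The main obstacle is the bookkeeping of signs: the sign $(-1)^{\ord_{D_X}(f)\ord_{D_X}(g)}$ built into \eqref{tame.symbol} must cancel exactly against the $(-1)^n$ factor in Theorem \ref{comp-thm} and the sign appearing in the residue identity $\Res_{D_i}(\tfrac{df}{f}\wedge\tfrac{dg}{g})=-\dlog(\partial\xi)|_{D_i}$. Once these cancellations are verified and the rigid analogue of the residue identity is established, which requires some care because of the overconvergent nature of $p^{-1}\log(h^p/h^\varphi)$ along $D_X$, the injectivity of $u$ concludes the argument.
\end{pf*}
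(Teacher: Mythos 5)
Your approach is genuinely different from the paper's. The paper works structurally: it first reduces to generators $\{f,g\}$ with $\ord_{D_X}(f)=0$, then exhibits the commutativity through a diagram ladder built from the morphism $\Cone[\sLog(f)(1)\to\sLog(g)]\to\sLog(f)(1)$ and the Gysin sequence with $\sLog(f)(1)$-coefficients, identifying the connecting maps abstractly (showing $p'=\delta_g$ and that $\Res\circ p'$ is multiplication by $\ord_{D_X}(g)$). The explicit Frobenius never enters. You instead propose to inject everything into $H^1(\cS(-))$ via Lemma \ref{filFMIC-lem}, compute $u([\xi]_{U/S})$ from the formula of Theorem \ref{comp-thm}, take residues of the two cocycle components, and match against $u([\partial\xi]_{D_X/S})$. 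This is a legitimate alternative route, and the de Rham half does go through with the sign conventions you indicate.

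The gap is in the rigid component. For $\xi=\{f,g\}$, Theorem \ref{comp-thm} represents the rigid part of $u([\xi]_{U/S})$ by the class of $-l_f\,\tfrac{dg}{g}+l_g\,(\tfrac{df}{f})^{\varphi_1}$, $l_h=p^{-1}\log(h^p/h^\varphi)$. You assert the residue along $D_X$ is $\log^{(\sigma)}(\partial\xi)|_{D_X}$ ``by multiplicativity of $p^{-1}\log(h^p/h^\varphi)$''. But when $\ord_{D_X}(h)\neq 0$, the function $l_h$ is not overconvergent across $D_X$, and its singularity is not of $\dlog$ type: it depends on the chosen Frobenius $\varphi$ on $A^\dag$, which in general does not preserve the ideal of $D_X$. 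Multiplicativity alone does not determine the residue of this cocycle; one must either replace it by a better representative of the cohomology class, or argue directly with the Gysin connecting morphism rather than by a pointwise restriction. The paper's initial reduction to $\ord_{D_X}(f)=0$ is precisely what eliminates one of the two problematic $l$-terms, and you should adopt it. Even then, the residue of $l_g\,(\dlog f)^{\varphi_1}$ (with $l_g$ singular along $D_X$) and of $l_f\,\dlog g$ (with $\dlog g$ having a pole) require a precise treatment that goes beyond the sketch. Finally, the phrase ``as established in the proof of Proposition \ref{prop:Gysin}'' overstates that proposition: there the rigid Gysin sequence is deduced from the de Rham one by the comparison isomorphism and flatness, and Frobenius-compatibility is checked fibrewise; it does not identify the rigid Gysin map at the level of explicit overconvergent cocycles, which is what your computation needs.
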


\begin{pf}
We first note that $K_2^M(\O(U))$ is generated by
symbols $\{f,g\}$ with $\ord_{D_X}(f)=0$.
Suppose $\ord_{D_X}(f)=0$, namely $f\in \O(U\cup D_X)^\times$.
The natural morphism
\[p:\Cone\left[\sLog f(1)\to\sLog (g)\right]\to \sLog f(1)\]
and the residue map induce the following commutative diagram 
\begin{equation}
%        \begin{tikzcd}
\xymatrix{           
 0 \ar[r] & H^1(U/S)(2) \ar[r] \ar@{=}[d] 
 & M_{f,g}(U/S) \ar[d]^{p} \ar[r] & \O_{S_K} \ar[d]^{p'} \ar[r]^{\delta_{f,g}\qquad} 
 & H^2(U/S)(2)\ar@{=}[d]\\
 \O_S \ar[r]^{\delta_f\qquad} & H^1(U/S)(2)\ar[d]^{\Res} 
            \ar[r] & H^1\left(\sLog f(1)\right) \ar[d]^{\Res} \ar[r] 
            & H^1(U/S)(1) \ar[d]^{\Res} \ar[r] & H^2(U/S)(2)\ar[d]^\Res\\
           0 \ar[r] & H^0(D_X/S)(1) \ar[r] 
           & H^0\left(\sLog f|D_X\right) \ar[r] & H^0(D_X/S) \ar[r]^{\delta_{f|D}} & H^1(D_X/S)(1)
 %       \end{tikzcd}
}       \label{eq:comm_diagram}
\end{equation}
with exact rows where 
$\delta_{f,g}(1)=\frac{df}{f}\frac{dg}{g}$,
$\delta_f(1)=\frac{df}{f}$ and 
$\delta_{f|D_X}(1)=\frac{df}{f}|_{D_X}$.
    The maps ``Res''s are residue maps appearing in the Gysin exact sequence;
    as for the middle one, we are considering the Gysin exact sequence with coefficient
    in $\sLog(f)(1)$ which is constructed in the same manner as Proposition \ref{prop:Gysin}.
We show that $p'$ agrees with the map $\delta_g:1\mapsto\frac{dg}{g}$. 
Let $[A\to B]$ denote the complex 
\[\cdots\lra 0\lra A\lra B\lra 0\lra\cdots\]
with $A$ placed in degree zero.
Consider a commutative diagram
    \[
  %      \begin{tikzcd}
 \xymatrix{          \left[\sLog(f)_{\dR}\to\sLog(g)_{\dR}\right] \ar[r] \ar[d]^p & \left[\sLog(f)_{\dR}/\sO_{U_K}e_{-2,f}\to\sLog(g)_{\dR}\right] 
 \ar[d]^{p'} 
 \ar[r]^{\hspace{2cm}\cong} & \left[ 0\to \sO_{U_K}\right] \\
            \left[\sLog(f)_{\dR}\to 0\right] \ar[r] 
            & \left[\sLog(f)_{\dR}/\sO_{U_K}e_{-2,f}\to 0\right] 
            \ar[r]^{\hspace{1cm}\cong} & \left[ \sO_{U_K}\to 0\right],
%        \end{tikzcd}
 }   \]
of complexes in $\FilFMIC(U)$
    where the two horizontal morphisms on the left-hand side are the canonical surjections
and the vertical ones are the projections.
    Under the identification $\sLog(f)_{\dR}/\sO_{U_K}e_{-2,f}\cong\sO_{U_K}$ (which sends $e_{0,f}$ to $1$), the homomorphism $p'$
    is nothing but the connecting morphism arising from the extension $0\to\sO_{U_K}\to \sLog(g)_{\dR}\to \sO_{U_K}\to 0$. This means $p'=\delta_g$.

Since the composition $\Res\circ p'$ is multiplication by $\ord_D(g)$, 
the diagram \eqref{eq:comm_diagram}
induces
\begin{equation}\label{eq:comm_diagram2}
%        \begin{tikzcd}
\xymatrix{           
 0 \ar[r] & H^1(U/S)(2) \ar[r] \ar[d]^\Res 
 & M_{f,g}(U/S) \ar[d] \ar[r] & \O_{S_K} \ar[d]^{\ord_{D_X}(g)} \ar[r]^{\delta_{f,g}\qquad} 
 & H^2(U/S)(2)\ar[d]^\Res\\
           0 \ar[r] & H^0(D_X/S)(1) \ar[r] 
           & H^0\left(\sLog f|D_X\right)' \ar[r] & \O_{S_K} \ar[r] & H^1(D_X/S)(1).
 %       \end{tikzcd}
}\end{equation}
Now we show the lemma.
Let $\xi\in K^M_2(\O(U))\cap\Ker(\dlog)$ be arbitrary.
Fix $t\in \O(U)^\times$ such that $\ord_{D_X}(t)>0$. Replacing $\xi$ with $m\xi$ for some $m>0$, one can express 
\[
\xi=\{f,t\}+\sum_j\{u_j,v_j\}
\]
with $\ord_{D_X}(f)=\ord_{D_X}(u_j)=\ord_{D_X}(v_j)=0$.
Then \eqref{eq:comm_diagram2} induces a commutative diagram
\begin{equation}\label{eq:comm_diagram3}
%        \begin{tikzcd}
\xymatrix{           
 0 \ar[r] & H^1(U/S)(2) \ar[r] \ar[d]^\Res 
 & M_\xi(U/S) \ar[d] \ar[r] & \O_{S_K} \ar[d]^{\ord_{D_X}(t)} \ar[r]
 & 0\\
           0 \ar[r] & H^0(D_X/S)(1) \ar[r] 
           & H^0\left(\sLog f|D_X\right)' \ar[r] & \O_{S_K} \ar[r] & 0.
 %       \end{tikzcd}
}\end{equation}
Since the tame symbol of $\xi$ is $f|_{D_X}$, the assertion follows.
\end{pf}
\begin{prop}\label{prop:from_K2X}
Let $K_2^M(\O(U))_{\partial=0}$ be the kernel of the tame symbol
$\partial:K_2^M(\O(U))\to \O(D_X)^\times$.
Then the symbol map $[-]_{U/S}$ uniquely extends to
\[
[-]_{X/S}:K_2^M(\O(U))_{\partial=0}\lra 
\Ext^1
\left(\sO_{S}, H^1(X/S)(2)\right).
\]
\end{prop}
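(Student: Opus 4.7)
The plan is to deduce the proposition by a diagram chase in $\FilFMIC(S)$, using the Gysin sequence of Proposition \ref{prop:Gysin} together with the tame--symbol compatibility of Lemma \ref{lem:tame}. After twisting the Gysin sequence by $\O_S(2)$, I split it into two short exact sequences in $\FilFMIC(S)$:
\[
0\lra H^1(X/S)(2)\lra H^1(U/S)(2)\lra Z\lra 0,\qquad 0\lra Z\lra H^0(D_X/S)(1)\lra H^2(X/S)(2)\lra 0,
\]
where $Z$ is the image of the residue map. Applying $\Ext^{\bullet}(\O_S,-)$ gives the two four-term exact sequences
\[
\Hom(\O_S,Z)\lra \Ext^1(\O_S,H^1(X/S)(2))\lra \Ext^1(\O_S,H^1(U/S)(2))\lra \Ext^1(\O_S,Z),
\]
\[
\Hom(\O_S,H^2(X/S)(2))\lra \Ext^1(\O_S,Z)\lra \Ext^1(\O_S,H^0(D_X/S)(1)).
\]

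Given $\xi\in K_2^M(\O(U))_{\partial=0}$, Lemma \ref{lem:tame} says that the image of $[\xi]_{U/S}$ in $\Ext^1(\O_S,H^0(D_X/S)(1))$ is $[\partial\xi]_{D_X/S}=0$. Combined with the vanishing of $\Hom(\O_S,H^2(X/S)(2))$, the second sequence forces the image of $[\xi]_{U/S}$ in $\Ext^1(\O_S,Z)$ to be zero, so by the first sequence $[\xi]_{U/S}$ lifts to some class in $\Ext^1(\O_S,H^1(X/S)(2))$. Uniqueness of the lift is equivalent to the map $\Ext^1(\O_S,H^1(X/S)(2))\to\Ext^1(\O_S,H^1(U/S)(2))$ being injective, hence to $\Hom(\O_S,Z)=0$; but $Z\hookrightarrow H^0(D_X/S)(1)$, and the vanishing follows once one knows $\Hom(\O_S,H^0(D_X/S)(1))=0$. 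Defining $[\xi]_{X/S}$ to be this unique lift gives the required extension of the symbol map, and additivity is inherited from $[-]_{U/S}$ by the uniqueness.

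The only substantive point is the two vanishing Hom's, which are trivialities of the Hodge filtration in $\FilFMIC(S)$: a morphism $\O_S\to M$ must send $1$ to a horizontal, Frobenius-invariant section of $\Fil^0M_\dR$. For a proper smooth family of curves $f\colon X\to S$ the relative trace yields $H^2(X/S)\cong \O_S(-1)$ (compatibly with $\nabla$, $\Phi$, and $\Fil$), so $H^2(X/S)(2)\cong \O_S(1)$ has $\Fil^0=0$; and $H^0(D_X/S)(1)$ is, after an \'etale cover splitting the finite \'etale $D_X/S$, a sum of copies of $\O_S(1)$, likewise with $\Fil^0=0$. The main (mild) obstacle is thus simply to verify the $\FilFMIC(S)$-isomorphism $H^2(X/S)\cong\O_S(-1)$ by invoking the relative trace together with the functoriality of Frobenius and Hodge filtration on $R^2f_\ast\Omega^{\bullet}_{X/S}$; once this is in hand, the rest of the argument is pure homological algebra.
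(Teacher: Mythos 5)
Your argument is correct and reaches the conclusion by essentially the same diagram chase, but with a different engine for the key vanishing. The paper isolates the image $N(2)$ (your $Z$) of the residue map, proves $\Hom(\O_S,N(2))=0$ by a \emph{weight} argument (the rigid realization of $H^0(D_X/S)$ is pure of weight $0$, so after the twists $N(2)_\rig$ has weight $-2$ and admits no Frobenius-fixed horizontal section), and then says the lifting is ``immediate'' from Lemma~\ref{lem:tame}. You instead argue entirely from the \emph{Hodge filtration}: a morphism $\O_S\to M$ in $\FilFMIC(S)$ lands in $\Fil^0M_\dR$, which vanishes both for $H^0(D_X/S)(1)$ (trivially, since $\Fil^1H^0(D_X/S)_\dR=0$) and for $H^2(X/S)(2)$ (since $\Fil^2H^2_\dR(X/S)=0$ for a relative curve); the first gives $\Hom(\O_S,Z)=0$ and hence uniqueness, and the second gives the injectivity $\Ext^1(\O_S,Z)\hookrightarrow\Ext^1(\O_S,H^0(D_X/S)(1))$ needed to transport the vanishing of Lemma~\ref{lem:tame} from the latter group to the former. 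This second vanishing, $\Hom(\O_S,H^2(X/S)(2))=0$, is actually an implicit gap in the paper's ``immediate'' (the paper establishes exactness only up to $\Ext^1(\O_S,N(2))$, while Lemma~\ref{lem:tame} only controls the image in $\Ext^1(\O_S,H^0(D_X/S)(1))$), so your more careful treatment of the existence step is a genuine improvement in rigor. Two small remarks: the trace isomorphism $H^2(X/S)\cong\O_S(-1)$ is more than you need — the vanishing $\Fil^2H^2_\dR(X/S)=0$ is automatic because $\Omega^p_{X/S}=0$ for $p\geq 2$ in relative dimension one, with no trace argument required; and splitting the four-term Gysin sequence into two short exact sequences of the paper's exact category $\FilFMIC(S)$ requires strictness of the filtration, which holds because the Gysin sequence is an exact sequence of variations of Hodge structure (the paper uses this silently as well).
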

\begin{pf}
Let $N$ be the kernel of the connecting homomorphism $H^0(D_X/S)(-1)\to H^2(X/S)$
in the Gysin exact sequence.
Since $H^0_{\rig}(D_X/S)$ is an overconvergent $F$-isocrystal on $S$ of weight $0$,
the weight of $N_{\rig}$ is $-2$ and therefore
we have $\Hom_{\Fil\hyphen F\hyphen\MIC(S)}\big(\sO_{S}, N(2)\big)=0$.
This shows that the Gysin exact sequence induces an exact sequence
    \[
        0\longrightarrow \Ext^1\left(\sO_{S}, H^1(X/S)(2)\right)\longrightarrow
\Ext^1\left(\sO_{S}, H^1(U/S)(2)\right)\longrightarrow\Ext^1\big(\sO_{S}, N(2)\big)
    \]
 where the extension groups is taken in the category of $\Fil\hyphen F\hyphen\MIC(S)$.  
Now the construction of $[-]_{X/S}$ is immediate from Lemma \ref{lem:tame}.
\end{pf}
\begin{lem}\label{prop:synK2X}
Let
\[
H^\bullet_\syn((Y,F),\Z_p(r))
=\varprojlim_n H^\bullet_\syn((Y,F),\Z/p^n(r))
\]
denote the syntomic cohomology with coefficients in $\Z_p$
where we omit to write ``$/(W,W^\times,F_W)$".
Then the syntomic symbol map $[-]_\syn$ induces a map
\[
K_2^M(\O(U))_{\partial=0}\lra H^2_\syn((Y,F),\Z_p(2)),
\]
which we write by the same notation.
When $\pS=S=\Spec W$ ($F=\emptyset$), this is compatible with the regulator map by 
Besser \cite{Be1}
(or equivalently by Nekov\'a\v{r}-Niziol \cite{NN}), which means that a diagram
\[
\xymatrix{
K_2^M(\O(U))_{\partial=0}\ar[r]^-{[-]_\syn}\ar[d]
&H^2_\syn(Y,\Z_p(2))\ar[d]\\
K_2(Y)^{(2)}\ar[r]^-{\reg_\rigsyn}&H^2_\rigsyn(Y,\Q_p(2))
}
\]
is commutative where 
$K_i(-)^{(j)}\subset K_i(-)\ot\Q$ denotes the Adams weight piece.
\end{lem}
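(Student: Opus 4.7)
The plan is to establish the two parts of the lemma in turn: first the existence of the induced map $K_2^M(\O(U))_{\partial=0}\to H^2_\syn((Y,F),\Z_p(2))$, then its compatibility with Besser's regulator when $\pS=S=\Spec W$.

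For the first part, I would mirror Proposition \ref{prop:from_K2X} on the syntomic side. The required input is a Gysin localization exact sequence in log syntomic cohomology attached to the smooth relative divisor $D_X\subset X$,
\[
\cdots\to H^1_\syn((D_X,D_X\cap F),\Z_p(1))\to H^2_\syn((Y,F),\Z_p(2))\to H^2_\syn((Y,D\cup F),\Z_p(2))\xrightarrow{\res} H^1_\syn((D_X,D_X\cap F),\Z_p(1))\to\cdots,
\]
which I would assemble from Proposition \ref{logsyn-exact-prop} together with the classical Gysin sequences in log de Rham and log crystalline cohomology, compatibly with Frobenius. The syntomic analog of Lemma \ref{lem:tame}, namely $\res\circ[-]_\syn=[-]_\syn\circ\partial$, then follows from the standard residue formula $\res(\dlog f\wedge \dlog g)=\dlog(\partial\{f,g\})|_{D_X}$ combined with the explicit $\dlog$ and $p^{-1}\log$ construction of $[-]_\syn$ recalled in \S\ref{syn-sym-sect}. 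Hence elements of $K_2^M(\O(U))_{\partial=0}$ are sent into $\ker(\res)$, so their image lifts to $H^2_\syn((Y,F),\Z_p(2))$; to obtain a canonical well-defined lift I would construct it directly on the level of representing complexes of the mapping fibers defining the log syntomic complex.

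For the compatibility assertion, specialize to $\pS=S=\Spec W$ and $F=\emptyset$, so that $(Y,F)=Y$ carries the trivial log structure. Lemma \ref{syn-rig-lem} with $D_Y=\emptyset$ then supplies a canonical isomorphism $c\colon H^2_\syn(Y,\Q_p(2))\xrightarrow{\cong} H^2_\rigsyn(Y,\Q_p(2))$ and already shows, for elements of $K_2^M(\O(U))$ viewed on $U$, that $c\circ[-]_\syn=\reg_\rigsyn$. The vertical arrow $K_2^M(\O(U))_{\partial=0}\to K_2(Y)^{(2)}$ is furnished by Gersten's resolution for the smooth surface $Y$: an element killed by the tame symbol along $D_X$ lifts through the localization map $K_2(Y)\to K_2(U)$, defining a class in $K_2(Y)^{(2)}$. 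Since Besser's regulator is functorial for the open immersion $U\hookrightarrow Y$, both compositions in the square reduce, after further restriction to $U$, to $c\circ[-]_\syn$ on $K_2^M(\O(U))$, giving the required equality in $H^2_\rigsyn(U,\Q_p(2))$; the statement over $Y$ then follows from the compatibility of the syntomic and rigid syntomic Gysin sequences that pin the lift down in $H^2_\rigsyn(Y,\Q_p(2))$.

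The chief technical obstacle is the construction of the log syntomic Gysin sequence integrally (with $\Z_p$-coefficients rather than merely after $\otimes\Q$) and the explicit verification that the residue commutes with the tame symbol on the nose at the level of syntomic complexes. Tracking the contributions of both the $\dlog$-piece and the $p^{-1}\log(b^p\varphi(b)^{-1})$-piece of the symbol map through the localization and residue morphisms will account for most of the detailed bookkeeping, and is also where one must check that the ambiguity in the lift vanishes so that the resulting map is genuinely well-defined.
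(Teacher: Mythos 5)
Your proposal follows the same broad route as the paper: a Gysin/localization sequence in log syntomic cohomology together with the compatibility of the tame symbol with the residue map for the first assertion, and reduction to the open piece $U$ via Lemma \ref{syn-rig-lem} for the second. One correction you should make: the term that maps into $H^2_\syn((Y,F),\Z_p(2))$ in the Gysin sequence is $H^{0}_\syn((D,D\cap F),\Z_p(1))$, not $H^1_\syn$ (the Gysin pushforward for a divisor shifts degree by two). This matters because, as the paper exploits, $H^0_\syn$ with a positive Tate twist vanishes: by Proposition \ref{logsyn-exact-prop} it injects into $H^0$ of $\omega^{\bullet\geq 1}$, which is zero since that brutally truncated complex starts in degree $1$. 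So the sequence the paper actually uses begins with $0$,
\[
0\lra H^2_\syn((Y,F),\Z/p^n(2))\lra H^2_\syn((Y,D+F),\Z/p^n(2))\os{\Res}{\lra} H^1_\syn((D,D\cap F),\Z/p^n(1)),
\]
and the lift of a class killed by $\Res$ is automatically unique; the complex-level construction of a canonical lift that you propose is unnecessary, and the ``ambiguity'' you worry about is an artifact of the degree slip.

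For the compatibility with Besser's regulator your reduction to $U$ is the paper's idea; the paper makes it cleanly by observing $H^2_\syn(Y,\Q_p(2))\cong H^1_\dR(Y_K/K)\hookrightarrow H^1_\dR(U_K/K)\cong H^2_\syn((Y,D),\Q_p(2))$ (here $Y$ is a relative curve over $W$, so $\omega^{\bullet\geq 2}=0$, which gives these identifications via Proposition \ref{logsyn-exact-prop}), after which Lemma \ref{syn-rig-lem} already furnishes the comparison over $U$. Invoking ``compatibility of the syntomic and rigid syntomic Gysin sequences'' is a roundabout way of asserting the same injectivity. Your discussion of the vertical map $K^M_2(\O(U))_{\partial=0}\to K_2(Y)^{(2)}$ via Gersten/localization is a useful point the paper leaves implicit.
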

\begin{pf}
To show the former, it is enough to show that a diagram
\begin{equation}\label{prop:synK2X-eq1}
 \xymatrix{K^M_2(\sO(U))
  \ar[r]^-{\partial} 
  \ar[d]_{[-]_\syn} 
  & \sO(D^*)^{\times}\ar[d]^{[-]_\syn} \\
H^2_\syn((Y,D+F),\Z/p^n(2))\ar[r]^-{\Res}&H^1_\syn((D,D\cap F),\Z/p^n(1))
}
\end{equation}
is commutative. 
Then the required symbol map is induced from an exact sequence
\[
0\to
H^2_\syn((Y,F),\Z/p^n(2))\to
H^2_\syn((Y,D+F),\Z/p^n(2))\to
H^1_\syn((D,D\cap F),\Z/p^n(1)).
\]
To show the commutativity of \eqref{prop:synK2X-eq1},
it is enough to check $[\partial(\xi)]_\syn=\Res([\xi]_\syn)$ for an element $\xi=\{f,g\}\in K_2^M(\O(U))$ such that
$\ord_{D_X}(f)=0$. One has $[\partial(\xi)]_\syn=[(f|_{D_X})^{\ord_{D_X}(g)}]_\syn
=\ord_{D_X}(g)[f|_{D_X}]_\syn$.
On the other hand, one has
\[
\Res([f]_\syn\cup[g]_\syn)=[f]_\syn|_{D_X}\cup\Res'([g]_\syn)
=\Res'([g]_\syn)[f|_{D_X}]_\syn
\]
where 
\[
\Res':
H^1_\syn((Y,D+F),\Z/p^n(1))\to H^0_\syn((D,D\cap F),\Z/p^n(0))=\Z/p^n\Z.
\]
By the construction of $[-]_\syn$ in \S \ref{syn-sym-sect},
one can verifies $\Res'([g]_\syn)=\ord_{D_X}(g)$. Hence 
$[\partial(\xi)]_\syn=\Res([\xi]_\syn)$ as required.

When $\pS=S=\Spec W$, the latter assertion can be derived from
Lemma \ref{syn-rig-lem} as 
$H^2_\syn(Y,\Q_p(2))\cong H^1_\dR(Y_K/K)\to H^2_\syn((Y,D),\Q_p(2))
\cong H^1_\dR(U_K/K)$ is injective.
\end{pf}
\begin{thm}\label{ell.diagram-thm}
\[
\xymatrix{
&K_2^M(\O(U))_{\partial=0}\ar[ld]_{[-]_{X/S}}\ar[rd]^{[-]_\syn}
%\ar@/^15pt/[ldd]^{\reg^{(\sigma)}_{U/S}}
\\
\Ext^1(\O_{S},H^1(X/S)(2))\ar[d]_{R_\sigma}^{\eqref{phi.sigma}}
&&
H^2_\syn((Y,F),\Z_p(2))\ar[d]\\
\O(S)^\dag_K\ot_{\O(S)}H^1_\dR(X/S)\ar[d]_\cap&&
H^2_\syn((Y,F)/(\pS,T,\sigma),\Z_p(2))
\\
 \O(S)_K^\wedge\ot_{\O(S)}H^1_\dR(X/S)
&&\O(\pS)^\wedge\ot_{\O(\pS)}H^1_\zar(Y,\omega^\bullet_{Y/\pS})
\ar[u]_\cong\ar[ll]
}
\]
is $(-1)$-commutative where the extension group is taken in the category 
$\FilFMIC(S,\sigma)$ and
$\omega^\bullet_{Y/\pS}$ is the
log de Rham complex of $(Y,F)/(\pS,T)$.
\end{thm}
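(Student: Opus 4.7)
\begin{pf*}{Proof plan}
The strategy is to deduce the statement from Theorem \ref{ext-logsyn-thm}, applied with $n=1$ to $U/S$ equipped with the log structure $M$ on $Y$ coming from the reduced part of $D+F$. That theorem already gives the $(-1)$-commutativity of an analogous square with $H^1(U/S)(2)$ in the upper-left object, $H^2_\syn((Y,D+F),\Z_p(2))$ in the upper-right object, $\O(S)_K^\wedge\ot H^1_\dR(U_K/S_K)$ at the bottom-left, and $\O(\pS)^\wedge\ot H^1_\zar(Y,\omega^\bullet_{Y/\pS,\log(D+F)})$ at the bottom-right. Our task is to glue this to the desired square by showing that each vertex of the desired diagram maps naturally into the corresponding vertex of the $(D+F)$-version, that the symbol maps and the structural arrows are compatible with those natural maps, and that one of the comparison arrows is injective so that commutativity can be pulled back.

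Concretely, I would fix $\xi\in K_2^M(\O(U))_{\partial=0}$ and construct a commutative prism whose two faces are the two diagrams above, with connecting edges as follows. The Gysin sequence of Proposition \ref{prop:Gysin} provides a morphism $H^1(X/S)\to H^1(U/S)$ in $\FilFMIC(S)$; inducing the horizontal edge $\Ext^1(\O_S,H^1(X/S)(2))\to \Ext^1(\O_S,H^1(U/S)(2))$, and, by the very construction of $[-]_{X/S}$ in Proposition \ref{prop:from_K2X}, this edge sends $[\xi]_{X/S}$ to $[\xi]_{U/S}$. On the syntomic side, the inclusion of log structures $(Y,F)\hookrightarrow (Y,D+F)$ induces a pull-back $H^2_\syn((Y,F),\Z_p(2))\to H^2_\syn((Y,D+F),\Z_p(2))$; by construction (Lemma \ref{prop:synK2X} and the factorization used in its proof), this edge sends $[\xi]_\syn$ to the class produced by Theorem \ref{ext-logsyn-thm}. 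At the bottom, the inclusion of log de Rham complexes $\omega^\bullet_{Y/\pS,\log F}\hra \omega^\bullet_{Y/\pS,\log(D+F)}$ yields the right-hand vertical edge, while the Gysin injection $H^1_\dR(X/S)\hra H^1_\dR(U_K/S_K)$ yields the left-hand vertical edge. Compatibility of $R_\sigma$ and of the comparison map $i$ with these vertical edges is by functoriality of the constructions in \S\ref{filFMIC-sect}--\S\ref{logsyn-sect} with respect to morphisms in $\FilFMIC(S)$ and with respect to morphisms of log schemes, respectively. With the prism built, the bottom face commutes trivially (both vertical edges are induced by natural maps of complexes), the top face is Theorem \ref{ext-logsyn-thm}, and the four side faces are compatibilities just explained; hence the front face, which is the desired diagram, becomes $(-1)$-commutative after composition with the Gysin injection $H^1_\dR(X/S)\hra H^1_\dR(U_K/S_K)$. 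Since this map is injective (and remains so after tensoring with the flat ring extension $\O(S)_K\to \O(S)_K^\wedge$), the $(-1)$-commutativity holds in $\O(S)_K^\wedge\ot H^1_\dR(X/S)$ itself.

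The routine parts are the functoriality statements for $[-]_{X/S}$, $[-]_\syn$, $R_\sigma$, and the complex $\cS(M)$ under the Gysin morphism and under the change of log structure; these are formal consequences of the definitions. The main obstacle is to verify that the isomorphism $i\colon H^2_\syn((Y,F)/(\pS,T,\sigma),\Z_p(2))\cong \O(\pS)^\wedge\ot H^1_\zar(Y,\omega^\bullet_{Y/\pS,\log F})$ deduced from Proposition \ref{logsyn-exact-prop} and Lemma \ref{syn-cpx-hom-lem} is natural under the inclusion $(Y,F)\hookrightarrow (Y,D+F)$, so that the bottom face of the prism indeed commutes. This amounts to checking that the weight filtration hypothesis $H^n(\omega^{\geq n+1}_{Y_m/\pS_m})=H^{n+1}(\omega^{\geq n+1}_{Y_m/\pS_m})=0$ (which we need with $n=1$ for both log structures) holds in the present setting, so that the ``$1-\Phi$'' part of the syntomic complex vanishes and $i$ is genuinely an isomorphism in both cases; this uses the hypotheses (ii)--(iv) of \S\ref{curvesymbol.sect} on $f$, $D$ and $F$, together with the standard fact that for a relative curve the Hodge filtration satisfies $\Fil^2=0$ and $\Fil^1=f_*\omega^1_{X/S}$.
\end{pf*}
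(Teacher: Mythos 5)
Your proposal is correct and takes essentially the same route as the paper: the paper's one-line proof also reduces Theorem \ref{ell.diagram-thm} to Theorem \ref{ext-logsyn-thm} via the injectivity of $H^1_\dR(X_K/S_K)\to H^1_\dR(U_K/S_K)$. Your ``prism'' simply spells out in detail the compatibilities the paper leaves implicit (in particular that $[-]_{X/S}\mapsto[-]_{U/S}$ under the Gysin inclusion, which is built into Proposition \ref{prop:from_K2X}, and the analogous compatibility of syntomic classes from Lemma \ref{prop:synK2X}), and correctly notes that the filtration hypothesis of Theorem \ref{ext-logsyn-thm} is trivially satisfied since $\omega^{\geq 2}_{Y_m/\pS_m}=0$ for a relative curve.
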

\begin{pf}
Noticing that $H^1_\dR(X_K/S_K)\to H^1_\dR(U_K/S_K)$ is injective,
one can derive the $(-1)$-commutativity
from Theorem \ref{ext-logsyn-thm}.
\end{pf}
\subsection{Syntomic regulators of $K_2$ of Elliptic Curves with $3$-torsion points}
Let 
\[
f_\Q:X_\Q\lra S_\Q=\Spec\Q[t,(t-t^2)^{-1}]
\]
be a family of elliptic curves given by a Weierstrass equation
\[
y^2=x^3+(3x+4(1-t))^2.
\]
This is the universal elliptic curve over the modular curve $X_1(3)\cong \P^1_\Q$
with 3-torsion points $(x,y)=(0,\pm4(1-t))$ and $x=\infty$.
The $j$-invariant of the generic fiber $X_t$  is
\[
j(X_t)=\frac{27(1+8t)^3}{t(1-t)^3}.
\]
The family $f_\Q$ extends to a fibration over $\P^1_\Q$ such that 
the Kodaira types of the fibers at $t=0,1,\infty$
are
I$_1$, I$_3$ and IV$^*$ respectively.

Let $p\geq 5$ and let $W=W(\ol\F_p)$ be the Witt ring of the algebraic closure $\ol\F_p$.
Let $K=\Frac W$ be the fractional field.
For $\alpha\in W\cup\{\infty\}$, 
let $P_\alpha$ denote the $W$-valued point of $\P^1_W(t)$ given by $t=\alpha$.
There is a projective flat morphism
\[
\bar f:\ol Y\lra \P^1_W(t)
\]
over $W$
arising from the fibration $f_\Q$ such that the following conditions hold.
\begin{itemize}
\item
$\ol Y$ is a smooth projective surface over $W$.
\item
The singular fiber $\bar f^{-1}(P_0)$ is a non-reduced curve with
two components $Z$ and $E$
such that the multiplicity of $Z$ (resp. $E$) is $1$ (resp. $2$).
The reduced part $\bar f^{-1}(P_0)_{\red}$ is a Neron $2$-gon. In particular, $Z\cup E$
is a relative simple NCD over $W$.
\item
The singular fiber $\bar f^{-1}(P_1)$ is the Neron $3$-gon over $W$.
\item
The singular fiber $\bar f^{-1}(P_\infty)$ is the curve of Kodaira type IV${}^*$ over $W$,
 in particular, a relative simple NCD over $W$.
\end{itemize}
Let 
$\pS:=\P^1_W\setminus\{P_1,P_\infty\}
%=\Spec W[t,(1-t)^{-1}]
\supset
S:=\P^1_W\setminus\{P_0,P_1,P_\infty\}$ and put $Y:=\bar f^{-1}(\pS)$,
$X:=\bar f^{-1}(S)$.
Let $c\in 1+pW$, and let 
$\sigma=\{\sigma_n:(\pS_n,P_{0,n})\to(\pS_n,P_{0,n})\}_n$ be the system of $p$-th 
Frobenius endomorphisms given by $\sigma_n(t)=ct^p$.
Let $\ol D\subset \ol Y$ be the closure of sections $(x,y)=(0,\pm 4(1-t))$ and the infinity section $x=\infty$.
It is not hard to 
see that each component of $\ol D$ is a section of $\bar f$ (and hence $\P^1_W$), 
and the three curves are disjoint.
Moreover they
intersect only with reduced and regualr locus of $\bar f^{-1}(P_i)$ for each
$i\in \{0,1,\infty\}$, and the intersections are transversal.
In particular $\ol D\cup \bigcup_i \bar f^{-1}(P_i)$ is a simple relative NCD over $W$.
Put $D:=Y\cap \ol D$, $D_X:=D\cap X$ and $U:=X\setminus D_X$.

\medskip

The above setting $(Y/\pS,S,D,\sigma)$ satisfies the conditions i), \ldots, iv) 
in the beginning of \S \ref{curvesymbol.sect} where 
iv) follows from Lemma \ref{lem:v)}.
Let $\wh B$ (resp. $B^\dag$) denote the $p$-adic completion (resp. the 
weak completion)
and write $\wh B_K:=\wh B\ot_WK$, $B^\dag_K:= B^\dag\ot_WK$ as before.

\medskip

We consider a Milnor symbol
\begin{equation}\label{xi}
\xi:=\left\{
\frac{y-3x-4(1-t)}{-8(1-t)},
\frac{y+3x+4(1-t)}{8(1-t)}
\right\}\in K_2^M(\O(U)).
\end{equation}
It is a simple exercise to show $\partial(\xi)=0$ where $\partial$ is the tame symbol
\eqref{tame.symbol}.
Hence we have a $1$-extension 
\[
[\xi]_{X/S}\in\Ext^1_{\FilFMIC(S)}
\left(\sO_S, H^1(X/S)(2)\right)
\]
by Proposition \ref{prop:from_K2X}.
It follows from Theorem \ref{comp-thm} that one has
\[
D_{X/S}(\xi)=-\dlog(\xi)=-3\frac{dt}{t-1}\ot\frac{dx}{y}\in \Omega^1_{B_K}\ot H^1_\dR(X_K/S_K).
\]
The purpose of this section is to describe
\[
\reg^{(\sigma)}_{X/S}(\xi)
\in  B_K^\dag\ot H^1_\dR(X_K/S_K)
\]
in terms of the hypergeometric functions
\[
{}_2F_1\left({a,b\atop1};t\right):=\sum_{n=0}^\infty\frac{(a)_n}{n!}\frac{(b)_n}{n!}t^n
\]
where $(a)_n:=a(a+1)\cdots(a+n-1)$ denotes the Pochhammer symbol.
Put
\begin{equation}
\omega:=\frac{dx}{y},\quad \eta:=\frac{xdx}{y}
\end{equation}
a $B_K$-basis of $H^1_\dR(X_K/S_K)$.
Let
\[
F(t)=\frac{1}{2\sqrt{-3}}{}_2F_1\left({\frac13,\frac23\atop1};t\right),
\] 
and put
\begin{equation}\label{hat-eq}
\wh\omega:=\frac{1}{F(t)}\frac{dx}{y},\quad \wh\eta:=
4(1-t)(F(t)+3tF'(t))\frac{dx}{y}+F(t)\frac{xdx}{y}.
\end{equation}
a $K((t))$-basis of $K((t))\ot H^1_\dR(X_K/S_K)$.
Let $\Delta:=\Spec W[[t]]\to \P^1_W(t)$ and $O:=\Spec W[[t]]/(t)\hra \Delta$.
The fibration $\cE:=f^{-1}(\Delta)$ over $\Delta$
is a Tate elliptic curve with central fiber
$f^{-1}(O)=F=Z+2E$.
There is the uniformization
\[
\wh{\mathbb G}_m:=\Spec\bigg( \varprojlim_n(W[t]/(t^n)[u,u^{-1}])\bigg)\lra \cE
\]
Let $q\in tW[[t]]$ be the Tate period of $\cE/\Delta$ which is characterized by
\[
j(X_t)=\frac{27(1+8t)^3}{t(1-t)^3}
=\frac{1}{q}+744+196884q+\cdots
\]
\[
\left(\Longrightarrow q=\frac{1}{27}t+\frac{250289}{243}t^2-
\frac{5507717}{243}t^3+ \frac{25287001}{81}t^4+\cdots\right).
\]
Let $\omega^\bullet_{\cE/\Delta}$ be the de Ram complex
of $(\cE,F)/(\Delta,O)$.
Thanks to the fundamental theorem on log crystalline cohomology by Kato,
we have the canonical isomorphism
\[
H^i_\zar(\cE,\omega^\bullet_{\cE/\Delta})\cong H^i_\crys((\cE_1,F_1)/(\Delta,O))
\]
(\cite[Theorem 6.4]{Ka3}, see also 
Proposition \ref{logsyn-prop1})
where $(\cE_n,Z_n)=(\cE,Z)\ot\Z/p^n\Z$ as before.
\begin{prop}\label{ell-prop1}
Let $\nabla:K((t))\ot H^1_\dR(X_K/S_K)\to K((t))dt\ot H^1_\dR(X_K/S_K)$ be
the Gauss-Manin connection.
Then
\begin{equation}\label{ell-prop1-eq1}
\nabla(\omega)=-\frac{dt}{3t}\omega+\frac{dt}{12(t^2-t)}\eta,\quad 
\nabla(\eta)=\frac{4dt}{3t}\omega+\frac{dt}{3t}\eta,
\end{equation}
\begin{equation}\label{ell-prop1-eq2}
\nabla(\wh\omega)%=\frac{dt}{12(t^2-t)F(t)^2}\ot\wh\eta
=\frac{dq}{q}\ot\wh\eta,\quad \nabla(\wh\eta)=0.
\end{equation}
Let $L:=\Frac(W[[t]])$.
Then $\{\frac12\wh\omega,\frac12\wh\eta\}$ forms the de Rham symplectic basis
of $L\ot_{W[[t]]}
H^1_\zar(\cE,\omega^\bullet_{\cE/\Delta})$ in the sense of \S \ref{gm-mt-sect}.
Moreover $\wh\omega$ and $\wh\eta$ lie in $H^1_\zar(\cE,\omega^\bullet_{\cE/\Delta})$.
\end{prop}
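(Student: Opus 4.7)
The plan is to verify the four assertions in turn.

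First, for \eqref{ell-prop1-eq1}, I do a direct Gauss--Manin computation on the Weierstrass model. Writing $P(x,t)=x^3+(3x+4(1-t))^2$, differentiating $y^2=P$ gives $\nabla(\omega)=-(\partial_tP/(2y^3))\,dx\otimes dt$ and similarly for $\nabla(\eta)$. Reducing a form $g(x)\,dx/y^3$ to a $B_K$-linear combination of $\omega$ and $\eta$ modulo exact forms --- via $d(y)=P'(x)\,dx/(2y)$ together with division by the relation $y^2=P$ to trade $1/y^3$ for $1/y$ --- is a mechanical algebraic exercise with the explicit cubic and yields the displayed coefficients.

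Second, for \eqref{ell-prop1-eq2}, I substitute the definitions \eqref{hat-eq} into the formulas just established and apply the Leibniz rule. The coefficient of $\omega$ in the resulting expressions for $\nabla(\wh\omega)$ and $\nabla(\wh\eta)$ collapses thanks to the hypergeometric differential equation
\[
t(1-t)F''+(1-2t)F'-\tfrac{2}{9}F=0
\]
satisfied by $F(t)$. To identify the remaining coefficient as $dq/q$ I use the classical Hauptmodul relation $dq/q=c\,dt/(t(1-t)F(t)^2)$ for $X_1(3)$, where the constant $c$ is the one built into the normalisation of $F(t)$; this identity follows by comparing the $q$-expansion of $j(X_t)=27(1+8t)^3/(t(1-t)^3)$ with the standard $q$-expansion of the $j$-invariant.

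Third, for the integrality assertion (4) and the symplectic-basis assertion (3), I invoke the Tate parametrisation $\wh{\mathbb{G}}_m\to\mathcal{E}$ over $\Delta$. The canonical invariant differential $du/u$ generates $\Fil^1H^1_\zar(\mathcal{E},\omega^\bullet_{\mathcal{E}/\Delta})$ as a $W[[t]]$-module, and the class $\nabla_{qd/dq}(du/u)$ is a horizontal lift completing it to a basis. By step two, $(\wh\omega,\wh\eta)$ satisfies precisely the same horizontality pattern under $\nabla_{qd/dq}$, so matching leading $q$-expansions of $F(t(q))$ identifies $\tfrac12\wh\omega$ with $du/u$, whence $\wh\omega$ is integral; then $\wh\eta=2\nabla_{qd/dq}(du/u)$ is integral automatically. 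The normalisation recalled in \S\ref{gm-mt-sect} is satisfied because the Weil pairing on the Tate curve sends the pair $(du/u,\nabla_{qd/dq}(du/u))$ to $1$.

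The principal obstacle will be this third step, where one must carefully pass between the Weierstrass model over $S$ and the Tate model over the boundary $\Delta$, tracking normalisation constants throughout --- in particular the factor $1/(2\sqrt{-3})$ built into $F(t)$ and the resulting factor $\tfrac12$ in the statement. The key auxiliary identity that drives both the second and third steps is the Hauptmodul relation between $dq/q$ and $dt/(t(1-t)F^2)$ for $X_1(3)$; once this is in hand, the rest of the proof is bookkeeping.
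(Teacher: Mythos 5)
Your route runs in the opposite direction from the paper's, which is a legitimate design choice but leaves two real gaps in your sketch. The paper first establishes the symplectic-basis assertion, by invoking Proposition \ref{gm-mt-sect-prop1} to get $\nabla(\wh\omega_\dR)=\frac{dq}{q}\ot\wh\eta_\dR$, $\nabla(\wh\eta_\dR)=0$ for the canonical de Rham symplectic pair, and then comparing with the explicit $\nabla(\wh\omega)=\frac{dt}{12(t^2-t)F(t)^2}\ot\wh\eta$, $\nabla(\wh\eta)=0$ (which follows from \eqref{ell-prop1-eq1} and the hypergeometric ODE). A Leibniz computation forces $\wh\omega=b\,\wh\omega_\dR$, $\wh\eta=a\,\wh\eta_\dR$ with constants $a,b\in K^\times$, and then one boundary computation pins down $b=2$ (and a residue count gives $a=2$); \eqref{ell-prop1-eq2} and the relation $\frac{dq}{q}=\frac{dt}{24(t^2-t)F^2}$ then fall out as corollaries rather than being assumed. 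You instead take that last relation as input.

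The first gap is that the Hauptmodul relation $dq/q=c\,dt/(t(1-t)F(t)^2)$ does not follow ``by comparing the $q$-expansion of $j(X_t)$ with the standard $q$-expansion of $j$.'' That comparison determines $q(t)$ as a power series but cannot, by itself, produce the closed-form identity; one needs either a Picard--Fuchs/Wronskian argument identifying $F$ with a period and computing the Wronskian from the ODE coefficients, or the transcendental uniformization that Proposition \ref{gm-mt-sect-prop1} packages (the paper's proof of that proposition reduces to a Betti computation over $\C$). Since you are already using the hypergeometric ODE, the honest version of your step would be a Wronskian argument, but you would still have to nail the scalar $c$ --- which brings us to the second gap.

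The second gap is the step ``matching leading $q$-expansions of $F(t(q))$ identifies $\tfrac12\wh\omega$ with $du/u$.'' This is not a well-defined operation: $\wh\omega=F(t)^{-1}\,dx/y$ is expressed in the Weierstrass coordinates, and nothing about $F(t(q))$ alone tells you what $dx/y$ restricts to in the Tate coordinate $u$ on the boundary. What is actually needed --- and what the paper does explicitly --- is to write the uniformizing parameter on the reduced component $Z$ of the central fiber, namely $u_0=y/(x+4)$ and then $u=(u_0-\sqrt{-3})/(u_0+\sqrt{-3})$, and compute $i^*(dx/y)=\frac{1}{\sqrt{-3}}\frac{du}{u}$, hence $i^*\wh\omega=2\sqrt{-3}\,i^*(dx/y)=2\frac{du}{u}$. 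Without such an explicit coordinate change you cannot fix the factor $2$ (equivalently, the constant $c$), and this factor is the whole point of the normalisation assertion. Once you add that explicit boundary computation, your argument closes up and is essentially equivalent in content to the paper's, just ordered differently.
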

\begin{pf}
The computation of the Gauss-Manin connection for an elliptic fibration is well-known,
and hence we have
\eqref{ell-prop1-eq1}.
We show that $\{\frac12\wh\omega,\frac12\wh\eta\}$ forms the de Rham symplectic basis.
Then \eqref{ell-prop1-eq2} follows from Proposition \ref{gm-mt-sect-prop1}.
Firstly, thanks to \eqref{ell-prop1-eq1}, one can directly show
\begin{equation}\label{ell-prop1-eq3}
\nabla(\wh\omega)=\frac{dt}{12(t^2-t)F(t)^2}\ot\wh\eta,
\quad \nabla(\wh\eta)=0
\end{equation}
where we note that the hypergeometric function $F(t)$ satisfies
$t(1-t)F^{\prime\prime}(t)+(1-2t)F'(t)-\frac29 F(t)=0$ (e.g. \cite[16.8.5]{NIST}).
Let $\{\wh\omega_\dR,\wh\eta_\dR\}$ be the de Rham symplectic basis.
Then 
\begin{equation}\label{ell-prop1-eq4}
\nabla(\wh\omega_\dR)=\frac{dq}{q}\ot\wh\eta_\dR,
\quad \nabla(\wh\eta_\dR)=0
\end{equation}
by Proposition \ref{gm-mt-sect-prop1}.
Since $\ker(\nabla)\cong K$,
we have $\wh\eta=a\wh\eta_\dR$ for some $a\in K^\times$.
Since $\wh\omega$ and $\wh\omega_\dR$ are regular $1$-forms, 
there is a $h\in L^\times$ such that $\wh\omega=h\wh\omega_\dR$.
We then have
\[
\nabla(\wh\omega)=\nabla(h\wh\omega_\dR)=dh\ot\wh\omega_\dR
+h\nabla(\wh\omega_\dR)
=dh\ot\wh\omega_\dR+h\frac{dq}{q}\ot\wh\eta_\dR.
\]
By \eqref{ell-prop1-eq3}, $h=b$ is a constant.
Let $\bG_m=Z\setminus(Z\cap E)$ be the reduced regular locus of $F$ 
and $i:\bG_m\hra \cE$ the inclusion.
Put $u_0=y/(x+4)$ and $u=(u_0-\sqrt{-3})/(u_0+\sqrt{-3})$. Then 
$\bG_m=\Spec W[u,u^{-1}]$ and
\[
i^*\wh\omega=2\sqrt{-3}i^*\left(\frac{dx}{y}\right)=2\frac{du}{u}=2i^*\wh\omega_\dR.
\]
This shows $\wh\omega=2\wh\omega_\dR$.
By \eqref{ell-prop1-eq3} and \eqref{ell-prop1-eq4}, we have
\[
\frac{dt}{12(t^2-t)F(t)^2}\ot\wh\eta
=2\frac{dq}{q}\ot\wh\eta_\dR=2a^{-1}\frac{dq}{q}\ot\wh\eta.
\]
Take the residue at $t=0$, and then we see $a=2$.

There remains to show
$\wh\omega,\wh\eta\in H^1_\zar(\cE,\omega^\bullet_{\cE/\Delta})$.
It is straightforward to see that $dx/y
\in 
\vg(\cE,\omega^\bullet_{\cE/\Delta})$, so that
one has $\wh\omega\in 
\vg(\cE,\omega^\bullet_{\cE/\Delta})$.
The Gauss-Manin connection induces a connection
\[
\nabla:H^1_\zar(\cE,\omega^\bullet_{\cE/\Delta})\lra\frac{dt}{t}\ot
H^1_\zar(\cE,\omega^\bullet_{\cE/\Delta}),
\]
and one has from \eqref{ell-prop1-eq2}
\[
\wh\eta=q\frac{d}{dq}\wh\omega=u(t)\cdot t\frac{d}{dt}\wh\omega,
\quad u(t):=\frac{q}{t}\frac{dt}{dq}\in W[[t]]^\times
\]
(note that $W[[q]]=W[[t]]$).
This shows $\wh\eta\in H^1_\zar(\cE,\omega^\bullet_{\cE/\Delta})$.
This completes the proof.
\end{pf}
\begin{prop}\label{ell-prop2}
Let $c\in 1+pW$ and 
let $\sigma$ be the $p$-th Frobenius on $\wh B_K$ defined by $\sigma(t)=ct^p$.
Let $\Phi$ be the $\sigma$-linear Frobenius on $H^1_\zar(\cE,\omega^\bullet_{\cE/\Delta})$.
Put
\[
\tau^{(\sigma)}(t)=p^{-1}\log\left(\frac{q^p}{q^\sigma}\right)
=-p^{-1}\log (27^{p-1}c)+p^{-1}\log(q_0^p/q_0^\sigma),\quad \left(q_0:=\frac{27q}t\right)
\]
where $\log:1+pW[[t]]\to W[[t]]$ is defined by the customary Taylor expansion.
Then
\[
\begin{pmatrix}
\Phi(\wh\omega)&\Phi(\wh\eta)
\end{pmatrix}
=\begin{pmatrix}
\wh\omega&\wh\eta
\end{pmatrix}
\begin{pmatrix}
p&0\\
-p\tau^{(\sigma)}(t)&1
\end{pmatrix}
\]
\end{prop}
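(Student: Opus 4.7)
My plan is to determine the matrix of $\Phi$ in the basis $\{\wh\omega,\wh\eta\}$ by combining horizontality with respect to the Gauss--Manin connection from Proposition \ref{ell-prop1}, the unit-root splitting of the F-isocrystal attached to the Tate elliptic curve, and a single boundary normalization. Write $\Phi(\wh\omega)=a\wh\omega+b\wh\eta$ and $\Phi(\wh\eta)=c\wh\omega+d\wh\eta$ with coefficients in the appropriate ring. Applying horizontality $\nabla\Phi=\Phi\nabla$ to $\wh\eta$ and using $\nabla(\wh\omega)=\frac{dq}{q}\otimes\wh\eta$, $\nabla(\wh\eta)=0$, a direct computation gives
\[
dc\otimes\wh\omega+\Bigl(c\frac{dq}{q}+dd\Bigr)\otimes\wh\eta=0,
\]
so $c=0$ and $d$ is a constant in $K$. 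The analogous computation applied to $\wh\omega$ yields $da=0$ (hence $a\in K$) together with $a\,\frac{dq}{q}+db=d\cdot\frac{dq^{\sigma}}{q^{\sigma}}$.

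To identify the diagonal entries $a,d$, I would exploit the Tate uniformization. The computation at the end of the proof of Proposition \ref{ell-prop1} identifies the restriction of $\wh\omega$ to the reduced component $\G_m\subset F$ with $2\,du/u$; since any $p$-th power Frobenius sends $du/u$ to $p\,du/u$ on $\G_m$, this forces $a=p$. On the other hand, $\wh\eta$ spans the kernel of $\nabla$ and, via the Tate uniformization, represents the unit-root class on which the canonical Frobenius acts trivially, giving $d=1$; alternatively, $d=1$ is obtained from the fact that $\wh\eta$ corresponds to the residue class at the node of the special fiber, on which Frobenius acts as the identity.

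Substituting $a=p$, $d=1$ into the horizontality relation gives
\[
db=\frac{dq^{\sigma}}{q^{\sigma}}-p\,\frac{dq}{q}=-d\log\Bigl(\frac{q^{p}}{q^{\sigma}}\Bigr)=-p\,d\tau^{(\sigma)}(t),
\]
so $b=-p\,\tau^{(\sigma)}(t)+C$ for some constant $C\in K$. The main obstacle is then to show $C=0$. I would reduce this to a single distinguished choice of Frobenius: since $q\in t+t^{2}W[[t]]$ and $27^{p-1}\equiv 1\pmod p$ by Fermat's little theorem, one can solve recursively for a Frobenius $\sigma_q$ on $W[[t]]$ satisfying $q^{\sigma_q}=q^p$ (necessarily of the form $\sigma_q(t)=27^{p-1}t^p+O(t^{p+1})$). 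For this $\sigma_q$ we have $\tau^{(\sigma_q)}\equiv0$, and by the canonical multiplicative uniformization $\widehat{\G}_m\to\cE$ the Frobenius on $H^1$ of the Tate curve is the diagonal $\mathrm{diag}(p,1)$ in the symplectic basis, so $C=0$. For a general Frobenius $\sigma(t)=ct^p$ with $c\in 1+pW$, the canonical parallel-transport isomorphism of overconvergent F-isocrystals (which is independent of the Frobenius lift) transports the formula and shows that $C$ is independent of $\sigma$; hence $C=0$ in general.
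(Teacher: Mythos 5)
Your computational skeleton --- horizontality forcing $\Phi$ to be lower-triangular with constant diagonal entries, the residue normalization $a=p$, integration of $db=-p\,d\tau^{(\sigma)}$ to get $b=-p\tau^{(\sigma)}+C$, and reduction to the distinguished Frobenius $\sigma_q$ with $q^{\sigma_q}=q^p$ --- is sound and matches the structure of the paper's argument, which proves Proposition \ref{ell-prop2} by noting $W[[t]]=W[[q]]$ and invoking the general appendix result Theorem \ref{frob-sect-thm1}. The genuine gap is the sentence asserting that ``by the canonical multiplicative uniformization $\widehat{\G}_m\to\cE$ the Frobenius on $H^1$ of the Tate curve is the diagonal $\mathrm{diag}(p,1)$ in the symplectic basis, so $C=0$.'' That diagonal form is exactly what must be established, and the uniformization by itself does not yield it: a priori one only knows $\Phi(\wh\omega)=p\wh\omega+C\wh\eta$ for some constant $C$. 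The paper closes this gap (in the universal case, for $\sigma(q_{ij})=q_{ij}^p$) by using the functoriality of Mumford's quotient construction $\bG_m^g/\uq^\Z$ to produce an honest scheme morphism $\varphi:J_q\to J_q$ lifting Frobenius; since $\Phi=\varphi^*$ is then induced by a morphism of the scheme it respects the Hodge filtration, forcing the off-diagonal entry to vanish, and the residue computation $\rho^*\Phi(\wh\omega_i)=p\,du_i/u_i$ finishes the identification. Likewise, your ``unit-root / residue at node'' heuristic for $d=1$ needs a concrete argument; the paper instead uses $\Phi(\ker\nabla)\subset\ker\nabla$ together with the symplectic pairing relation $\langle\Phi x,\Phi y\rangle=p\langle x,y\rangle$.

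Two further points to tighten. The passage from $\sigma_q$ to a general $\sigma(t)=ct^p$ should be carried out via the explicit Taylor-expansion comparison of Lemma \ref{EK-lem}, together with the observation (made after Definition \ref{frob-sect-def2}) that the log objects are compatible under that equivalence; the phrase ``the canonical parallel-transport isomorphism $\ldots$ is independent of the Frobenius lift'' does not by itself show that the integration constant $C$ is $\sigma$-independent, since $\Phi$ itself changes with $\sigma$. And a minor slip: since $q=\tfrac{1}{27}t+O(t^2)$, the distinguished Frobenius satisfies $\sigma_q(t)=27^{1-p}t^p+O(t^{p+1})$, not $27^{p-1}t^p+\cdots$ (the two agree mod $p$ by Fermat, which is all that is needed, but the exponent should be $1-p$).
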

\begin{pf}
Note $W[[t]]=W[[q]]$. This is a special case of Theorem \ref{frob-sect-thm1} in Appendix.
\end{pf}

Let $\xi$ be the symbol \eqref{xi}.
Let $\sigma:B^\dag\to B^\dag$ be the $p$-th Frobenius defined by $\sigma(t)=ct^p$.
Let
\[
0\lra H^1_\dR(X/S)(2)\lra M_\xi(X/S)\lra\O_{S}\lra0
\]
be the 1-extension in $\FilFMIC(S,\sigma)$,
 associated to $[\xi]_{X/S}$. Let $e_\xi\in \Fil^0M_\xi(X_K/S_K)_\dR$
be the unique lifting of $1\in \O(S_K)$.
Define $\ve_i(t),E_i(t)$ by
\begin{align}
\reg^{(\sigma)}_{X/S}(\xi)=e_\xi-\Phi(e_\xi)&=\ve_1(t)\omega+\ve_2(t)\eta
\label{e-Phi-eq1}\\
&=E_1(t)\wh\omega+E_2(t)\wh\eta.\label{e-Phi-eq2}
\end{align}
It follows that $\ve_i(t)\in B^\dag_K$. 
By \eqref{hat-eq}, one immediately has
\begin{equation}\label{ell-thm-eq3}
\ve_1(t)=\frac{E_1(t)}{F(t)}+4(1-t)(F(t)+3tF'(t))E_2(t),
\end{equation}
\begin{equation}\label{ell-thm-eq4}
\ve_2(t)=F(t)E_2(t).
\end{equation}
The power series $E_i(t)$ are explicitly described as follows.
\begin{thm}\label{ell-thm}
Let $\xi$ be as in \eqref{xi}. Put $\nu=(-1+\sqrt{-3})/2$.
Then $E_1(t),E_2(t)\in K[[t]]$ and they are characterized by 
\begin{equation}\label{ell-thm-eq1}
\frac{d}{dt}E_1(t)=-3\left(F(t)\frac{dt}{t-1}-F(t)^\sigma\frac{p^{-1}dt^\sigma}{t^\sigma-1}\right),\quad E_1(0)=0,
\end{equation}
\begin{equation}\label{ell-thm-eq2}
\frac{d}{dt}E_2(t)=-E_1(t)\frac{q'}{q}-3F(t)^\sigma
\tau^{(\sigma)}(t)\frac{p^{-1}dt^\sigma}{t^\sigma-1},\quad
E_2(0)=-9\ln^{(p)}_2(-\nu).
\end{equation}
\end{thm}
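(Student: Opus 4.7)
The plan is to combine the cocycle description of the extension class $[\xi]_{X/S}$ with a boundary calculation at $t=0$. By \eqref{phi.sigma} and the definitions in \eqref{comp-eq2}, the pair $\bigl(D_{X/S}(\xi),\,\reg^{(\sigma)}_{X/S}(\xi)\bigr)$ represents $[\xi]_{X/S}$ inside $H^1\bigl(\cS(H^1(X/S)(2))\bigr)$, so it satisfies the cocycle relation
\[ (1-\varphi_2)\bigl(D_{X/S}(\xi)\bigr) \;=\; \nabla\bigl(\reg^{(\sigma)}_{X/S}(\xi)\bigr), \]
with $\varphi_2 = p^{-2}\varphi$ reflecting the Tate twist $(2)$. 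The paper already recorded $D_{X/S}(\xi)=-3(dt/(t-1))\omega$ from Theorem \ref{comp-thm}, and $\omega = F(t)\wh\omega$ from \eqref{hat-eq}; Proposition \ref{ell-prop2} (giving $\Phi\wh\omega = p\wh\omega - p\tau^{(\sigma)}\wh\eta$, $\Phi\wh\eta = \wh\eta$) then computes
\[ (1-\varphi_2)\bigl(D_{X/S}(\xi)\bigr) = -3\!\left(F(t)\tfrac{dt}{t-1}-F(t)^\sigma\tfrac{p^{-1}dt^\sigma}{t^\sigma-1}\right)\wh\omega \;-\; 3F(t)^\sigma\tau^{(\sigma)}(t)\tfrac{p^{-1}dt^\sigma}{t^\sigma-1}\,\wh\eta, \]
while Proposition \ref{ell-prop1} gives $\nabla(E_1\wh\omega + E_2\wh\eta) = dE_1\cdot\wh\omega + (E_1\,dq/q + dE_2)\wh\eta$. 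Matching the $\wh\omega$- and $\wh\eta$-coefficients reproduces exactly the ODEs \eqref{ell-thm-eq1} and \eqref{ell-thm-eq2}.

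The regularity $E_1,E_2\in K[[t]]$ follows because the basis $\{\wh\omega,\wh\eta\}$ extends to a basis of the log crystalline cohomology $H^1_\zar(\cE,\omega^\bullet_{\cE/\Delta})$ over $\Delta = \Spec W[[t]]$ (Proposition \ref{ell-prop1}), and Theorem \ref{ell.diagram-thm} places $\reg^{(\sigma)}_{X/S}(\xi)$ inside $W[[t]]_K\otimes H^1_\zar(\cE,\omega^\bullet_{\cE/\Delta})$ after base change to $\Delta$. To pin down the integration constants I restrict $\xi$ to the smooth locus $\bG_m = Z\setminus(Z\cap E) = \Spec W[u, u^{-1}]$ of the central fiber, using $u_0=y/(x+4)$ and $u=(u_0-\sqrt{-3})/(u_0+\sqrt{-3})$. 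The identity $y^2|_{t=0} = (x+1)(x+4)^2$ yields $x(u)=-4(1+u+u^2)/(1-u)^2$ and $y(u)=-12\sqrt{-3}u(1+u)/(1-u)^3$, and after substitution the numerators factor as powers of $(1-\nu u)$ and $(1-\nu^2 u)$:
\[ \tfrac{y-3x-4(1-t)}{-8(1-t)}\Big|_{\bG_m} = -\frac{(1-\nu u)^3}{(1-u)^3}, \qquad \tfrac{y+3x+4(1-t)}{8(1-t)}\Big|_{\bG_m} = -\frac{(1-\nu^2 u)^3}{(1-u)^3}. \]
Thus $\xi|_{\bG_m} = 9\{a,b\}$ with $a = -(1-\nu u)/(1-u)$ and $b = -(1-\nu^2 u)/(1-u)$. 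Running this through the explicit regulator formula of Theorem \ref{comp-thm} on $\bG_m$, or equivalently the dilog formalism of Definition \ref{poly-obj-def}, and matching the $\wh\omega$- and $\wh\eta$-coefficients of the specialization of $\reg^{(\sigma)}_{X/S}(\xi)$ at $P_0$ (using that $\wh\omega|_{\bG_m} = 2\,du/u$ from the proof of Proposition \ref{ell-prop1}) forces $E_1(0)=0$ and identifies $E_2(0)$ with the $p$-adic dilog value $-9\ln_2^{(p)}(-\nu)$.

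The principal obstacle is this boundary computation: rigorously specializing the syntomic regulator at the boundary point $P_0$ and turning the resulting Steinberg-type symbol $9\{a,b\}$ on $\bG_m$ into the explicit dilog value $-9\ln_2^{(p)}(-\nu)$. Everything else is a careful but mechanical combination of Theorems \ref{comp-thm} and \ref{ell.diagram-thm} with the Frobenius and connection formulas of Propositions \ref{ell-prop1}, \ref{ell-prop2}; it is only in the identification of the boundary $\hat\eta$-coefficient with $\ln_2^{(p)}(-\nu)$ that the full polylog-object machinery of Section \ref{poly-obj-sect} is genuinely put to work.
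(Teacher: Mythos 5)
Your derivation of the two ODEs is essentially the paper's: apply $\nabla$ to $\reg^{(\sigma)}_{X/S}(\xi)=e_\xi-\Phi(e_\xi)=E_1\wh\omega+E_2\wh\eta$, use $\Phi\nabla=\nabla\Phi$ and the formulas from Propositions \ref{ell-prop1}, \ref{ell-prop2}, and match coefficients. The regularity $E_i\in K[[t]]$ also follows as you say. Your explicit formulas for $h_1, h_2$ on the central $\bG_m$ and the factorization $\xi|_{\bG_m}=9\{a,b\}$ agree with the paper's (written there in the coordinate $z=\nu^2(1-z_0)/2=-({1-\nu u})/({1-u})$). That part of the route is sound.

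The gap is in the initial conditions, and it is bigger than you acknowledge. For $E_1(0)=0$: you propose to read it off by ``matching the $\wh\omega$- and $\wh\eta$-coefficients of the specialization at $P_0$.'' But the specialization the paper makes precise goes through $H^1_\zar(F_\star,\O_{F_\star})$, which is one-dimensional and annihilates the $\wh\omega$-direction (indeed $u''(dx/y)=0$ in the proof); the $\wh\omega$-coefficient is invisible there. The paper instead extracts $E_1(0)=0$ by a residue argument: in the second ODE, $q'/q$ has a simple pole at $t=0$ with residue $1$, $\tau^{(\sigma)}\in W[[t]]$, and $\frac{d}{dt}E_2\in K[[t]]$, so taking $\Res_{t=0}$ forces $E_1(0)=0$. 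Your boundary-matching route would require computing the class in the full two-dimensional log crystalline cohomology of the central fiber, not just its image under $u'$, which you have not set up.

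For $E_2(0)=-9\ln_2^{(p)}(-\nu)$, you correctly identify this as the hard step and do not carry it out. The paper's argument is genuinely intricate: it works with the simplicial nerve $F_\star$ of the Neron $2$-gon, uses \eqref{ell-thm-pf-eq5} relating $(u'\circ i_{F_\star}^*)([\xi]_\syn)$ to $-E_2(0)(u\circ i_{F_{\red}}^*)(\wh\eta)$, computes the right side by a \v{C}ech cocycle for $\wh\eta$ (yielding $2$), and computes the left side by passing to the rigid syntomic regulator on $F'_{U,\star}$ and integrating $-9p^{-1}\log((1-\nu z)^p/(1-\nu^pz^p))\,dz/z=9\,d\ln_2^{(p)}(\nu z)$, using overconvergence of $\ln_2^{(p)}$ (Proposition \ref{polylog-oc}) to recognize the class as $18\ln_2^{(p)}(-\nu)$. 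Merely restricting to the single component $\bG_m$ and invoking ``the dilog formalism'' does not suffice: $\bG_m$ is an open piece of a degenerate fiber, Theorem \ref{comp-thm} is not directly applicable to it, and the simplicial/\v{C}ech gluing across $Z$ and $E$ (where the relation $h_1|_E=h_2|_E-1$ also enters) is exactly what your sketch omits.
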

\begin{pf}
Apply $\nabla$ on \eqref{e-Phi-eq2}.
By Proposition \ref{ell-prop1} together with the fact that $\Phi\nabla=\nabla\Phi$, one has
\begin{equation}\label{ell-thm-pf-eq1}
\nabla(e_\xi)-\Phi(\nabla(e_\xi))=dE_1(t)\ot\wh\omega+
\left(E_1(t)\frac{dq}{q}+dE_2(t)\right)\ot\wh\eta.
\end{equation}
Since
\[
\nabla(e_\xi)=-\dlog(\xi)=-3\frac{dt}{t-1}\ot\frac{dx}{y}=-3F(t)\frac{dt}{t-1}\ot\wh\omega
\]
(cf. \eqref{comp-thm-pf-eq3}), the left hand side of \eqref{ell-thm-pf-eq1} is
\begin{align*}
&-3F(t)\frac{dt}{t-1}\ot\wh\omega+3p^{-1}\sigma\left(F(t)\frac{dt}{t-1}\right)
\ot p^{-1}\Phi(\wh\omega)\\
=&-3F(t)\frac{dt}{t-1}\ot\wh\omega+3F(t)^\sigma
\frac{p^{-1}dt^\sigma}{t^\sigma-1}\ot(\wh\omega-\tau^{(\sigma)}(t)\wh\eta)
\quad(\text{Proposition \ref{ell-prop2}})
\\
=&-3\left(F(t)\frac{dt}{t-1}-F(t)^\sigma\frac{p^{-1}dt^\sigma}{t^\sigma-1}\right)\ot\wh\omega
-3F(t)^\sigma\tau^{(\sigma)}(t)\frac{p^{-1}dt^\sigma}{t^\sigma-1}\ot\wh\eta.
\end{align*}
Therefore one has
\begin{equation}\label{ell-thm-pf-eq2}
\frac{d}{dt}E_1(t)=-3\left(F(t)\frac{dt}{t-1}-F(t)^\sigma\frac{p^{-1}dt^\sigma}{t^\sigma-1}\right),
\end{equation}
and
\begin{equation}\label{ell-thm-pf-eq3}
\frac{d}{dt}E_2(t)=-E_1(t)\frac{q'}{q}-3F(t)^\sigma
\tau^{(\sigma)}(t)\frac{p^{-1}dt^\sigma}{t^\sigma-1}.
\end{equation}
The differential equation \eqref{ell-thm-pf-eq2} implies that $E_1(t)\in K[[t]]\cap\wh B_K$.
It determines all coefficients of $E_1(t)$ except the constant term.
Note $\tau^{(\sigma)}(t)\in W[[t]]$. Taking the residue at $t=0$ of the both sides
of \eqref{ell-thm-pf-eq3}, one concludes $E_1(0)=0$.
We thus have the full description of $E_1(t)$.
Since $E_1(t)\in tK[[t]]$,
the differential equation \eqref{ell-thm-pf-eq3} implies that 
$E_2(t)\in K[[t]]\cap\wh B_K$, and
it determines all coefficients of $E_2(t)$ except the constant term.

The rest of the proof is to show $E_2(0)=-9\ln_2^{(p)}(-\nu)$.
To do this, we look at the syntomic cohomology of the singular fiber $F$.
Recall that $F$ has two components $Z,E$ and the multiplicity of $Z$ (resp. $E$) is 
$1$ (resp. $2$)
The reduced part $F_{\red}=Z\cup E$ is a Neron $2$-gon,
and the divisor $D$ intersects only with $Z$.
Let $F_\star$ be the simplicial nerve of the normalization of $F_{\red}$.
Let $i_{F_{\red}}: F_{\red}\hra \cE$ and let $i_{F_\star}:F_\star\to \cE$.
There is a commutative diagram
\[\xymatrix{
H^2_\syn(\cE,\Z_p(2))\ar[d]\ar[r]^{i^*_{F_\star}}\ar@/_80pt/[dd]_{\rho_\sigma}
&H^2_\syn(F_\star,\Z_p(2))&
H^1_\dR(F_\star/W)\ar[l]_-{\cong}\ar[d]^{u'}_\cong\\
H^2_\syn((\cE,F)/(\Delta,O,\sigma),\Z_p(2))&&H^1_\zar(F_\star,\O_{F_\star})\\
H^1_\zar(\cE,\omega^\bullet_{\cE/\Delta})\ar[u]^\cong\ar[r]^{i_{F_{\red}}^*}&
H^1_\zar(F_{\red},\omega^\bullet_{\cE/\Delta}|_{F_{\red}})\ar[ru]_{u}
}\]
where $u$ and $u'$ are the maps induced from the canonical maps
$\omega^\bullet_{\cE/\Delta}|_{F_{\red}}\to\O_{F_\star}$
and $\Omega^\bullet_{F_\star}\to\O_{F_\star}$.
By Theorem \ref{ell.diagram-thm}, 
$
\reg_{X/S}^{(\sigma)}(\xi)
%-(E_1(t)\wh\omega+E_2(t))\wh\eta\in H^1_\zar(\cE,\omega^\bullet_{\cE/\Delta})
$
is related to $[\xi]_\syn\in H^2_\syn(\cE,\Z_p(2))$ as follows,
\begin{equation}\label{ell-thm-pf-eq4}
\rho_\sigma([\xi]_\syn)=-E_1(t)\wh\omega-E_2(t)\wh\eta\in H^1_\zar(\cE,\omega^\bullet_{\cE/\Delta}).
\end{equation}
To compute $E_2(0)$, we shall compute the both side of
\begin{equation}\label{ell-thm-pf-eq5}
(u'\circ i^*_{F_\star})([\xi]_\syn)=-E_2(0)(u\circ i^*_{F_{\red}})
(\wh\eta)\in H^1_\zar(F_\star,\O_{F_\star}).
\end{equation}

Let
$z_0=y/(x+4)|_Z$ be a coordinate
of $Z\cong \P^1_W$. 
Let $R_1,R_2$ be the intersection points of
$Z$ and $E$ given by $z_0=\sqrt{-3},-\sqrt{-3}$
respectively.
We use another coordinate $z:=\nu^2(1-z_0)/2$ of $Z$, so that the rational functions
\[
h_1:=\frac{y-3x-4(1-t)}{-8(1-t)},\quad
h_2:=\frac{y+3x+4(1-t)}{8(1-t)}\in\O(U)^\times
\]
in the Milnor
symbol $\xi$ in \eqref{xi} satisfy 
\begin{equation}\label{h1h2-z}
h_1|_Z=z^3,\quad h_2|_Z=(1-\nu z)^3,\quad h_1|_E=h_2|_E-1.
\end{equation}
The points $R_1,R_2$ are given by $z=-1,-\nu$ respectively.
Let $\cS_n(r)_{V}=\cS_n(r)_{V/(W,F_W)}$ denote the syntomic complex
for $V=Z, E$ or $Z\cap E$.
We fix isomorphisms
\begin{align*}
H^j_\syn(F_\star,\Z/p^n(r))
&\cong H^j_\et(F_\red,\cS_n(r)_Z\op\cS_n(r)_E\os{i}{\to}\cS_n(r)_{Z\cap E})\\
H^j_\dR(F_\star/W)
&\cong H^j_\zar(F_\red,\Omega^\bullet_Z\op\Omega^\bullet_E
\os{i}{\to}\O_{Z\cap E})\\
H^j_\zar(F_\star,\O_{F_\star})
&\cong H^j_\zar(F_\red,\O_Z\op\O_E
\os{i}{\to}\O_{Z\cap E}),
\end{align*}
where $i:(f,g)
\mapsto f|_{Z\cap E}-g|_{Z\cap E}$.
We also fix an isomorphism
\begin{align*}
\alpha:%H^1_\dR(F_\star,/W)\os{\cong}{\to} 
H^1_\zar(F_\star,\O_{F_\star})
\cong\Coker[H^0(\O_Z)\op H^0(\O_E)\os{i}{\to} H^0(\O_{R_1})\op 
H^0(\O_{R_2})]\os{\cong}{\to}W,
\end{align*}
where the last isomorphism
is given by $(c_1,c_2)\mapsto c_1-c_2$.

\medskip

We compute the right hand side of \eqref{ell-thm-pf-eq5}.
The map $u\circ i^*_{F_\red}$ agrees with the composition of the
maps,
\[
H^1_\zar(\cE,\omega^\bullet_{\cE/\Delta})\os{u^{\prime\prime}}{\lra} 
H^1_\zar(\cE,\O_\cE)\os{ i^*_{F_\star}}{\lra} H^1_\zar(F_\star,\O_{F_\star}).
\]
Let $y_1=2y$ and $x_1=x+3$ so that we have 
$y^2=x^3+(3x+4-4t)^2$ $\Leftrightarrow$ $y_1^2=4x_1^3-g_2x_1-g_3$.
Let $\cE=U_0\cup U_\infty$ be an affine open covering given by
$U_0=\{y_1^2=4x_1^3-g_2x_1-g_3\}$ and 
$U_\infty=\{w^2=4v-g_2v^3-g_3v^4\}$ with $v=1/x_1$, $w=y_1/x_1^2$.
Then one can compute the cohomology $H^i_\zar(\cE,{\mathscr F}^\bullet)$ for a bounded complex
 ${\mathscr F}^\bullet$ of quasi-coherent sheaves by the total complex of Cech complex
\[
\vg(U_0,{\mathscr F}^\bullet)\op\vg(U_\infty,
{\mathscr F}^\bullet)\lra \vg(U_0\cap U_\infty,{\mathscr F}^\bullet),
\quad (f_0,f_\infty)\longmapsto f_0|_{U_0\cap U_\infty}-f_\infty|_{U_0\cap U_\infty}.
\]
In particular, $H^1_\zar(\cE,{\mathscr F}^\bullet)$ is the kernel of the map
\[
\vg(U_0\cap U_\infty,{\mathscr F}^0)\times
(\vg(U_\infty,{\mathscr F}^1)\op \vg(U_\infty,{\mathscr F}^1))
\to \vg(U_0\cap U_\infty,{\mathscr F}^1)\times
(\vg(U_\infty,{\mathscr F}^2)\op \vg(U_\infty,{\mathscr F}^2))
\]
\[
(f)\times(g_0,g_\infty)\longmapsto 
(df-(g_0|_{U_0\cap U_\infty}-g_\infty|_{U_0\cap U_\infty}))\times(dg_0,dg_\infty)
\]
modulo the image of the map
\[
\vg(U_\infty,{\mathscr F}^0)\op \vg(U_\infty,{\mathscr F}^0)\lra
\vg(U_0\cap U_\infty,{\mathscr F}^0)\times
(\vg(U_\infty,{\mathscr F}^1)\op \vg(U_\infty,{\mathscr F}^1))
\]
\[
(g_0,g_\infty)\longmapsto(g_0|_{U_0\cap U_\infty}-g_\infty|_{U_0\cap U_\infty})\times(dg_0,dg_\infty)
\]
where $d:{\mathscr F}^i\to{\mathscr F}^{i+1}$ is the differentail map.
We compute
\begin{equation}\label{cech-1}
u\circ i^*_{F_\red}(\wh\eta)= i^*_{F_\star}(u^{\prime\prime}(\wh\eta))
= i^*_{F_\star}\circ u^{\prime\prime}\left(\frac{1}{\sqrt{-3}}\wh\eta_1\right),\quad
\wh\eta_1:=\frac{x_1dx_1}{y_1}
\end{equation}
where the last equality follows from the fact that $u^{\prime\prime}(dx/y)=0$.
The cohomology class $\wh\eta_1\in H^1_\zar(\omega^\bullet_{\cE/\Delta})$ 
is represented by a Cech cocycle
\[
\left(\frac{y_1}{2x_1}\right)\times
\left(\frac{x_1dx_1}{y_1},\frac{(g_2v+2g_3v^2)dv}{4w}\right)
\in \vg(U_0\cap U_\infty,\O_\cE)\times
(\vg(U_0,\omega^1_{\cE/\Delta})\op\vg(U_\infty,\omega^1_{\cE/\Delta})).
\]
Then $u^{\prime\prime}(\wh\eta)$ is represented by a Cech cocycle
$(y_1/2x_1)=(y/(x+3))\in \vg(U_0\cap U_\infty,\O_\cE)$, and hence
\eqref{cech-1} is represented by a Cech cocycle
\begin{align*}
&\left(\frac{1}{\sqrt{-3}}\frac{y}{x+3}\bigg|_Z,0\right)
\times(0,0)=
\left(\frac{1}{\sqrt{-3}}\frac{z_0(z_0^2+3)}{z_0^2+2},0 \right)\times(0,0)\\
&=
\left(\frac{1}{\sqrt{-3}}z_0,0 \right)\times(0,0)
-\left(\frac{1}{\sqrt{-3}}\frac{-z_0}{z_0^2+2},0 \right)\times(0,0)
\\
&\in \vg(U_0\cap U_\infty,\O_Z\op\O_E)\times(\vg(U_0,\O_{Z\cap E})\op 
\vg(U_\infty,\O_{Z\cap E})).
\end{align*}
Since $z_0\in \vg(U_0,\O_Z)$ and $z_0/(z_0^2+2)\in \vg(U_\infty,\O_Z)$,
this is equivalent to
\begin{align*}
&(0,0)\times
\left(\left(\frac{1}{\sqrt{-3}} \cdot\sqrt{-3},\frac{1}{\sqrt{-3}}\cdot(-\sqrt{-3})\right),
\left(\frac{1}{\sqrt{-3}} \cdot\frac{-\sqrt{-3}}{-1},\frac{1}{\sqrt{-3}}\cdot\frac{
\sqrt{-3}}{-1}\right)
\right)\\
=&(0,0)\times((1,-1),(1,-1)).
\end{align*}
This shows
\begin{equation}\label{ell-thm-pf-eq5-right}
\alpha\bigg((h\circ i^*_{F_{\red}})(\wh\eta)\bigg)=1-(-1)=2
\end{equation}
giving
the right hand side of \eqref{ell-thm-pf-eq5}.

\medskip

We write down the left hand side of \eqref{ell-thm-pf-eq5} explicitly.
Put $D_F:=D\cap F=D\cap Z$ ($=$ three points $z=0,\nu^2,\infty$).
Let $F_U:=F_{\red}\cap U=(Z\setminus D_F)\cup E$ and 
its simplicial nerve is denoted by $F_{U,\star}$. 
One has
$
h_i|_{F_U}\in \O(F_U)^\times
$
and 
\[
\xi|_{F_U}=
\{h_1|_{F_U},h_2|_{F_U}\}%=9\{z,1-\nu z\}
\in K_2^M(\O(F_U)).
\]
We think $h_i|_{F_U}$ to be an element of $K_1(F_{U,\star})$ and
$\xi_{F_U}$ 
of being an element of $K_2(F_{U,\star})$ via the canonical maps
$K^M_i(\O(F_U))\to K_i(F_{U,\star})$.
To compute the left hand side of \eqref{ell-thm-pf-eq5}, it is enough to compute
\[
[\xi|_{F_U}]_\syn=
[h_1|_{F_U}]_\syn\cup[h_2|_{F_U}]_\syn
\in H^2_\syn((F_\star,D_F),\Z_p(2))
\]
as there is a commutative diagram
\[
\xymatrix{
H^2_\syn(F_\star,\Z_p(2))\ar[d]&
H^1_\dR(F_\star/W)\ar[l]_-{\cong}\ar[r]^-\cong_-{u'}\ar[d]&
H^1_\zar(F_\star,\O_{F_\star})\ar@{=}[d]\\
H^2_\syn((F_\star,D_F),\Z_p(2))&
H^1_\dR((F_\star,D_F)/W)\ar[l]_-{\cong}\ar[r]&
H^1_\zar(F_\star,\O_{F_\star}).
}
\]
We further replace the log syntomic cohomology with the rigid syntomic cohomology.
Take an (arbitrary) 
affine open set $E'\subset E$ such that $E'\supset Z\cap E$.
Put $Z':=Z\setminus D_F$ and $F'_U:=Z'\cup E'$ and let $F'_{U,\star}$ be the simplicial nerve.
Then
there is a commutative diagram
\[
\xymatrix{
H^2_\syn((F_\star,D_F),\Z_p(2))\ar[d]&
H^1_\dR((F_\star,D_F)/W)\ar[l]_-{\cong}\ar[r]\ar[d]&
H^1_\zar(F_\star,\O_{F_\star})\ar[d]\\
H^2_\rigsyn(F'_{U,\star},\Q_p(2))&
H^1_\dR(F'_{U,\star}/K)\ar[l]_-{\cong}\ar[r]&
H^1_\zar(F_\star,\O_{F_\star})\ot\Q.
}
\]
Thanks to the compatibility with the rigid regulator maps (Lemma \ref{syn-rig-lem}),
it is enough to compute
\begin{equation}\label{ell-thm-pf-eq6}
%\reg_\rigsyn(i_{F_{\red}}^*(\xi)|_{F^U_{\red}})=
\reg_\rigsyn(h_1|_{F'_U})\cup\reg_\rigsyn(h_2|_{F'_U})
\in H^2_\rigsyn(F'_{U,\star},\Q_p(2)).
\end{equation} 
We take a $p$-th Frobenius $\varphi_{Z'}$ on $\O(Z')^\dag$ 
given by $\varphi_{Z'}(z)=z^p$.
Note $\varphi:Z'\to Z'$ fixes $R_1$ and $R_2$.  We also take a
$p$-th Frobenius $\varphi_{E'}$ on $\O(E')^\dag$ that fixes $R_1$ and $R_2$.
Then for $(V,\ol V)=(Z',Z)$ or $(E',E)$, 
the rigid syntomic complex $S_\rigsyn(r)_{V}:=R\varGamma_\rigsyn(V,\Q_p(r))$ 
is given as follows,
\[
S_\rigsyn(r)_{V}=
\text{Cone}[\vG(\ol V_K,\Omega^{\bullet\geq r}_{\ol V_K}(\log \partial V_K))\os{1-p^{-r}\varphi_V}\lra\Omega^\bullet_{\O(V)^\dag_K} ][-1]
\]
where $\partial V=\ol V\setminus V$ and $\Omega^1_{\O(V)^\dag_K}
=\Omega^1_{\O(V)^\dag_K/K}$ denotes the module of continuous differentials.
Moreover the rigid syntomic cohomology of $F'_{U,\star}$ is described as follows,
\[
H^j_\rigsyn(F'_{U,\star},\Q_p(r))\cong
H^j(S_\rigsyn(r)_{Z'}\op S_\rigsyn(r)_{E'}\os{i}{\to}S_\rigsyn(r)_{Z\cap E}),
\]
where $i:(f,g)\mapsto f|_{Z\cap E}-g|_{Z\cap E}$.
Under this identification,
it follows from \cite[Prop.10.3]{Be1} that we have cocyles 
\begin{align*}
\reg_\rigsyn(h_1|_{F'_U})&=\left(3\frac{dz}{ z},0\right)\times(0,0)\times(0,0)\\
\reg_\rigsyn(h_2|_{F'_U})&=\left(\frac{-3\nu dz}{1-\nu z},
3p^{-1}\log\left(\frac{(1-\nu z)^p}{1-\nu^p z^p}\right)\right)\times(0,0)\times(0,0)\\
\in&(\Omega^1_{\O(Z')^\dag_K}\op\O(Z')^\dag_K)\times
(\Omega^1_{\O(E')^\dag_K}\op\O(E')^\dag_K)\times
(\O(R_1)\op\O(R_2))
\end{align*}
for $h_i$ in \eqref{h1h2-z}, and then
\begin{align*}
\eqref{ell-thm-pf-eq6}
&=\left(-9p^{-1}\log\left(\frac{(1-\nu z)^p}{1-\nu^p z^p}\right)\frac{dz}{z},0\right)
\times(0,0)\times(0,0)\\
&=\left(9d(\ln^{(p)}_2(\nu z)),0\right)
\times(0,0)\times(0,0)\\
&\equiv
(0,0)\times(0,0)\times(9\ln^{(p)}_2(-\nu ),9\ln^{(p)}_2(-\nu^2))
\end{align*}
in $H^2_\rigsyn(F'_{U,\star},\Q_p(2))$.
Here we use the fact that $\ln^{(p)}_2(z)$ 
is an overconvergent function on $\P^1\setminus
\{1,\infty\}$
(Proposition \ref{polylog-oc}) .
This shows
\[
\alpha\left((u'\circ i^*_{F_\star})([\xi]_\syn)\right)=
9\ln^{(p)}_2(-\nu )-9\ln^{(p)}_2(-\nu^2)=18\ln^{(p)}_2(-\nu),
\]
which provides
the left hand side of \eqref{ell-thm-pf-eq5}.
Combining this with \eqref{ell-thm-pf-eq5-right}, one finally has
\[
18\ln^{(p)}_2(-\nu)=-2E_2(0).
\]
This completes the proof.
\end{pf}
\begin{cor}\label{ell-cor}
Let $a\in W$ satisfy $a\not\equiv 0,1$ mod $p$.
Let $c=F_W(a)a^{-p}$ so that $\sigma(t)=ct^p$ satisfy $\sigma(t)|_{t=a}=F_W(a)$.
Let $E_a$ be the fiber of $f$ at $t=a$, and put $U_a:=E_a\cap U$.
Let $\xi|_{E_a}\in K^M_2(\O(U_a))_{\partial=0}$ be 
the restriction of the Milnor symbol $\xi$ in \eqref{xi}, and
we think it to be an element of the Adams weight piece $K_2(E_a)^{(2)}$
under the natural map $K^M_2(\O(U_a))_{\partial=0}\to K_2(E_a)^{(2)}$.
Let \[
\reg_\rigsyn:K_2(E_a)^{(2)}\lra H^2_\rigsyn(E_a,\Q_p(2))\cong H^1_\dR(E_a/K)
\]
be the regulator map by Besser \cite{Be1}
or equivalently by Nekov\'a\v{r}-Niziol \cite{NN}.
Then\[
\reg_\rigsyn(\xi|_{E_a})=-\ve_1(a)\frac{dx}{y}-\ve_2(a)\frac{xdx}{y}
\in H^1_\dR(E_a/K).
\]
\end{cor}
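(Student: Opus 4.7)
The plan is to deduce the corollary from Theorem \ref{ell-thm} by specializing the diagram of Theorem \ref{ell.diagram-thm} at the point $P_a : \Spec W \to S$ defined by $t = a$. The assumptions $a \not\equiv 0, 1 \pmod p$ guarantee that $P_a$ factors through $S \subset \pS$, and the choice $c = F_W(a)a^{-p}$ was made precisely so that $\sigma(t)|_{t=a} = ca^p = F_W(a)$; hence the Frobenius $\sigma$ is compatible with $F_W$ under $P_a^*$. This yields a pullback functor $P_a^* : \FilFMIC(S,\sigma) \to \FilFMIC(\Spec W, F_W)$ together with compatible pullbacks on the syntomic cohomologies. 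Note also that $\xi|_{E_a} \in K^M_2(\O(U_a))_{\partial=0}$ since the tame symbol is compatible with the restriction along $P_a$.

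Next, I would specialize the $(-1)$-commutative diagram of Theorem \ref{ell.diagram-thm} along $P_a$. By functoriality of the symbol map $[-]_{X/S}$ (Lemma \ref{log-obj-lem2} together with the explicit construction of $M_\xi(U/S)$) and of the syntomic symbol map $[-]_\syn$, we have $P_a^*[\xi]_{X/S} = [\xi|_{E_a}]_{E_a/\Spec W}$ and $P_a^*[\xi]_\syn = [\xi|_{E_a}]_\syn$. Along the left-hand path, the specialization at $t=a$ of $\reg^{(\sigma)}_{X/S}(\xi) = \ve_1(t)\omega + \ve_2(t)\eta$ from Theorem \ref{ell-thm} is $\ve_1(a)\omega + \ve_2(a)\eta \in K \otimes H^1_\dR(E_a/K)$. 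Along the right-hand path, the image of $[\xi|_{E_a}]_\syn \in H^2_\syn(E_a, \Z_p(2))$ lands in $K \otimes H^1_\dR(E_a/K)$ via the specialized chain $H^2_\syn(E_a, \Z_p(2)) \to H^2_\syn(E_a/(W, F_W), \Z_p(2)) \cong H^1_\zar(E_a, \Omega^\bullet_{E_a/W})\otimes_W K = H^1_\dR(E_a/K)$.

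To identify this element with $\reg_\rigsyn(\xi|_{E_a})$, I would invoke the second half of Lemma \ref{prop:synK2X}: in the absolute case the class $[\xi|_{E_a}]_\syn$ corresponds to $\reg_\rigsyn(\xi|_{E_a})$ under the canonical isomorphism $H^2_\syn(E_a,\Q_p(2)) \cong H^2_\rigsyn(E_a, \Q_p(2))$ of Lemma \ref{syn-rig-lem}, after identifying $H^2_\rigsyn(E_a, \Q_p(2)) \cong H^1_\dR(E_a/K)$ as in \cite[(8.5)]{Be1}. These identifications are mutually compatible with the ones appearing in the right column of the specialized Theorem \ref{ell.diagram-thm}, so the $(-1)$-commutativity yields $\ve_1(a)\omega + \ve_2(a)\eta = -\reg_\rigsyn(\xi|_{E_a})$, which is the desired formula. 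The main technical obstacle is the specialization step itself — namely, checking that the whole diagram of Theorem \ref{ell.diagram-thm} transports along $P_a$ to the analogous diagram for $E_a/\Spec W$. The de Rham and filtered $F$-isocrystal sides reduce to the functoriality already built into our symbol map in Section \ref{Ext-Mil-sect}; the delicate point is the base-change functoriality of the log syntomic symbol map and the compatibility of the comparison $H^2_\syn(E_a/(W,F_W),\Z_p(2)) \cong H^1_\zar(E_a, \Omega^\bullet_{E_a/W}) \otimes \Q$ with Besser's rigid-syntomic identification, but once these formal compatibilities are verified the argument assembles immediately.
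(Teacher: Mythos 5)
Your proof is correct and follows essentially the same route as the paper: specialize the $(-1)$-commutative diagram of Theorem \ref{ell.diagram-thm} at the point $t=a$ (using the choice $c=F_W(a)a^{-p}$ to make $\sigma$ compatible with $F_W$ under the specialization), invoke Lemma \ref{prop:synK2X} to identify $[-]_\syn$ with $\reg_\rigsyn$ in the absolute case, and read off the answer from the explicit expression $\reg^{(\sigma)}_{X/S}(\xi)=\ve_1(t)\omega+\ve_2(t)\eta$ of Theorem \ref{ell-thm}. Your closing concern about the base-change compatibilities is already absorbed by the paper's proof of Theorem \ref{ext-logsyn-thm}, which is itself proved precisely by reducing to such specializations.
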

\begin{pf}
Recall the diagram in Theorem \ref{ell.diagram-thm}, and take the pull-back of it
at the point $t=a$ of $S^*$.
Then $[-]_\syn$ turns out to be the regulator map $\reg_\rigsyn$
(Lemma \ref{prop:synK2X}).
Now the assertion is immediate from Theorem \ref{ell-thm}.
\end{pf}

\section{Appendix : Frobenius on totally degenerating abelian schemes}
\label{degenerate-sect}
\subsection{De Rham symplectic basis for totally degenerating abelian varieties}
\label{gm-mt-sect}
Let $R$ be a regular noetherian domain, and $I$
a reduced ideal of $R$. Let $L:=\Frac(R)$ be the fractional field.
Let $J/R$ be a $g$-dimensional 
commutative group scheme such that
the generic fiber $J_\eta$ is a principally polarized abelian variety over $L$.
If the fiber $T$ over $\Spec R/I$ is an algebraic torus,
we call $J$ a {\it totally degenerating abelian scheme over $(R,I)$}
(cf. \cite{FC} Chapter II, 4).
Assume that the algebraic torus $T$ is split.
Assume further that $R$ is complete with respect to $I$.
Then there is the uniformization $\rho:\bG_m^g\to J$ in the rigid analytic sense.
We fix $\rho$ and the coordinates $(u_1,\ldots,u_g)$ of $\bG_m^g$.
Then a matrix
\begin{equation}\label{gm-sect-3-eq1}
\underline{q}=\begin{pmatrix}
q_{11}&\cdots&q_{1g}\\
\vdots&&\vdots\\
q_{g1}&\cdots&q_{gg}
\end{pmatrix},\quad q_{ij}=q_{ji}\in L
\end{equation}
of multiplicative periods is determined up to $\mathrm{GL}_g(\Z)$, and this 
yields an isomorphism
\[
J\cong \bG_m^g/\underline{q}^\Z
\]
of abelian schemes over $R$ where $\bG_m^g/\underline{q}^\Z$ denotes 
Mumford's construction of the quotient scheme (\cite{FC} Chapter III, 4.4).

\medskip

In what follows, we suppose that the characteristic of $L$ is zero.
The morphism $\rho$ induces 
\[
\rho^*:\Omega^1_{J/R}\lra \bigoplus_{i=1}^g\wh\Omega^1_{\bG_m,i},\quad
\wh\Omega^1_{\bG_m,i}:=\varprojlim_n\Omega^1_{R/I^n[u_i,u_i^{-1}]/R}.
\]
Let 
\[
\Res_i:\wh\Omega^1_{\bG_m,i}\lra R,\quad 
\Res_i\left(\sum_{m\in\Z} a_mu^m_i\frac{du_i}{u_i}\right)=a_0
\]
be the residue map. The composition of $\rho^*$ 
and the residue map induces 
a morphism
$\Omega^\bullet_{J/R}\lra R^g[-1]$
of complexes, and hence a map 
\begin{equation}\label{tau-app}
\tau:H^1_\dR(J/R):=H^1_\zar(J,\Omega^\bullet_{J/R})\lra R^g.
\end{equation}
Let $U$ be defined by
\[
0\lra U\lra H^1_\dR(J_\eta/L)\os{\tau}{\lra} L^g\lra0.
\]
Note that the composition
$\vg(J_\eta,\Omega^1_{J_\eta/L})\os{\subset}{\to} H^1_\dR(J_\eta/L)\os{\tau}{\to} L^g
$
is bijective.
Let $\langle x,y\rangle$ denotes the symplectic pairing on $H^1_\dR(J_\eta/L)$
with respect to the principal polarization on $J_\eta$.
We call an $L$-basis
\[
\wh\omega_i,\, \wh\eta_j\in H^1_\dR(J_\eta/L),\quad 1\leq i,\,j\leq g
\]
a {\it de Rham symplectic basis} if the following conditions are satisfied. 
\begin{description}
\item[(DS1)]
$\wh\omega_i\in \vg(J_\eta,\Omega^1_{J_\eta/L})$ and
$\tau(\wh\omega_i)\in (0,\ldots,1,\ldots,0)$ where ``$1$'' is placed in the $i$-th component.
Equivalently, \[
\rho^*(\wh\omega_i)=\frac{du_i}{u_i}.
\]
\item[(DS2)]
$\wh\eta_j\in U$ and
$\langle\wh\omega_i ,\wh\eta_j\rangle=\delta_{ij}$ where
$\delta_{ij}$ is the Kronecker delta.
\end{description}
If we fix the coordinates $(u_1,\ldots,u_g)$ of $\bG_m^g$, then 
$\wh\omega_i$ are uniquely determined by {\bf(DS1)}.
Since the symplectic pairing $\langle x,y\rangle$ is annihilated on $U\ot U$ and
$\vg(J_\eta,\Omega^1_{J_\eta/L})\ot \vg(J_\eta,\Omega^1_{J_\eta/L})$,
the basis $\wh\eta_j$ are uniquely determined as well by {\bf(DS2)}.
\begin{prop}\label{gm-mt-sect-prop1}
Let $V$ be a subring of $R$.
Suppose that $(R,I)$ and $V$ satisfy the following.
\begin{description}
\item[(C)]
There is a regular integral noetherian 
$\C$-algebra $\wt R$ complete with respect to a reduced ideal $\wt I$ and an
injective homomorphism $i:R\to\wt R$ such that $i(V)\subset \C$ and
$i(I)\subset \wt I$ and 
\[
\wh\Omega^1_{L/V}\lra \wh\Omega^1_{\wt L/\C}
\]
is injective where we put $\wt L:=\Frac(\wt R)$ and 
\[
\wh\Omega^1_{L/V}:=L\ot_R\left(\varprojlim_n \Omega^1_{R_n/V}\right),\quad \wh\Omega^1_{\wt L/\C}:=\wt L\ot_{\wt R}\left(\varprojlim_n \Omega^1_{\wt R_n/\C}\right),\quad R_n:=R/I^n,\, \wt R_n:=\wt R/\wt I^n.
\]
\end{description}
Let
\[
\nabla:H^1_\dR(J_\eta/L)\lra \wh\Omega^1_{L/V}\ot_L
H^1_\dR(J_\eta/L)
\]
be the Gauss-Manin connection. Then
\begin{equation}\label{gm-mt-sect-eq3}
\nabla(\wh\omega_i)=\sum_{j=1}^g\frac{dq_{ij}}{q_{ij}}\ot \wh\eta_j,
\quad \nabla(\wh\eta_i)=0.
\end{equation}
\end{prop}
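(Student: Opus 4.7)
The plan is to reduce, via condition \textbf{(C)}, to the classical complex analytic setting and there invoke the standard computation of the Gauss--Manin connection on a family of complex tori. Both $\wh\omega_i$ and $\wh\eta_j$ are uniquely characterized by \textbf{(DS1)}, \textbf{(DS2)}; these characterizations, like the formation of $\nabla$, are compatible with the base change $i\colon R\to\wt R$. Consequently, by the injectivity of $\wh\Omega^1_{L/V}\to\wh\Omega^1_{\wt L/\bC}$ built into \textbf{(C)}, it suffices to verify \eqref{gm-mt-sect-eq3} after this base change. So I may assume $V=\bC$ and $R=\wt R$.

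In this complex setting I would further lift the uniformization $\rho\colon \bG_m^g\to J$ by choosing local branches of logarithm: set $u_i=\exp(2\pi i z_i)$ and $q_{ij}=\exp(2\pi i\tau_{ij})$, realizing $J^{\mathrm{an}}$ locally as $\bC^g/(\bZ^g+\tau\bZ^g)$. A symplectic basis $\{\alpha_i,\beta_j\}$ of the lattice then provides a local flat basis $\{a^i,b^j\}$ of $H^1_\dR(J/R)$ (namely the Betti dual basis), which is annihilated by $\nabla$ since it is locally constant.

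Next I would translate the two bases into each other. The identity $\rho^*\wh\omega_i=du_i/u_i=2\pi i\,dz_i$ together with the classical period integrals $\int_{\alpha_k}dz_i=\delta_{ik}$ and $\int_{\beta_k}dz_i=\tau_{ik}$ gives $\wh\omega_i=2\pi i\bigl(a^i+\sum_j\tau_{ij}b^j\bigr)$. For $\wh\eta_j$, the condition $\wh\eta_j\in U=\Ker\tau$ together with the vanishing $\tau(b^k)=0$ (a standard residue computation showing that $\rho^*b^k$ has no residue) forces $\wh\eta_j$ to be a linear combination of the $b^k$'s only; the symplectic normalization $\langle\wh\omega_i,\wh\eta_j\rangle=\delta_{ij}$ together with the cup-product pairing $\langle a^i,b^j\rangle$ coming from the principal polarization then pins down $\wh\eta_j$ as a specific scalar multiple of $b^j$. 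Applying $\nabla$ and using $\nabla a^i=\nabla b^j=0$ together with $dq_{ij}/q_{ij}=2\pi i\,d\tau_{ij}$ yields the two formulas in \eqref{gm-mt-sect-eq3} at once.

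The hard part will be the bookkeeping of the $2\pi i$ factors, in particular pinning down the exact scalar relating $\wh\eta_j$ to $b^j$: this requires a careful reading of the symplectic pairing induced by the principal polarization on $H^1_\dR(J_\eta/L)$ (which carries a Tate twist in its natural integral normalization), and of the residue computation showing $U=\mathrm{span}_R\{b^j\}$. Once these pieces of the dictionary are fixed, the proposition reduces to the simple logarithmic relation $dq_{ij}/q_{ij}=2\pi i\,d\tau_{ij}$ combined with the flatness of the topological basis $\{a^i,b^j\}$.
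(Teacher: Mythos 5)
Your approach is essentially the same as the paper's: reduce via \textbf{(C)} to the complex analytic case, exhibit $\wh\omega_i$ and $\wh\eta_j$ in terms of a flat Betti basis coming from the uniformization (the paper uses $\delta_i^*,\gamma_j^*$ dual to the cycle $|u_i|=\ve$ and the path $1\rightsquigarrow q_j$, which match your $a^i,b^j$), and then differentiate using flatness and $\log q_{ij}=2\pi\sqrt{-1}\,\tau_{ij}$. One step you gloss over that the paper handles explicitly: since $R$ after reduction is still a complete local ring (e.g.\ $\C[[x_1,\ldots,x_n]]$), one must first spread the family out to an algebraic family over an affine $\C$-variety, pass to its analytic neighborhood of the degeneration locus to make sense of $J^{\an}$ and the branches of $\log$, and then invoke uniqueness of the de Rham symplectic basis together with injectivity of the map from formal to analytic power series to transfer the analytic identity $\wh\eta_j^{\an}=\gamma_j^*$ back to the formal side.
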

\begin{pf}
By the assumption {\bf(C)}, we may replace $J_\eta/L$ with $(J_\eta\ot_L\wt L)/\wt L$.
We may assume $R=\wt R$, $V=\C$ and $J=J_\eta\ot_R\wt R$.
There is a smooth scheme $J_S/S$ over a
connected smooth affine variety $S=\Spec A$ over $\C$
with a Cartesian square
\[
\xymatrix{
J\ar[d]\ar[r]\ar@{}[rd]|{\square}&J_S\ar[d]\\
\Spec R\ar[r]&S
}
\]
such that $\Spec R\to S$ is dominant.
Let $D\subset S$ be a closed subset such that $J_S$ is proper over $U:=S\setminus D$.
Then the image of $\Spec R/I$ is contained in $D$ since
$J$ has a totally degeneration over $\Spec R/I$. 
Thus we may replace $R$ with the completion $\wh A_D$ of $A$ by the ideal
of $D$. Let $L_D=\Frac \wh A_D$ then
$\wh \Omega^1_{L_D/\C}\cong L_D\ot_A\Omega^1_{A/\C}$.
Let ${\frak m}\subset A$ be a maximal ideal containing $I$, and $\wh A_{\frak m}$ the completion
by $\frak m$. Then $\wh A_D\subset \wh A_{\frak m}$ and $\wh\Omega_{\wh A_D/\C}
\subset \wh\Omega_{\wh A_{\frak m}/\C}$.
Therefore we may further replace $\wh A_D$ with $\wh A_{\frak m}$.
Summing up the above, it is enough work in the following situation.
\[
R=\wh A_{\frak m}\cong \C[[x_1,\ldots, x_n]]\supset I=(x_1,\ldots,x_n),
\quad V=\C,\quad L=\Frac R
\]
\[
J=\Spec R\times_AJ_S\lra \Spec R.
\]
Note $\wh \Omega^1_{L/\C}\cong L\ot_A\Omega^1_{A/\C}$.
Let $h:\cJ\to S^{\an}$ be the analytic fibration associated to $J_S/S$.
Write $J_\l=h^{-1}(\l)$ a smooth fiber over $\l\in U^{\an}:=(S\setminus D)^{\an}$.
Let
\[
\nabla:\O_{U^{\an}}\ot R^1h_*\C\lra 
\Omega^1_{U^{\an}}\ot R^1h_*\C.
\] 
be the flat connection compatible with
the Gauss-Manin connection on 
$H^1_\dR(J_S/S)|_U$ under the comparison
\[
\O_{U^{\an}}\ot R^1h_*\C\cong \O_{U^{\an}}\ot_AH^1_\dR(J_S/S).
\]
We describe a de Rham symplectic basis in $\wh\omega^{\an}_i,\wh\eta^{\an}_j
\in \O_{U^{\an}}\ot_AH^1_\dR(J_S/S)$
and prove \eqref{gm-mt-sect-eq3} for the above flat connection.
Write $J_\l=(\C^\times)^g/\underline{q}^\Z$ for $\l\in U^{\an}$.
Let $(u_1,\ldots,u_g)$ denotes the coordinates of $(\C^\times)^g$.
Let $\delta_i\in H_1(J_\l,\Z)$ be the homology cycle defined by the circle 
$|u_i|=\ve$ with $0<\ve\ll1$.
Let $\gamma_j\in H_1(J_\l,\Z)$ be the homology cycle defined by
the path from $(1,\ldots,1)$ to $(q_{j1},\ldots,q_{jg})$.
As is well-known, the dual basis $\delta_i^*,\gamma^*_j\in H^1(J_\l,\Z)$
is a symplectic basis, namely
\[
\langle\delta^*_i,\delta^*_{i'}\rangle=
\langle\gamma^*_j,\gamma^*_{j'}\rangle=0,\quad
\langle\delta^*_i,\gamma^*_j\rangle=\frac{1}{2\pi\sqrt{-1}}\delta_{ij}
\]
where $\delta_{ij}$ denotes the Kronecker delta.
We have $\wh\omega^{\an}_i=du_i/u_i$ by {\bf (DS1)},  and then 
\begin{equation}\label{gm-mt-sect-eq5}
\wh\omega_i^{\an}=2\pi\sqrt{-1}\delta_i^*+\sum_{j=1}^g\log q_{ij}\gamma^*_j.
\end{equation}
Let $\tau^B:R^1h_*\Z\to \Z(-1)^g$ be the associated map to $\tau$.
An alternative description of $\tau^B$ is
\[\tau^B(x)=\frac{1}{2\pi\sqrt{-1}}((x,\delta_1),\ldots,(x,\delta_g))\]
where $(x,\delta)$ denotes the natural pairing
on $H^1(J_\l,\Z)\ot H_1(J_\l,\Z)$.
Obviously $\tau^B(\gamma_j^*)=0$.
This implies that
$\wh\eta_j$ is a linear combination
of $\gamma_1^*,\ldots,\gamma_g^*$ by {\bf(DS2)}.
Since $\langle \wh\omega^{\an}_i,\wh\eta_j\rangle=\delta_{ij}=\langle \wh\omega^{\an}_i,
\gamma^*_j\rangle$,
one concludes 
\[
\wh\eta_j^{\an}=\gamma^*_j.
\]
Now \eqref{gm-mt-sect-eq3} is immediate from this and \eqref{gm-mt-sect-eq5}.

Let $\wh\omega_i,\wh\eta_j\in H^1_\dR(J_\eta/L)$ be the de Rham symplectic basis.
Let $x\in D^{\an}$ be the point associated to $\frak m$.
Let $V^{\an}$ be a small neighborhood of $x$ and $j:V^{\an}\setminus D^{\an}\hra S^{\an}$ an open immersion .
Obviously 
$\wh\omega^{\an}_i=du_i/u_i\in\vg(V^{\an},j_*\O^{\an})\ot_A H^1_\dR(J_S/S)$.
Thanks to the uniquness property, this implies 
$\wh\eta^{\an}_j=\gamma_j^*\in\vg(V^{\an},j_*\O^{\an})\ot_A H^1_\dR(J_S/S)$,
in other words, $\gamma_j^*\in \vg(V^{\an}\setminus D^{\an},j^{-1}R^1h_*\Q)$.
Let $\wh S_{\frak m}$ be the ring of power series over $\C$ containing
$\wh A_m$ and $\vg(V^{\an},j_*\O^{\an})$. 
There is a commutative diagram
\[
\xymatrix{
\wh A_{\frak m}\ot_A\Omega^1_{A/\C}\ot_AH^1_\dR(J_S/S)\ar[r]&
\wh S_{\frak m}\ot_A\Omega^1_{A/\C}\ot_AH^1_\dR(J_S/S)\\
\Omega^1_{A/\C}\ot_AH^1_\dR(J_S/S)\ar[r]\ar[u]
&\vg(V^{\an},j_*\O^{\an})\ot_A\Omega^1_{A/\C}\ot_A H^1_\dR(J_S/S)\ar[u]
}
\]
with all arrows injective.
Hence the desired assertion for
$\wh\omega_i,\wh\eta_j$ can be reduced
to that of $\wh\omega_i^{\an},\wh\eta_j^{\an}$.
This completes the proof.
\end{pf}

\subsection{Frobenius on De Rham symplectic basis}
Let $V$ be a complete discrete valuation ring such that
the residue field $k$ is perfect and of characteristic $p$, and the fractional field
$K:=\Frac V$ is of characteristic zero.
Let $F_V$ be
a $p$-th Frobenius endomorphism on $V$.

Let $A$ be an integral flat noetherian $V$-algebra
equipped with
a $p$-th Frobenius endomorphism $\sigma$ on $A$ which is
compatible with $F_V$.
Assume that $A$ is $p$-adically complete and separated
and that there is a family $(t_i)_{i\in I}$ of finitely many elements of $A$
such that it forms a $p$-basis of $A_n:=A/p^{n}A$ over 
$V_n:=V/p^{n}V$ for all $n\geq 1$
in the sense of \cite[Definition 1.3]{Ka1}. 
The latter assumption is equivalent to that
$(t_i)_{i\in I}$ forms a $p$-basis of $A_1$ over $V_1$ since $A$ is flat over $V$
(loc.cit. Lemma 1.6).
Then $\Omega^1_{A_n/V_n}$ is a free $A_n$-module with basis $(dt_i)_{i\in I}$
(\cite{Ka1} Lemma (1.8)).
Write $A_K:=A\ot_VK$.
\begin{defn}
We define the category $\FMIC(A_K,\sigma)$ as follows.
An object is a triplet $(M,\nabla,\Phi)$ where
\begin{itemize}
\item
$M$ is a locally free $A_K$-module of finite rank, 
\item
$\nabla:M\lra \wh\Omega^1_{A/V}\ot_A M$ is an integrable connection where 
$\wh\Omega^1_{A/V}:=\varprojlim_n \Omega^1_{A_n/V_n}$ a free $A$-module
with basis $\{dt_i\}_{i\in I}$,
\item
$\Phi:\sigma^*M\to M$ is a horizontal $A_K$-linear map.
\end{itemize}
A morphism in $\FMIC(A_K,\sigma)$ is a $A$-linear map of the underlying $A$-modules
which is commutative with $\nabla$ and
$\Phi$.
Let $L:=\Frac(A)$ be the fractional field.
The category $\FMIC(L,\sigma)$ is defined in the same way
by replacing $A$ with $L$, and $\wh\Omega^1_{A/V}$ with $L\ot_A\wh\Omega^1_{A/V}$.
\end{defn}
\begin{lem}[{\cite[6.1]{EK}}]\label{EK-lem}
Let $\sigma'$ be another $F_V$-linear $p$-th Frobenius on $A$.
Then there is the natural equivalence
\[
\FMIC(A,\sigma)\os{\cong}{\lra}
\FMIC(A,\sigma'),\quad(M,\nabla,\Phi)\longmapsto(M,\nabla,\Phi')
\]
of categories, where $\Phi'$ is defined in the following way.
Let $(\partial_i)_{i\in I}$ be the basis of $\wh T_{A/V}:=\varprojlim_n 
(\Omega^1_{A_n/V_n})^*$
which is the dual basis of $(dt_i)_{i\in I}$.
Then
\[
\Phi'=\sum_{n_i\geq 0}\left(\prod_{i\in I}(\sigma'(t_i)-\sigma(t_i))^{n_i}\right)\Phi
\prod_{i\in I}\partial_i^{n_i}.
\]
\end{lem}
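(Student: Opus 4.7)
The plan is to verify that the explicit formula defines a functor $\FMIC(A,\sigma)\to\FMIC(A,\sigma')$ by sending $(M,\nabla,\Phi)$ to $(M,\nabla,\Phi')$, and then to exhibit its inverse by applying the same construction with the roles of $\sigma$ and $\sigma'$ swapped. The four things to check are: $p$-adic convergence of the defining series; $\sigma'$-semilinearity of $\Phi'$; horizontality of $\Phi'$ with respect to $\nabla$; and functoriality on morphisms. Equivalence of categories then follows from the symmetry of the construction.

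Convergence is easy. Since $\sigma$ and $\sigma'$ both reduce to the Frobenius on $A_1=A/pA$, we have $\sigma'(t_i)-\sigma(t_i)\in pA$ for every $i\in I$, so $\prod_{i}(\sigma'(t_i)-\sigma(t_i))^{n_i}\in p^{|n|}A$ with $|n|=\sum_i n_i$. Because $I$ is finite and $A$ is $p$-adically complete, the series converges termwise in any local $A_K$-basis of $M$, giving an $F_V$-linear additive endomorphism $\Phi'$ of $M$. For $\sigma'$-semilinearity, I would use the $p$-adic Taylor expansion of $\sigma'(a)$ around $\sigma(a)$ in the variables $t_i$, combined with the Leibniz rule for the iterated derivations $\prod_i\partial_i^{n_i}$ acting on $am$; feeding this into $\Phi$, the $\sigma$-semilinearity of $\Phi$ triggers a Vandermonde-type cancellation that produces precisely $\sigma'(a)\Phi'(m)$.

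The main technical step, and the step I expect to be the obstacle, is horizontality $\nabla\Phi'=\Phi'\nabla$. Here I would push $\nabla$ through the infinite sum term by term, using three inputs: the integrability of $\nabla$ (so the derivations $\partial_i$ on $M$ pairwise commute, making $\prod_i\partial_i^{n_i}$ unambiguous); the horizontality $\nabla\Phi=\Phi\nabla$ for the original Frobenius; and the derivation rule applied to the coefficients, which involves $d(\sigma'(t_i)-\sigma(t_i))=d\sigma'(t_i)-d\sigma(t_i)$. After reorganization, the error terms telescope by a shift in the multi-index $n$, and the resulting identity is exactly what is needed. Functoriality on morphisms is immediate from the formula, and the symmetric construction furnishes an inverse functor: composing the two constructions yields the identity by the same combinatorial identities that underlie the $\sigma'$-semilinearity. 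The bookkeeping of the series manipulations — making sure every formal identity valid in a divided-power completion actually holds in the $p$-adic topology on $M$, using only that $\sigma'(t_i)-\sigma(t_i)\in pA$ and that $\wh\Omega^1_{A/V}$ is free on $(dt_i)$ — is the delicate point.
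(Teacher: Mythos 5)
The paper itself offers no proof of this lemma; it simply cites Emerton--Kisin \cite[6.1]{EK}. Your plan --- termwise $p$-adic convergence, $\sigma'$-semilinearity via a Taylor expansion of $\sigma'$ about $\sigma$, horizontality, and invertibility by swapping $\sigma\leftrightarrow\sigma'$ --- is exactly the standard argument behind that citation, so your route is the right one.

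There is, however, a concrete gap you would hit the moment you try to close the semilinearity identity: the displayed formula cannot be correct as written; it needs divided-power normalization, i.e.
\[
\Phi'=\sum_{\underline{n}}\left(\prod_{i\in I}\frac{(\sigma'(t_i)-\sigma(t_i))^{n_i}}{n_i!}\right)\Phi\prod_{i\in I}\partial_i^{n_i}.
\]
A one-variable check exposes the problem: take $|I|=1$, $M=A_K$, $\nabla=d$, $\Phi(1)=\alpha$, and test $\sigma'$-semilinearity on $a=t^2$. Without $1/2!$, the formula produces $\sigma(t)^2+2\sigma(t)\,(\sigma'(t)-\sigma(t))+2(\sigma'(t)-\sigma(t))^2$, whereas $\sigma'(t^2)=\sigma(t)^2+2\sigma(t)\,(\sigma'(t)-\sigma(t))+(\sigma'(t)-\sigma(t))^2$. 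The underlying identity is the $p$-adic Taylor expansion
\[
\sigma'(a)=\sum_{\underline{n}}\frac{\prod_i(\sigma'(t_i)-\sigma(t_i))^{n_i}}{\prod_i n_i!}\,\sigma\!\left(\prod_i\partial_i^{n_i}a\right),
\]
and the factorials are essential to the ``Vandermonde-type cancellation'' you are counting on. Since you did not write down the identity, you did not notice that the stated formula fails; a careful run of your own step two would have.

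Two consequences follow. First, your convergence estimate is too coarse: you argued the coefficient of the $\underline{n}$-th term lies in $p^{\sum_i n_i}A$ and stopped, but with the $1/n_i!$ factors the relevant valuation is $\sum_i n_i-\sum_i v_p(n_i!)$. Using $v_p(n!)\leq (n-1)/(p-1)$ this still tends to $+\infty$ for $p\geq 3$, but it does \emph{not} tend to infinity when $p=2$ (it equals $s_2(n)$, which is bounded along $n=2^k$). So either $p>2$ must be assumed here, or one needs more than $\sigma'(t_i)-\sigma(t_i)\in pA$; this hypothesis should be surfaced. Second, for invertibility you cannot rely on the ``symmetry of the construction'' alone: composing the $(\sigma\to\sigma')$ and $(\sigma'\to\sigma)$ operators must be shown to give the identity, which is a genuine cocycle identity that again rests on the divided-power Taylor expansion above, not on formal symmetry of the expression.
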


Let $f:X\to\Spec A$ be a projective smooth morphism.
Write $X_n:=X\times_VV/p^{n+1}V$.
Then one has an object
\[
H^i(X/A):=(H^i_\dR(X/A)\ot_V K,\nabla,\Phi)\in \FMIC(A_K)
\]
where $\Phi$ is induced from the Frobenius on crystalline cohomology
via the comparison (\cite{BO} 7.4)
\[H^\bullet_\crys(X_0/A)\cong \varprojlim_n H^\bullet_\dR(X_n/A_n)\cong
H^\bullet_\dR(X/A).\]
Assume that there is a smooth $V$-algebra $A^a$ and a smooth projective morphism
$f^a:X^a\to \Spec A^a$ with a Cartesian diagram
\[
\xymatrix{
X\ar[r]\ar[d]\ar@{}[rd]|{\square}&\Spec A\ar[d]\\
X^a\ar[r]&\Spec A^a
}
\]
such that $\Spec A\to \Spec A^a$ is flat.
One has the overconvergent $F$-isocrystal
$R^if^a_{\rig,*}\O_{X^a}$ on $\Spec A^a_0$ (\cite{Et} 3.4.8.2).
Let
\[
H^i(X^a/A^a):=(H^i_\rig(X^a_0/A^a_0),\nabla,\Phi)\in \FMIC^\dag(A^a_K)
\] 
denote the associated object via the natural equivalence
$F\text{-}\mathrm{Isoc}^\dag(A^a_0)\cong\FMIC^\dag(A^a_K)$ (\cite{LS} 8.3.10).
The comparison
$H^i_\rig(X^a_0/A^a_0)\cong H_\dR^i(X^a/A^a)\ot_{A^a}(A^a)^\dag_K$ 
in \eqref{eq:comparisonisom2}
induces an isomorphism
 $H^i(X/A)\cong A_K\ot_{(A^a)^{\dag}_K}H^i(X^a/A^a)$ in $\FMIC(A_K)$.
\begin{defn}[Tate objects]\label{frob-sect-def1}
For an integer $r$, a Tate object $A_K(r)$ is defined to be the triplet $(A_K,\nabla,\Phi)$ 
scuh that $\nabla=d$
is the usual differential operator, and $\Phi$ is a multiplication by $p^{-r}$.
%We often write $L=L(0)$ simply if there is no possibility of confusion.
\end{defn}
We define for $f\in A\setminus\{0\}$
\[
\log^{(\sigma)}(f):=p^{-1}\log\left(\frac{f^p}{f^\sigma}\right)=-\sum_{n=1}^\infty
\frac{p^{n-1}g^n}{n},
\quad
\frac{f^p}{f^\sigma}=1-pg
\]
which belongs to the $p$-adic completion of the subring $A[g]\subset L$.
In particular, if $f\in A^\times$, then $\log^{(\sigma)}(f)\in A$.
\begin{defn}[Log objects]\label{frob-sect-def2}
Let $\uq=(q_{ij})$ be a $g\times g$-symmetric matrix with $q_{ij}\in A^\times$.
We define a log object $\Log(\uq)=(M,\nabla,\Phi)$ in $\FMIC(A,\sigma)$ to be the following.
Let 
\[
M=\bigoplus_{i=1}^gA_Ke_i\op\bigoplus_{i=1}^gA_Kf_i
\]
be a free $A_K$-module with a basis $e_i,f_j$.
The connection is defined by
\[
\nabla(e_i)=\sum_{j=1}^g \frac{dq_{ij}}{q_{ij}}\ot f_j,\quad \nabla(f_j)=0
\]
and the Frobenius $\Phi$ is defined by
\[
\Phi(e_i)=e_i-\sum_{j=1}^g \log^{(\sigma)}(q_{ij})f_j,\quad
\Phi(f_j)=p^{-1}f_j.
\]
\end{defn}
It is immediate to check that the log objects are compatible under the natural
equivalence in Lemma \ref{EK-lem}.
In this sense, our $\Log(\uq)$ does not depend on $\sigma$.
By definition there is an exact sequence
\[
0\lra \bigoplus_{j=1}^gA_K(1)f_j\lra \Log(\uq)\lra \bigoplus_{i=1}^gA_K(0)e_i\lra0.
\]

\begin{thm}\label{frob-sect-thm1}
Let $R$ be a flat $V$-algebra which is 
a regular noetherian domain complete with respect to a reduced ideal $I$.
Suppose that $R$ has a $p$-th Frobenius $\sigma$.
Let $J$ be a totally degenerating abelian scheme with a principal polarization over $(R,I)$
in the sense of \S \ref{gm-mt-sect}.
Let $\Spec R[h^{-1}]\hra \Spec R$ be an affine open set such that $J$ is proper
over $\Spec R[h^{-1}]$ and 
$q_{ij}\in R[h^{-1}]^\times$ where 
$\uq=(q_{ij})$ is the multiplicative periods as in \eqref{gm-sect-3-eq1}.
Suppose that $R/pR[h^{-1}]$ has a $p$-basis over $V/pV$. 
Let $A=R[h^{-1}]^\wedge$ be the $p$-adic completion of $R[h^{-1}]$.
Put $L:=\Frac(A)$ and $J_A:=J\ot_RA$.
Let $J_\eta$ be the generic fiber of $J_A$. 
Then there is an isomorphism
\begin{equation}\label{frob-sect-eq1}
(H^1_\dR(J_\eta/L),\nabla,\Phi)\ot_{A_K} A_K(1)\cong \Log(\underline{q})\in 
\FMIC(L)
\end{equation}
which sends the de Rham symplectic basis $\wh\omega_i,\wh\eta_j\in H^1_\dR(J_\eta/L)$ 
to $e_i$, $f_j$ respectively.
\end{thm}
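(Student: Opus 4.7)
The plan is to identify the data $(\nabla,\Phi)$ of the de Rham symplectic basis $\{\wh\omega_i,\wh\eta_j\}$ with those of $\{e_i,f_j\}$ in $\Log(\uq)$ (after the Tate twist $A_K(1)$) in two stages. For the connection, I invoke Proposition \ref{gm-mt-sect-prop1} applied to $(A,V)$. Condition (C) is verified by fixing an embedding $K\hookrightarrow\bC$ (which exists for cardinality reasons) and spreading out the algebraic Mumford data $(R,I,\uq)$ to a complex analytic model with the same multiplicative periods, so that $\wh\Omega^1_{L/V}\hookrightarrow\wh\Omega^1_{\wt L/\bC}$ follows from flatness of the base change. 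The proposition yields $\nabla(\wh\omega_i)=\sum_j (dq_{ij}/q_{ij})\otimes\wh\eta_j$ and $\nabla(\wh\eta_j)=0$, matching the connection of $\Log(\uq)$ (the Tate twist does not modify $\nabla$).

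For the Frobenius part, the key structural input is that the residue map $\tau$ of \eqref{tau-app} satisfies $\tau(\Phi m)=p\,\sigma(\tau m)$, since it is computed via pullback along the formal uniformization $\rho$ and $\sigma^{*}(du_i/u_i)=p\cdot du_i/u_i$ on formal torus cohomology. Hence $\Phi$ preserves the subspace $U:=\Ker(\tau)\otimes L$ spanned by the $\wh\eta_j$'s, and on the quotient $H^1/U\cong L^g$ it acts as $p$ times the $\sigma$-twist, giving $\Phi(\wh\omega_i)=p\wh\omega_i+\sum_j b_{ij}\wh\eta_j$ for some $b_{ij}\in L$. Combining this with the Frobenius compatibility $\langle\Phi x,\Phi y\rangle=p\sigma\langle x,y\rangle$ of the polarization pairing, together with $\langle\wh\omega_i,\wh\eta_j\rangle=\delta_{ij}$ and the isotropy of the $\wh\eta$'s, forces $\Phi(\wh\eta_j)=\wh\eta_j$. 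Horizontality $\nabla\circ\Phi=(\sigma\otimes\Phi)\circ\nabla$ applied to the formula for $\nabla(\wh\omega_i)$ then yields
\[
db_{ij}=\frac{d\sigma(q_{ij})}{\sigma(q_{ij})}-p\cdot\frac{dq_{ij}}{q_{ij}}=-d\bigl(p\log^{(\sigma)}(q_{ij})\bigr),
\]
so that $b_{ij}=-p\log^{(\sigma)}(q_{ij})+c_{ij}$ for some horizontal constants $c_{ij}\in K$. Up to these constants, this matches $\Log(\uq)$ modulo the Tate twist.

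The main obstacle will be showing $c_{ij}=0$. Since the $c_{ij}$ lie in $K$, they are independent of the point on $\Spec A$ and can be computed on any convenient specialization. I plan a reduction to the one-dimensional case: in the diagonal situation $\uq=\mathrm{diag}(q_1,\ldots,q_g)$, Mumford's construction yields a product of Tate elliptic curves, the Frobenius decomposes over the product, and each diagonal block reduces to the classical formula $\Phi(\wh\omega)=p\wh\omega-p\log^{(\sigma)}(q)\wh\eta$ on a Tate elliptic curve (established via explicit theta functions or Coleman's method), forcing $c_{ii}=0$. For the off-diagonal entries $c_{ij}$ ($i\neq j$), one specializes to two-dimensional sub-families whose Poincar\'e bundle realizes the prescribed $q_{ij}$; the Frobenius there is again computed explicitly from the Mumford construction to give $c_{ij}=0$. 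This completes the identification $(H^1_\dR(J_\eta/L),\nabla,\Phi)\otimes A_K(1)\cong\Log(\uq)$.
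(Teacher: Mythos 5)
Your plan for the connection part and your horizontality computation deriving $db_{ij}=-d\bigl(p\log^{(\sigma)}(q_{ij})\bigr)$ are sound, but the Frobenius part has two genuine gaps, and you miss a reduction the paper uses that makes the whole $\log^{(\sigma)}$ bookkeeping evaporate.

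First, the residue compatibility $\tau(\Phi m)=p\,\sigma(\tau m)$ is not self-evident. The residue map $\tau$ is built from $\rho^{*}$ into the formal completion of $\bG_m^g$, while $\Phi$ is the crystalline Frobenius on $H^1_\dR(J_A/A)$; a priori $\Phi$ is not the pullback along any morphism of schemes, so you cannot simply assert that it pulls back to $u_i\mapsto u_i^p$ on the formal torus. The paper removes this issue by first reducing (by functoriality of Mumford's construction) to the universal case $J_q/\Z_p[[t_1,\ldots,t_r]]$ and then \emph{building a genuine endomorphism} $\varphi=\sigma_J\circ[p]'$ of $J_q$ such that $\Phi=\varphi^{*}$ and whose uniformization is literally $u_i\mapsto u_i^p$. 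Once $\Phi$ is a scheme pullback, it preserves the Hodge filtration, which instantly forces $\Phi(\wh\omega_i)$ to lie in the span of the $\wh\omega$'s — strictly stronger than your statement that $\Phi$ preserves $U=\Ker\tau$ and descends to $p\sigma$ on the quotient.

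Second, and more seriously, your pinning-down of the constants $c_{ij}$ is not a proof. You invoke ``the classical formula on a Tate elliptic curve (established via explicit theta functions or Coleman's method)'' as a black box for $c_{ii}$, and for $c_{ij}$ with $i\neq j$ you gesture at ``two-dimensional sub-families whose Poincar\'e bundle realizes the prescribed $q_{ij}$'' without specifying what family, what Frobenius lift, or why the computed Frobenius matrix has $c_{ij}=0$. This is exactly the hard kernel of the theorem, so leaving it as a reduction to an unproved special case is a gap. The paper sidesteps the ambiguity entirely: by Lemma \ref{EK-lem} the object $(H^1_\dR(J_\eta/L),\nabla,\Phi)$ is independent of $\sigma$ up to the canonical equivalence, so after the reduction to the universal case one may \emph{choose} $\sigma(q_{ij})=q_{ij}^p$, in which case $\log^{(\sigma)}(q_{ij})=0$ identically and there are no constants to determine — only the equalities $\Phi(\wh\omega_i)=p\wh\omega_i$ and $\Phi(\wh\eta_j)=\wh\eta_j$ remain, both of which fall out of $\rho^{*}\circ\varphi^{*}$ and the polarization pairing. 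I recommend you incorporate the Frobenius-change trick (Lemma \ref{EK-lem}) and replace the specialization argument by the explicit morphism $\varphi$; otherwise you must actually prove the elliptic base case and carry out the two-dimensional specialization in detail.
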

\begin{pf}
Let $q_{ij}$ be indeterminates with $q_{ij}=q_{ji}$, and 
$t_1,\ldots,t_r$ ($r=g(g+1)/2$) are products $\prod q_{ij}^{n_{ij}}$ such that 
$\sum n_{ij}x_ix_j$ is positive semi-definite and they give a $\Z$-basis of 
the group of the symmetric pairings.
Let $J_q=\bG_m^g/\uq^\Z$ be Mumford's construction of tbe quotient group scheme
over a ring $\Z_p[[t_1,\ldots,t_r]]$ (\cite{FC} Chapter III, 4.4).    
Then there is a Cartesian square
\[
\xymatrix{
J\ar@{}[rd]|{\square}\ar[r]\ar[d]&J_q\ar[d]\\
\Spec R\ar[r]&\Spec \Z_p[[t_1,\ldots,t_r]]}
\]
such that the bottom arrow sends $t_i$ to an element of $I$
by the functoriality of Mumford's construction (\cite{FC} Chapter III, 5.5).
Thus we may reduce the assertion to the case of 
$J=J_q$, $R=\Z_p[[t_1,\ldots,t_r]]$, $I=(t_1,\ldots,t_r)$ and $h=\prod q_{ij}$.
Since $\sLog(\ul q)$ and $H^1_\dR(J_\eta/L)$ are compatible
under the natural equivalence in Lemma \ref{EK-lem},
we may replace the Frobenius $\sigma$ on $R$ with a suitable one.
Thus we may assume that it is given as $\sigma(q_{ij})=q_{ij}^p$
and $\sigma(a)=a$ for $a\in \Z_p$. 
Under this assumption $\log^{(\sigma)}(q_{ij})=0$ by definition.
Therefore our goal is to show
\begin{equation}\label{frob-sect-eq2}
\nabla(\wh\omega_i)=\sum_{j=1}^g \frac{dq_{ij}}{q_{ij}}\ot \wh\eta_j,\quad 
\nabla(\wh\eta_j)=0,
\end{equation}
\begin{equation}\label{frob-sect-eq3}
\Phi(\wh\omega_i)=p\wh\omega_i,\quad
\Phi(\wh\eta_j)=\wh\eta_j.
\end{equation}
Since the condition {\bf (C)} in Proposition \ref{gm-mt-sect-prop1} is satisfied,
\eqref{frob-sect-eq2} is nothing other than \eqref{gm-mt-sect-eq3}.
We show \eqref{frob-sect-eq3}.
Let $\uq^{(p)}:=(q_{ij}^p)$ and $J_{q^p}:=\bG_m^g/(\uq^{(p)})^\Z$.
Then there is the natural morphism $\sigma_J:J_{q^p}\to J_q$ such that the following diagram
is commutative.
\[
\xymatrix{
\bG_m^g\ar[d]\ar@{=}[r]&\bG_m^g\ar[d]\\
J_{q^p}\ar[d]\ar[r]^{\sigma_J}&J_q\ar[d]\\
\Spec R\ar[r]^\sigma&\Spec R}
\]
Let $[p]:J_{q^p}\to J_{q^p}$ denotes the multiplication by $p$ with respect to the
commutative group scheme structure of $J_q$.
It factors through the canonical surjective morphism $J_{q^p}\to J_q$ so that we have
$[p]':J_q\to J_{q^p}$. Define $\varphi:=\sigma_J \circ[p]'$. 
Under the uniformization $\rho:\bG_m^g\to J_q$,
this is compatible with
a morphism $\bG_m^g\to \bG_m^g$ given by $u_i\to u^p_i$ and $a\to \sigma(a)$ for
$a\in R$, 
which we also write $\Phi$. 
Therefore
\[
\Phi=\varphi^*:H^1_\dR(J_\eta/L)\lra H^1_\dR(J_\eta/L).
\]
In particular $\Phi$ preserves the Hodge filtration, so that 
$\Phi(\wh\omega_i)$ is again a linear combination of $\wh\omega_i$'s.
Since 
\[
\rho^*\Phi(\wh\omega_i)=
\Phi\rho^*(\wh\omega_i)=\Phi\left(\frac{du_i}{u_i}\right)=p\frac{du_i}{u_i}
\]
one concludes $\Phi(\wh\omega_i)=p\wh\omega_i$.
On the other hand, since $\Phi(\ker\nabla)\subset \ker\nabla$ and
$\ker\nabla$ is generated by $\wh\eta_j$'s by \eqref{frob-sect-eq2},
$\Phi(\wh\eta_j)$ is again
a linear combination of $\wh\eta_i$'s.
Note
\[
\langle \Phi(x),\Phi(y)\rangle=p\langle x,y\rangle.
\]
Therefore 
\[
\langle \Phi(\wh\eta_j),p\wh\omega_j\rangle=
\langle \Phi(\wh\eta_j),\Phi(\wh\omega_j)\rangle=
p\langle \wh\eta_j,\wh\omega_j\rangle.
\]
This implies $\Phi(\wh\eta_j)=\wh\eta_j$, so we are done.
\end{pf}

\end{document}